\newtheorem{theorem}{Theorem}
\newtheorem{lemma}[theorem]{Lemma}
\Crefname{lemma}{Lemma}{Lemma}
\crefname{lemma}{lemma}{lemma}
\newtheorem{corollary}[theorem]{Corollary}
\Crefname{corollary}{Corollary}{Corollary}
\crefname{corollary}{corollary}{corollary}
\newtheorem{remark}{Remark}
\Crefname{remark}{Remark}{Remark}
\crefname{remark}{remark}{remark}
\newtheorem{proposition}[theorem]{Proposition}
\Crefname{proposition}{Proposition}{Proposition}
\crefname{proposition}{proposition}{proposition}
\newtheorem{definition}{Definition}
\Crefname{definition}{Definition}{Definition}
\crefname{definition}{definition}{definition}
\Crefname{example}{Example}{Example}
\crefname{example}{example}{example}
\DeclareMathAlphabet{\mathpzc}{OT1}{pzc}{m}{it}
\newcommand{\dps}{\displaystyle} 
\newcommand{\rme}{\mathrm{e}}
\newcommand{\cL}{\mathcal{L}}
\newcommand{\R}{\mathbb{R}}
\newcommand{\Id}{\mathrm{Id}} 
\newcommand{\Ran}{\mathrm{Ran}}
\renewcommand{\leq}{\leqslant}
\renewcommand{\le}{\leqslant}
\renewcommand{\geq}{\geqslant}
\renewcommand{\ge}{\geqslant}
\def\N{\mathbb{N}}
\def\R{\mathbb{R}}
\def\T{\mathbb{T}}
\def\Z{\mathbb{Z}}
\def\E{\mathbb{E}} 
\def\P{\mathbb{P}}
\newcommand{\A}{\mathcal{A}} 
\newcommand{\Diff}{\mathcal{D}} 
\newcommand{\Diffset}{\mathfrak{D}} 
\newcommand{\Df}{\mathscr{D}}
\newcommand{\diff}{D} 
\newcommand{\diffset}{\delta}
\renewcommandx{\norm}[2][2=]{\left\lvert{#1}\right\rvert_{#2}}
\newcommand{\F}{\mathrm{F}}
\newcommand{\normF}[1]{\left| #1 \right|_{\F}}
\renewcommand{\dim}{d}
\newcommand{\cLD}{\cL_\Diff}
\newcommand{\calA}{\mathcal{A}}
\newcommand{\calB}{\mathcal{B}}
\newcommand{\calM}{\mathcal{M}}
\newcommand{\calS}{\mathcal{S}}
\newcommand{\calW}{\mathcal{W}}
\newcommand{\calC}{\mathcal{C}}
\newcommand{\rmF}{\mathrm{F}}
\let\div\relax
\DeclareMathOperator{\div}{div}
\DeclareMathOperator{\supp}{supp}
\DeclareMathOperator{\Ker}{Ker}
\DeclareMathOperator{\argmin}{argmin}
\DeclareRobustCommand{\varlambda}{\text{\usefont{OML}{txmi}{m}{it}\symbol{"15}}}
\begin{document}

\title{Optimizing the diffusion coefficient of overdamped Langevin dynamics}
\author{T. Leli\`evre$^{1,2}$, G. A. Pavliotis$^3$, G. Robin$^{4}$, R. Santet$^{1,2}$ and G. Stoltz$^{1,2}$ \\
  \small 1: CERMICS, \'Ecole nationale des ponts et chaussées, IP Paris, Marne-la-Vallée, France \\
  \small 2: MATHERIALS project-team, Inria Paris, France \\
  \small 3: Department of Mathematics, Imperial College, London, United-Kingdom \\
  \small 4: CNRS \& Laboratoire de Mathématiques et Modélisation d'Évry, Évry, France
}

\maketitle


\begin{abstract}
  Overdamped Langevin dynamics are reversible stochastic differential equations which are commonly used to sample probability measures in high-dimensional spaces, such as the ones appearing in computational statistical physics and Bayesian inference. By varying the diffusion coefficient, there are in fact infinitely many overdamped Langevin dynamics which are reversible with respect to the target probability measure at hand. This suggests to optimize the diffusion coefficient in order to increase the convergence rate of the dynamics, as measured by the spectral gap of the generator associated with the stochastic differential equation. We analytically study this problem here, obtaining in particular necessary conditions on the optimal diffusion coefficient. We also derive an explicit expression of the optimal diffusion in some appropriate homogenized limit. Numerical results, both relying on discretizations of the spectral gap problem and Monte Carlo simulations of the stochastic dynamics, demonstrate the increased quality of the sampling arising from an appropriate choice of the diffusion coefficient.
\end{abstract}



\section{Introduction}
\label{sec:introduction}

Predicting properties of materials and macroscopic physical systems in the framework of statistical physics~\cite{Balian}, using for instance molecular dynamics~\cite{FrenkelSmit,Tuckerman,LM15,lelievre_stoltz_2016,AT17} and obtaining the distribution of parameter values in Bayesian inference~\cite{Robert07}, both require the sampling of probability measures in high-dimensional spaces. Methods of choice to sample such probability measures rely on stochastic dynamics, in particular Markov Chain Monte Carlo (MCMC) methods. The convergence of these methods may however be quite slow because the target measure is typically concentrated on a few high probability modes separated by low probability regions, or because the Hessian of the corresponding potential, see~\eqref{eq:mu}, is badly conditioned. Various algorithms have been proposed over the years to improve the sampling of such probability measures, for instance using importance sampling strategies, and/or interacting replicas; see for instance the extensive review~\cite{HLSVD22} in the context of molecular dynamics.

\paragraph{Sampling Boltzmann--Gibbs measures.}
We focus in this work on overdamped Langevin dynamics, a popular choice in molecular dynamics, computational statistics~\cite{RobertsTweedie1996}, and also machine learning (see for instance~\cite{CFG14}). This dynamics is ergodic (and, in fact, reversible) with respect to the Boltzmann--Gibbs distribution, which is the probability measure with density
\begin{equation}
  \label{eq:mu}
  \mu(q) = Z^{-1}\rme^{-V(q)}, \qquad Z = \int_\mathcal{Q} \rme^{-V} < +\infty,
\end{equation}
where~$V \in \calC^\infty(\mathcal{Q})$. In molecular dynamics, the measure~$\mu$ is the distribution sampled by the possible states of a system in the canonical ensemble. In that case, the system is at thermal equilibrium with a heat bath of fixed temperature~$T$, which is taken into account by considering~$\beta V$ instead of~$V$ in~\eqref{eq:mu} with~$\beta=1/(k_{\rm B} T)$, where $k_{\rm B}$ is the Boltzmann constant (in this work, we simply set~$\beta =1$). Because periodic boundary conditions are commonly used for molecular dynamics simulations, we restrict ourselves to the case when the configuration space~$\mathcal{Q}$ is the~$\dim$-dimensional torus~$\T^\dim$ (with~$\T = \mathbb{R}/\mathbb{Z}$ the one dimensional torus).

The overdamped Langevin dynamics is given by the following stochastic differential equation (SDE):
\begin{equation}\label{e:langevin}
  dq_t = -\nabla V(q_t) \, dt + \sqrt{2} \, dW_t,
\end{equation}
where~$(W_t)_{t \geq 0}$ is a standard~$\dim$-dimensional Brownian motion. This dynamics enjoys many nice properties. In particular, it is reversible with respect to the Gibbs measure~\eqref{eq:mu}, and, under appropriate assumptions on the potential energy function~$V$ (see for instance~\cite{bakry}), its marginal law in time converges exponentially fast to the target distribution~\eqref{eq:mu} in various norms and distances, \emph{e.g.}~total variation, relative entropy (Kullback--Leibler), $L^2(\mu)$, or weighted $L^{\infty}$ norms. For example, when the overdamped Langevin dynamics in~$\R^\dim$ is considered, it is sufficient for the potential~$V$ to be smooth and strongly convex at infinity to ensure the existence of Poincaré and logarithmic Sobolev inequalities for the Gibbs measure which, in turn, leads to an exponentially fast convergence to the target distribution in~$L^2(\mu)$ and in relative entropy, respectively. This class of potentials  includes many potentials of interest, for example multiwell potentials. Naturally, in the nonconvex case, the convergence rate typically scales very badly with respect to the dimension~$d$ (and the temperature). In this work, we will consider convergence in~$L^2(\mu)$, \emph{i.e.}~the $\chi^2$-divergence. This is the relevant setting \emph{e.g.}~for obtaining error estimates on trajectory averages, relying on the Central Limit Theorem.

It is important to note that the constant in the Poincar\'{e} and in the logarithmic Sobolev inequalities is a property of the Gibbs measure. In particular, the rate of convergence to the target measure for the overdamped Langevin dynamics~\eqref{e:langevin} depends only on the potential $V$, and convergence can be very slow when entropic or energetic barriers are present. It is therefore natural to consider alternatives to the overdamped Langevin dynamics that include ``hyperparameters'' that can be tuned to improve the rate of convergence to equilibrium and reduce the asymptotic variance.

There are in fact infinitely many overdamped Langevin dynamics which admit~\eqref{eq:mu} as invariant probability measure; see~\cite[Section~4.6]{Pavl2014}. A comprehensive list of stochastic dynamics that can be used in order to sample from a Gibbs measure can be found in~\cite{DuncanNuskenPavliotis2017}. For instance, dynamics of the form
\begin{equation*}
  dq_t = \left( -\nabla V(q_t) + \gamma(q_t)\right) dt + \sqrt{2} \, dW_t,
\end{equation*}
where the vector field~$\gamma:\T^\dim\to \R^\dim$ is smooth and such that
  $\div\left( \gamma \mu \right) = 0$
leave invariant the target probability measure with density~\eqref{eq:mu}; and are ergodic, but not reversible, with respect to this probability measure under certain assumptions on~$V,\gamma$. The hope is that the added drift term~$\gamma$ that renders the dynamics nonreversible, can accelerate convergence, measured in the $L^2(\mu)$ sense, and also reduce the asymptotic variance. This approach was extensively studied in the past decade~\cite{HHS93,HHS05,LelievreNierPavliotis2013,RS15,RBS16,DLP16,DPZ17}. For example, for Gaussian targets, the optimal nonreversible perturbation~$\gamma$ can be obtained in an algorithmic manner~\cite{LelievreNierPavliotis2013}. The presence of the divergence-drift term $\gamma$ can lead to computational difficulties, since the resulting dynamics can be stiff. Appropriate numerical methods need to be developed to address this issue~\cite{DLP16, DPZ17}. In fact, the discretized nonreversible dynamics can perform worse than the reversible dynamics if the stiffness issue is not addressed adequately~\cite{ZMS22, LS18}.

\paragraph{Optimizing reversible dynamics for better sampling.}
Another alternative, which is the focus of this work, is to stay within the class of reversible dynamics, but with a space-dependent diffusion matrix. For a given diffusion matrix~$\Diff \in \calC^1(\T^\dim,\mathcal{S}_\dim^{+})$ (with~$\mathcal{S}_\dim^{+}$ the set of real, symmetric, positive matrices of size~$\dim \times \dim$), the associated overdamped Langevin dynamics reads
\begin{equation}
  \label{eq:dynamics_mult}
  dq_t = \left(- \Diff(q_t)\nabla V(q_t) + \div\Diff(q_t) \right) dt + \sqrt{2}\,\Diff^{1/2}(q_t) \, dW_t,
\end{equation}
where~$\Diff^{1/2}$ is defined by functional calculus, and~$\div\, \Diff$ is the vector whose~$i$-th component is the divergence of the~$i$-th column of the matrix~$\Diff = [\Diff_1,\dots,\Diff_\dim]$, \emph{i.e.}~$\div\, \Diff = \left(\div\, \Diff_1,\dots,\div\, \Diff_\dim\right)^{\top}$. Modulating the diffusion to improve the sampling was explored to some extent both in the computational statistics literature~\cite{RobertsStramer} and in molecular dynamics~\cite{RBS16,ABDULLE2019349}; and also for simulated annealing~\cite{FQG97}.

The goal of this paper is to optimize the diffusion matrix in order to accelerate convergence. Intuitively, it seems relevant to accelerate the diffusion in regions of low probability under the target measure (equivalently, regions of high values of the energy function~$V$), in order to more efficiently and rapidly find another mode, and slow down the diffusion in regions of high probability since these are the zones where sampling should be favored. Clearly, in order for this optimization problem to make sense, appropriate constraints on the magnitude of the diffusion matrix need to be introduced. These constraints on the diffusion matrix and their effects on the solution of the associated constrained optimization problem are discussed in detail later on.

The $L^2(\mu)$ convergence rate of the overdamped Langevin diffusion~\eqref{eq:dynamics_mult} is given by the spectral gap of the infinitesimal generator. The latter operator acts on test functions~$\varphi$ as
\begin{equation}
  \label{eq:generator_cLD}
  \cLD \varphi = \left(- \Diff \nabla V + \div(\Diff)\right)^{\top} \nabla \varphi + \Diff : \nabla^2 \varphi.
\end{equation}
It is standard to show (see, \emph{e.g.}~\cite{LelievreNierPavliotis2013,bakry,lelievre_stoltz_2016}) that,  $\cLD$ admits a positive spectral gap $\Lambda(\Diff)$ if and only if,
for any initial condition~$f(0) = \mu_0/\mu\in L^2(\mu)$, and denoting by~$f(t)\mu$ the probability density function of the process~$q_t$ at time~$t$,
\begin{equation}
  \label{eq:cvg}
\forall t\ge 0,\qquad   \left\lVert f(t) - 1\right\rVert_{L^2(\mu)} \leq \mathrm{e}^{-\Lambda (\Diff)t}\left\lVert f(0) - 1\right\rVert_{L^2(\mu)},
\end{equation}
where~$\left\lVert\cdot\right\rVert_{L^2(\mu)}$ is the norm on~$L^2(\mu)$, see~\Cref{subsec:general-problem} for precise definitions. For a fixed target probability density~$\mu$ (\emph{i.e.}~a fixed potential energy function~$V$), the generator~$\cLD$, and thus the spectral gap~$\Lambda(\Diff)$, are parameterized by the diffusion matrix~$\Diff$.

The aim of this work is to compute, explicitly or numerically, the optimal diffusion matrix leading to the largest spectral gap, and thus to the largest convergence rate. For reversible Markov chains on discrete spaces, this question was explored in~\cite{Boyd_fastMC,Boyd_fastMC2}. A subtle issue in this endeavor is the normalization of the diffusion matrix~$\Diff$. Indeed, the convergence rate in~\eqref{eq:cvg} can trivially be increased by a factor~$\alpha\geq 1$ upon multiplying~$\Diff$ by~$\alpha$. In fact, it seems advantageous to let~$\alpha$ go to infinity. The catch is of course that one should compare dynamics which are defined on similar timescales. From a practical point of view, one way of setting a timescale is to consider discretizations of overdamped Langevin dynamics. Larger diffusion matrices require smaller values of the time step. The issue of normalization is discussed more thoroughly in~\Cref{subsec:normalization}.

\paragraph{Related works.}
The optimization of the diffusion matrix is related to a body of literature on accelerating the convergence of Langevin-like dynamics through particular choices for the diffusion matrix~$\Diff$. This is very much related to the optimization of the damping coefficient~\cite{chak2023optimal} or of the mass matrix in the Langevin Monte Carlo method, a.k.a.~as preconditioning, see~\cite{Bennett75,Girolami_RiemannMC,GW10,riemannianLangevinSimplex,LMW18,mirror_langevin,SanzSernaPreconditioning,chen2023gradient,MR4807088} as well as~\cite{Haario1999, haario_2001,MR2749836} for adaptive constructions of preconditioners. One example of such methods is the Riemannian manifold Langevin Monte Carlo method, introduced in~\cite{Girolami_RiemannMC}, and which reduces to~\eqref{eq:dynamics_mult} in the so-called overdamped limit. There, for strictly convex potentials, the diffusion matrix~$\Diff$ is chosen as the inverse of the Fisher--Rao metric tensor associated with the target measure, namely the inverse of the Hessian of~$V$. From a functional analytical point of view, this can be understood, for a problem set on~$\R^d$, from the Brascamb--Lieb inequality~\cite{Helffer1998,Saumard_Wellner_2014}:
\begin{equation*}
  \mbox{Var}_{\mu}(h) \leq  \int_{\R^d} \langle \nabla h, (\mbox{Hess} (V))^{-1} \nabla h \rangle \, \mu(q) \, d q,
\end{equation*}
for an arbitrary observable $h \in \calC^1$ with $\mbox{Var}_{\mu}(h) < +\infty$. Choosing~$\Diff=\left(\nabla^2 V\right)^{-1}$ in~\eqref{eq:generator_cLD}, one can check that the right hand side of the above inequality is the Dirichlet form associated with the generator~$\cL_{(\nabla^2 V)^{-1}}$, see~\eqref{eq:cLD} below. In particular, this implies that the spectral gap of this generator is 1, which leads to an exponential convergence with rate~1, independently of the strongly convex potential~$V$ and the dimension. We emphasize that this unconstrained optimization problem for the diffusion matrix is quite different from the one we consider in this paper: we optimize the spectral gap for the dynamics on the torus, with no convexity assumptions on~$V$ and with constraints on the magnitude of~$\mathcal{D}$.

Optimization problems on eigenvalues is a subject which has already been extensively discussed in the literature~\cite{Henrot}. Note in particular that, for Markov chains, it was already observed in~\cite{Boyd_fastMC} that maximizing the spectral gap can be formulated as a convex optimization problem. In some sense, our work can be seen as a generalization of~\cite{Boyd_fastMC} in an infinite dimensional setting.
Let us also mention the nice work~\cite{overton1992large} which discusses in detail the problems induced by the degeneracy of the eigenvalues at optimality, and specific algorithms to improve the convergence on this setting (in the present work, we will stick to elementary numerical techniques to solve the optimization problem).

At the time we completed this work, we became aware of the preprint~\cite{cui2024optimal} where the authors consider a very similar problem, that they solve using different techniques, namely Stein kernels and moment measures. More precisely, by using an appropriate transportation map, they transform the nonconvex log-probability to a convex one, for which the Brascamp--Lieb inequality yields the optimal diffusion, for a well-chosen normalization constraint. It is unclear how to apply their technique, which requires to work in the configuration space~$\R^\dim$, to our setting, namely the~$d$-dimensional torus. Notice that the results also seem to differ qualitatively, for example in terms of the order of degeneracy of the second eigenvalue at optimality. We intend to further explore the links between this recent preprint and ours in future works.

Let us also mention results obtained via the theory of partial differential equations for eigenvalue problems associated with operators of the form~$-\div(\Diff \nabla \cdot)$, see~\cite[Chapter~10]{Henrot}. The main difference with such results and ours is that we consider the Laplace operators on the weighted space~$L^2(\mu)$ as we are interested in sampling the nontrivial probability measure~$\mu$.

\paragraph{Main contributions.} The main contributions of this work are the following.
\begin{itemize}
  \item First, we formulate the optimization of the convergence rate of the Langevin dynamics~\eqref{eq:dynamics_mult} with respect to~$\Diff$ as a convex optimization program, for which the well-posedness is guaranteed by adapting standard results in the literature~\cite{Henrot}. We also discuss some important properties of the resulting optimal diffusion matrix, such as its formal characterization using the Euler--Lagrange equation as well as its positivity. The theoretical results are illustrated by numerical experiments in dimension $d=1$, using a method combining a finite element parametrization and an optimization algorithm to compute the optimal diffusion matrix in practice. All the methods and experiments are provided in open source Python and Julia codes, available at \url{https://github.com/rsantet/Optimal_Overdamped_Langevin_Diffusion_Python} and \url{https://github.com/rsantet/Optimal_Overdamped_Langevin_Diffusion_Julia} respectively.
  \item Our second main contribution is to study the behavior of the optimal diffusion matrix in the homogenized limit where the target measure is associated with a highly oscillating potential. In a one-dimensional setting, we show that the optimal diffusion matrix has an analytical expression, proportional to the inverse of the target density, which is in accordance with various previous heuristics (as in~\cite{RobertsStramer}). This analytical solution can be used as an initialization guess for the optimization algorithm mentioned above, or as a proxy for the optimal diffusion matrix which does not require costly convex optimization procedures.
  \item Our third main contribution is to propose a sampling algorithm based on a simple Random Walk Metropolis Hastings algorithm with a proposal variance depending on the current state, which takes advantage of the precomputed optimal diffusion matrix. We show that the associated Markov chain converges to the Langevin dynamics with optimal diffusion matrix in the diffusive limit. We also present some numerical experiments illustrating the behavior of the complete procedure on simple examples, highlighting in particular that the dynamics is less metastable when using the optimal diffusion matrix or its homogenized approximation. We show that the results obtained with the diffusion from the homogenization approximation are similar to the ones obtained with the optimal diffusion matrix, which makes the diffusion from the homogenization approximation a good choice in practice since it has an analytical expression.
\end{itemize}

\paragraph{Outline of the work.}
We formally define the maximization of the spectral gap in~\Cref{sec:general} and make precise the normalization of the diffusion matrix. We then study the well posedness of the maximization problem in~\Cref{sec:scalar}, and characterize the optimal diffusion matrix. We present in~\Cref{sec:numerical} numerical results for various one-dimensional probability distributions. As an alternative to a full-scale numerical simulation, or in order to start the optimization procedure with a relevant initial guess, we consider in~\Cref{sec:homog} the optimal diffusion matrix arising in the homogenization limit where the given probability density is periodically replicated with a decreasing spatial period. We finally demonstrate in~\Cref{sec:sampling} with Monte Carlo simulations that an optimized diffusion matrix is beneficial for the convergence of the dynamics. We conclude in~\Cref{sec:perspectives} by listing perspectives this work calls for. In the appendices~\ref{app:thm:well-posedness-1D},~\ref{app:smooth_max} and~\ref{app:homogenization} we present the proofs of our main results. Appendix~\ref{app:numerical} provides details about the methodology underlying the discrete optimization and the algorithms used in the illustrative numerical experiments, and~\Cref{app:differential_lambda} makes precise the computation of the differential of~$\Lambda$.

\section{Formulation of the optimization problem}
\label{sec:general}

We present in this section the problem of interest, namely the maximization of the spectral gap of the generator of the Langevin dynamics~\eqref{eq:dynamics_mult} with invariant measure~$\mu$ given by~\eqref{eq:mu}. We start by precisely formulating the target functional in~\Cref{subsec:general-problem}. Then, in~\Cref{subsec:normalization}, we discuss normalization constraints which are required for the well-posedness of the problem. Let us recall that we restrict ourselves to  diffusion processes on the~$d$-dimensional torus~$\T^\dim$.

\subsection{Maximizing the spectral gap of Langevin dynamics}
\label{subsec:general-problem}

As discussed in the introduction around~\eqref{eq:cvg}, the rate of convergence of the dynamics~\eqref{eq:dynamics_mult} towards the invariant measure~$\mu$ is governed by the spectral gap of the operator~$\cLD$ (see \emph{e.g.}~\cite[Chapter~4]{bakry}). In order to precisely define this quantity in our context, we work on the Hilbert space~$L^2(\mu)$. For a given diffusion matrix~$\Diff : \T^\dim \to \mathcal{S}_\dim^{+}$ (where we recall that~$\calS_d^{+}$ denote the set of real, symmetric,  semi-definite positive matrices of size~$d\times d$), a simple computation shows that the generator~$\cLD$, considered as an operator on~$L^2(\mu)$, can be written as
\begin{equation}
  \label{eq:cLD}
  \cLD = - \nabla^*\Diff\nabla =  -\sum_{i,j=1}^\dim \partial_{q_j}^* \Diff_{j,i} \partial_{q_i},
\end{equation}
where we denote by~$A^*$ the~$L^2(\mu)$-adjoint of a closed operator~$A$. In particular,~$\partial_{q_i}^* = -\partial_{q_i} + \partial_{q_i}V$. The quadratic form associated with~$\cLD$ is easily obtained from the expression~\eqref{eq:cLD} of the generator: formally, for any~$u : \T^\dim \to \R$,
\begin{equation*}
  \left\langle u, \cLD u \right\rangle_{L^2(\mu)} = - \int_{\T^\dim} \nabla u(q)^{\top} \Diff(q) \nabla u(q) \, \mu(q) \, dq.
\end{equation*}
The right-hand side of the above equality is nonpositive, and possibly infinite at this level of generality. It is finite for instance when~$\Diff \in L^\infty(\T^\dim,\mathcal{S}_\dim^{+})$ and~$u \in H^1(\mu)$, the subspace of~$L^2(\mu)$ composed of functions in~$L^2(\mu)$ whose (distributional) partial derivatives with respect to~$q_i$ also belong to~$L^2(\mu)$. We will indeed consider in the following an optimization problem for~$\Diff$ belonging to a subspace of $L^\infty(\T^\dim,\mathcal{S}_\dim^{+})$ (see~\eqref{eq:maximizer_in_thm} below).

Since~0 is an eigenvalue of~$\cLD$ whose associated eigenvectors are constant functions, we introduce the following subspace to define the spectral gap:
\begin{equation}
  \label{eq:H10}
  H^{1,0}(\mu) = \left\{ u \in H^1(\mu) \ \middle| \ \int_{\T^\dim} u(q) \, \mu(q) \, dq = 0\right\}.
\end{equation}
We also define~$H^{1,0}(\T^\dim)$ as the space~$H^{1,0}(\mathbf{1})$, namely the space~\eqref{eq:H10} with~$\mu$ replaced by the uniform probability distribution on~$\T^\dim$.
The spectral gap is finally defined as
\begin{equation}
  \label{eq:lambdaD-init}
  \Lambda(\Diff) = \inf_{u \in H^{1,0}(\mu) \setminus\{0\}} \frac{\dps \int_{\T^\dim} \nabla u (q)^{\top}\Diff(q) \nabla u (q) \, \mu(q)\,dq}{\dps \int_{\T^\dim} u(q)^2 \mu(q)\,dq}.
\end{equation}

For a bounded positive definite diffusion matrix~$\Diff$ satisfying~$\Diff(q) \geq c \Id_\dim$ almost everywhere for some~$c>0$ (the inequality being understood in the sense of symmetric matrices, a.k.a.~the Loewner order, \emph{i.e.}~$M_1 \le M_2$ if and only if~$\xi^\top M_1\xi\leqslant \xi^\top M_2\xi$ for all~$\xi\in\R^\dim$), standard arguments from calculus of variations show that the infimum is attained for an eigenvector~$u_\Diff$ associated with the second eigenvalue of~$-\cLD$ (ranked by increasing values), \emph{i.e.}~with the first nonzero eigenvalue of~$-\cLD$; and that~$\Lambda(\Diff)$ is in fact the first nonzero eigenvalue of~$-\cLD$ (see for instance~\cite[Chapter~1]{Henrot}).

\begin{remark}[Degenerate diffusion matrices]
  \label{rmk:SG_when_dim_cancels}
  If there exists~$q\in\T^\dim$ such that~$\Diff(q)$ is rank-deficient (\emph{i.e.}~$\xi^{\top}\Diff(q)\xi=0$ for some~$\xi\in\R^\dim \setminus \{0\}$), the domain of the quadratic form associated with~$\cLD$ can in fact be larger than~$H^{1,0}(\mu)$; see~\cite{Zhikov_1998}. In this case,~$\Lambda(\Diff)$ is in general larger than
 the spectral gap of the operator~$\cLD$ (and hence provides a too optimistic convergence rate) since the infimum in~\eqref{eq:lambdaD-init} is taken on a set of functions not sufficiently large. See~\cite[Remark~1.2.3]{Henrot} for a discussion on sufficient conditions for the operator~$\cLD$ to have a discrete spectrum even in the presence of degeneracies of the diffusion matrix (by which we mean that the diffusion matrix is positive everywhere, but not positive definite at some points).
\end{remark}


We propose to maximize the spectral gap of the overdamped Langevin dynamics with respect to the diffusion matrix~$\Diff$. This corresponds here to solving the following optimization problem:
\begin{equation}
  \label{eq:optim-continuous-init}
  \Lambda^{\star} = \sup_{\Diff\in\Diffset}\Lambda(\Diff),
\end{equation}
where~$\Diffset$ is a subset of{\label{minus:bounded_diffusion}} measurable functions from~$\T^\dim$ to~$\mathcal{S}_\dim^{+}$. The choices we consider in the sequel for~$\Diffset$ take into account that the diffusion needs to be normalized in some way since, for any~$\Diff:\T^\dim\rightarrow\mathcal{S}_\dim^{+}$,
\begin{equation}
  \label{eq:t_Lambda_scaling}
  \forall t\geq 0, \qquad \Lambda(t \Diff) = t\Lambda(\Diff),
\end{equation}
so that a maximization over a set containing a line~$\R \Diff$ with~$\Lambda(\Diff) > 0$ would lead to~$\Lambda^{\star} = +\infty$. Notice that, more generally, it holds~$\Lambda(\Diff_1)\leqslant\Lambda(\Diff_2)$ whenever~$\Diff_1\leqslant\Diff_2$ in the sense of matrix-valued operators, that is, for almost every~$q\in\T^\dim$, it holds~$\Diff_1(q)\leqslant\Diff_2(q)$ in the sense of symmetric matrices.

\begin{remark}[A numerical motivation for normalizing the diffusion matrix]
  \label{rmk:normalization_numerics}
  The need for a normalization can also be motivated by numerical considerations. For instance, an Euler--Maruyama discretization of~\eqref{eq:dynamics_mult} with a time step~$\Delta t>0$ reads
  \begin{equation}
    \label{eq:EM_ovd_multiplicative}
    q^{n+1} = q^n + \Delta t \left[- \Diff \nabla V + \div\,\Diff \right](q^n) + \sqrt{2\Delta t}\, \Diff^{1/2}(q^n) \, G^n,
  \end{equation}
  where~$(G^n)_{n \geq 0}$ is a family of independent and identically distributed standard~$d$-dimensional Gaussian random variables. It is apparent from the above formula that the behavior of the numerical scheme actually depends on~$\Delta t \, \Diff$ (and the divergence of this quantity), and not on~$\Diff$ or~$\Delta t$ alone, so that multiplying~$\Diff$ by a constant can be equivalently seen as multiplying the time step by the same constant. In this context, one should think of normalization criteria as ways to fix the amount of numerical error involved in the discretization, for instance by fixing the average rejection probability in a Metropolis--Hastings scheme whose proposal is provided by~\eqref{eq:EM_ovd_multiplicative}. The normalization we discuss in~\Cref{subsec:normalization} is conceptually simpler than such a criterion, and not related to a specific numerical scheme. We intend to further study this perspective in following works.
\end{remark}

\subsection{Normalization constraint}
\label{subsec:normalization}

We make precise in this section the normalization we consider for diffusion matrices, in view of the discussion after~\eqref{eq:optim-continuous-init}. In essence, we choose a (weighted) Lebesgue norm to be smaller than~1.

\paragraph{The $L^p_V$ normalization.} A quantity which naturally appears in the mathematical formulation of the problem is the product~$\Diff(q) \rme^{-V(q)}$, present for instance in the numerator of the Rayleigh ratio~\eqref{eq:lambdaD-init}. This quantity also measures the average size of the Brownian moves at stationarity (since~$\Diff(q)$ is the covariance of the noise at a given configuration~$q$, the latter configuration having a likelihood proportional to~$\rme^{-V(q)}$). Note also that the divergence of this product is the drift in~\eqref{eq:dynamics_mult} (up to the factor~$\rme^{-V(q)}$). This motivates normalizing the product~$\Diff \rme^{-V}$ rather than~$\Diff$ itself.

Let~$1\leqslant p\leqslant +\infty$. For~$\Diff:\T^\dim\to\calS_\dim^{+}$ measurable, we define
\begin{equation}
  \label{eq:norm_L^p_mu_matrices}
  \left\lVert \Diff \right\rVert_{L^p_V} := \left( \int_{\T^\dim} \normF{\Diff(q)}^p \, \rme^{- p V(q)} \, dq \right)^{1/p},
\end{equation}
when~$p<+\infty$ and~$\left\lVert \Diff \right\rVert_{L^\infty_V} := \left\| \normF{\Diff} \rme^{-V} \right\|_{L^\infty(\T^\dim)}$ when $p=+\infty$. Here,~$|\cdot|_{\rmF}$ is any monotonic norm with respect to the Loewner order on~$\calS_\dim^+$: it holds~$\normF{M_1} \leq \normF{M_2}$ when~$0 \le M_1 \leq M_2$ (see~\Cref{rem:normF} as to why we make this assumption). Examples of such norms include the Frobenius and spectral norms. We use in particular the Frobenius norm as a specific choice to obtain characterizations of optimal diffusions in~\Cref{subsec:euler-lagrange}, since the map~$\Diff\mapsto\normF{\Diff}^p$ is differentiable for~$1<p<+\infty$.
In order to normalize the optimal diffusion, throughout this work, we will therefore maximize the spectral gap $\Lambda(\Diff)$ over diffusion matrices $\Diff: \T^d \to \calS_d^+$ such that:
\begin{equation}\label{eq:normLp}
\|\Diff\|_{L^p_V} \le 1.
\end{equation}

Finally, we will only consider the~$L^{p}_V$ constraint~\eqref{eq:normLp} for~$1\leqslant p<+\infty$. It is natural since sampling relies on averages with respect to the equilibrium measure~$\mu$. Moreover, Euler--Lagrange equations for the optimal diffusions are less explicit when $p=+\infty$ as~$\Diff\mapsto\left\lVert\Diff\right\rVert_{L_V^{\infty}}$ is not a differentiable map (in contrast to~$\Diff\mapsto\left\lVert\Diff\right\rVert_{L^{p}_V}^{p}$). Besides, considering $p=+\infty$ may lead to constant and isotropic optimal diffusions for some norms (see~\Cref{rem:spectral_norm_l_infty_constraint}) whereas we would like to define a problem which leads to diffusion matrices which fit to the local characteristics of the underlying potential, possibly anisotropic (see related discussions in~\cite[Section~10.1]{Henrot}).

\begin{remark}[On the monotonicity assumption of~$|\cdot|_{\rmF}$]
  \label{rem:normF}
  The existence of optimal diffusions can actually be obtained without the monotonicity assumption on~$|\cdot|_{\rmF}$. In our work, this assumption is used in some theoretical arguments when studying the homogenization limits, see~\Cref{thm:commutation} and its proof in~\Cref{app:thm:commutation}.
\end{remark}

\begin{remark}[On the choice $p=+\infty$ in~\eqref{eq:normLp}]
\label{rem:spectral_norm_l_infty_constraint}
  Let~$|\cdot|_{\rmF}$ be the spectral norm, \emph{i.e.}~$\left\lvert \Diff(q)\right\rvert_{\rmF}$ yields the largest eigenvalue of~$\Diff(q)$ (which is real and nonnegative). Then a maximizer of~$\Diff\mapsto\Lambda(\Diff)$ over the set $\left\lbrace\Diff: \T^d \to \calS_d^+ \,\middle|\,\left\lVert\Diff\right\rVert_{L_V^{\infty}}\leqslant1\right\rbrace$ is simply given by~$\Diff^{\star}(q)=\rme^{V(q)}\Id_\dim$. Indeed, if~$\left\lVert\Diff\right\rVert_{L^{\infty}_V}\leqslant1$, then for almost every~$q\in\T^\dim$ and~$\xi\in\R^\dim$, it holds~$\xi^{\top}\Diff(q)\xi\leqslant\left\lvert\Diff(q)\right\rvert_{\rmF}\xi^\top\xi\leqslant\rme^{V(q)}\xi^\top\xi=\xi^\top\Diff^\star(q)\xi$ so that~$\Lambda(\Diff)\leqslant\Lambda(\Diff^{\star})$. The fact that~$\left\lVert\Diff^\star\right\rVert_{L_V^{\infty}}=1$ concludes the proof. 
\end{remark}

\begin{remark}[On the choice of the weight in~\eqref{eq:norm_L^p_mu_matrices}]
    \label{rem:weight_norm_Lp}
    Note that~\eqref{eq:norm_L^p_mu_matrices} is simply the usual~$L^{p}$ norm applied to the product~$\normF{\Diff}\rme^{-V}$. Weights other than~$\rme^{-pV}$ in the integrand of~\eqref{eq:norm_L^p_mu_matrices} could be considered, see~\Cref{rmk:choice_normalization_motivated} in~\Cref{app:thm:commutation}. One motivation for the weight in~\eqref{eq:norm_L^p_mu_matrices} is that it leads to expressions for optimal diffusion matrices in the homogenized limit which are independent of~$p$, see~\Cref{sec:optimization_homog_lim}.
\end{remark}

\paragraph{Definition of the set of diffusion matrices.}
To obtain some of the theoretical results, the normalization constraint~\eqref{eq:normLp} will not be sufficient, and we will require sometimes the diffusion matrices to be bounded from above, or from below by a positive constant (uniform positiveness). In particular,
\begin{itemize}
  \item to obtain the existence of a maximizer of~$\Lambda$, we use a  compactness argument (for the weak-* $L^\infty$ topology), which  requires to work on a set of diffusions bounded from above by a fixed constant (see Appendix~\ref{app:thm:well-posedness-1D});
  \item a set of diffusions bounded from below and from above by a fixed positive constant has to be considered when studying the homogenized limit of the optimal diffusion matrices (see~\Cref{sec:homog}).
\end{itemize}
This leads us to introduce, for~$p\in[1,+\infty]$ and $a,b\geqslant0$, the set
$$L^p_V(\T^\dim,\mathcal M_{a,b})=\left\{ \Diff: \T^\dim \to \calS_\dim^+\, \middle|\, \Diff(q) \rme^{-V(q)} \in \mathcal M_{a,b} \text{ for a.e. } q \in \T^d,\, \|\Diff\|_{L^p_V} < +\infty \right\},$$
where
\begin{equation}
  \label{eq:M_ab}
  \mathcal{M}_{a, b} = \left\{ M\in\mathcal{S}_\dim^+ \, \middle| \, \forall \xi \in \R^\dim, \ a|\xi|^2\leq \xi^{\top}M\xi \leq \frac{1}{b} |\xi|^2 \right\},
\end{equation}
with~$|\cdot|$ the Euclidean norm on~$\R^\dim$, and the convention that the right-hand side of the last inequality is~$+\infty$ when~$b=0$. 
The optimization problem that we consider in the following is thus:
\begin{equation}
  \label{eq:maximizer_in_thm}
  \textrm{Find }\Diff^{\star} \in \Diffset_p^{a,b}\textrm{ such that }\Lambda(\Diff^{\star}) = \sup_{\Diff\in\Diffset_p^{a,b}} \Lambda(\Diff),
\end{equation}
where
\begin{equation}
  \label{eq:constrained-set-matriciel}
  \Diffset_p^{a,b} = \left\{\Diff\in L^{\infty}_V(\T^{\dim},\calM_{a,b}) \,\middle|\, \|\Diff\|_{L^p_V}\leq 1 \right\}.
\end{equation}
Notice that we will therefore always consider bounded diffusion matrices (since, even if $b=0$, any~$\Diff \in \Diffset_p^{a,b}$ is such that~$\|\Diff\|_{L^\infty_V} < +\infty$). This is in particular useful since~$L^\infty\left(\T^d,\calS_\dim^+\right) \ni \Diff \mapsto \Lambda(\Diff)$ is upper-semicontinous.

In order for the set~\eqref{eq:constrained-set-matriciel} to be nonempty, there are constraints on $a$ and $b$.
\begin{lemma}
  \label{lem:D_p^ab_nonempty}
  Fix~$1\leq p < +\infty$, and~$a,b \geq 0$. The set $\Diffset_p^{a,b}$ is nonempty if and only if
  \begin{equation}
\label{eq:compatibility_conditions_continuous_level}
    ab \leq 1, \qquad a \leq a_\mathrm{max} := \frac{1}{\normF{\Id_\dim}}.
  \end{equation}
\end{lemma}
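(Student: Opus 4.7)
The plan is to prove each direction separately. For the two conditions to be \emph{necessary}, I would exploit the compatibility of $\normF{\cdot}$ with the order on symmetric positive matrices together with the fact that $\T^\dim$ has unit volume. For \emph{sufficiency}, I would simply exhibit an explicit element of $\Diffset_p^{a,b}$.

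For necessity, let $\Diff \in \Diffset_p^{a,b}$. Since $\Diff(q)\rme^{-V(q)} \in \calM_{a,b}$ for a.e.\ $q$, the set $\calM_{a,b}$ is nonempty, which by the very definition of $\calM_{a,b}$ (the sandwich inequality $a|\xi|^2 \le \xi^{\top}M\xi \le b^{-1}|\xi|^2$ must be compatible) forces $ab \le 1$. Moreover, the left inequality with $M = \Diff(q)\rme^{-V(q)}$ yields $a\Id_\dim \le \Diff(q)\rme^{-V(q)}$ in the sense of symmetric matrices, so by the order compatibility of the norm,
\begin{equation*}
  a\normF{\Id_\dim} \leq \normF{\Diff(q)\rme^{-V(q)}} = \rme^{-V(q)}\normF{\Diff(q)}.
\end{equation*}
Raising this inequality to the $p$-th power and integrating over $\T^\dim$, whose Lebesgue measure equals $1$, gives
\begin{equation*}
  a^p \normF{\Id_\dim}^p \leq \int_{\T^\dim} \normF{\Diff(q)}^p \rme^{-pV(q)}\, dq \leq 1,
\end{equation*}
hence $a \le 1/\normF{\Id_\dim} = a_{\mathrm{max}}$.

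For sufficiency, suppose $ab \le 1$ and $a \le a_{\mathrm{max}}$. I would exhibit the constant-rescaled candidate
\begin{equation*}
  \Diff^{\mathrm{ref}}(q) = a\, \rme^{V(q)}\, \Id_\dim.
\end{equation*}
Then $\Diff^{\mathrm{ref}}(q)\rme^{-V(q)} = a\Id_\dim$, and the sandwich inequality defining $\calM_{a,b}$ reduces to $a \le b^{-1}$, which holds by the assumption $ab \le 1$. Since $\normF{\Diff^{\mathrm{ref}}(q)}\rme^{-V(q)} = a\normF{\Id_\dim}$ is a constant, $\Diff^{\mathrm{ref}} \in L^\infty_V(\T^\dim,\calM_{a,b})$, and
\begin{equation*}
  \int_{\T^\dim} \normF{\Diff^{\mathrm{ref}}(q)}^p \rme^{-pV(q)}\, dq = a^p \normF{\Id_\dim}^p \leq 1
\end{equation*}
by the assumption $a \le a_{\mathrm{max}}$. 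Thus $\Diff^{\mathrm{ref}} \in \Diffset_p^{a,b}$, and the set is nonempty.

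I do not anticipate any real obstacle here: the only point that requires care is recognizing that $\T^\dim$ has unit volume, so that the lower pointwise bound on $\normF{\Diff}\rme^{-V}$ transfers directly to a lower bound on the $L^p_V$-norm. The explicit construction $a\rme^V\Id_\dim$ then simultaneously saturates the pointwise lower bound in $\calM_{a,b}$ and the integral constraint, which matches the threshold $a_{\mathrm{max}}$.
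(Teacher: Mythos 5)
Your proof is correct and follows essentially the same approach as the paper's: both identify the extremal candidate $a\,\rme^{V}\Id_\dim$ as the critical witness (since every element of $\Diffset_p^{a,b}$ pointwise dominates it in the matrix order), and both reduce membership to the computation $\int_{\T^\dim}\normF{a\rme^{V}\Id_\dim}^p\rme^{-pV}=a^p\normF{\Id_\dim}^p$. Your write-up simply makes the monotonicity step (passing the pointwise matrix bound through $\normF{\cdot}$ and the integral to deduce necessity of $a\le a_{\max}$) more explicit than the paper's terse phrasing.
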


\begin{proof}
  First, $\mathcal M_{a,b}$ is nonempty if and only if $a \le b^{-1}$.
  Note that any element of~$\Diffset_p^{a,b}$ is lower bounded by~$a\rme^{V}\Id_\dim$, so that $\|\Diff\|_{L^p_V} \le 1$ requires $a \le a_{\rm max}$.
\end{proof}

\begin{remark}[On the importance of considering $a>0$ and/or $b>0$]
\label{rem:a_b}
As already stated above, the lower and upper bounds $a$ and $b$ are only introduced in order to obtain certain theoretical results. Our primary objective in this work is to study the maximization problem~\eqref{eq:maximizer_in_thm} with $a=b=0$. We will always try to set~$a=0$
 or $b=0$ whenever possible, or consider situations where these constraints are not active at the optimal solution. In particular, we will assume that only the~$L^{p}_V$ constraint~\eqref{eq:norm_L^p_mu_matrices} prevents the scaling phenomenon~\eqref{eq:t_Lambda_scaling} (and not the constraint due to the upper bound~$b$).
 
 For the numerical experiments, we mostly take $a=b=0$ and the discretized optimization problem is actually well posed in this setting, see~\Cref{app:numerical}. Moreover, it can be shown that even if $a=0$, the optimal diffusion for the discretized problem is positive definite (see~\Cref{prop:well-posed-pwc}).
 \end{remark}

\section{Theoretical analysis of the spectral gap optimization}
\label{sec:scalar}

We present in this section some theoretical results on the mathematical analysis of the optimization problem~\eqref{eq:maximizer_in_thm}. We start by well-posedness results in~\Cref{subsec:well-posedness}, and {\label{minus:we}then formally characterize in~\Cref{subsec:euler-lagrange} the optimal diffusion matrix.

\subsection{Well posedness of the optimization problem}
\label{subsec:well-posedness}

The aim of this section is to show that the maximization of the spectral gap~$\Lambda(\Diff)$ on the set~$\Diffset_p^{a,b}$ defined in~\eqref{eq:constrained-set-matriciel} is a well-posed problem. This is easily done by adapting arguments from~\cite{Henrot}. Recall that the potential~$V$ is assumed to be~$\calC^{\infty}(\T^\dim)$ in all this work. We first show that the map~$\Lambda$ is well-defined over the set of matrices we consider for the optimization problem.

\begin{proposition}
\label{prop:lambda_bounded}
  Fix~$1\leq p < +\infty$, and~$a,b \geq 0$ such that~\eqref{eq:compatibility_conditions_continuous_level} is satisfied. The function~$\Lambda$ is nonnegative and bounded on~$\Diffset_p^{a,b}$, uniformly with respect to~$a,b$.
\end{proposition}

\begin{proof}
  Since~$\Diff\in\Diffset_p^{a,b}$ is a.e.~positive semi-definite, it holds~$\Lambda(\Diff)\geq 0$. On the other hand, for a fixed function~$u \in \calC^{\infty}(\T^\dim)$ satisfying~$\int_{\T^\dim}u(q)\, \mu(q) \, dq = 0$ and~$\int_{\T^\dim}u(q)^2\mu(q)\,dq = 1$, the function~$|\nabla u(q)|^2$ is bounded on~$\T^\dim$. Since there exists a constant~$K\in\R_{+}$ such that~$0 \leq \xi^\top \Diff(q) \xi \leq K\normF{\Diff(q)} |\xi|^2$ (the constant~$K$ being 1 for both the Frobenius and spectral norms), it holds (recall that~$Z$ is defined in~\eqref{eq:mu})
  \begin{align*}
    \Lambda(\Diff) & \leq Z^{-1}K\left\lVert |\nabla u|^2\right\rVert_{L^\infty(\T^\dim)}\int_{\T^\dim}\normF{\Diff(q)}\, \rme^{-V(q)}\,dq\\
    & \leq Z^{-1} K\left\lVert|\nabla u|^2\right\rVert_{L^\infty(\T^\dim)}\left(\int_{\T^\dim}\normF{\Diff(q)}^p \rme^{- p V(q)}\, dq\right)^{1/p} <+\infty,
  \end{align*}
  using H\"older's inequality on~$\T^d$. This shows that there exists~$C>0$ such that, for any~$a,b \geq 0$ and any~$\Diff \in \Diffset_p^{a,b}$, it holds~$0 \le \Lambda(\Diff) \le C$.
 \end{proof}
 
A first useful property to prove the well posedness of the maximization problem~\eqref{eq:maximizer_in_thm} is the following classical result on the concavity of the functional~$\Lambda$, directly obtained from the fact that~$\Lambda(\Diff)$ is an infimum of linear functionals in~$\Diff$ (see for instance~\cite[Theorem~10.1.1]{Henrot}).

\begin{lemma}
  \label{lem:concave-mat}
  The functional~$\Diff \mapsto\Lambda(\Diff)$ is concave on the set of bounded, measurable functions~$\T^\dim \to \mathcal{S}_\dim^+$.
\end{lemma}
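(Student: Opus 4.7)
The plan is to exploit the variational characterization~\eqref{eq:lambdaD-init} of the spectral gap as an infimum of a family of Rayleigh quotients, each of which is (affinely) linear in the diffusion matrix $\Diff$. Since the pointwise infimum of any family of concave functionals is concave, the result will follow directly. No deep ingredient is required: the proof amounts to unwinding the definition and applying a standard convex-analytic identity.

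First, I would fix an arbitrary nonzero $u \in H^1_0(\mu)$ and introduce the functional
\[
  F_u(\Diff) = \frac{\dps\int_{\T^\dim} \nabla u(q)^\top \Diff(q) \nabla u(q)\,\mu(q)\,dq}{\dps\int_{\T^\dim} u(q)^2\,\mu(q)\,dq},
\]
which is well-defined (and finite) for every bounded measurable $\Diff : \T^\dim \to \mathcal{S}_\dim^+$, because the denominator is a fixed positive constant and the integrand in the numerator is controlled by $\|\,|\nabla u|^2\,\|_{L^\infty} \cdot \|\Diff\|_\infty$. The key observation is that $\Diff \mapsto F_u(\Diff)$ is \emph{linear}: for any bounded measurable $\Diff_1, \Diff_2$ and $t\in[0,1]$,
\[
  F_u\bigl(t\Diff_1 + (1-t)\Diff_2\bigr) = t F_u(\Diff_1) + (1-t) F_u(\Diff_2),
\]
by linearity of the integral and of the quadratic form $\nabla u^\top \cdot \nabla u$ in the matrix entries.

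Next, I would write $\Lambda(\Diff) = \inf_{u \in H^1_0(\mu)\setminus\{0\}} F_u(\Diff)$ and invoke the elementary fact that the pointwise infimum of a family of concave (in particular, linear) functionals is concave. Explicitly, for any bounded measurable $\Diff_1, \Diff_2$ and any $t\in[0,1]$,
\[
  \Lambda\bigl(t\Diff_1 + (1-t)\Diff_2\bigr)
  = \inf_{u} \bigl[ t F_u(\Diff_1) + (1-t) F_u(\Diff_2) \bigr]
  \geq t \inf_u F_u(\Diff_1) + (1-t)\inf_u F_u(\Diff_2)
  = t \Lambda(\Diff_1) + (1-t)\Lambda(\Diff_2),
\]
where the inequality uses that the infimum of a sum is at least the sum of the infima.

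There is no real obstacle here; the only minor point to check is that $\Lambda(\Diff)$ is finite (i.e. the infimum is not $-\infty$), which is ensured by the nonnegativity of $F_u$ when $\Diff$ takes values in $\mathcal{S}_\dim^+$. No positivity lower bound on $\Diff$, nor any Poincaré-type inequality, is needed for concavity itself: those become relevant only when one wants to further assert that $\Lambda(\Diff) > 0$ or that the infimum is attained, which are not part of this statement.
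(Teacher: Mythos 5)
Your proof is correct and follows exactly the argument the paper indicates: $\Lambda(\Diff)$ is the pointwise infimum over $u\in H^1_0(\mu)\setminus\{0\}$ of the Rayleigh quotients $F_u(\Diff)$, each of which is linear in $\Diff$, and an infimum of linear functionals is concave. This matches the paper's one-line justification (citing Henrot, Theorem~10.1.1) and requires no further comment.
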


The following theorem, proved in~\Cref{app:thm:well-posedness-1D} states the well-posedness of the optimization problem we consider. Note that we assume that~$b>0$ for the maximizing sequence to be compact in $\Diffset_p^{a,b}$ for the weak-* $L^\infty(\T^\dim,\calS_\dim^+)$ topology. Recall that~$a_{\rm max}$ is defined in~\eqref{eq:compatibility_conditions_continuous_level}.

\begin{theorem}
  \label{thm:well-posedness-1D}
  Fix~$p \in [1,+\infty)$. For any~$a \in [0,a_\mathrm{max}]$ and~$b > 0$ such that~$ab \leq 1$, there exists a solution~$\Diff^{\star} \in \Diffset_p^{a,b}$ to~\eqref{eq:maximizer_in_thm}. In addition, for any open set~$\Omega \subset \T^\dim$, the optimal diffusion matrix~$\Diff^{\star}$ is not identically zero on $\Omega$.
\end{theorem}

The second property stated in Theorem 4 is of course only meaningful for $a=0$. It guarantees that~$\Diff^{\star}$ cannot be identically~0 on open sets, but a priori~$\Diff^{\star}(q)$ need not be positive definite, and could vanish on sets of measure~0. Let us recall that if~$\Diff^{\star}$ vanishes at some point, it may not be optimal in terms of the spectral gap of the operator, see~\Cref{rmk:SG_when_dim_cancels}. We  will further discuss the positive definiteness of~$\Diff^{\star}$ in~\Cref{subsec:euler-lagrange}, using the Euler--Lagrange equation. From a numerical perspective, we prove in~\Cref{subsubsec:disc_optim} that the optimal diffusion matrix is bounded from below by a positive constant when the minimization is performed over the intersection of~$\Diffset_p^{a,b}$ with a finite dimensional linear space. However, in some cases, this lower bound is numerically observed to decrease when the spatial discretization is refined, which seems to indicate that $\Diff^{\star}$ may indeed vanish (see {\em e.g.}~\Cref{fig:res_1} below). 

Besides, it is not clear that the maximizer~$\Diff^{\star}$ saturates the constraint~\eqref{eq:normLp}, because of the upper bound~$b$. In practice, it is always the case in our numerical experiments since we perform the optimization without any point-wise upper bound (we take $b=0$).

\subsection{Characterization of uniformly positive definite maximizers~$\Diff^{\star}$}
\label{subsec:euler-lagrange}

The objective of this section is to formally characterize the maximizers~$\Diff^{\star}$ using the Euler--Lagrange condition satisfied by critical points of the functional~$\Lambda$.
When the optimal diffusion is uniformly positive definite, we first show in~\Cref{subsec:euler_lagrange_nondegenerate} that~$\Lambda(\Diff^{\star})$ is degenerate (\emph{i.e.}~this eigenvalue has a multiplicity larger or equal to~2). Therefore, the mapping $\Diff \mapsto \Lambda(\Diff)$ is not necessarily differentiable at~$\Diff^{\star}$, and only partial information can be obtained by considering the superdifferential of~$\Lambda$ at the optimum. In order to obtain an Euler--Lagrange equation characterizing~$\Diff^{\star}$, we circumvent this difficulty in~\Cref{subsec:euler_lagrange_degenerate} by relying on a smooth approximation of the definition of~$\Lambda$. The theoretical results obtained in this section are partially formal and will be corroborated by numerical experiments in~\Cref{sec:numerical}.

\subsubsection{The eigenvalue~$\Lambda(\Diff^\star)$ is degenerate}
\label{subsec:euler_lagrange_nondegenerate}

Let us consider the solution~$\Diff^{\star}$ to the problem~\eqref{eq:maximizer_in_thm} for $p \in (1,+\infty)$,
\begin{equation*}
  a=0,
\end{equation*}
and $b>0$. Note that we exclude the case~$p=1$ as the map~$M\mapsto\left\lvert M\right\rvert_{\rmF}$ is not differentiable at any rank-deficient~$M \in \mathcal{M}_{0,b}$. By~\Cref{thm:well-posedness-1D}, there exists a matrix valued function~$\Diff^{\star}$ which is a maximizer of~$\Lambda$ over the constrained set~$\Diffset_p^{0,b}$. Let us recall that, necessarily, either the upper bound~\eqref{eq:M_ab} on~$b$ or the $L^p$ constraint~\eqref{eq:norm_L^p_mu_matrices} has to be saturated, otherwise one can increase the spectral gap by simply multiplying the diffusion by a constant. In the remainder of this section, we assume that 
\begin{equation}\label{eq:b_not_active}
  \exists b_+>b, \quad \Diff^{\star}(q) \, \rme^{-V(q)} \le \frac{1}{b_+} \,   \Id_\dim \textrm{ for a.e. }q\in\T^\dim,
\end{equation}
so that necessarily
\begin{equation}\label{eq:phip=1}
\Phi_p(\Diff^{\star})=1,
\end{equation}
where~$\Phi_p$ encodes the normalization constraint~\eqref{eq:norm_L^p_mu_matrices}:
\begin{equation}
  \label{eq:normalization_constraint}
  \Phi_p(\Diff) = \int_{\T^\dim} \normF{\Diff(q)}^p \, \rme^{- p V(q)} \,dq.
\end{equation}
In our numerical experiments, we always observe that~\eqref{eq:b_not_active} and~\eqref{eq:phip=1} are satisfied upon choosing $b$ sufficiently small. 

Moreover, we assume that
\begin{equation}\label{eq:unif_SDP}\exists c>0,\quad \Diff^{\star}(q)\geq c \, \Id_\dim \textrm{ for a.e. }q\in\T^\dim.
\end{equation}
As explained below,~\eqref{eq:unif_SDP} is in particular natural for potentials which are $1/k$-periodic with $k \in \N \setminus \{0\}$, and $k$ sufficiently large to approach the homogenized limit, for which the optimal diffusion is indeed uniformly positive definite; see \Cref{sec:homog}.

A standard argument based on the Euler--Lagrange equation satisfied by the minimizer of the Rayleigh quotient in~\eqref{eq:lambdaD-init} shows that there exists a function~$u_{\Diff^{\star}}\in H^{1,0}(\mu)$ such that $\|u_{\Diff^{\star}}\|_{L^2(\mu)}=1$ and $\Lambda(\Diff^{\star}) =
\int_{\T^\dim} \nabla u_{\Diff^{\star}} (q)^{\top}\Diff^{\star}(q) \nabla u_{\Diff^{\star}} (q) \, \mu(q)\,dq$. We show in this section that, when~\eqref{eq:unif_SDP} holds,~$\Lambda(\Diff^\star)$ is a degenerate eigenvalue of the operator~$-\mathcal{L}_{\Diff^\star}$ (under some additional regularity assumption in the case~$d=1$). The proof proceeds by contradiction and relies on the fact that, when~$\Lambda(\Diff^{\star})$ is non-degenerate and~\eqref{eq:unif_SDP} holds, one can write the Euler--Lagrange equation satisfied for~$\Diff^{\star}$. This leads to a characterization of the optimal diffusion matrix which contradicts~\eqref{eq:unif_SDP}. In order to state the result, we need to distinguish for technical reasons the cases~$\dim=1$ and~$\dim>1$.

\begin{proposition}
  \label{prop:EL_degenerate}
  Choose the Frobenius norm for the matrix norm~$|\cdot|_{\rmF}$, consider~$p \in (1,+\infty)$ and~$a=0$. Assume that the maximizer~$\Diff^\star$ solution to~\eqref{eq:maximizer_in_thm}  satisfies~\eqref{eq:b_not_active}. Assume moreover that~$\Diff^{\star} \in \calC^0(\T,\R_+)$ when~$\dim=1$. If~\eqref{eq:unif_SDP} holds, then the optimal spectral gap~$\Lambda(\Diff^{\star})$ is a degenerate eigenvalue of the diffusion operator~$-\mathcal{L}_{\Diff^\star}$.
\end{proposition}

\begin{proof}
  Suppose that~$\Diff^{\star}\geqslant c\Id_\dim$ for~$c>0$. We denote by~$u_{\Diff^{\star}}$ an eigenvector satisfying~$-\cL_{\Diff^{\star}}u_{\Diff^{\star}}= \Lambda(\Diff^{\star})u_{\Diff^{\star}}$.
  Observe first that, for any~$\delta \Diff \in L^{\infty}(\T^\dim,\mathcal{S}_\dim)$, where~$\mathcal{S}_\dim$ is the set of real, symmetric, matrices of size~$d \times d$ (note that~$\delta \Diff$ does not output a positive matrix a priori), the matrix valued function~$\Diff^{\star} + t\delta\Diff$ has values in the space of symmetric positive definite matrices when~$|t|$ is sufficiently small.

  The proof proceeds by contradiction. If the eigenvalue~$\Lambda(\Diff^{\star})$ is non-degenerate, the eigenvalue~$\Lambda(\Diff^{\star} + t\delta\Diff)$ remains non-degenerate and isolated for~$|t|$ sufficiently small, and it is possible to choose the eigenvector~$u_{\Diff^{\star} + t\delta\Diff}$ associated with this eigenvalue so that~$\|u_{\Diff^{\star} + t\delta\Diff}\|_{L^2(\mu)} = 1$ and the mappings~$t\mapsto u_{\Diff^{\star} + t\delta\Diff}$ and~$t\mapsto\Lambda(\Diff^{\star} + t\delta\Diff)$ are analytic in an open neighborhood of~$t=0$ (see {\em e.g.} Theorem~II.6.1 and the discussion in Section~VII.3.1 of~\cite{Kato}). The optimal diffusion matrix~$\Diff^{\star}$ is therefore characterized by the following Euler--Lagrange equation: there exists a Lagrange multiplier~$\gamma \in \R_{+}$ such that
  \begin{equation}
    \label{eq:EL}
    \forall \delta\Diff\in L^{\infty}(\T^\dim,\mathcal{S}_\dim),\qquad
    \frac{d}{dt}{\Lambda}(\Diff^\star + t\delta\Diff)\Big|_{t=0} + \gamma \frac{d}{dt}\Phi_p(\Diff^\star + t\delta\Diff)\Big|_{t=0} = 0.
  \end{equation}
  Note that there are no constraints in~\eqref{eq:EL} related to positivity or boundedness because of~\eqref{eq:b_not_active}-\eqref{eq:unif_SDP}.

  Relying on the choice of the Frobenius norm for~$|\cdot|_{\rmF}$ to compute the differential of~$\Phi_p$, \eqref{eq:EL} can be rewritten as (see~\Cref{app:differential_lambda} for details on the derivation of the differential of~$\Lambda$): for any~$\delta \Diff \in L^{\infty}(\T^\dim,\mathcal{S}_\dim)$,
  \begin{equation*}
    \int_{\T^\dim} \delta\Diff(q) : \left( \nabla u_{\Diff^{\star}}\otimes\nabla u_{\Diff^{\star}} \right) \mu(q)\,dq = p\gamma\int_{\T^\dim}\normF{\Diff^{\star}(q)}^{p-2}\Diff^{\star}(q):\delta\Diff(q) \, \rme^{- p V(q)}\,dq,
  \end{equation*}
  where~$:$ and~$\otimes$ are respectively the double contraction and outer product operators: for any~$M_1,M_2 \in \mathbb{R}^{\dim \times \dim}$ and~$\xi,\zeta \in \mathbb{R}^\dim$,
  \begin{equation*}
    M_1:M_2 = \mathrm{Tr}\left(M_1^\top M_2\right) = \sum_{i,j=1}^\dim \left[M_1\right]_{i,j} \left[M_2\right]_{i,j},
    \qquad
    \left[\xi \otimes \zeta\right]_{i,j} = \xi_i\zeta_j.
  \end{equation*}
 We deduce from the Euler--Lagrange condition~\eqref{eq:EL} that the optimal diffusion matrix~$\Diff^{\star}$ satisfies

  \begin{equation}
    \label{eq:EL-prop-1}
    \Diff^{\star}(q) = \gamma_p \normF{\Diff^{\star}(q)}^{2-p} \rme^{(p-1)V(q)} \nabla u_{\Diff^{\star}}(q)\otimes \nabla u_{\Diff^{\star}}(q),
  \end{equation}
  where the constant~$\gamma_p > 0$ is determined by the constraint~$\Phi_p(\Diff^{\star}) = 1$.

  When~$\dim \geq 2$, we immediately obtain a contradiction with~$\Diff^{\star}\geq c \Id_\dim$ since~\eqref{eq:EL-prop-1} implies that~$\Diff^{\star}(q)$ is rank-deficient (in fact, rank 1) for a.e.~$q\in\T^\dim$.

  When~$\dim=1$, \eqref{eq:EL-prop-1} simplifies (after some straightforward manipulations) as
  \begin{equation}
    \label{eq:EL-prop}
    \Diff^{\star}(q) = \widetilde{\gamma}_p \rme^{V(q)} \left| u'_{\Diff^{\star}}(q) \right|^{2/(p-1)},
  \end{equation}
  where the constant~$\widetilde{\gamma}_p > 0$ is determined by~$\Phi_p(\Diff^{\star}) = 1$. We now show that~$u'_{\Diff^{\star}}$ vanishes on~$\T$, thus contradicting that~$\Diff^{\star}(q)\geq c > 0$ for all~$q\in\T$ (the inequality is indeed valid for all~$q \in \T$, and not up to a set of Lebesgue measure~0 since~$\Diff^\star$ is assumed to be continuous). To do so, we start by noticing that the continuity of~$\Diff^\star$ implies that the eigenfunction~$u_{\Diff^{\star}}$ belongs to~$\calC^1(\T,\R)$ (see~\cite[Remark 1.2.11]{Henrot} and~\cite[Theorem 9.15]{gilbargEllipticPartialDifferential2001}), so that~$u_{\Diff^{\star}}'$ is continuous on~$\T$. Since~$u_{\Diff^{\star}}$ is periodic on~$\T$, its derivative~$u_{\Diff^{\star}}'$ vanishes at least once on~$\T$. In view of~\eqref{eq:EL-prop}, the diffusion~$\Diff^\star$ then also vanishes at this point. The contradiction finally allows to obtain the desired result for~$\dim=1$ as well.
\end{proof}

\subsubsection{Euler--Lagrange equation for~$\Diff^\star$}
\label{subsec:euler_lagrange_degenerate}

In view of~\Cref{prop:EL_degenerate}, the Euler--Lagrange equation cannot be used to obtain a characterization of the maximizer, at least when the maximizer is uniformly positive definite. In fact, we observed in our numerical experiments that the spectral gap is always degenerate, even when the maximizer is rank-deficient, which suggests that~$\Lambda$ is never differentiable at~$\Diff^{\star}$. Partial information can however still be obtained by considering the superdifferential of~$\Lambda$ at~$\Diff^{\star}$: adapting the arguments from~\cite[Proposition~4]{cui2024optimal} (which rely on~\cite[Theorem~2.4.18]{zalinescu_2002}), the following characterization of~$\Diff^{\star}$ can be written when the eigenspace associated with the spectal gap is~$M$-dimensional:
\begin{equation}
  \label{eq:maximizer_superdifferential}
  \Diff^{\star}=\gamma_p\normF{\Diff^{\star}}^{2-p}\rme^{(p-1)V}\left(\sum_{i=1}^{M}\upsilon_i\nabla u_{i,\Diff^{\star}}\otimes\nabla u_{i,\Diff^{\star}}\right),\quad\forall 1\leqslant i\leqslant M,\quad\upsilon_i\geqslant 0,\quad \sum_{i=1}^{M}\upsilon_i=1,
\end{equation}
where~$\gamma_p>0$, and~$\left(u_{i,\Diff^{\star}}\right)_{1\leqslant i\leqslant M}$ are normalized orthogonal eigenvectors associated with the eigenvalue~$\Lambda(\Diff^{\star})$.  However, this characterization does not provide the precise convex combination that recombines into~$\Diff^{\star}$. In this section, we show how to estimate the coefficients~$(\upsilon_i)_{1\leqslant i\leqslant M}$ by introducing a smooth-min approximation of the objective function. More precisely, these coefficients can be estimated by a limiting procedure on these approximated optimization problems for which the Euler--Lagrange equations yield an explicit characterization of the optimum.
The so-obtained characterization is corroborated by numerical experiments, see~\Cref{sec:numerical}. The positivity of~$\Diff^{\star}$ is discussed at the end of this section.

\paragraph{Smooth-min approximation of the objective function $\Lambda$.}
Our approach relies on the fact that, for a positive~$\alpha$, the map~$\mathsf{m}_\alpha:\R^n \to \R$ defined by
\begin{equation*}
  \label{eq:m_alpha}
  \mathsf{m}_\alpha(x_1,\dots,x_n)=\sum_{i=1}^n x_i\rme^{-\alpha x_i} \, \Big/ \,\sum_{i=1}^n\rme^{-\alpha x_i}
\end{equation*}
converges to the~$\min$ function when~$\alpha\to+\infty$, \emph{i.e.}
\begin{equation}
  \label{eq:conv_m_alpha}
  \forall (x_1,\dots,x_n) \in \R^n,
  \qquad
  \mathsf{m}_{\alpha}(x_1,\dots,x_n)\xrightarrow[\alpha\to+\infty]{}\min\limits_{1\leqslant i\leqslant n}x_i.
\end{equation}
We adapt this remark to obtain a smooth approximation of the spectral gap~$\Lambda(\Diff)$, from which one can compute the gradient, and exchange the derivative and the limit~$\alpha\to+\infty$ in order to obtain a formal characterization of the optimal diffusion matrix similar to~\eqref{eq:EL-prop-1}. The methodology to obtain this formal characterization is detailed in~\Cref{app:smooth_max}, and we only briefly present the results here.

Applying a smooth-min approximation to the spectral gap $\Lambda$ defined by~\eqref{eq:lambdaD-init} leads us to introduce the map $f_\alpha$ defined by:
for any~$\alpha>0$, for any $\Diff$, 
\begin{equation}
  \label{eq:f_alpha}
  f_{\alpha}(\Diff)=\frac{\Tr\left(-\cLD\,\rme^{\alpha\cLD}\right)}{\Tr(\rme^{\alpha\cLD})-1},
\end{equation}
where the traces are taken on~$L^{2}(\mu)$. It can be checked that~$f_{\alpha}(\Diff)$ converges to~$\Lambda(\Diff)$ when~$\alpha\to+\infty$, see~\eqref{eq:limit_f_alpha_to_infty} in~\Cref{app:smooth_max}. This motivates considering the following approximation of the problem~\eqref{eq:maximizer_in_thm}. Fix~$p \in [1,+\infty)$. For any~$a \in [0,a_\mathrm{max}]$ and~$b > 0$ such that~$ab \leq 1$,
\begin{equation}\label{eq:maximizer_in_thm_alpha}
  \textrm{Find }\Diff^{\star,\alpha} \in \Diffset_p^{a,b}\textrm{ such that }f_\alpha(\Diff^{\star,\alpha}) = \sup_{\Diff\in\Diffset_p^{a,b}} f_\alpha(\Diff).
\end{equation}
We expect that $\Diff^{\star,\infty}$ (which is the limit of $\Diff^{\star,\alpha}$ as~$\alpha\to+\infty$ whenever it exists) is an optimal diffusion $\Diff^{\star}$ solution to~\eqref{eq:maximizer_in_thm}. The interest of the function $f_\alpha$ compared to $\Lambda$ is that it is differentiable so that, under assumptions similar to~\eqref{eq:b_not_active}-\eqref{eq:unif_SDP}, one obtains a characterization for~$\Diff^{\star,\alpha}$ similar to~\eqref{eq:EL-prop-1}, which writes
\begin{equation}
  \label{eq:diff_alpha_F}
  \Diff^{\star,\alpha}(q)=\gamma_{\alpha}\left\lvert \Diff^{\star,\alpha}(q)\right\rvert_{\mathrm{F}}^{2-p}\rme^{(p-1)V(q)}\sum_{k\geqslant2}\left[
    \frac{\mathcal{G}_{\alpha}(1-\alpha\varlambda_{k,\alpha})+\alpha \mathcal{H}_{\alpha}}{\mathcal{G}_{\alpha}^{2}}\rme^{-\alpha\varlambda_{k,\alpha}}
    \right]\nabla e_{k,\alpha}(q)\otimes\nabla e_{k,\alpha}(q),
\end{equation}
with~$\gamma_{\alpha}\in\R_{+}$ such that~$\Phi_p(\Diff^{\star,\alpha})=1$,~$(\varlambda_{k,\alpha})_{k\geqslant1}$ and~$(e_{k,\alpha})_{k\geqslant1}$ are the eigenelements of the operator~$-\cL_{\Diff^{\star,\alpha}}$ (with eigenvalues ordered and counted with their multiplicities), and the quantities~$\mathcal{G}_\alpha$ and~$\mathcal{H}_\alpha$ are defined by
\begin{equation}
  \label{eq:G_alpha_H_alpha_maximizer}
  \mathcal{G}_\alpha=\sum\limits_{j\geqslant2}\rme^{-\alpha\varlambda_{j,\alpha}},\qquad
  \mathcal{H}_\alpha=\sum\limits_{j\geqslant2}\varlambda_{j,\alpha}\rme^{-\alpha\varlambda_{j,\alpha}}.
\end{equation}
As a consequence, one obtains in the case~$\dim=1$,
\begin{equation}
  \label{eq:diff_alpha_1d}
  \Diff^{\star,\alpha}(q)=\widetilde{\gamma}_{\alpha}\rme^{V(q)}\left(
  \sum_{k\geqslant2}\left[
    \frac{\mathcal{G}_{\alpha}(1-\alpha\varlambda_{k,\alpha})+\alpha \mathcal{H}_{\alpha}}{\mathcal{G}_{\alpha}^{2}}\rme^{-\alpha\varlambda_{k,\alpha}}
    \right]\left\lvert e_{k,\alpha}'(q)\right\rvert^2\right)^{1/(p-1)}
\end{equation}
with~$\widetilde{\gamma}_{\alpha}>0$. We observed numerically on various test cases that this formula indeed yields a good approximation of the optimal diffusion, even for rather small values
of~$\alpha$, see~\Cref{fig:res_3_b} in~\Cref{app:smooth_max}.

\paragraph{Formal characterization in the limit~$\alpha\to+\infty$.}

For notational simplicity, we consider the case when the limiting problem obtained for~$\alpha\to+\infty$ has a leading nonzero eigenvalue with a degeneracy of at most 2. This is in line with our numerical experiments in dimension one. It is straightforward to adapt the argument to degeneracies of arbitrary order. We make the following assumptions:
\begin{itemize}
  \item all the eigenelements converge when~$\alpha\to+\infty$: for any~$k\geqslant 1$, there exists~$\varlambda_{k,\infty}\in \R_{+}$ such that~$\varlambda_{k,\alpha}\xrightarrow[\alpha\to+\infty]{}\varlambda_{k,\infty}$, and there exists~$e_{k,\infty}\in H^{1,0}(\mu)$ such that~$e_{k,\alpha}\xrightarrow[\alpha\to+\infty]{}e_{k,\infty}$ in~$H^{1,0}(\mu)$;
  \item if~$k\geqslant 4$, then~$\varlambda_{k,\infty}>\varlambda_{3,\infty}$ (the eigenspace associated to the eigenvalue~$\varlambda_{2,\infty}$ is either 1 or 2 dimensional);
  \item when~$\varlambda_{2,\infty}=\varlambda_{3,\infty}$, we further assume that the following limit is well defined:
  $$\lim_{\alpha \to \infty} \alpha(\varlambda_{3,\alpha}-\varlambda_{2,\alpha})= \eta. $$
\end{itemize}
One then obtains the following characterization of an optimal diffusion matrix in the limit~$\alpha\to+\infty$, and in dimension 1:
\begin{equation}
\label{eq:Diff_characterization_formal_alpha_to_infty_degenerate_case_1d_case}
  \Diff^{\star,\infty}(q)=\widetilde{\gamma}_{\infty}\rme^{V(q)}\left(
  \left\lvert e_{2,\infty}'(q)\right\rvert^{2}+
  \frac{\rme^{-\eta}(1+\rme^{-\eta}-\eta)}{1+\rme^{-\eta}+\eta\rme^{-\eta}}\left\lvert e_{3,\infty}'(q)\right\rvert^{2}
  \right)^{1/(p-1)},
\end{equation}
with~$\widetilde{\gamma}_{\infty}>0$. Formula~\eqref{eq:Diff_characterization_formal_alpha_to_infty_degenerate_case_1d_case} is a special instance of formula~\eqref{eq:maximizer_superdifferential}, where the convex combination is uniquely determined by the scalar~$\eta$, which can be estimated by solving~\eqref{eq:maximizer_in_thm_alpha} for one or many values of~$\alpha$. The correctness of formula~\eqref{eq:Diff_characterization_formal_alpha_to_infty_degenerate_case_1d_case} is demonstrated on two numerical examples in~\Cref{sec:numerical}, see~\Cref{fig:res_2_right,fig:res_3_e} below.

In the case~$d=1$, it is clear from~\eqref{eq:Diff_characterization_formal_alpha_to_infty_degenerate_case_1d_case} that if the value~$\eta$ satisfies~$1+\rme^{-\eta}-\eta=0$, then $\Diff^{\star,\infty}$ vanishes where~$e'_{2,\infty}$ vanishes. In that case, the uniform positivity assumption (similar to~\eqref{eq:unif_SDP}, see~\eqref{eq:unif_SDP_alpha}), which is needed to derive the characterization~\eqref{eq:diff_alpha_F}, is not satisfied. However, we still believe that the formula~\eqref{eq:Diff_characterization_formal_alpha_to_infty_degenerate_case_1d_case} holds, from two numerical experiments in~\Cref{sec:numerical}, see~\Cref{fig:eta_star_2,fig:eta_star_1} below. The particular value of~$\eta$ that satisfies~$1+\rme^{-\eta}-\eta=0$ is
\begin{equation}
  \label{eq:eta_star}
  \eta^\star=1+W\left(\frac{1}{\rme}\right) \approx 1.27846,
\end{equation}
where~$W$ is the Lambert~$W$ function. In fact, a necessary condition (actually sufficient if~$e_{2, \infty}'$ and~$e_{3,\infty}'$ do not vanish at the same points) for~\eqref{eq:Diff_characterization_formal_alpha_to_infty_degenerate_case_1d_case}  to define a positive definite diffusion coefficient is~$\eta\in[0,\eta^\star)$. In practice, we only observed numerical values of~$\eta \in [0,\eta^\star]$, which is equivalent to the non-negativeness of the multiplicative coefficient of $\left\lvert e_{3,\infty}'(q)\right\rvert^{2}$ in~\eqref{eq:Diff_characterization_formal_alpha_to_infty_degenerate_case_1d_case}. This is indeed consistent with the fact that we should obtain convex combinations, see~\eqref{eq:maximizer_superdifferential}.

\section{Numerical results}
\label{sec:numerical}

In this section, we illustrate the theoreticals results obtained in~\Cref{sec:scalar} by solving the optimization problem for several one-dimensional examples on the torus, with the normalization constraint~\eqref{eq:normLp} with~$p=2$.

We describe in~\Cref{app:numerical} the general methodology that was used to solve~\eqref{eq:maximizer_in_thm} in practice.  We also give theoretical results on the existence of a maximizer for the discretized problem, and show that is it necessarily positive, see~\Cref{prop:well-posed-disc}. The existence of a maximizer holds even when~$b=0$, so that we use this value for all numerical experiments. We also include make precise how the problem~\eqref{eq:maximizer_in_thm_alpha} was discretized and solved in practice. We recall that the code used to obtain these numerical results is available on GitHub, see~\Cref{sec:introduction}.

Following the methodology we suggest, the maximizers obtained by solving the discrete approximations of~\eqref{eq:maximizer_in_thm} and~\eqref{eq:maximizer_in_thm_alpha} can be seen either as piecewise functions, defined on the partition~$(K_n)_{1\leqslant n\leqslant N}=\left([(n-1)/N,n/N)\right)_{1\leqslant n\leqslant N}$ of~$\T$, or simply as~$N$-dimensional vectors, which we name diffusion coefficients and denote by~$D$. We choose~$N=1000$. In each figure, the target distribution is~$\mu$ defined by~\eqref{eq:mu}; the optimal diffusion coefficient~$\diff^{\star}$ (respectively~$\diff^{\star,\alpha}$) is obtained numerically by solving the discretized version of the optimization problem~\eqref{eq:maximizer_in_thm} (respectively~\eqref{eq:maximizer_in_thm_alpha}); the constant diffusion coefficient is~$\diff_{\rm cst}=\gamma\mathbbm{1}_N$ with~$\gamma>0$ chosen to saturate the (discrete version of the) constraint~\eqref{eq:normLp}; and the homogenized diffusion coefficient corresponds to the finite-dimensional representation of the proxy~$\Diff_{\mathrm{hom}}^\star$ for the optimal diffusion coefficient obtained in a periodic homogenization limit, whose expression is given by~\eqref{eq:homog_diff_opt} (see~\Cref{sec:homog} for the presentation of the homogenized limit):
\begin{equation}
  \label{eq:approximate_d_hom_star}
  \forall n\in\left\lbrace1,\dots,N\right\rbrace,\qquad
  \diff^{\star}_{\mathrm{hom},n}=\Diff^{\star}_{\rm hom}\left(\frac{n-1}{N}\right)=\rme^{V\left(\frac{n-1}{N}\right)}.
\end{equation}

In order to numerically check that 
the smooth-min optimization procedure yields a good approximation of an optimal diffusion coefficient, we introduce the numerical approximation~$D^{\star,\infty}$  of~\eqref{eq:Diff_characterization_formal_alpha_to_infty_degenerate_case_1d_case}. This vector is defined componentwise as follows: for~$n\in\left\lbrace1,\dots,N\right\rbrace$,
\begin{equation}
  \label{eq:approximation_d_star_infty}
  D^{\star,\infty}_{n}=\widetilde{\gamma}_{\infty}\rme^{V((n-1)/N)}
  \left(
  \left\lvert U_{2,n}-U_{2,n-1}\right\rvert^{2}+
  \frac{\rme^{-\upeta}(1+\rme^{-\upeta}-\upeta)}{1+\rme^{-\upeta}+\upeta\rme^{-\upeta}}\left\lvert U_{3,n}-U_{3,n-1}\right\rvert^{2}
  \right)^{1/(p-1)},
\end{equation}
where~$U_2$ (respectively~$U_3$) is a normalized eigenvector of the discretized operator with the optimal diffusion $D^{\star}$, associated with the eigenvalues~$\sigma_2(D^{\star})$ (respectively~$\sigma_3(D^{\star})$)\footnote{Even though we numerically obtain very close values for~$\sigma_2(D^{\star})$ and~$\sigma_3(D^{\star})$, the normalized eigenvectors $U_2$ and $U_3$ are in practice always uniquely defined (up to an irrelevant sign) since we always have $\sigma_2(D^{\star})<\sigma_3(D^{\star})$ (a strict equality is never observed numerically).}. Here,~$\sigma_i(D)$ denotes the~$i$-th eigenvalue of the discrete approximation of the operator~$-\cLD$. The constant~$\widetilde{\gamma}_\infty$ is such that the (discrete version of the) constraint~\eqref{eq:normLp} is saturated. Note that periodic boundary conditions are imposed on the eigenvalue problems so that {\em e.g.}~$U_{2,0}=U_{2,N}$ and~$U_{3,0}=U_{3,N}$. The scalar~$\upeta$ is an approximation of~$\eta$ (see~\eqref{eq:Diff_characterization_formal_alpha_to_infty_degenerate_case_1d_case}), which is obtained by approximating numerically the limit~$\upeta=\lim\limits_{\alpha\to+\infty}\alpha(\sigma_{3}(D^{\star,\alpha})-\sigma_{2}(D^{\star,\alpha}))$.

We present the numerical results by first considering cases where~$a=0$ in~\Cref{sec:results_a_0}, and then situations where the lower bound~$a$ is positive (see~\Cref{sec:results_a>0}). The results for~$a=0$ are further distinguished depending on the value of the approximation of~$\eta$ among the cases~$\eta = 0$, $\eta \in (0,\eta^\star)$ and~$\eta = \eta^\star$. We finally draw conjectures from our numerical observations in~\Cref{subsubsec:conj}.

\subsection{Results for~$a=0$}
\label{sec:results_a_0}

\paragraph{A case where~$\eta=0$.}
We plot in~\Cref{fig:res_2_left} various diffusion coefficients as well as the target distribution for a four-well potential. We first observe that the optimal diffusion coefficient~$\diff^{\star}$ takes larger values in regions where the target distribution is small. The analytical proxy~$\diff_{\mathrm{hom}}^\star$ has the same general shape as~$\diff^{\star}$. In fact, the two coefficients are almost superimposed. This observation is formalized in~\Cref{subsec:optim-homog} below.

The numerical values of the spectral gaps obtained for~$\diff^{\star}$,~$\diff_{\mathrm{hom}}^\star$ and~$\diff_{\rm cst}$ are respectively~30.24, 30.19 and 14.70. This simple example thus demonstrates the improvement obtained by optimizing the diffusion coefficient in terms of the spectral gap of the operator, and thus the enhanced convergence rate to equilibrium (further demonstrated by Monte Carlo simulations in~\Cref{sec:sampling}). It also confirms the relevance of the homogenized proxy~$\Diff_{\mathrm{hom}}^\star$ derived in~\Cref{sec:homog}, which can be computed explicitly and yields a spectral gap comparable to the one obtained through the optimization procedure.

The optimization procedure leads to an optimal diffusion coefficient which is uniformly lower bounded on the torus. In particular, we checked that the minimum of the optimal diffusion coefficient does not decrease as the mesh gets finer. A different behavior is observed in other numerical examples, see~\Cref{fig:res_1_left,fig:res_cos_4a} below.

The first three nonzero eigenvalues obtained for the optimal diffusion coefficient are~$\sigma_2(D^{\star})=30.23813406$, $\sigma_3(D^{\star})=30.23813407$, and~$\sigma_4(D^{\star})=86.06$. This means that the second eigenvalue can be considered degenerate, which is in accordance with~\Cref{prop:EL_degenerate} (we expect that both eigenvalues are equal in the limit~$N\to+\infty$).~\Cref{fig:res_2_right} displays the optimal diffusion coefficient~$D^{\star}$ and the numerical approximation~$D^{\star,\infty}$ of~\eqref{eq:Diff_characterization_formal_alpha_to_infty_degenerate_case_1d_case} given by~\eqref{eq:approximation_d_star_infty}. We have observed that the two eigenvalues~$\sigma_{2}(D^{\star,\alpha})$ and~$\sigma_{3}(D^{\star,\alpha})$ are essentially the same (up to numerical precision) even for very small values of~$\alpha$, the values of~$\alpha(\sigma_{3}(D^{\star,\alpha})-\sigma_{2}(D^{\star,\alpha}))$ when~$\alpha\in[1,7]$ being at most of order~$2.5\times10^{-5}$. We therefore consider that~$\upeta=0$: in view of~\eqref{eq:Diff_characterization_formal_alpha_to_infty_degenerate_case_1d_case}, the two eigenvectors equally contribute in the convex combination. With this choice, the two coefficients~$D^{\star}$ and~$D^{\star,\infty}$ are superimposed, as predicted by~\eqref{eq:Diff_characterization_formal_alpha_to_infty_degenerate_case_1d_case}. This confirms the relevance of the smooth-min approach used in~\Cref{subsec:euler_lagrange_degenerate}.

\begin{figure}
  \centering
  \begin{subfigure}[t]{0.49\linewidth}
    \includegraphics[width=\textwidth]{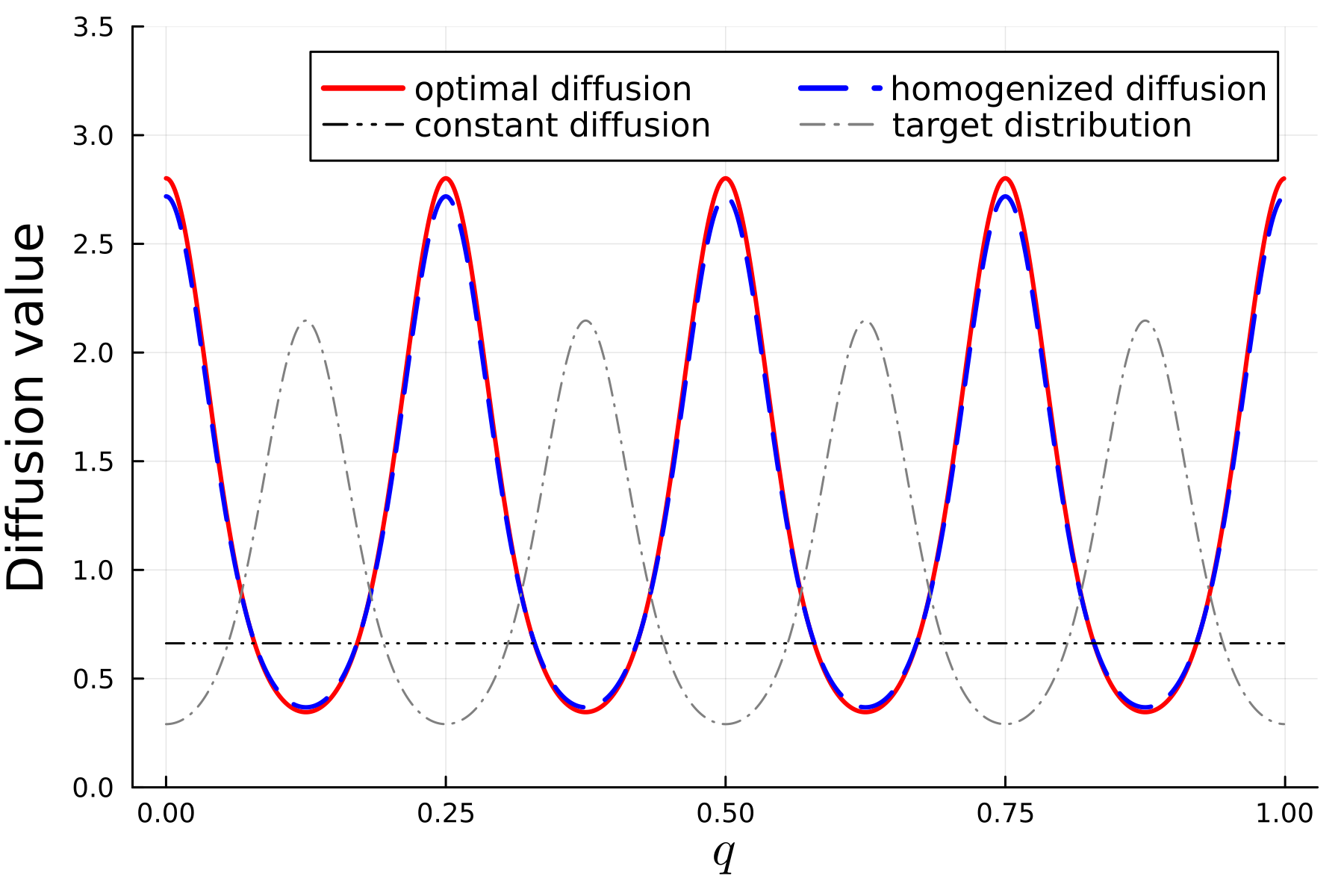}
    \caption{\label{fig:res_2_left}Various diffusion coefficients and target distribution~$\mu$. The solid curve in red is the optimal diffusion coefficient~$D^{\star}$.}
  \end{subfigure}
  \hfill
  \begin{subfigure}[t]{0.49\linewidth}
    \includegraphics[width=\textwidth]{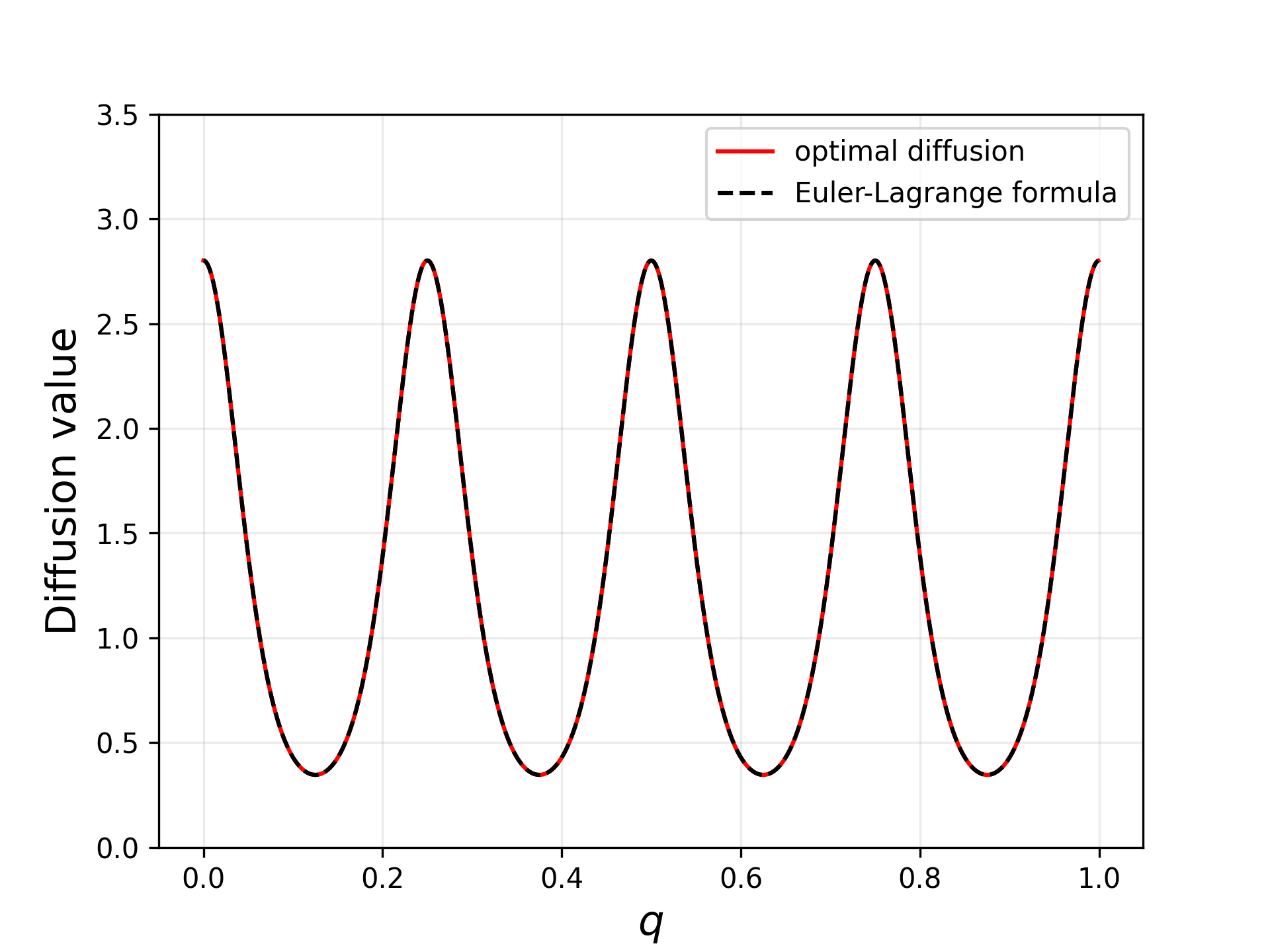}
    \caption{\label{fig:res_2_right}Comparison with the formal Euler--Lagrange characterizations~\eqref{eq:Diff_characterization_formal_alpha_to_infty_degenerate_case_1d_case} (approximated by~\eqref{eq:approximation_d_star_infty}) for the case $\eta=0$.}
  \end{subfigure}
  \caption{
  Results of the optimization procedure on the one-dimensional four-well potential~$V(q) =  \cos(8\pi q)$.}
\end{figure}

\paragraph{A case where~$\eta\in(0,\eta^\star)$.}
We plot in~\Cref{fig:res_3} the numerical results for a one-well potential. The numerical values of the spectral gaps obtained for~$\diff^\star,\diff^{\star,\alpha}$,~$\diff_{\mathrm{hom}}^\star$ and~$D_{\rm cst}$ are respectively~36.88, 36.75, 32.43 and 30.47. We also compute~$f_\alpha(D^{\star,\alpha})=36.94$. This shows that the map~$f_{\alpha}$ is a good approximation of the objective function~$\Lambda$, and that the diffusion coefficient obtained when solving~\eqref{eq:maximizer_in_thm_alpha} yields a spectral gap comparable to the one obtained when solving~\eqref{eq:optim-discrete}.

 Let us again compute an approximation $\upeta$ of~$\eta$ in order to compare the optimal diffusion with the formal Euler--Lagrange formula~\eqref{eq:Diff_characterization_formal_alpha_to_infty_degenerate_case_1d_case}. We plot in~\Cref{fig:res_3_c} the two leading nonzero eigenvalues using the maximizer~$\diff^{\star,\alpha}$ for increasing values of~$\alpha$. The behavior is very different from the previous case: the eigenvalues seem to converge to the same value but are quite different, even for rather large values of~$\alpha$. In~\Cref{fig:res_3_d}, we plot the behavior of~$\alpha(\sigma_{3}(D^{\star,\alpha})-\sigma_{2}(D^{\star,\alpha}))$ as~$\alpha$ increases, and fit the data with a function~$\alpha\mapsto K/\alpha +\upeta$, parameterized by~$K,\upeta\in\R$, using the method~\texttt{curve\_fit} from the~\texttt{scipy.optimize} module. This gives a limiting value~$\upeta\approx 0.51$: this corresponds to the convex combination~$(\upsilon_1,\upsilon_2)=(0.744,0.256)$ in~\eqref{eq:maximizer_superdifferential}. The comparison between~$D^{\star}$ and~$D^{\star,\infty}$ (as in~\eqref{eq:approximation_d_star_infty}) is given in~\Cref{fig:res_3_e}: the curves are superimposed, which again confirms that the formal characterization obtained in~\Cref{subsec:euler_lagrange_degenerate} is relevant.
\begin{figure}
  \centering
  \begin{subfigure}[t]{0.49\linewidth}
    \includegraphics[width=\textwidth]{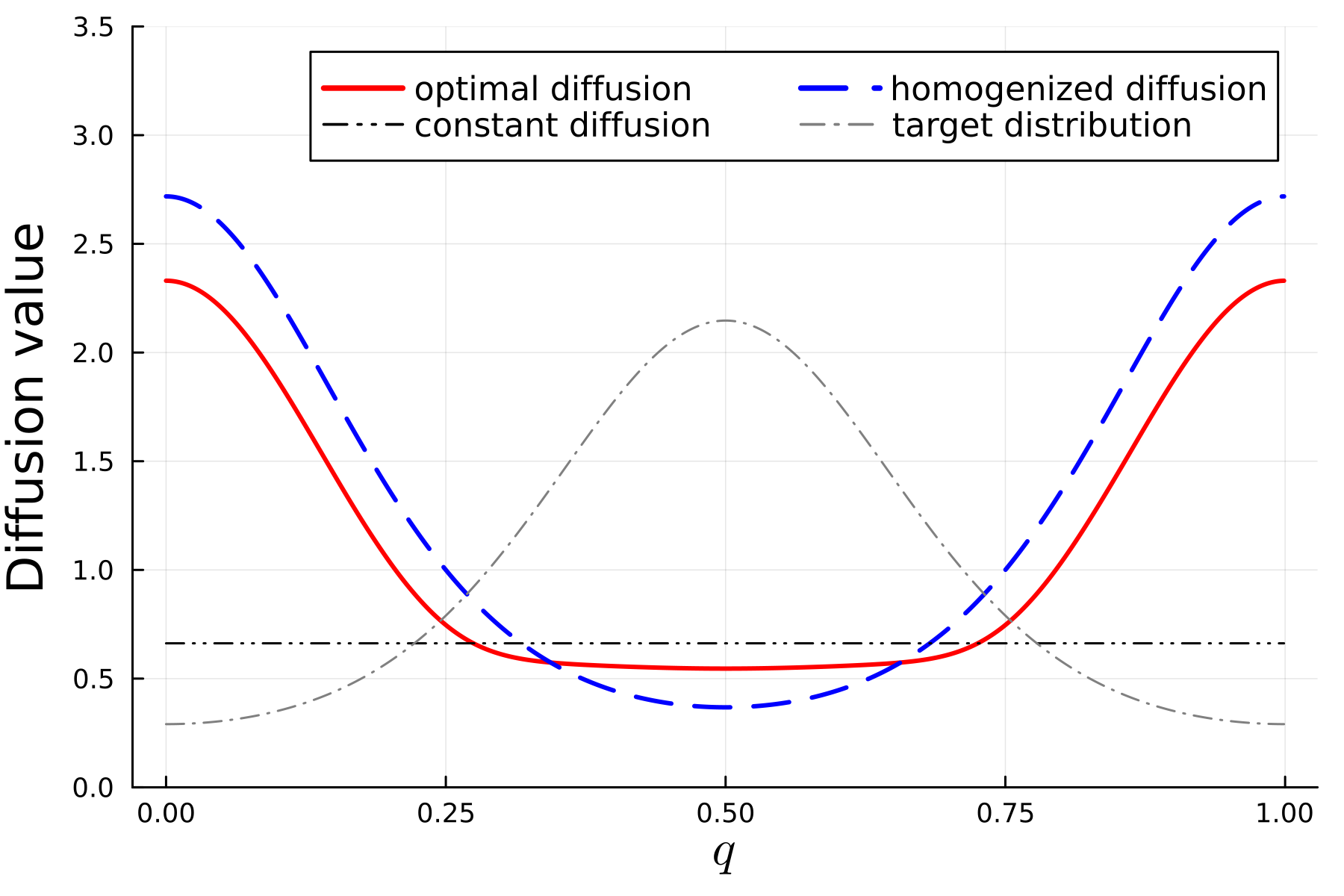}
    \caption{\label{fig:res_3_a}Various diffusion coefficients and target distribution~$\mu$. The solid curve in red is the optimal diffusion coefficient~$D^{\star}$.}
  \end{subfigure}
  \hfill
  \begin{subfigure}[t]{0.49\linewidth}
    \includegraphics[width=\textwidth]{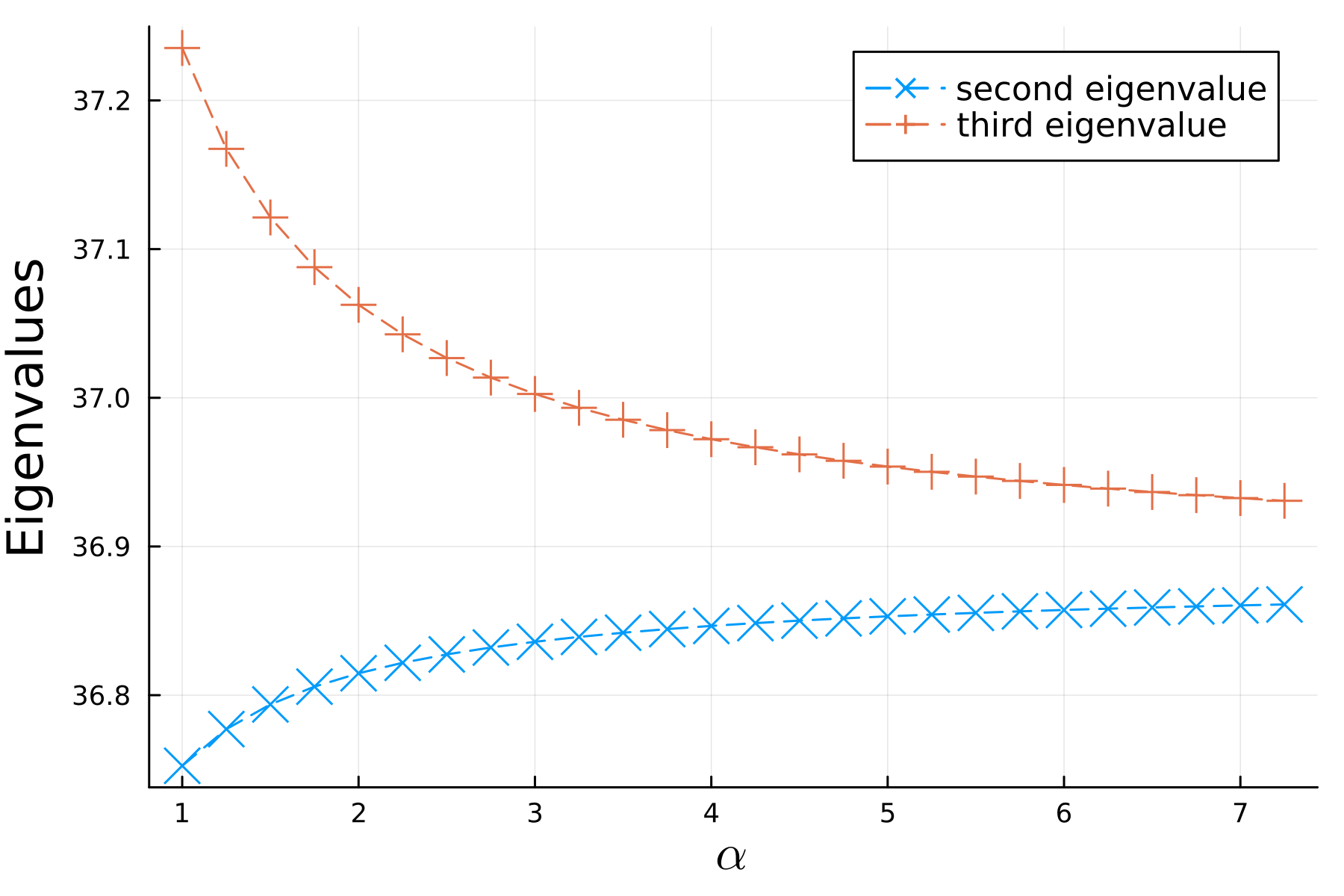}
    \caption{\label{fig:res_3_c}Second and third eigenvalues associated with the maximizer~$\diff^{\star,\alpha}$ as a function of~$\alpha$.}
  \end{subfigure}
  \hfill
  \begin{subfigure}[t]{0.49\linewidth}
    \includegraphics[width=\textwidth]{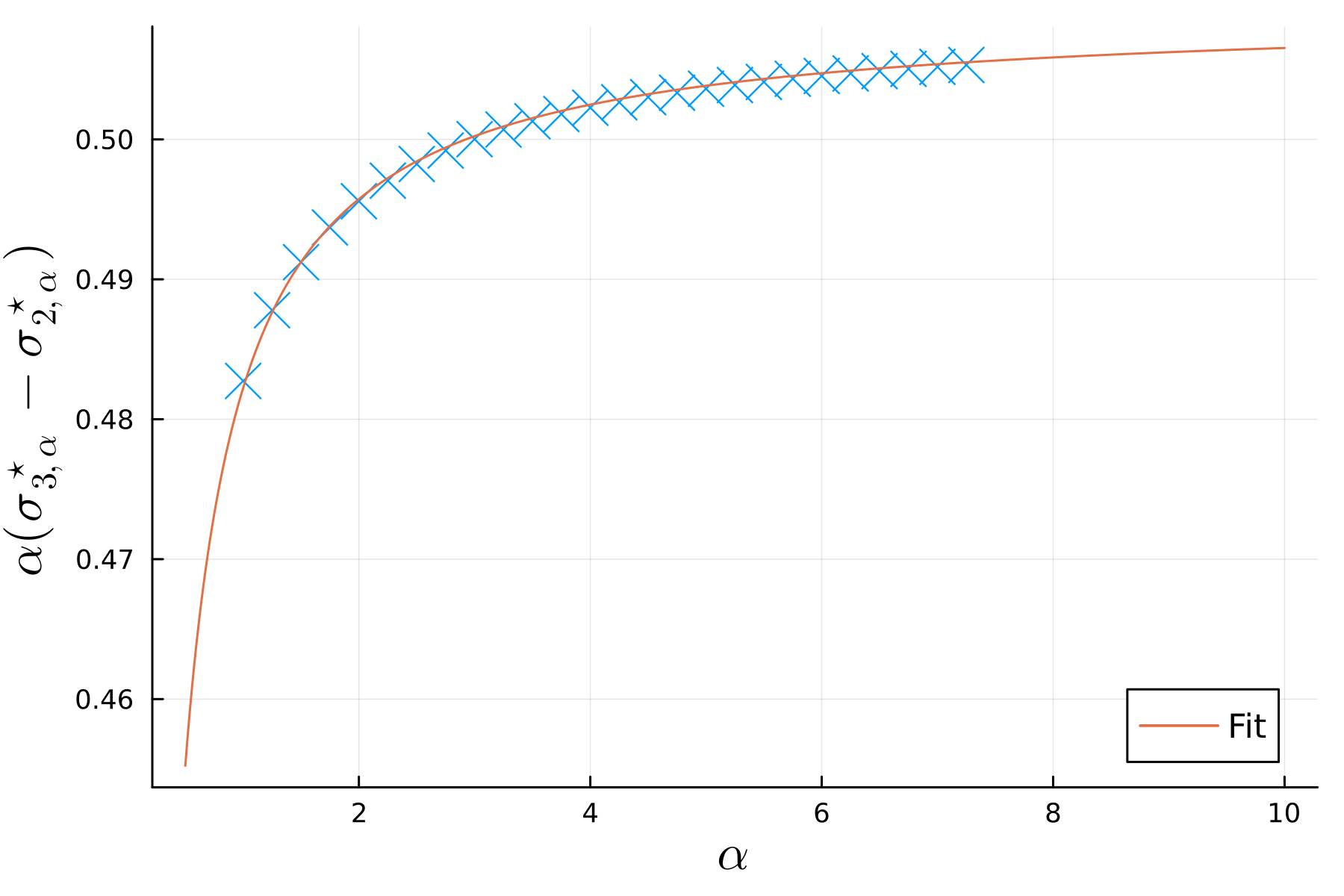}
    \caption{\label{fig:res_3_d}$\alpha(\sigma_{3}(D^{\star,\alpha})-\sigma_{2}(D^{\star,\alpha}))$ as a function of~$\alpha$. The fit is given by~$K/\alpha +\upeta$ with~$K\approx-0.027$ and~$\upeta\approx0.51$.}
  \end{subfigure}
  \hfill
  \begin{subfigure}[t]{0.49\linewidth}
    \includegraphics[width=\textwidth]{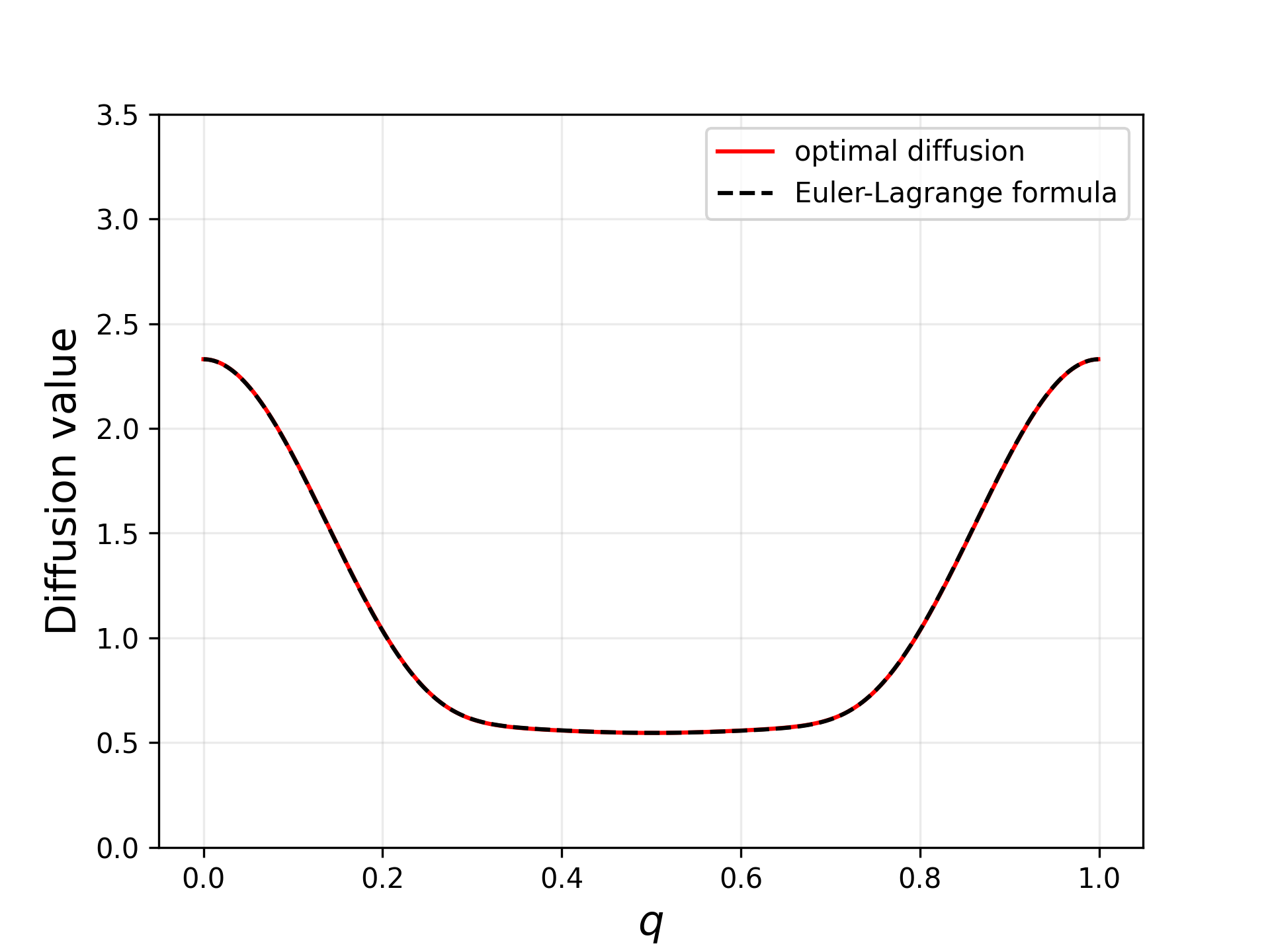}
    \caption{\label{fig:res_3_e}Comparison with the formal Euler--Lagrange characterizations~\eqref{eq:Diff_characterization_formal_alpha_to_infty_degenerate_case_1d_case} (approximated by~\eqref{eq:approximation_d_star_infty}) for the case $\eta\in(0,\eta^\star)$. The solid curve in red is the optimal diffusion coefficient~$D^{\star}$.}
  \end{subfigure}
  \caption{\label{fig:res_3} Results of the optimization procedure on the one-dimensional one-well potential~$V(q) =  \cos(2\pi q)$.}
\end{figure}

\paragraph{Cases where~$\eta=\eta^\star$.\label{ans:O_11}}

\begin{figure}
  \centering
    \begin{subfigure}[t]{0.49\linewidth}
    \includegraphics[width=\textwidth]{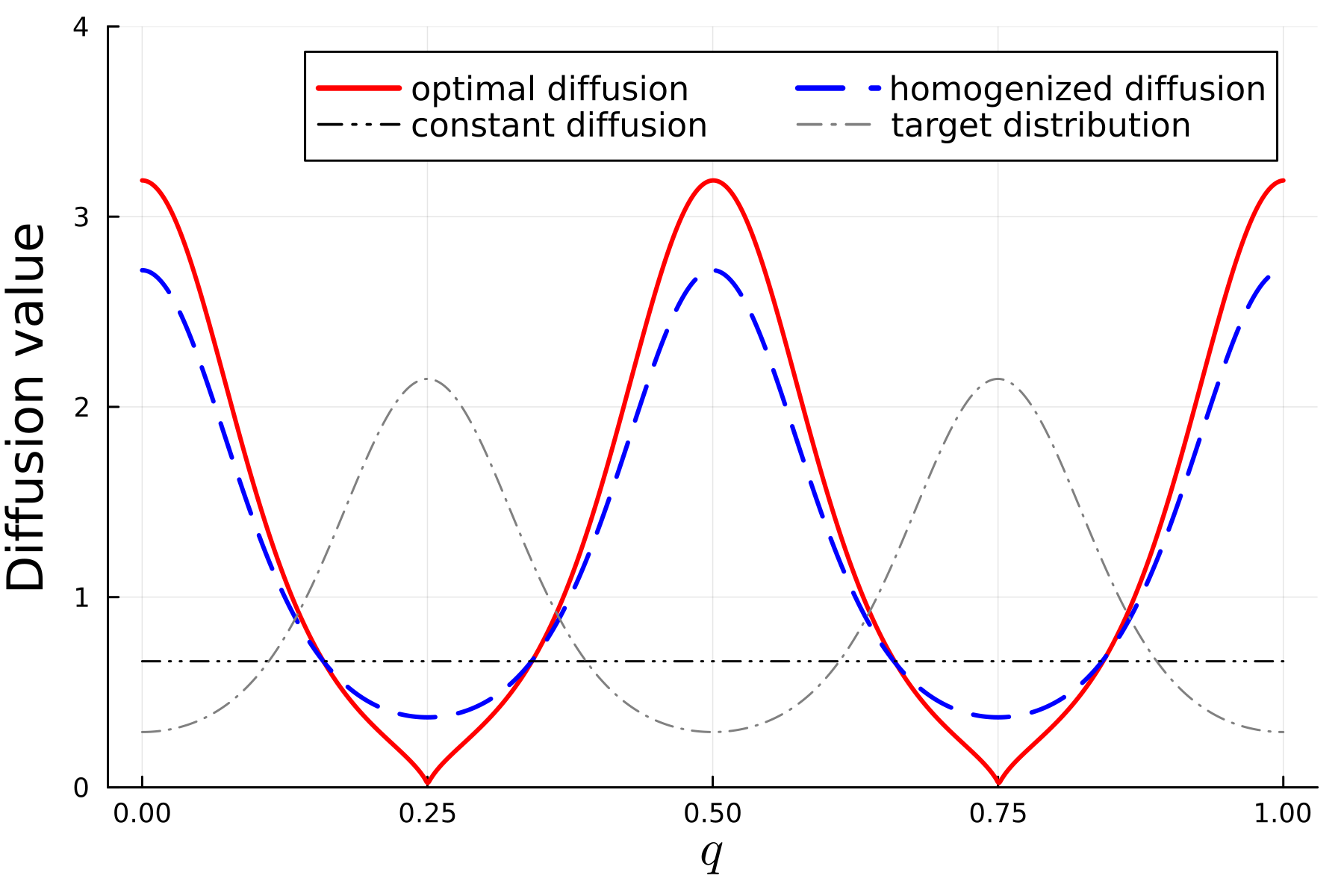}
    \caption{\label{fig:res_cos_4a}Various diffusion coefficients and target distribution~$\mu$. The solid curve in red is the optimal diffusion coefficient~$D^{\star}$. The numerical values of the spectral gap obtained for~$\diff^{\star}$,~$\diff_{\mathrm{hom}}^\star$ and~$D_{\rm cst}$ are respectively~22.84, 21.18 and 8.46. }
  \end{subfigure}
  \hfill
  \begin{subfigure}[t]{0.49\linewidth}
    \includegraphics[width=\textwidth]{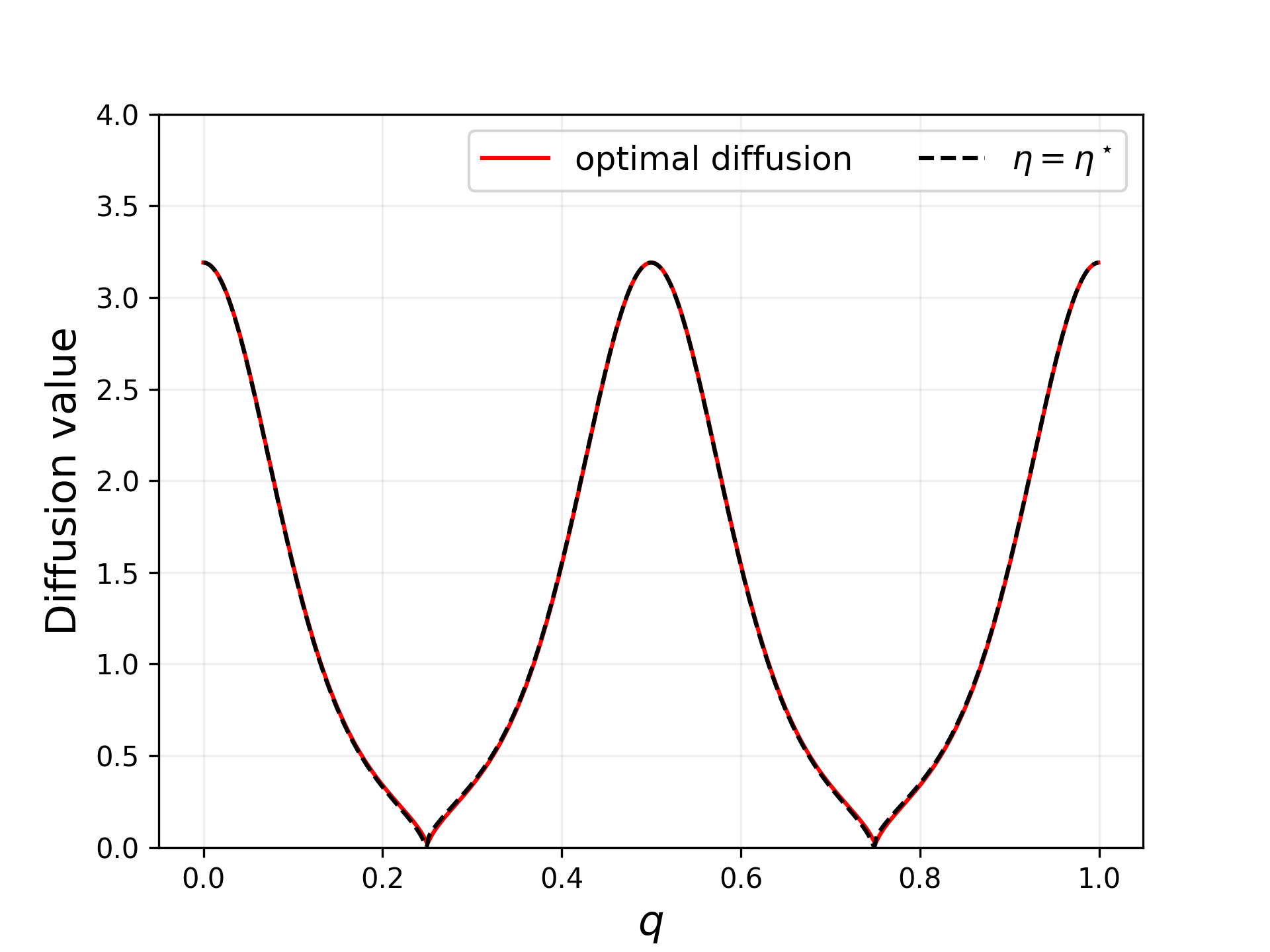}
    \caption{\label{fig:eta_star_2}
    Comparison with the formal Euler--Lagrange characterizations~\eqref{eq:Diff_characterization_formal_alpha_to_infty_degenerate_case_1d_case} (approximated by~\eqref{eq:approximation_d_star_infty}), using the value~$\eta=\eta^\star$ given by~\eqref{eq:eta_star}.}
  \end{subfigure}\\
  \begin{subfigure}[t]{0.49\linewidth}
    \includegraphics[width=\textwidth]{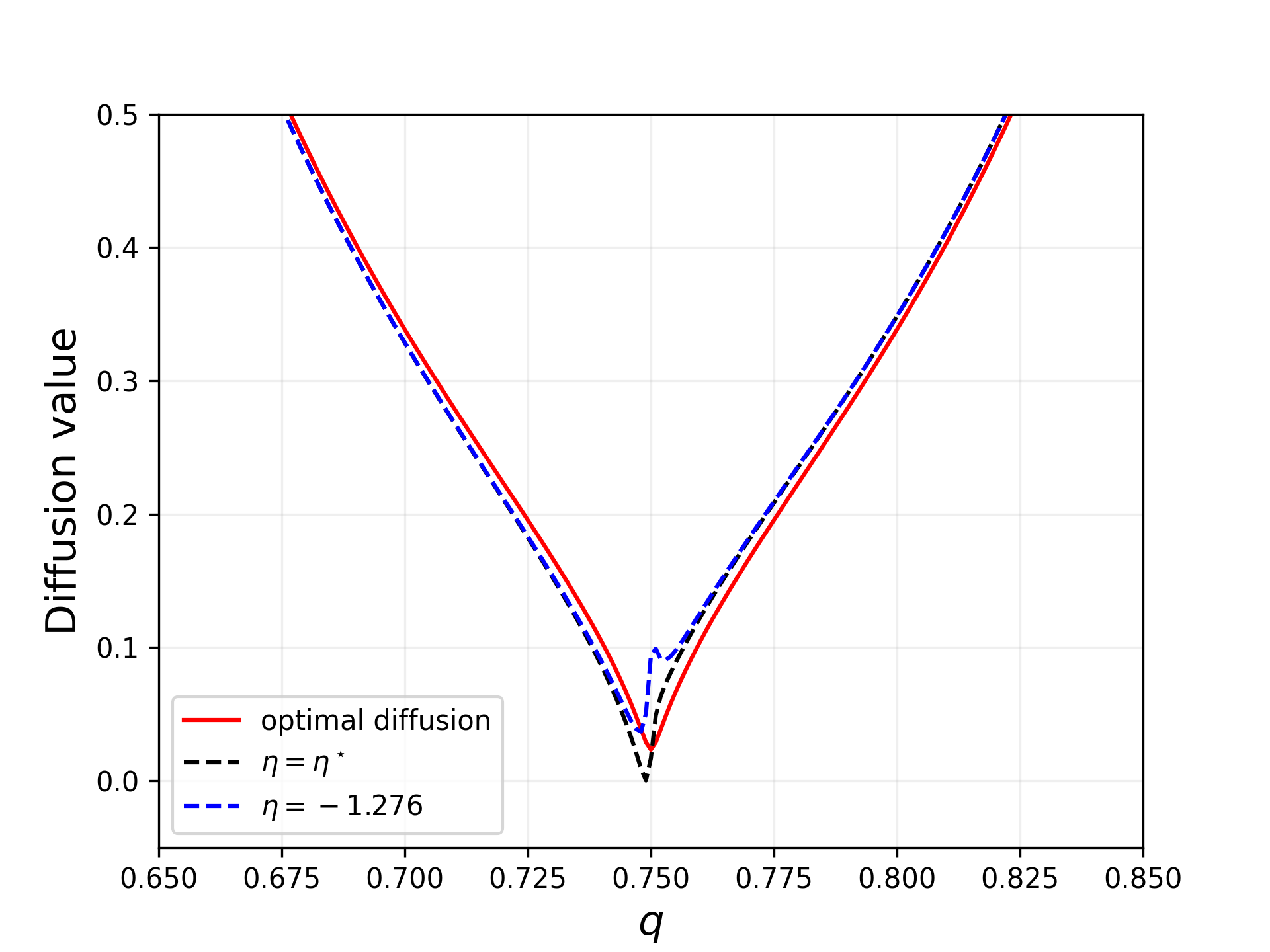}
    \caption{\label{fig:eta_star_3}Zoom around~$q\approx0.75$, displaying the coefficient given by~\eqref{eq:Diff_characterization_formal_alpha_to_infty_degenerate_case_1d_case} with~$\eta=\eta^\star$ and~$\eta=\upeta$.}
  \end{subfigure}
  \hfill
  \begin{subfigure}[t]{0.49\linewidth}
    \includegraphics[width=\textwidth]{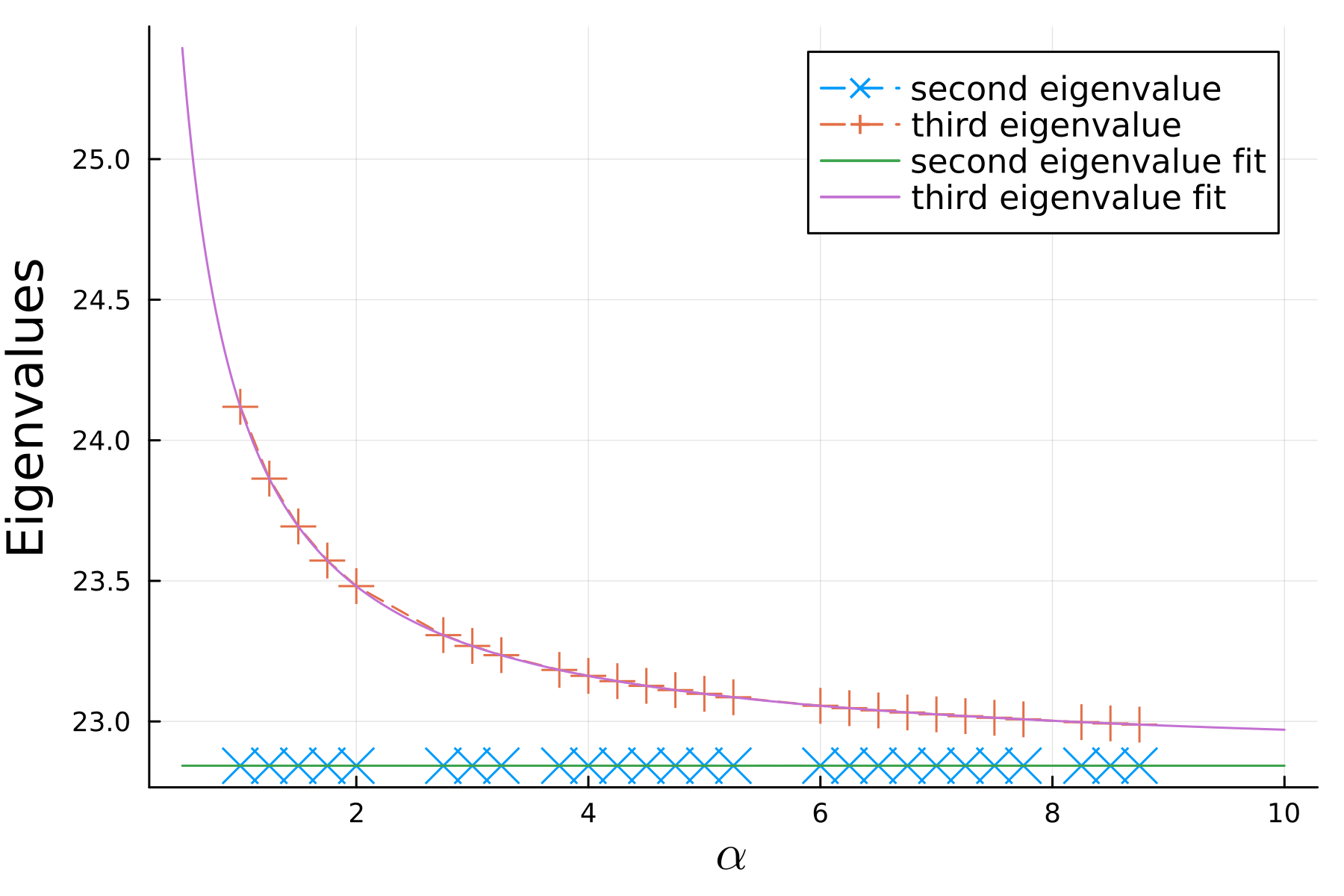}
    \caption{\label{fig:res_cos4_b}Second and third eigenvalues associated with the maximizer~$D^{\star,\alpha}$ as a function of~$\alpha$. The fit for~$\sigma_{2}(D^{\star,\alpha})$ is the constant~$K_2\approx22.842$ and the fit for~$\sigma_{3}(D^{\star,\alpha})$ is given by~$\upeta/\alpha+K_3$ with~$\upeta\approx1.276$ and~$K_3\approx22.843$.}
  \end{subfigure}
  \caption{
  Results of the optimization procedure on the one-dimensional double-well potential~$V(q) =  \cos(4\pi q)$.}
\end{figure}

Let us finally exhibit situations where $\eta=\eta^\star$. We will see in particular that in this case the optimal diffusion vanishes at some points, as expected from~\eqref{eq:Diff_characterization_formal_alpha_to_infty_degenerate_case_1d_case} with $\eta=\eta^\star$.

Let us first consider the double-well potential $V(q)=\cos(4\pi q)$, see~\Cref{fig:res_cos_4a}. We observe on~\Cref{fig:eta_star_2} that the optimal diffusion can be obtained from
the characterization~\eqref{eq:Diff_characterization_formal_alpha_to_infty_degenerate_case_1d_case} (approximated by~\eqref{eq:approximation_d_star_infty}) with $\eta=\eta^\star$ (the value of~$\eta^\star$ is approximated using the \texttt{lambertw} function of the \texttt{scipy.special} module). We see in~\Cref{fig:eta_star_3} that the optimal diffusion vanishes at two points around~$q\approx0.25$ and~$q\approx0.75$, as expected from formula~\eqref{eq:Diff_characterization_formal_alpha_to_infty_degenerate_case_1d_case} with $\eta=\eta^\star$.  
We further back the statement that $\eta=\eta^\star$ by considering the numerical limit $\upeta$ by solving~\eqref{eq:maximizer_in_thm_alpha} for various values of~$\alpha$. We present in~\Cref{fig:res_cos4_b} the behavior of the two leading nonzero eigenvalues for the values of~$\alpha$ for which the optimization problems converged. We also fit the data similarly to what was done in~\Cref{fig:res_3_d} to extract the limiting value~$\upeta\approx1.276$, which is very close to the critical value~$\eta^\star$ introduced in~\Cref{subsec:euler_lagrange_degenerate}, see~\eqref{eq:eta_star}. 
We emphasize that, even though only one eigenvector contributes in the formula~\eqref{eq:Diff_characterization_formal_alpha_to_infty_degenerate_case_1d_case}, the eigenvalue is degenerate.


\begin{figure}
  \centering
  \begin{subfigure}[t]{0.49\linewidth}
    \includegraphics[draft=false,width=\textwidth]{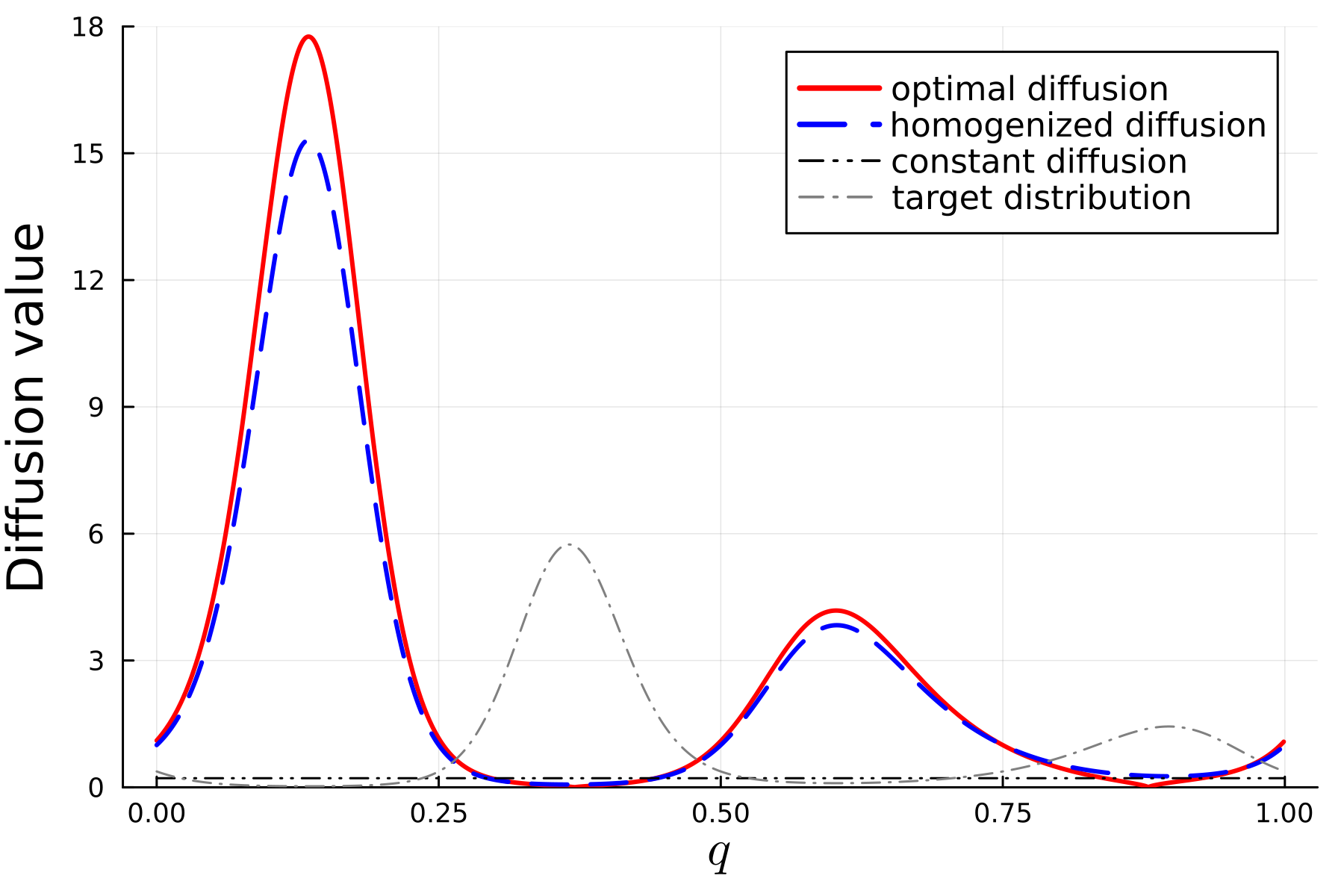}
    \caption{\label{fig:res_1_left}Various diffusion coefficients and target distribution~$\mu$. The solid curve in red is the optimal diffusion coefficient~$D^{\star}$. The numerical values of the spectral gaps obtained for~$\diff^{\star}$,~$\diff_{\mathrm{hom}}^\star$ and~$D_{\rm cst}$ are respectively~11.23, 10.58 and 0.81.}
  \end{subfigure}
  \hfill
    \begin{subfigure}[t]{0.49\linewidth}
    \includegraphics[width=\textwidth]{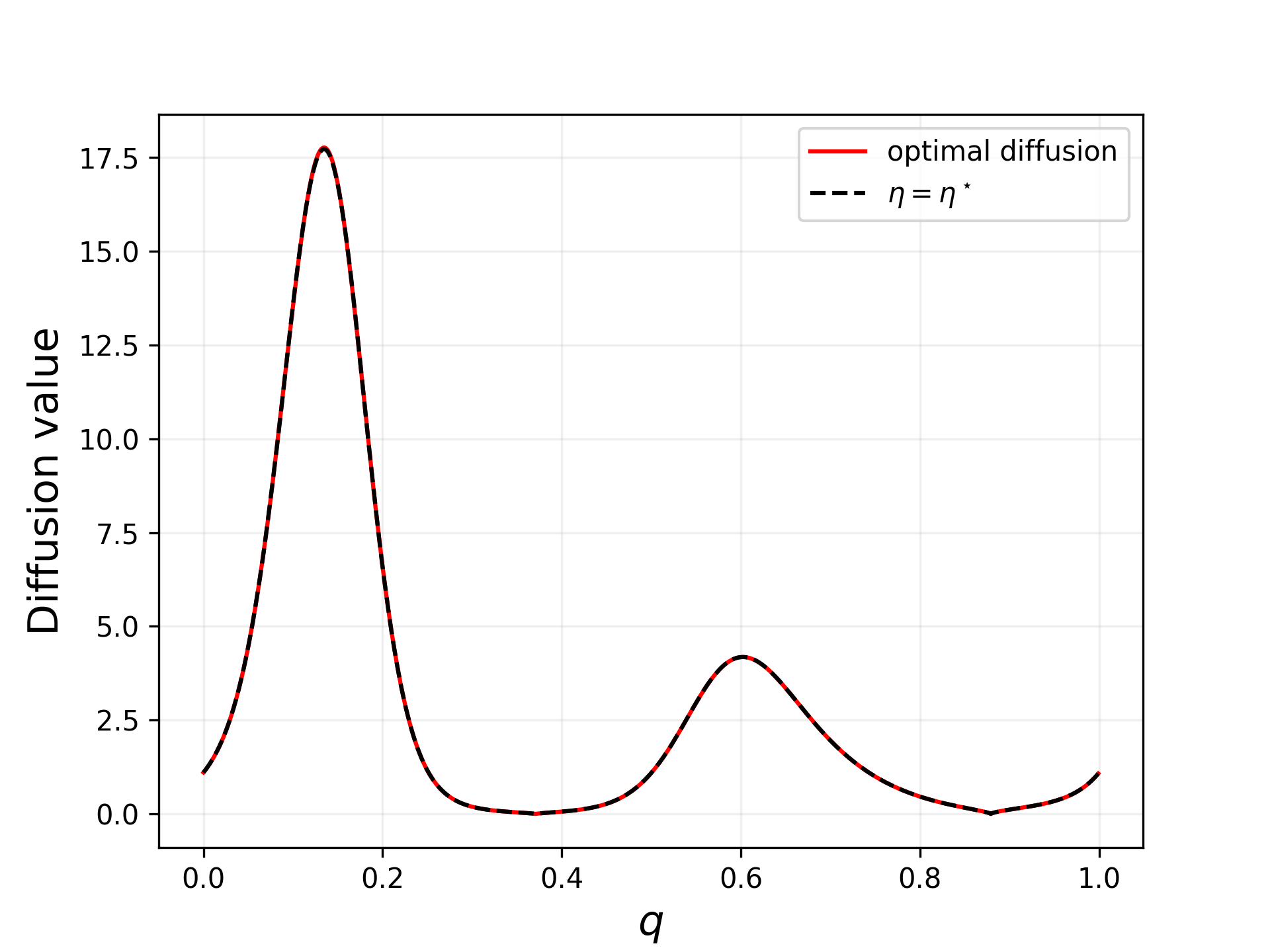}
    \caption{\label{fig:eta_star_1}
    Comparison with the formal Euler--Lagrange characterizations~\eqref{eq:Diff_characterization_formal_alpha_to_infty_degenerate_case_1d_case} (approximated by~\eqref{eq:approximation_d_star_infty}), using the value~$\eta=\eta^\star$ given by~\eqref{eq:eta_star}.}
  \end{subfigure}
  \hfill
  \begin{subfigure}[t]{0.49\linewidth}
    \includegraphics[width=\textwidth]{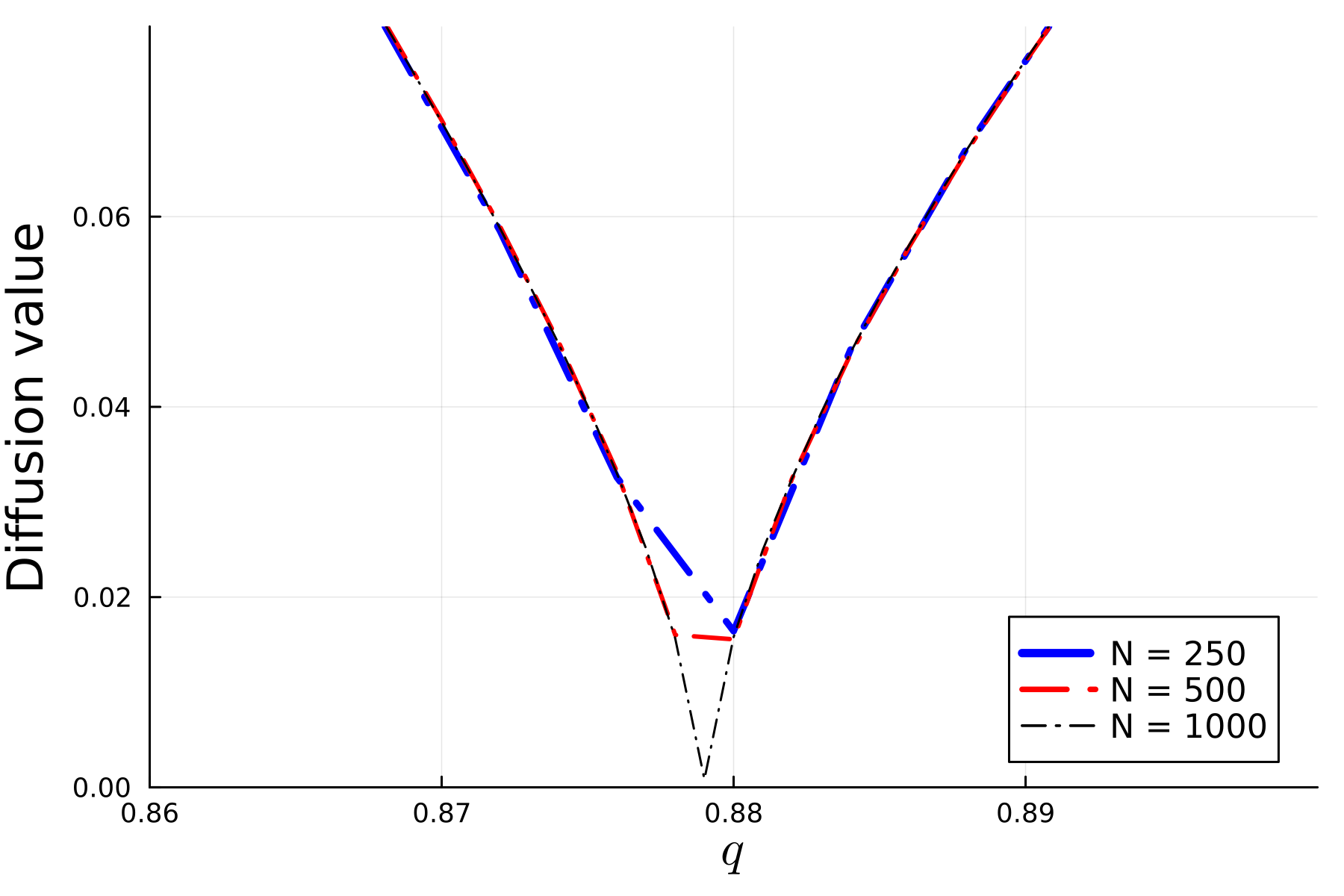}
    \caption{\label{fig:res_1_bottom}Zoom on the optimal diffusion coefficient~$D^{\star}$ for increasing values of~$N$.}
  \end{subfigure}
  \caption{\label{fig:res_1} Results of the optimization procedure on the one-dimensional double-well potential~$V(q) =  \sin(4\pi q) (2 + \sin(2\pi q))$.}
\end{figure}

Let us now consider another double-well potential, namely $V(q)=\sin(4\pi q)(2+\sin(2\pi q))$. We plot in~\Cref{fig:res_1_left} the optimal diffusions. We observe in~\Cref{fig:eta_star_1} that, as in the previous case, the formal Euler--Lagrange characterization~\eqref{eq:Diff_characterization_formal_alpha_to_infty_degenerate_case_1d_case} using the value~$\eta=\eta^\star$ is satisfied. As a consequence, the optimal diffusion coefficient vanishes at two points, around~$q \approx 0.37$ and~$q \approx 0.88$. We zoom around the point~$q\approx 0.88$ in~\Cref{fig:res_1_bottom} for various meshes and observe that, indeed, as the mesh gets finer ($N$ increases), the minimum value of the optimal diffusion coefficient gets smaller. In this case, it was difficult to compute $\upeta$.  It seems that the cancellation of the optimal diffusion at some point induces some difficulties in the optimization procedure, see~\Cref{rem:numerical_issues} in~\Cref{app:numerical}.

\subsection{Results for~$a>0$}
\label{sec:results_a>0}

We now present results of the optimization procedure for the double-well potential energy function~$V(q)=\sin(4\pi q)(2+\sin(2\pi q))$ already considered above (see Figure~\ref{fig:res_1}), when imposing various lowers bounds~$a\in\left\lbrace 0, 0.2, 0.4, 0.6, 0.8, 1\right\rbrace$. In~\Cref{fig:res_various_lower_bounds_optimal_diffusions}, we plot the optimal diffusion coefficients. The case~$a=0$ has already been covered in~\Cref{fig:res_1}. When~$a=1$, the optimal diffusion coefficient is fully determined by the constraints, and actually coincides with the optimal diffusion coefficient in the homogenized limit~$\diff_{\rm hom}^{\star}$. We highlight in~\Cref{fig:res_various_lower_bounds_normalized_diffusions} how the lower bounds~$a$ affects the shape of the optimal diffusion coefficient: we plot the optimal diffusion coefficients divided by~$\diff_{\rm hom}^{\star}$, and observe that the pointwise lower bound~$a$ is saturated on larger intervals as~$a$ increases to~1. We present in~\Cref{tab:spectral_gap_various_lower_bounds} the values of the spectral gap for the same values of the lower bound~$a$. Note that the spectral gap for~$a=0.2$ is almost the same as the one for~$a=0$. This suggests that imposing a (small) positive lower bound does not deteriorate too much the spectral gap, while ensuring a non-degenerate optimal diffusion. This may be useful to ensure the irreducibility of Markov Chain Monte Carlo algorithms based on the optimal diffusion.
 This point is further discussed below in~\Cref{subsec:sampling} (see~\Cref{fig:rejection_probabilities_various_lower_bounds}).  Besides, we observed that imposing a lower bound $a>0$ also improves the convergence of the optimization procedure to get the optimal diffusion.

\begin{figure}
  \centering
  \begin{subfigure}[t]{0.45\textwidth}
    \includegraphics[width=\linewidth]{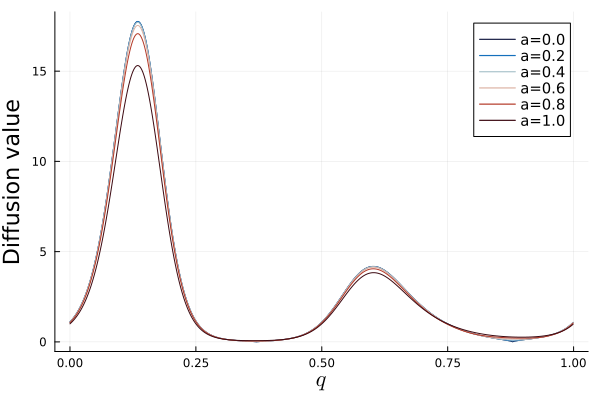}
    \caption{Optimal diffusion coefficients for various lower bounds~$a$.}
    \label{fig:res_various_lower_bounds_optimal_diffusions}
  \end{subfigure}
  \hfill
  \begin{subfigure}[t]{0.45\textwidth}
    \includegraphics[width=\linewidth]{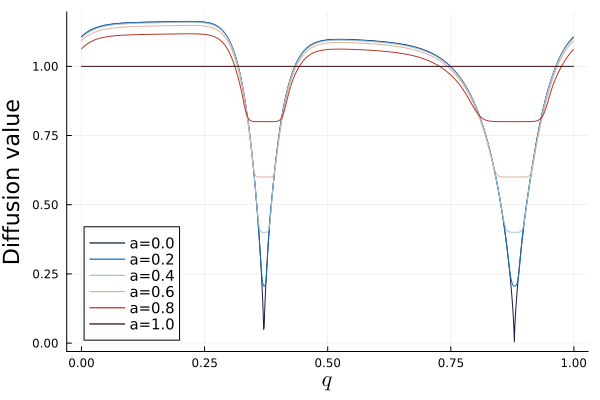}
    \caption{Optimal diffusion coefficients normalized by~$\diff_{\hom}^{\star}$.}
    \label{fig:res_various_lower_bounds_normalized_diffusions}
  \end{subfigure}
  \caption{Results of the optimization procedure for various lower bounds~$a$ and~$V(q)=\sin(4\pi q)(2+\sin(2\pi q))$.}
  \label{fig:res_various_lower_bounds}
\end{figure}

\begin{table}
  \centering
  \begin{tabular}{c|cccccc}
    \toprule
    Lower bound~$a$ & 0.0 & 0.2 & 0.4 & 0.6 & 0.8 & 1.0 \\\midrule
    Spectral gap    &
    11.227          &
    11.226          &
    11.208          &
    11.145          &
    10.983          &
    10.572
    \\ \bottomrule
  \end{tabular}
  \caption{Spectral gap associated to optimal diffusion coefficients obtained with the optimization procedure using different lower bounds~$a$ in the same setting as~\Cref{fig:res_various_lower_bounds}.}
  \label{tab:spectral_gap_various_lower_bounds}
\end{table}

\subsection{Conjectures and  remarks}\label{subsubsec:conj}
In view of the results above, we conjecture that in all situations, in dimension 1, for the optimal diffusion coefficient:
\begin{itemize}
  \item The first three nonzero eigenvalues of the operator~$-\cL_{\Diff^{\star}}$ satisfy $\varlambda_2 = \varlambda_3 < \varlambda_4$.
  \item The limiting procedure outlined in Section~\ref{subsec:euler_lagrange_degenerate} to define the optimal diffusion by~\eqref{eq:Diff_characterization_formal_alpha_to_infty_degenerate_case_1d_case} holds; in particular $\lim_{\alpha \to \infty} \alpha(\varlambda_{3,\alpha}-\varlambda_{2,\alpha})=\eta \in [0,\eta^\star]$ is well defined.
  \item The independent and normalized eigenvectors~$e_2$ and~$e_3$ (associated to~$\varlambda_2=\varlambda_3$) have derivatives that do not vanish at the same points and there are then two situations (see~\eqref{eq:Diff_characterization_formal_alpha_to_infty_degenerate_case_1d_case}):       \begin{itemize}
          \item either~$\eta=\eta^\star$, which is equivalent to the fact that the optimal
                diffusion vanishes at some points;
          \item or~$\eta<\eta^\star$, which is equivalent to the fact that the optimal
                diffusion is uniformly positive.
        \end{itemize}
\end{itemize}
It would be interesting to prove these conjectures, and also to understand which features of the potential $V$ determine whether $\eta \in [0,\eta^\star)$ (the optimal diffusion is positive) or $\eta=\eta^\star$ (the optimal diffusion vanishes at some point).
 For example, we will see in the next section that when~$V$ is periodic with a period~$1/k$ where~$k\in\mathbb{N}\setminus\left\lbrace0\right\rbrace$, the optimal diffusion converges rapidly to the optimal diffusion for the homogenized limit, which does not vanish.

\section{Optimal diffusion matrix in the homogenized limit}
\label{sec:homog}

A drawback of the optimization method described in~\Cref{sec:numerical} is the computational burden associated with the calculation of the optimal diffusion matrix. To obtain an approximation of the optimal diffusion, which can be used for instance as a good initial guess to the optimization procedure, we propose a simple model for the effective diffusion of the optimized overdamped Langevin dynamics. This model is obtained by studying the asymptotic behavior of the optimal diffusion matrix in the homogenized limit of periodic potentials with infinitely small spatial periods. The limit is explicit either for linear constraints, or for generic~$L^p_V$ constraints in the one dimensional case~$\dim=1$, and displays a simple dependence on the target density~$\mu$. The form of this dependence is backed up by prior results in the literature~\cite{RobertsStramer}.

From a physical viewpoint, considering infinitely periodic materials is relevant, for example to study diffusions of adatoms on (periodic) surfaces or propagation of defects in (periodic) crystalline lattices~\cite[Chapitre~3]{bensoussan_2011}. We also refer to~\Cref{rem:homog_sampling} where we explain that under a proper rescaling in time and space, the diffusion process can be seen as a solution to an effective Brownian motion on the torus whose diffusion is the homogenized diffusion. Maximizing this homogenized diffusion therefore amounts to enhancing the exploration capabilities of this limiting process.

In order to present our results, we start by introducing in~\Cref{subsec:homog-compact} the setting of homogenization theory, in particular the notion of~$H$-convergence, and give some useful and standard results, adapted to our framework of periodic boundary conditions. Next, in~\Cref{subsec:per-homog}, we study the homogenized limit of the diffusion matrices and then explicitly optimize the spectral gap of the homogenized limit for one-dimensional systems or linear constraints. In~\Cref{subsec:optim-homog}, we study the commutation of the homogenization and optimization procedures. We show that the diffusion matrix obtained by optimizing first and then taking the homogenized limit coincides with the optimal diffusion matrix for the homogenized problem, therefore proving that maximizing an effective diffusion is equivalent to optimizing a spectral gap on the periodized problem before homogenizing. The second approach, namely optimizing the homogenized problem, is more practical, and this is the main motivation for the commutation result we establish. Lastly, we present in~\Cref{subsec:homog:numerical_results} numerical experiments that confirm our theoretical analysis.

\subsection{Definitions and compactness results}
\label{subsec:homog-compact}

We present in this section compactness results for sequences of matrices~$(\A^k)_{k\geq 1} \subset L^{\infty}(\T^\dim, \mathcal{M}_{a,b})$. Let us emphasize that we restrict ourselves to~$a,b>0$ in all this section. We start by recalling the notion of~$H$-convergence in the setting of~$\mathbb{Z}^\dim$-periodic functions. Compared to the classical~$H$-convergence presented in the literature (see for instance~\cite[Chapter~1]{allaire_homogeneisation} and references therein, as well as~\cite{lebris_blanc_homog_book}), the main difference is that we use periodic boundary conditions rather than Dirichlet boundary conditions.

\begin{definition}[$H$-convergence]
  \label{def:Hcvg}
  Fix~$a,b>0$, and consider a sequence~$(\A^k)_{k\geq 1} \subset L^{\infty}(\T^\dim, \mathcal{M}_{a,b})$. The sequence~$(\A^k)_{k\geq 1}$ is said to~$H$-converge to~$\overline{\A} \in L^{\infty}(\T^\dim, \mathcal{M}_{a,b})$ (denoted by~$\A^k\xrightarrow{H}\overline \A$) if, for any~$f\in H^{-1}(\T^\dim)$ such that~$\langle f, \mathbf{1}\rangle_{H^{-1}(\T^\dim),H^1(\T^\dim)}=0$, the sequence~$(u^k)_{k\geq 1} \subset H^1(\T^\dim)$ of solutions to
  \begin{equation}
    \label{eq:periodic-boundary}
    \left\{\begin{aligned}
       & -\div\left(\A^k\nabla u^k\right)= f & \text{ on }\T^\dim,
      \\
       & \int_{\T^\dim}u^k(q)\, dq = 0,      &
    \end{aligned}\right.
  \end{equation}
  satisfies, in the limit~$k \to +\infty$,
  \begin{equation*}
    \left\{\begin{aligned}
      u^k            & \rightharpoonup u \text{ weakly in }H^1(\T^\dim),                          \\
      \A^k\nabla u^k & \rightharpoonup \overline \A\nabla u  \text{ weakly in }L^2(\T^\dim)^\dim,
    \end{aligned}\right.
  \end{equation*}
  where~$u \in H^1(\T^\dim)$ is the solution of the homogenized problem
  \begin{equation}\label{eq:Abaru}
    \left\{\begin{aligned}
       & -\div\left(\overline \A\nabla u\right)= f & \text{ on }\T^\dim,
      \\
       & \int_{\T^\dim}u(q) \, dq=0.               &
    \end{aligned}\right.
  \end{equation}
\end{definition}

Notice that the functions~$u^k$ in~\eqref{eq:periodic-boundary} and~$u$ in~\eqref{eq:Abaru} are indeed well-defined since~$\langle f, \mathbf{1}\rangle_{H^{-1}(\T^\dim),H^1(\T^\dim)}=0$ (in application of the Lax--Milgram theorem, see~\Cref{lem:LaxMilgram} in~\Cref{app:homogenization} below).
The next theorem, proved for completeness in~\Cref{app:thm:Hcvg-compact}, states a compactness result for sequences of periodic functions.

\begin{theorem}[Murat--Tartar]
  \label{thm:Hcvg-compact}
  Fix~$a,b>0$, and consider a sequence~$(\A^k)_{k\geq 1} \subset L^{\infty}(\T^\dim,\mathcal{M}_{a,b})$. Up to extraction of a subsequence,~$(\A^k)_{k \ge 1}$~$H$-converges to a matrix~$\overline{\A}\in L^{\infty}(\T^\dim,\mathcal{M}_{a,b})$ (non constant in general).
\end{theorem}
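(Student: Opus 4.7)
The strategy is to adapt the classical Murat--Tartar compactness proof for $H$-convergence (see, e.g.,~\cite[Chapter~1]{allaire_homogeneisation}, \cite{lebris_blanc_homog_book}) from the Dirichlet setting on a bounded domain to the periodic setting on $\T^\dim$. The two key ingredients---uniform coercivity and boundedness of the bilinear forms associated with $\A^k$ (guaranteed by $\A^k(q) \in \mathcal{M}_{a,b}$ with $a,b>0$), and the div--curl lemma of compensated compactness---carry over directly. The main modification with respect to the Dirichlet case is to replace $H^1_0(\Omega)$ by the quotient space $H^1(\T^\dim)/\R$, using the Poincar\'e--Wirtinger inequality to recover coercivity after factoring out constants.

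First, I would fix a countable dense family $(f_n)_{n\geq 1}$ in the closed subspace of $H^{-1}(\T^\dim)$ orthogonal to constants. For each pair $(n,k)$, the Lax--Milgram theorem on $H^1(\T^\dim)/\R$ yields a unique zero-mean solution $u_n^k$ to $-\div(\A^k \nabla u_n^k) = f_n$, with $\|u_n^k\|_{H^1} \leq C_n$ uniformly in $k$. A standard diagonal extraction then yields a subsequence along which, for every $n$, $u_n^k \rightharpoonup u_n$ in $H^1(\T^\dim)$ and $\A^k \nabla u_n^k \rightharpoonup \sigma_n$ in $L^2(\T^\dim)^\dim$; passing to the limit in the PDE gives $-\div \sigma_n = f_n$.

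Next, I would construct the homogenized matrix $\overline{\A}$ via periodic correctors. For each canonical basis vector $e_j\in \R^\dim$ and each $k$, solve on $\T^\dim$
\begin{equation*}
-\div\!\left(\A^k(e_j + \nabla w_j^k)\right) = 0, \qquad \int_{\T^\dim} w_j^k\, dq = 0,
\end{equation*}
which is well-posed since $\div(\A^k e_j)$ has zero mean. Uniform $H^1$ bounds provide, up to a further extraction, $w_j^k \rightharpoonup w_j$ in $H^1(\T^\dim)$ and $\xi_j^k := \A^k(e_j + \nabla w_j^k) \rightharpoonup \xi_j^\infty$ in $L^2$, with $\div \xi_j^\infty = 0$. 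I would then \emph{define} $\overline{\A}$ columnwise by $\overline{\A}(q) e_j := \xi_j^\infty(q)$, which produces a candidate matrix field on $\T^\dim$.

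The identification $\sigma_n = \overline{\A}\nabla u_n$ then follows from two applications of the div--curl lemma, exploiting the symmetry $\A^k = (\A^k)^\top$. On the one hand, $\A^k \nabla u_n^k$ has a divergence precompact in $H^{-1}$, and $e_j + \nabla w_j^k$ is locally a gradient (of $e_j\cdot q + w_j^k$), so div--curl yields $\A^k \nabla u_n^k \cdot (e_j + \nabla w_j^k) \rightharpoonup \sigma_n \cdot (e_j + \nabla w_j)$ in $\mathcal{D}'(\T^\dim)$. Symmetrically, $\xi_j^k$ is divergence-free while $\nabla u_n^k$ is a gradient, giving $\xi_j^k \cdot \nabla u_n^k \rightharpoonup \xi_j^\infty \cdot \nabla u_n$. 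Since $\A^k$ is symmetric, the two integrands coincide pointwise for every $k$; integrating against arbitrary $\varphi \in \mathcal{C}^\infty(\T^\dim)$ and using the equations $\div \xi_j^\infty = 0$ and $\div \sigma_n = -f_n$ in integrations by parts allows one to cancel the corrector contributions $\sigma_n\cdot\nabla w_j$ and $\xi_j^\infty\cdot \nabla u_n$, leaving $\sigma_n \cdot e_j = e_j^\top \overline{\A} \nabla u_n$ almost everywhere for every $j$. Finally, $\overline{\A} \in L^\infty(\T^\dim,\mathcal{M}_{a,b})$ is established by passing to the weak limit in the quadratic bounds $a|e_j + \nabla w_j^k|^2 \leq (e_j + \nabla w_j^k) \cdot \A^k(e_j + \nabla w_j^k) \leq b^{-1}|e_j + \nabla w_j^k|^2$, combined with weak lower semicontinuity of convex functionals. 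The main technical obstacle I anticipate is precisely this identification step: juggling the two div--curl identities and the integration-by-parts cancellations so as to peel off the corrector terms and recover the clean relation $\sigma_n = \overline{\A}\nabla u_n$; everything else is essentially a translation of the Dirichlet proof to periodic boundary conditions.
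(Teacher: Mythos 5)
Your overall strategy --- redoing the Murat--Tartar compactness argument directly in the periodic setting via Lax--Milgram on $H^1(\T^\dim)/\R$, corrector problems, and two applications of the div--curl lemma using the symmetry of $\A^k$ --- is a genuinely different route from the paper's. The paper instead periodizes $\A^k$ to the extended domain $(-1,2)^\dim$, invokes the \emph{known} Dirichlet compactness theorem~\cite[Theorem~1.2.16]{allaire_homogeneisation} there, and then transfers the conclusion back to $\T^\dim$ via the locality of $H$-convergence~\cite[Proposition~1.2.19]{allaire_homogeneisation} and a partition-of-unity argument. Your approach is more self-contained (it does not lean on the locality principle), at the cost of reproducing the full identification machinery; the paper's is modular and shorter.

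However, there is a concrete gap in your identification step, exactly where you flag ``the main technical obstacle''. From the two div--curl limits and the symmetry of $\A^k$ you correctly obtain, for a.e.\ $q$,
\begin{equation*}
\sigma_n \cdot \bigl( e_j + \nabla w_j \bigr) = \xi_j^\infty \cdot \nabla u_n .
\end{equation*}
There is no integration-by-parts that makes $\sigma_n\cdot\nabla w_j$ or $\xi_j^\infty\cdot\nabla u_n$ disappear: testing against $\varphi$ and integrating by parts using $\div\sigma_n = -f_n$ only trades $\sigma_n\cdot\nabla w_j$ for $f_n\, w_j$ plus a term involving $\nabla\varphi$, which is nonzero. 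In the general Murat--Tartar setting (without the rescaling $\A^k(q)=\A(kq)$ used later in~\Cref{thm:Hcvg}), $\nabla w_j^k$ does \emph{not} converge weakly to zero, so $\nabla w_j\neq 0$ and these terms survive. Consequently, defining $\overline{\A}(q)e_j := \xi_j^\infty(q)$ columnwise is incorrect. The correct definition is implicit: $\overline{\A}^{\top}\bigl(e_j+\nabla w_j\bigr)=\xi_j^\infty$, i.e.\ $\overline{\A}^{\top} W = \Xi$ with $W$ (resp.\ $\Xi$) the matrix with columns $e_j+\nabla w_j$ (resp.\ $\xi_j^\infty$), so that the display above factors as $\bigl(\sigma_n - \overline{\A}\nabla u_n\bigr)\cdot\bigl(e_j+\nabla w_j\bigr)=0$ for all $j$. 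This in turn requires showing $W(q)$ is invertible a.e., and that the resulting $\overline{\A}$ is symmetric and lies in $\mathcal{M}_{a,b}$; both facts need further div--curl arguments (e.g.\ showing $W^\top\Xi$ is the weak limit of the symmetric matrices $W_k^\top\A^k W_k$ and is uniformly coercive). These are the genuine technical points of the Murat--Tartar proof, and they are not closed by the cancellation you propose. Were you to fix the definition of $\overline{\A}$ and supply the invertibility argument, your route would go through; as written, it does not.
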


The following theorem, adapted from~\cite[Theorem~1.3.18]{allaire_homogeneisation} (and proved for completeness in~\Cref{subsec:proof-Hcvg-constant}, see~\Cref{rmk:cv_H_periodic}), states an~$H$-convergence result for sequences of matrices defined through periodic homogenization. It is a particular case of~\Cref{thm:Hcvg-compact} specific to periodic functions. 

\begin{theorem}
  \label{thm:Hcvg}
  Fix~$a,b>0$, and let~$\A\in L^{\infty}(\T^\dim,\mathcal{M}_{a,b})$. For~$k\geq 1$, consider the sequence~$(\A_{\#,k})_{k \ge 1}$ defined as
  \begin{equation*}
    \forall q \in \T^\dim, \qquad \A_{\#,k}(q)=\A(kq).
  \end{equation*}
  The sequence~$(\A_{\#,k})_{k \ge 1}$~$H$-converges to the constant matrix~$\overline{\mathcal{A}}\in\mathcal{M}_{a,b}$ with entries
  \begin{equation}
    \label{eq:Abar1}
    \forall 1\leq i,j\leq \dim, \qquad \overline{\A}_{ij} = \int_{\T^\dim} (e_i + \nabla w_i(q))^\top \A(q) (e_j + \nabla w_j(q)) \, dq,
  \end{equation}
  where~$(e_1,\ldots , e_\dim)$ is the canonical basis of~$\R^\dim$ and~$\{w_i\}_{1\leq i\leq \dim} \subset H^1(\T^\dim)$ is the family of unique solutions to the problems
  \begin{equation}\label{eq:Abar2}
    \left\{\begin{aligned}
       & -\div \left( \A(e_i+\nabla w_i) \right) = 0, \\
       & \int_{\T^\dim}w_i(q) \, dq =  0.
    \end{aligned}\right.
  \end{equation}
\end{theorem}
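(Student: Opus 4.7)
The plan is to invoke Theorem~\ref{thm:Hcvg-compact} to extract a subsequence of $(\A_{\#,k})_{k \geqslant 1}$ which H-converges to some limit $\A^\star \in L^\infty(\T^\dim, \mathcal{M}_{a,b})$, and then use Tartar's method of oscillating test functions (the div--curl / compensated compactness lemma) to identify $\A^\star$ with the constant matrix $\overline{\A}$ defined by~\eqref{eq:Abar1}--\eqref{eq:Abar2}. Since the subsequential limit will then be forced to equal $\overline{\A}$ independently of the extraction, a standard subsequence-of-subsequence argument will give H-convergence of the full sequence.

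\textbf{Correctors and oscillating sequences.} First I would establish well-posedness of the cell problems~\eqref{eq:Abar2} via Lax--Milgram applied to the bilinear form $(u,v)\mapsto \int_{\T^\dim}\nabla u^\top \A \nabla v$ on $\{u\in H^1(\T^\dim) \mid \int u = 0\}$, which is coercive thanks to $\A\geqslant a\,\Id_\dim$ (with $a>0$) combined with the Poincar\'e inequality on the torus; the right-hand side $-\div(\A e_i)\in H^{-1}(\T^\dim)$ automatically pairs to zero with constants. Next I would introduce the rescaled vector fields $\xi_j^k(q):=\A(kq)(e_j+\nabla w_j(kq))$, which are $\Z^\dim$-periodic. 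By the chain rule and the cell equation, $\div \xi_j^k = 0$ on $\T^\dim$; by the standard weak convergence of rescaled $L^2$-periodic functions (Riemann--Lebesgue type), $\xi_j^k \rightharpoonup \overline{\A}e_j$ weakly in $L^2(\T^\dim)^\dim$, the mean value being identified as $\overline{\A}e_j$ through the cell equation and integration by parts (this uses that $\int \nabla w_i^\top \A(e_j+\nabla w_j) = 0$). Likewise, $\nabla w_j(k\cdot) \rightharpoonup 0$ weakly in $L^2$, since $\int_{\T^\dim} \nabla w_j = 0$ by periodicity of $w_j$.

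\textbf{Identification via div--curl and conclusion.} Fix $f\in H^{-1}(\T^\dim)$ with zero mean, and let $u^k\rightharpoonup u$ and $\A_{\#,k}\nabla u^k \rightharpoonup \A^\star\nabla u$ weakly in $L^2$ along the extracted subsequence. I would then compute the distributional limit of $\xi_j^k\cdot \nabla u^k$ in two different ways. On the one hand $\xi_j^k$ is divergence-free and $\nabla u^k$ is curl-free, so the div--curl lemma yields $\xi_j^k\cdot\nabla u^k \rightharpoonup \overline{\A}e_j\cdot \nabla u$ in $\mathcal{D}'(\T^\dim)$. On the other hand, using symmetry of $\A$, $\xi_j^k\cdot \nabla u^k = (e_j+\nabla w_j(k\cdot))\cdot (\A_{\#,k}\nabla u^k)$, where the first factor is curl-free and the second has divergence $-f$ independent of $k$ (hence relatively compact in $H^{-1}$); a second application of the div--curl lemma yields $e_j\cdot \A^\star\nabla u$. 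Equating the two limits and using symmetry of $\overline{\A}$ gives $\overline{\A}\nabla u = \A^\star \nabla u$ almost everywhere. To deduce the pointwise equality $\A^\star(q)=\overline{\A}$, I would choose $f_\ell := -\div(\A^\star\nabla \psi_\ell)$ with $\psi_\ell(q)=(2\pi)^{-1}\sin(2\pi q_\ell)$: by uniqueness of the homogenized solution, $u = \psi_\ell$, so $\nabla u(q) = e_\ell\cos(2\pi q_\ell)$; as $\ell$ varies in $\{1,\dots,\dim\}$ these gradients span $\R^\dim$ at almost every $q$, forcing $\A^\star(q)=\overline{\A}$ a.e. The main obstacle is the div--curl step, requiring one to verify Murat's hypothesis of relative compactness of $\div$ or $\curl$ in $H^{-1}_{\mathrm{loc}}$; this is immediate here since $\div \xi_j^k = 0$ and $\div(\A_{\#,k}\nabla u^k) = -f$ is fixed, and the lemma's conclusion is local in nature and hence carries over to $\T^\dim$ via a smooth partition of unity.
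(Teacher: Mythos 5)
Your proof is correct, and it follows a genuinely different route from the paper's. The paper (Appendix \ref{subsec:proof-Hcvg-constant}, proof of Lemma~\ref{lem:Hcvg-constant} together with Remark~\ref{rmk:cv_H_periodic}) builds the explicit oscillating test functions $\widetilde w^k=\varphi+k^{-1}\sum_i w_i(k\cdot)\,\partial_{q_i}\varphi$ and then extracts the relevant weak limits by hand through the integration-by-parts identity~\eqref{eq:homog_preliminary_comp} and the pairing $\langle f,\widetilde w^k\rangle=\langle-\div(\A^k\nabla\widetilde w^k),u^k\rangle$; this produces a constant matrix $\mathfrak F$ and identifies it with the $H$-limit in one pass, and Remark~\ref{rmk:cv_H_periodic} then recognizes $\mathfrak F$ as the cell formula~\eqref{eq:Abar1}. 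You instead package the same cancellation into two black-box applications of the div--curl lemma to $\xi_j^k\cdot\nabla u^k$, using first that $\xi_j^k$ is divergence-free and $\nabla u^k$ is a gradient, then (after transposing $\A$) that $e_j+\nabla w_j(k\cdot)$ is a gradient and $\A_{\#,k}\nabla u^k$ has fixed divergence $-f$. This is cleaner if one is willing to cite div--curl on the torus as established (which, as you correctly note, reduces to the local statement via a partition of unity --- the same device the paper uses in Appendix~\ref{app:thm:Hcvg-compact}). The price of your route is that it only yields $\overline\A\nabla u=\A^\star\nabla u$ a.e.\ for each data $f$, and you then need the additional pointwise identification step with $f_\ell=-\div(\A^\star\nabla\psi_\ell)$, $\psi_\ell(q)=(2\pi)^{-1}\sin(2\pi q_\ell)$, to let the gradients $\cos(2\pi q_\ell)e_\ell$ span $\R^\dim$ a.e.\ and force $\A^\star(q)=\overline\A$ pointwise --- a step the paper avoids because the oscillating-test-function computation delivers a manifestly constant $\mathfrak F$ right away. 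Both proofs are instances of Tartar's compensated compactness; yours is more modular, the paper's is more self-contained. One small point worth flagging: your subsequential argument is sound only because you identify every subsequential $H$-limit with the same $\overline\A$, which is independent of the extraction because the correctors $w_i$ do not depend on $k$; this is exactly the observation in Remark~\ref{rmk:cv_H_periodic}, and you should make it explicit when concluding convergence of the full sequence.
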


The last theorem, proved in~\Cref{app:thm:Hcvg-spectral} by a straightforward adaptation of~\cite[Theorem~1.3.16]{allaire_homogeneisation}, states that~$H$-convergence implies the convergence of the sequence of spectral gaps.

\begin{theorem}
  \label{thm:Hcvg-spectral}
  Fix~$a,b>0$ and let~$(\A^k)_{k\geq 1}\subset L^{\infty}(\T^\dim,\mathcal{M}_{a,b})$ be a sequence~$H$-converging to a homogenized matrix~$\overline{\mathcal{A}} \in L^{\infty}(\T^\dim,\mathcal{M}_{a,b})$. Let~$(\rho^k)_{k\geq 1} \subset L^{\infty}(\T^\dim)$ be a sequence of positive functions converging weakly-$*$ in~$L^{\infty}(\T^\dim)$ to a limiting function~$\rho$, and such that there exists~$\rho_-,\rho_+>0$ for which~$0<\rho_{-}\leq \rho^k(q)\leq \rho_+<+\infty$ for all~$k\geq 1$ and for almost all~$q\in\T^\dim$. Let~$\lambda^k$ be the smallest nonzero eigenvalue and~$u^k$ an associated normalized eigenvector of the spectral problem
  \begin{equation*}
    \left\{\begin{aligned}
       & -\div\left(\A^k(q)\nabla u^k(q)\right) = \lambda^k\rho^k(q)u^k(q), \\
       & \int_{\T^\dim} u^k(q)^2 \, dq = 1.
    \end{aligned}\right.
  \end{equation*}
  Then,
  \begin{equation*}
    \lim_{k\rightarrow +\infty}\lambda^k = \overline{\lambda}
  \end{equation*}
  and, up to a subsequence,~$(u^k)_{k\geq 1}$ converges weakly in~$H^1(\T^\dim)$ to~$u$,
  where $(\overline{\lambda},u)$ is an eigenpair of the homogenized operator:
  \begin{equation*}
    \left\{\begin{aligned}
       & -\div\left(\overline{\mathcal{A}}\nabla u(q)\right) = \overline{\lambda}\rho(q)u(q), \\
       & \int_{\T^\dim} u(q)^2 \, dq = 1,
    \end{aligned}\right.
  \end{equation*}
  and~$\overline{\lambda}$ is the smallest nonzero eigenvalue of this homogenized eigenvalue problem.
\end{theorem}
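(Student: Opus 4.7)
\emph{Uniform a priori bounds.} The first step is to get $k$-independent bounds on $\lambda^k$ and $\|u^k\|_{H^1}$. Using the Rayleigh quotient characterization
\[\lambda^k = \inf\Bigg\{\frac{\int_{\T^\dim}\nabla v^\top \A^k \nabla v \, dq}{\int_{\T^\dim}\rho^k v^2 \, dq} : v\in H^1(\T^\dim),\ \int_{\T^\dim}\rho^k v\, dq = 0,\ v\neq 0\Bigg\},\]
together with the two-sided bounds on $\A^k$ and $\rho^k$, one tests with $v = \phi - c_k$ for a fixed mean-zero smooth $\phi$ and $c_k = \int\rho^k\phi/\int\rho^k$ to produce a uniform upper bound $\lambda^k \leq \Lambda_+$. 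Plugging this into the energy identity $a\|\nabla u^k\|_{L^2}^2 \leq \lambda^k \rho_+$ and using $\|u^k\|_{L^2}=1$ gives a uniform $H^1$ bound. Integrating the eigenvalue equation against the constant~$1$ forces $\int_{\T^\dim} \rho^k u^k \, dq = 0$; combined with Poincaré--Wirtinger on the torus, this controls the spatial mean $|\int u^k|$ by $\|\nabla u^k\|_{L^2}$ up to a factor depending only on $\rho_-/\rho_+$, so that $\|u^k\|_{L^2} = 1$ forces $\|\nabla u^k\|_{L^2}^2 \geq c > 0$ uniformly, hence $\lambda^k \geq ac/\rho_+$.

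\emph{Passage to the limit via $H$-convergence.} Along any subsequence such that $\lambda^k \to \lambda^\star$, Rellich--Kondrachov produces a further subsequence along which $u^k \rightharpoonup u$ weakly in $H^1(\T^\dim)$ and strongly in $L^2(\T^\dim)$, with $\|u\|_{L^2}=1$. The weak-$*$ convergence $\rho^k \rightharpoonup \rho$ in $L^\infty$ combined with $u^k \to u$ in $L^2$ yields $\rho^k u^k \rightharpoonup \rho u$ in $L^2(\T^\dim)$; in particular $\int \rho u = 0$. Setting $f^k := \lambda^k \rho^k u^k$ and $f := \lambda^\star \rho u$, compactness of $L^2 \hookrightarrow H^{-1}$ upgrades $f^k \rightharpoonup f$ to $f^k \to f$ in $H^{-1}(\T^\dim)$. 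The zero-mean shift $\tilde u^k := u^k - \int u^k$ then solves $-\div(\A^k \nabla \tilde u^k) = f^k$; decomposing $\tilde u^k = z^k + y^k$ with $-\div(\A^k\nabla z^k) = f$ and $-\div(\A^k \nabla y^k) = f^k - f$, both with zero mean, the definition of $H$-convergence directly gives $z^k \rightharpoonup \overline z$ weakly in $H^1$ with $-\div(\overline{\mathcal{A}}\nabla \overline z) = f$, while the energy estimate $a\|\nabla y^k\|_{L^2}^2 \leq \|f^k - f\|_{H^{-1}}\|y^k\|_{H^1}$ combined with Poincaré--Wirtinger forces $y^k \to 0$ in $H^1$. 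Uniqueness of weak limits identifies $\overline z = u - \int u$, so $-\div(\overline{\mathcal{A}}\nabla u) = \lambda^\star \rho u$.

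\emph{Identification $\lambda^\star = \overline\lambda$.} The lower bound $\lambda^\star \geq \overline\lambda$ is immediate from the Rayleigh quotient characterization of $\overline\lambda$, applied to the nontrivial admissible limit $u$. The matching upper bound $\lambda^\star \leq \overline\lambda$ is the main obstacle. To establish it, I would fix a normalized eigenfunction $\overline v$ of the homogenized problem associated to $\overline\lambda$ and build a recovery family $v^k \in H^1(\T^\dim)$ satisfying $\int \rho^k v^k\, dq = 0$, typically by taking $v^k$ to be the zero-mean solution of $-\div(\A^k \nabla v^k) = -\div(\overline{\mathcal{A}}\nabla \overline v)$, shifted by a constant to enforce the $\rho^k$-orthogonality. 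Strong $L^2$-convergence $v^k \to \overline v$ follows from $H$-convergence applied to the fixed right-hand side, yielding $\int \rho^k (v^k)^2 \to \int \rho \overline v^2 = 1$, and the crucial convergence of the Dirichlet form $\int \nabla v^k \cdot \A^k \nabla v^k \to \int \nabla \overline v \cdot \overline{\mathcal{A}}\nabla \overline v = \overline\lambda$ follows from the div--curl (Murat--Tartar) lemma applied to the weakly converging flux $\A^k\nabla v^k \rightharpoonup \overline{\mathcal{A}}\nabla \overline v$ and gradient $\nabla v^k \rightharpoonup \nabla \overline v$. This is a direct transcription to the torus of the strategy used in~\cite[Theorem~1.3.16]{allaire_homogeneisation}. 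The variational inequality then gives $\lambda^k \leq \overline\lambda + o(1)$, so that $\lambda^\star \leq \overline\lambda$. Since this holds along every convergent subsequence, the full sequence $\lambda^k$ converges to $\overline\lambda$, and any weak $H^1$-accumulation point of $(u^k)$ is a normalized eigenfunction of the homogenized problem associated to $\overline\lambda$.
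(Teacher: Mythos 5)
Your proposal is correct and follows essentially the same architecture as the paper's proof: uniform a priori bounds via Rayleigh quotients and Poincar\'e--Wirtinger (the constraint $\int\rho^k u^k=0$ being the key to a uniform positive lower bound on $\lambda^k$), extraction of a weakly convergent subsequence, identification of the weak $H^1$-limit $u$ as an eigenfunction of the homogenized problem via a comparison solution with fixed right-hand side, and identification of the limiting eigenvalue as the smallest nonzero one via a recovery sequence. The only substantive difference is in the energy passage to the limit for the recovery sequence: you invoke the Murat--Tartar div--curl lemma to obtain $\int\nabla v^k\cdot\A^k\nabla v^k\to\overline\lambda$, whereas the paper avoids div--curl by exploiting that the right-hand side of $-\div(\A^k\nabla w^k)=\widehat\lambda\,\rho\,\widehat u$ is fixed in $k$, so testing against $w^k$ gives directly $\int\nabla w^k\cdot\A^k\nabla w^k=\widehat\lambda\int\rho\,\widehat u\,w^k$, which converges by the strong $L^2$ convergence of $w^k$. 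That shortcut is equally available with your choice of $v^k$ and saves you the div--curl machinery. The remaining discrepancies --- your direct $\limsup$ argument versus the paper's contradiction argument for the smallest-eigenvalue identification, and your $z^k+y^k$ splitting versus a single comparison function in the limit passage --- are cosmetic reorganizations of the same estimates.
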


\subsection{Optimization of the periodic homogenization limit}
\label{subsec:per-homog}

We still fix~$a,b>0$, and apply the results of~\Cref{subsec:homog-compact} to a particular class of sequences of matrix valued-functions~$(\A_{\#,k})_{k\geq 1}$, defined for~$\Diff\in\Diffset_p^{a,b}$ by decreasing the period of the~$\mathbb{Z}^\dim$-periodic matrix
\begin{equation}
  \label{eq:def_A_from_D}
  \A(q) = \Diff(q) \exp(-V(q))
\end{equation}
to obtain the~$(\Z/k)^\dim$-periodic function
\begin{equation}
  \label{eq:A_sharp_k}
  \A_{\#,k}(q) = \Diff_{\#,k}(q)\exp\left(-V_{\#,k}(q)\right), \qquad \Diff_{\#,k}(q) = \Diff(kq), \qquad V_{\#,k}(q) = V(kq).
\end{equation}
Note that~$\A_{\#,k}(q)\in\mathcal{M}_{a,b}$ for almost every~$q \in \T^\dim$ and for all~$k\geq 1$.
The periodization procedure~\eqref{eq:A_sharp_k} allows us to prove in~\Cref{sec:periodic_homog_on_A} the~$H$-convergence of the sequence of matrices, and the convergence of the associated eigenvalues and eigenvectors (see~\Cref{cor:Hcvg-spectral}). We then provide in~\Cref{sec:optimization_homog_lim} explicit expressions for diffusion matrices~$\Diff$ which maximize the spectral gap of the homogenized problem; see in particular~\Cref{prop:max-homog} for the one dimensional case~$\dim = 1$.

\subsubsection{Periodic homogenization of the spectral gap problem}
\label{sec:periodic_homog_on_A}

We consider the process~$(q^k_t)_{t \geq 0}$ defined on~$\T^\dim$ by the dynamics
\begin{equation}
  \label{eq:dynamics-periodic}
  dq^k_t = \left[-\Diff_{\#,k}(q^k_t)\nabla V_{\#,k}(q^k_t) + \div(\Diff_{\#,k})(q^k_t)\right]dt+\sqrt{2}\,\Diff_{\#,k}^{1/2}(q^k_t)\,dW_t.
\end{equation}
The generator associated to the dynamics acts on test functions~$u:\T^\dim \to \R$ as
\begin{equation*}
  \mathcal{L}_k u =  \exp\left(V_{\#,k}\right) \div\left[\Diff_{\#,k}\exp\left(-V_{\#,k}\right)\nabla u\right].
\end{equation*}
Since the eigenfunctions associated with the first eigenvalue~0 are constant functions, the Rayleigh--Ritz principle implies that the spectral gap of~$\mathcal{L}_k$ considered as an operator on~$L^{2}(Z^{-1}\rme^{-V_{\#,k}})$ (recall that~$Z$ is defined in~\eqref{eq:mu}) writes (recalling the notation~\eqref{eq:A_sharp_k}):
\begin{equation}
  \label{eq:lambda_per_k}
  \Lambda_{\#,k}(\Diff) = \min_{u\in H^1(\T^\dim) \setminus\{0\}} \left\{ \frac{\dps \int_{\T^\dim}\nabla u(q)^\top \A_{\#,k}(q)\nabla u(q) \, dq}{\dps \int_{\T^\dim} u^2(q) \, \rme^{-V_{\#,k}(q)} \, dq} \ \middle| \ \int_{\T^\dim} u \, \rme^{-V_{\#,k}} = 0 \right\}.
\end{equation}
Let us emphasize that the test functions to find the minimum on the right-hand side of the previous equality are~$\Z^\dim$-periodic, while the diffusion and the potential are~$(\Z/k)^\dim$-periodic. Note also that the minimization over the functions~$u$ is performed on~$H^1(\T^\dim)$, which is equivalent in fact to the functional space~$H^1(Z^{-1}\rme^{-V_{\#,k}})$\ since $\T^\dim$ is compact and~$V$ is smooth. This allows to work with a fixed functional space for the test functions~$u$ whatever the value of~$k$.

The spectral gap~$\Lambda_{\#,k}(\Diff)$ is the smallest nonzero solution~$\lambda^{k}$ of the following eigenvalue problem posed on~$\T^\dim$: find~$(u^{k},\lambda^{k})\in H^{1}(\T^{\dim})\setminus\left\lbrace0\right\rbrace\times(0,+\infty)$ such that
\begin{equation*}
  -\div\left[\Diff_{\#,k}\exp\left(-V_{\#,k}\right)\nabla u^k\right]= \lambda^k \exp\left(-V_{\#,k}\right)u^k,\qquad \int_{\T^{\dim}}u^{k}\rme^{-V_{\#,k}}=0.
\end{equation*}
By applying~\Cref{thm:Hcvg-spectral} to the sequence~$(\mathcal{A}_{\#,k})_{k\geq 1}$, we therefore obtain the following result on the convergence of the sequence of spectral gaps~$\left(\Lambda_{\#,k}(\Diff)\right)_{k\geq 1}$ for a given diffusion~$\Diff\in \Diffset_p^{a,b}$.

\begin{corollary}
  \label{cor:Hcvg-spectral}
  Fix~$p \in [1,+\infty)$,~$a,b>0$ satisfying~\eqref{eq:compatibility_conditions_continuous_level} and~$\Diff\in \Diffset_p^{a,b}$. Then,~$\mathcal{A}_{\#,k} \xrightarrow{H} \overline{\mathcal{A}}$ with~$\overline{\A}$ defined in~\eqref{eq:Abar1}. Defining~$\overline{\Diff} = Z^{-1}\overline{\A}$, the sequence~$\left(\Lambda_{\#,k}(\Diff)\right)_{k\geq 1}$ converges as~$k\to +\infty$ towards
  \begin{equation*}
    \Lambda_{\mathrm{hom}}(\Diff) : = \inf_{u\in H^{1,0}(\T^\dim) \setminus\{0\}}\frac{\dps \int_{\T^\dim} \nabla u(q)^\top \overline{\Diff}\nabla u(q) \, dq}{\dps \int_{\T^\dim}u^2(q) \, dq}.
  \end{equation*}
\end{corollary}

\begin{remark}
  \label{rem:homog_sampling}
  Consider the process~$(q_t^{k})_{t\geqslant0}$ defined in~\eqref{eq:dynamics-periodic}, and define the rescaled process~$\left(Q_t^k\right)_{t\geqslant 0}=\left(k^{-1}q^{k}_{k^2t}\right)_{t\geqslant0}$ for~$k\geqslant 1$. By adapting the arguments from~\cite[Theorem~1]{fathi_2015} (see also~\cite[Chapter~3]{bensoussan_2011}), this rescaled process can be shown to converge, in the limit $k\to+\infty$, to an effective Brownian motion on the $d$-dimensional torus whose diffusion is the homogenized diffusion $\overline{\Diff}$ introduced in Corollary~\ref{cor:Hcvg-spectral}.
\end{remark}

In the next section, we seek to maximize the spectral gap~$\Lambda_{\mathrm{hom}}(\Diff)$ with respect to the baseline diffusion matrix~$\Diff$ from which~$\overline{\Diff}$ arises as the~$H$-limit of~$\Diff_{\#,k}(q)\exp(- V_{\#,k}(q))/Z$. There exists a large body of literature on optimizing homogenized coefficients, for example in the context of shape optimization of materials, typically by considering laminated structures which would correspond here to situations where the diffusion can only take two values; see for instance~\cite{Sigmund94,HD95,BT10,AGDP19} and~\cite[Chapter~10]{Henrot}.

It is useful to provide a more explicit expression of the limiting matrix~$\overline{\Diff}$ in order to maximize~$\Lambda_{\mathrm{hom}}(\Diff)$. Consider~$\xi = \xi_1e_1 + \dots + \xi_\dim e_\dim \in\R^\dim$ and note that
\begin{equation*}
  \xi^{\top} \overline{\Diff}\xi = Z^{-1} \int_{\T^\dim} (\xi + \nabla w_{\xi}(q))^\top \A(q)(\xi + \nabla w_{\xi}(q)) \, dq,
\end{equation*}
where~$w_{\xi} = \xi_1 w_1 + \dots + \xi_\dim w_\dim$ and the family~$(w_i)_{1 \leq i \leq \dim}$ is defined in~\eqref{eq:Abar2}. Recalling~\eqref{eq:def_A_from_D}, it follows that
\begin{align}
  \xi^{\top} \overline{\Diff}\xi & = \int_{\T^\dim} \left(\xi+\nabla w_\xi(q)\right)^{\top} \Diff(q) \left(\xi+\nabla w_\xi(q)\right) \, \mu(q) \, dq \notag                                                          \\
  & = \int_{\T^\dim}\left(\xi^{\top} \Diff(q)\xi  + 2 \xi^{\top}\Diff(q)\nabla w_\xi(q)+ \nabla w_\xi(q)^{\top} \Diff(q) \nabla w_\xi(q) \right)\mu(q) \, dq. \label{eq:optim-homog-3}
\end{align}
By the weak formulation of~\eqref{eq:Abar2}, it holds, for any~$v \in H^1(\T^\dim)$,
\begin{equation}
  \label{eq:FV_def_mat_homog}
  \int_{\T^\dim}\xi^{\top} \Diff(q)\nabla v(q) \, \mu(q) \, dq = -\int_{\T^\dim}\nabla w_\xi(q)^{\top} \Diff(q)\nabla v(q) \, \mu(q) \, dq.
\end{equation}
Upon replacing~$v$ by~$w_\xi$, we obtain with~\eqref{eq:optim-homog-3} that
\begin{equation}
  \label{eq:optim-homog-4}
  \xi^{\top} \overline{\Diff}\xi  = \xi^\top \left( \int_{\T^\dim} \Diff(q)\, \mu(q)\, dq \right)\xi - \int_{\T^\dim}\nabla w_\xi(q)^{\top} \Diff(q) \nabla w_\xi(q) \, \mu(q) \, dq.
\end{equation}

\subsubsection{Optimization of the homogenized limit}
\label{sec:optimization_homog_lim}

We now focus on the optimization with respect to~$\Diff \in \Diffset_p^{a,b}$ of the homogenized limit~$\Lambda_{\mathrm{hom}}({\Diff})$ given in~\Cref{cor:Hcvg-spectral}, under some normalization constraint:
\begin{equation}
  \label{eq:optim-homog-def}
  \Lambda_{\mathrm{hom}}^\star = \sup_{\Diff \in \Diffset_p^{a,b}}\Lambda_{\mathrm{hom}}(\Diff).
\end{equation}
This optimization problem is well posed by arguments similar to the ones used in~\Cref{subsec:well-posedness}, and using the fact that the mapping~$\Diff\mapsto\overline{\Diff}$ in~\eqref{eq:optim-homog-4} is continuous on~$\Diffset_p^{a,b}$ (relying on the continuity of the mapping~$L^{\infty}(\T^\dim,\calM_{a,b})\ni\mathcal{A}\mapsto w_i\in H^{1}(\T^d)$ where~$w_i$ is the solution of~\eqref{eq:Abar2}). In the most general case, one of the difficulties is to understand how the constraint~$\Diff \in \Diffset_p^{a,b}$ homogenizes into a constraint on~$\overline{\Diff}$. We start by analyzing a specific example of constrained set with linear constraints, which is not the constraint~$\| \Diff \|_{L^1_V} \leq 1$ with the norm defined in~\eqref{eq:norm_L^p_mu_matrices}. The interest of the linear constraint we consider is that the optimization problem becomes trivial. We next derive a similar result for the constraint~$\| \Diff \|_{L^p_V} \leq 1$, but for the one-dimensional case~$\dim=1$. Both results hold provided~$a,b>0$ are sufficiently small so that the optimal diffusion matrix automatically satisfies the pointwise lower and upper bounds involving~$a$ and~$b$.

\paragraph{Linear constraint.} We start by considering the following constraint on~$\Diff$, for a given constant matrix~$M\in\mathcal{S}_\dim^{++}$ (with~$\calS_\dim^{++}$ the set of real, symmetric, positive, definite matrices of sizes~$\dim \times \dim$):
\begin{equation}
  \label{eq:linear-constraint}
  \int_{\T^\dim}\Diff(q) \, \mu(q) \, dq = M.
\end{equation}
Note that this imposes~$\dim(\dim+1)/2$ scalar constraints on~$\Diff$ instead of only~1 scalar constraint when considering the~$L^{p}_V$ constraint~\eqref{eq:normLp}. This is the only setting where we are able to provide an explicit formula for the homogenized optimization problem in dimension $d >1$.

\begin{proposition}
\label{prop:linear_constraints}
  Consider~$M\in\mathcal{S}_\dim^{++}$, with~$M_-,M_+ > 0$ such that~$M_- \Id_\dim \leq M \leq M_+^{-1} \Id_\dim$. Fix~$a,b > 0$ such that
  \begin{equation}
    \label{eq:bounds_a_b_linear_constraint}
    0 < a \leq M_- Z, \qquad 0 < b \leq \frac{1}{M_+ Z}.
  \end{equation}
  Then, the diffusion~$\Diff^\star_\mathrm{hom}(q) = M/\mu(q)$ is a solution to the optimization problem
  \begin{equation*}
    \max \left\{ \Lambda_{\rm{hom}}(\Diff)\, \middle| \, \Diff\in L_V^{\infty}(\T^\dim,\mathcal{M}_{a,b}), \ \ \int_{\T^\dim}\Diff(q) \, \mu(q) \, dq = M \right\}.
  \end{equation*}
\end{proposition}

\begin{proof}
  In view of~\eqref{eq:optim-homog-4}, for any~$\Diff \in L_V^{\infty}(\T^\dim,\mathcal{M}_{a,b})$ satisfying~\eqref{eq:linear-constraint}, it holds
  \begin{equation*}
    \forall \xi\in\R^\dim, \qquad \xi^{\top}\overline \Diff\xi \leq \xi^{\top}\left(\int_{\T^\dim}\Diff(q) \, \mu(q) \, dq\right) \xi = \xi^{\top}M\xi.
  \end{equation*}
  Thus, for any~$\Diff\in L_V^{\infty}(\T^\dim,\mathcal{M}_{a,b})$ satisfying the constraint~\eqref{eq:linear-constraint},
  \begin{equation*}
    \inf_{u\in H^{1,0}(\T^\dim) \setminus\{0\}}\frac{\dps \int_{\T^\dim} \nabla u(q)^\top \overline{\Diff} \nabla u(q)\, dq}{\dps \int_{\T^\dim} u^2(q) \, dq} \leq \inf_{u\in H^{1,0}(\T^\dim) \setminus\{0\}}\frac{\dps \int_{\T^\dim} \nabla u(q)^\top M\nabla u(q) \, dq}{\dps \int_{\T^\dim}u^2(q) \, dq}.
  \end{equation*}
  Now, note that~$\Diff(q) = M/\mu(q)$ belongs to~$L_V^{\infty}(\T^\dim,\mathcal{M}_{a,b})$ in view of the choice~\eqref{eq:bounds_a_b_linear_constraint}, and satisfies~$\overline{\Diff} = M$ in view of~\eqref{eq:optim-homog-4} since~$w_\xi=0$ for any~$\xi\in\R^d$ by~\eqref{eq:Abar2}. This diffusion matrix therefore maximizes the limiting spectral gap in view of the above inequality, which concludes the proof.
\end{proof}

\paragraph{$L^p_V$ constraint in the one-dimensional case.}
We now restrict ourselves to the case where~$\dim=1$. In this situation,~$\Lambda_{\mathrm{hom}}(\Diff)$ is proportional to the nonnegative real number~$\overline{\Diff}$. Thus, maximizing~$\Lambda_{\mathrm{hom}}(\Diff)$ boils down to maximizing~$\overline{\Diff} \geq 0$. In the one-dimensional case, denoting~$w_\xi$ as~$w_\Diff$ (we omit the irrelevant parameter~$\xi$, but make the dependence on~$\Diff$ explicit), equation~\eqref{eq:Abar2} on~$w_{\Diff}$ can be rewritten as
\begin{equation}
  \label{eq:eq_w_D_1D}
  \left[\rme^{-V}\Diff(1+w_\Diff')\right]' = 0,
\end{equation}
and the expression~\eqref{eq:optim-homog-4} simplifies as
\begin{equation*}
  \overline{\Diff} = \int_{\T^\dim} \Diff(q)\left( 1-w'_\Diff(q)^2 \right) \mu(q) \, dq,
\end{equation*}
while the normalization constraint~$\| \Diff \|_{L^p_V} \leq 1$ simply reads
\begin{equation*}
  \int_{\T^\dim}\Diff(q)^p \rme^{- p V(q)} \, dq \leq 1.
\end{equation*}
The optimization problem~\eqref{eq:optim-homog-def} is therefore equivalent to
\begin{equation*}
  \max_{\Diff \in L^p_V(\T,\R_+)} \left\{\int_{\T^\dim} \Diff(q)\left( 1-w'_\Diff(q)^2 \right) \mu(q) \, dq \ \middle| \ a \leq \Diff \rme^{-V} \leq b^{-1}, \ \int_{\T^\dim}\Diff(q)^p \,\rme^{- p V(q)}\,dq \leq 1 \right\}.
\end{equation*}
This optimization problem admits an explicit solution provided~$a,b\leqslant 1$, as made precise in the following result.

\begin{proposition}
  \label{prop:max-homog}
  Let~$\dim=1$. Fix~$p \in [1,+\infty)$, and consider~$0 <a \le 1 \le b^{-1} < +\infty$. Then, a maximizer for the spectral gap in the homogenized limit~\eqref{eq:optim-homog-def} is
  \begin{equation}
    \label{eq:homog_diff_opt}
    \Diff^\star_{\mathrm{hom}}(q) = \rme^{V(q)}.
  \end{equation}
\end{proposition}

Note that the maximizer~$\Diff^\star_{\mathrm{hom}}$ is independent of~$p$ due to our specific choice of normalization, and that no extra normalization factor is needed.

\begin{proof}
  Equation~\eqref{eq:eq_w_D_1D} can be integrated as~$w_{\Diff}' + 1 = K \Diff^{-1} \rme^{V}$ for some constant~$K \in \R$, so that (up to an irrelevant additive constant)
  \begin{equation*}
    w_{\Diff}(q) = -q + K \int_0^q \frac{\rme^{V}}{\Diff}.
  \end{equation*}
  The constant~$K$ is chosen to ensure the periodicity of the function. This gives
  \begin{equation*}
    w_{\Diff}(q) = -q +  \left(\int_{\T} \frac{\rme^{V}}{\Diff} \right)^{-1} \int_0^q \frac{\rme^{V}}{\Diff},
  \end{equation*}
  so that, by~\eqref{eq:optim-homog-4} and~\eqref{eq:FV_def_mat_homog} (with~$\xi=1$ in this one-dimensional context, and $v=w_\xi=w_\Diff$),
  \begin{equation}
    \label{eq:D_homogenized_1d}
    \overline{\Diff} = \int_{\T}  \Diff \left[ 1 - \left(w_{\Diff}'\right)^2 \right] \mu = \int_\T \Diff\left(1 + w'_{\Diff}\right) \mu = \left(\int_\T \rme^{-V}\right)^{-1}\left(\int_\T \Diff^{-1} \, \rme^{V}\right)^{-1}.
  \end{equation}
  The maximization of the effective diffusion~$\overline{\Diff}$ with respect to~$\Diff$ therefore amounts to solving the following optimization problem:
  \begin{equation*}
    \min\left\{ \int_{\T} \Diff^{-1} \rme^{V} \ \middle| \ a \leq \Diff \rme^{-V} \leq b^{-1}, \ \int_{\T} \Diff(q)^p \, \rme^{- p V(q)} \, dq \leq 1 \right\}.
  \end{equation*}
  The Euler--Lagrange equation associated with the minimization problem with the normalization constraint only (forgetting the pointwise upper and lower bounds), assuming that the constraint is saturated at its value~1, implies that~$\Diff^{-2} \rme^{V}$ is proportional to~$\Diff^{p-1} \rme^{-p V}$, \emph{i.e.}~$\Diff$ is proportional to~$\rme^{V}$; and in fact equal to the latter function in view of the normalization constraint. The pointwise upper and lower bounds are then automatically satisfied given our choice for~$a,b$, which allows to conclude the proof.
\end{proof}

The optimal diffusion in the homogenized limit~\eqref{eq:homog_diff_opt} therefore modulates the speed of the exploration of the potential energy surface based on the configuration's energy level: the system slows down in important regions of the configuration space, where~$V$ is smaller, and tends to quickly cross low probability regions, where~$V$ is larger. Intuitively, this favors transitions between high-probability regions (modes of the distribution, corresponding to low energy regions).

The extension of this result to the multi-dimensional case~$\dim \geq 1$ is not as clear as computations cannot be easily performed analytically and we leave this open question for future work.

\subsection{Homogenization of the optimal diffusion}
\label{subsec:optim-homog}

We consider in this section the~$H$-limit of diffusion matrices which maximize the spectral gap for potentials with decreasing spatial periodicity. This corresponds to first performing the optimization of the spectral gap, then the homogenization of the associated maximizers; whereas the analysis of~\Cref{sec:periodic_homog_on_A} relied on first homogenizing the spectral gap problem, then optimizing it in~\Cref{sec:optimization_homog_lim}.

For~$k\geq 1$, define the constrained set associated with the oscillating potential~$V_{\#,k}$:
\begin{equation*}
  \Diffset_{\#,k,p}^{a,b} = \left\{\Diff\in L^{\infty}_{V_{\#,k}}(\T^\dim,\mathcal{M}_{a,b}) \,\middle|\, \int_{\T^\dim}\normF{\Diff(q)}^p \, \rme^{-p V_{\#,k}(q)} \, dq \leq 1 \right\}.
\end{equation*}
Note that, if~$\Diff \in \Diffset_{p}^{a,b}$, then~$\Diff_{\#,k} \in \Diffset_{\#,k,p}^{a,b}$, where we recall that~$\Diff_{\#,k}$ is the periodized diffusion~$\Diff_{\#,k}(q) = \Diff(k q)$. Of course, the set~$\Diffset_{\#,k,p}^{a,b}$ contains more functions than those obtained by periodization. Let us indeed emphasize that the diffusion matrices in~$\Diffset_{\#,k,p}^{a,b}$ are a priori only~$\mathbb{Z}^\dim$-periodic. For~$\Diff\in\Diffset_{\#,k,p}^{a,b}$ a~$\mathbb{Z}^\dim$-periodic diffusion, consider the spectral gap associated with the oscillating potential~$V_{\#,k}$:
\begin{equation}
  \label{eq:lambda-periodic-def}
  \Lambda^k(\Diff) = \min_{u \in H^1(\T^\dim) \setminus\{0\}} \left\{ \frac{\dps \int_{\T^\dim} \nabla u(q)^\top \Diff(q)\nabla u(q)\, \rme^{-V_{\#,k}(q)}\, dq}{\dps \int_{\T^\dim} u^2(q) \, \rme^{-V_{\#,k}(q)} \, dq} \ \middle| \ \int_{\T^\dim} u \, \rme^{-V_{\#,k}} = 0 \right\}.
\end{equation}
The optimal diffusion matrix corresponding to~\eqref{eq:lambda-periodic-def} satisfies the following optimization problem:
\begin{equation}
  \label{eq:lambda-periodic-optim}
  \Lambda^{k,\star} = \max_{\Diff \in \Diffset_{\#,k,p}^{a,b}} \Lambda^k(\Diff).
\end{equation}
Note that the minimization problem in~\eqref{eq:lambda-periodic-def} is different from the one in~\eqref{eq:lambda_per_k} since the diffusion matrix~$\Diff$ is not~$(\mathbb{Z}/k)^\dim$-periodic, but only~$\mathbb{Z}^\dim$-periodic a priori. It turns out however that the optimization problem~\eqref{eq:lambda-periodic-optim} admits a~$(\Z/k)^\dim$-periodic solution, as made precise in the following proposition, proved in~\Cref{app:lem:periodicity}.

\begin{proposition}
  \label{prop:periodicity}
  Fix~$p \in [1,+\infty)$ and consider~$a \in [0,a_\mathrm{max}]$ and~$b > 0$ such that~$ab \leq 1$. For~$k\in \N_+$, there exists~$\Diff^{k,\star} \in \Diffset_{p}^{a,b}$ such that, denoting by~$\Diff_{\#,k}^{k,\star}(q) = \Diff^{k,\star}(kq)$,
  \begin{equation*}
    \Lambda^k(\Diff_{\#,k}^{k,\star}) = \Lambda_{\#,k}(\Diff^{k,\star}) = \Lambda^{k,\star}.
  \end{equation*}
\end{proposition}
\Cref{prop:periodicity} shows that, while solving the optimization problem~\eqref{eq:lambda-periodic-optim} (well posed in application of~\Cref{thm:well-posedness-1D}), we may restrict ourselves to~$(\Z/k)^\dim$-periodic diffusion matrices.~\Cref{thm:commutation} below, proved in~\Cref{app:thm:commutation}, states that the limit~$\overline{\Lambda}^\star$ when~$k \to +\infty$ of the sequence~$(\Lambda^{k,\star})_{k \geq 1}$ is in fact equal to the limit~$\Lambda^\star_{\mathrm{hom}}$ defined in~\eqref{eq:optim-homog-def}, showing the commutation of homogenization and optimization procedures.

\begin{theorem}
  \label{thm:commutation}
  Fix~$p \in [1,+\infty)$ and consider~$a \in (0,a_\mathrm{max}]$ and~$b > 0$ such that~$ab \leq 1$.
  Then the sequence~$\left(\Lambda^{k,\star} \right)_{k\geq 1}$ converges to~$\Lambda^\star_{\mathrm{hom}}$ as~$k\to+\infty$.
\end{theorem}
~\Cref{thm:commutation} suggests that, in the homogenized limit, the optimal diffusion matrix yields the same spectral gap as its proxy obtained by solving~\eqref{eq:optim-homog-def} -- which, we recall, can be analytically written out in the simple cases discussed in~\Cref{sec:optimization_homog_lim}.

\subsection{Numerical results}
\label{subsec:homog:numerical_results}

We present two one-dimensional numerical examples illustrating the convergence towards the homogenized regime when the frequency~$k$ goes to infinity. The space discretizations of the optimization problem~\eqref{eq:lambda-periodic-optim}, the optimization algorithms, the notation and the hyperparameters are the same as in~\Cref{sec:numerical}. The mesh size used to obtain these results is refined as~$k$ increases, using~$N=200k$. Denote by~$\diff^{k,\star}$ the maximizer obtained by solving the approximation of the optimal spectral gap~\eqref{eq:lambda-periodic-optim} for a given frequency~$k\geqslant 1$ and by~$\sigma_2(\diff^{k,\star})$ the corresponding optimal value. The optimal homogenized coefficient is $\overline{\Diff}^\star_{\rm hom}=1/Z$, which is easily obtained from the formula~\eqref{eq:D_homogenized_1d} together with the expression of the optimal diffusion coefficient in the homogenized limit~\eqref{eq:homog_diff_opt}. Therefore, the value of the homogenized limit of the spectral gap is the product of the first non-zero eigenvalue of $-\Delta$ on $\T$ with $\overline{\Diff}^\star_{\rm hom}$, which yields~$\Lambda_{\mathrm{hom}}^\star=4\pi^2/Z$. 

The convergence of~$\diff^{k,\star}$ towards~$\diff^{\star}_{\rm hom}$ is illustrated in~\Cref{fig:homogenization-pic}. In particular, we represent on the upper right and left plots the optimal diffusion coefficients~$\diff^{k,\star}$ for increasing values of~$k$, along with the diffusion coefficient~$\diff^\star_\mathrm{hom}$. Note that we always observe that the diffusion~$\diff^{k,\star}$ is~$1/k$-periodic, and we only plot its trace on~$[0,1/k]$ (and stretch it on~$[0,1]$) so that optimal diffusion coefficients for various values of~$k$ can be compared: more precisely, the function plotted for the optimal diffusion coefficient for the frequency~$k\geqslant 1$ is~$q\mapsto\diff^{k,\star}_{\left\lfloor\frac{Nq}{k}\right\rfloor+1}$.

We observe that, on these simple one-dimensional examples, the optimal and homogenized diffusion coefficients coincide almost perfectly for relatively small values of the frequency, namely already for~$k=3$. We also observe that the optimal diffusion coefficient seems to vanish at two points for~$k=1$ (and for $k=2$ for the numerical example presented in~\Cref{fig:hom_cv_sin_sin}), whereas it is uniformly positive definite for~$k \geq 3$.

\begin{figure}
  \centering
  \begin{subfigure}[t]{0.49\linewidth}
    \includegraphics[width=\linewidth]{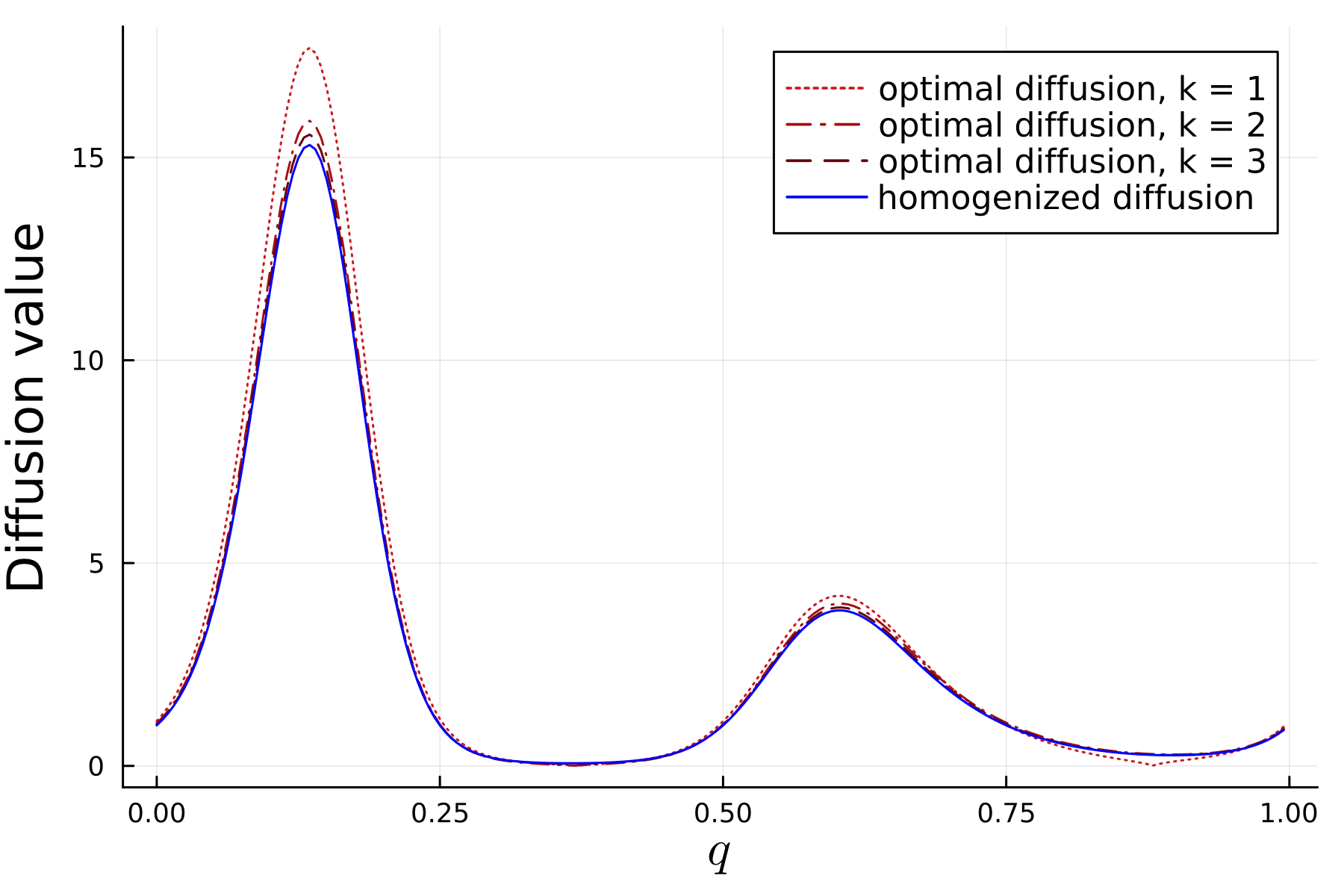}
    \includegraphics[width=\linewidth]{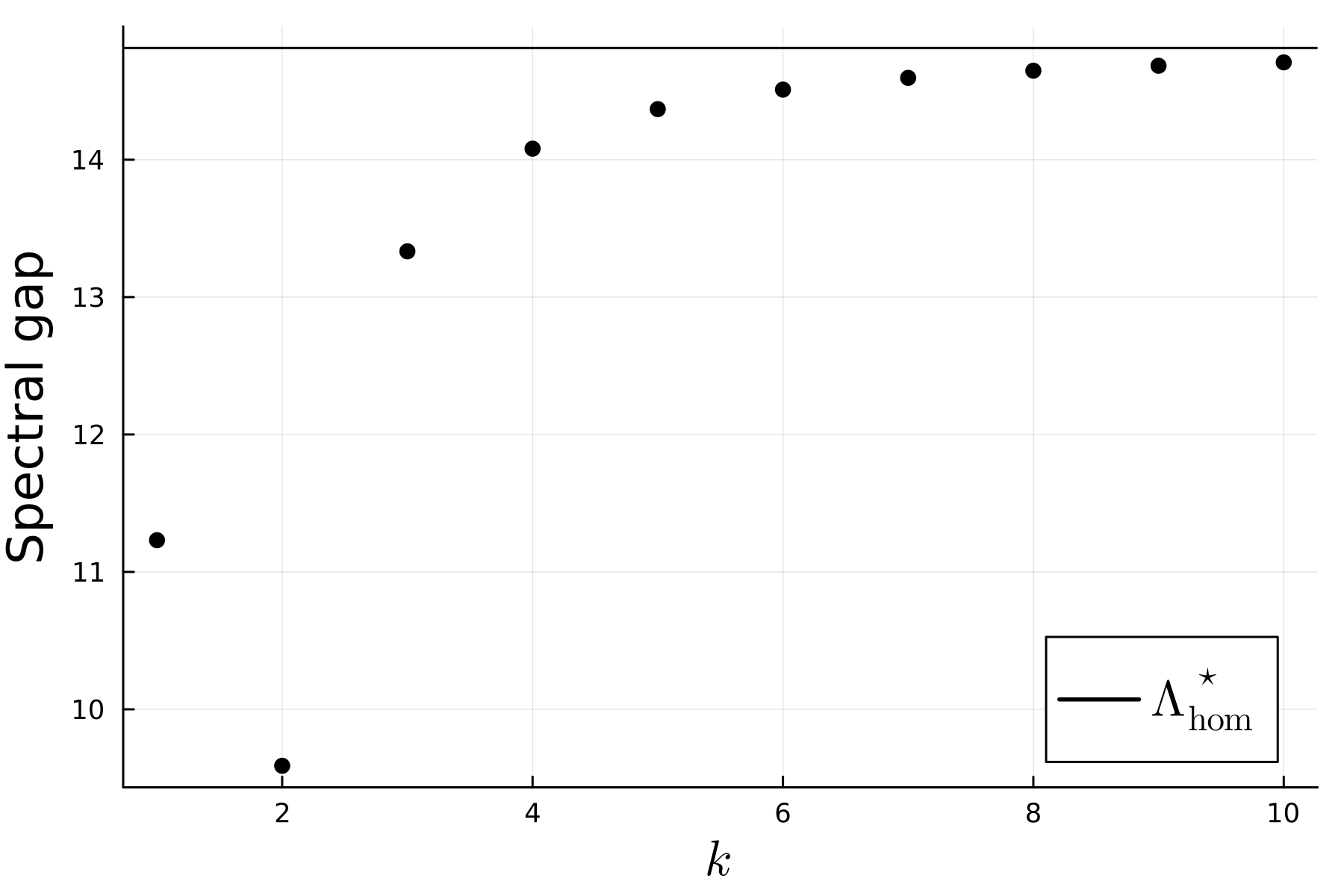}
    \caption{\label{fig:hom_cv_sin_sin}Potential:~$V(q) =   \sin(4\pi q) (2 + \sin(2\pi q))$.}
  \end{subfigure}
  \hfill
  \begin{subfigure}[t]{0.49\linewidth}
    \includegraphics[width=\linewidth]{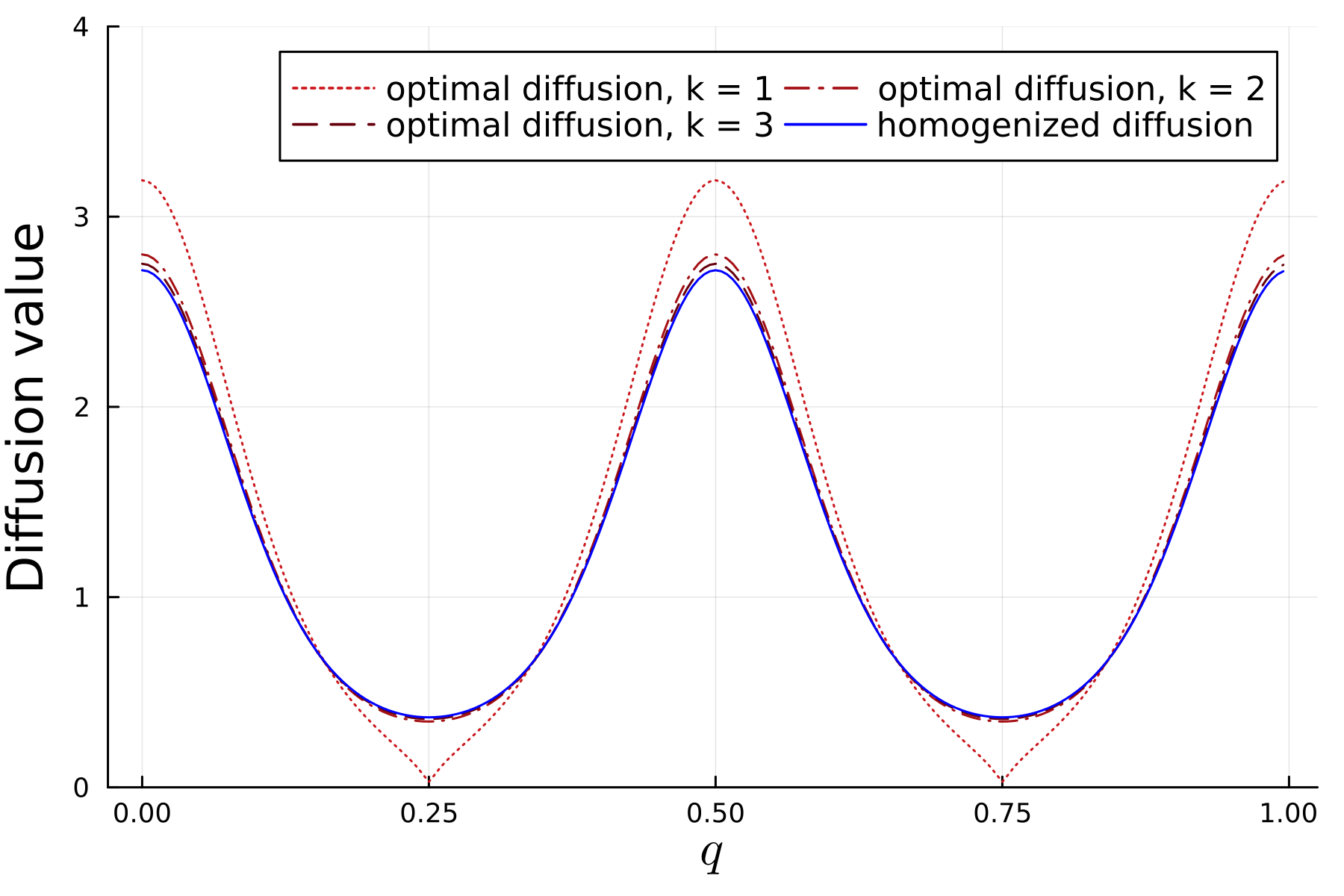}
    \includegraphics[width=\linewidth]{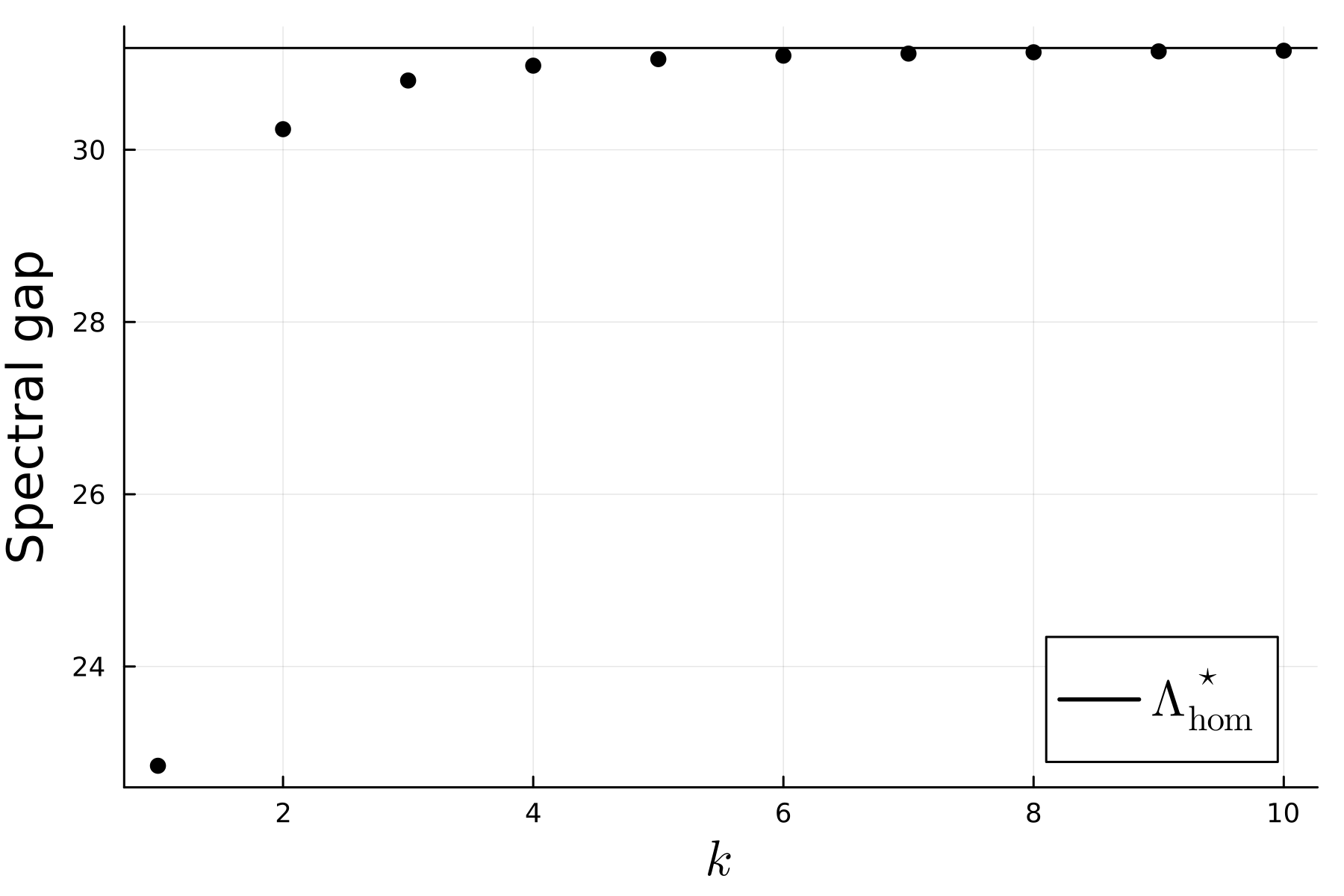}
    \caption{Potential:~$V(q) =   \cos(4\pi q)$. }
  \end{subfigure}
  \caption{Comparison of the two procedures: homogenize then optimize (leading to the solid blue line `homogenized diffusion'), versus optimize then homogenize (series of curves indexed by increasing values of~$k$). The left and right columns correspond to two different target distributions. The figures in the second row illustrate the convergence of the sequence~$\left(\sigma_2(\diff^{k,\star})\right)_{k\geqslant 1}$ towards~$\Lambda_{\mathrm{hom}}^{\star}$.}
  \label{fig:homogenization-pic}
\end{figure}

\section{Application to sampling algorithms}
\label{sec:sampling}

We present in this section an application of the optimization procedure to accelerate sampling algorithms. We focus on the Random Walk Metropolis--Hastings (RWMH) algorithm~\cite{metropolis,hastings} to unbiasedly sample from a Boltzmann--Gibbs target distribution. As we show in~\Cref{subsec:Donsker}, this algorithm provides a consistent discretization of the continuous dynamics~\eqref{eq:dynamics_mult}. More precisely, we show that the RWMH with proposals constructed using a space dependent diffusion converges, in the limit~$\Delta t \to 0$, to a Langevin diffusion with space dependent diffusion given by the proposal variance. Notice that we do not discuss the ergodicity of such dynamics, see~\cite{livingstone2021} for results in that direction. We next present in~\Cref{subsec:sampling} numerical experiments demonstrating that optimizing the diffusion leads to a more efficient sampling of the target measure. 
We numerically compare three variants of RWMH corresponding to three different choices for the variance of the proposal. The first variant is a simple RWMH algorithm with constant proposal variance; the two others have a space-dependent proposal variance~$\Delta t \, \Diff$, with~$\Diff$ given by (i) the pre-computed optimal diffusion matrix and (ii) the explicit homogenized limit of~\Cref{sec:optimization_homog_lim}.

\begin{remark}[On the choice of the dynamics]
We consider a simple Random-Walk algorithm with moves of size~$\mathrm{O}(\sqrt{\Delta t})$ rather than a Metropolization of a Euler--Maruyama discretization of the overdamped Langevin dynamics (the SmartMC method~\cite{RDF78} in molecular dynamics, known as MALA~\cite{RobertsTweedie1996} in computational statistics). Indeed, the latter dynamics have rejection probabilities of order~$\Delta t^{3/2}$ for constant diffusion matrices, but a much larger rejection probability of order~$\sqrt{\Delta t}$ for genuinely position dependent diffusion matrices~\cite{FS17} (see also~\Cref{lem:acceptance-rate-DL} in~\Cref{sec:technical_lemmas_pathwise_cv} below). In contrast, the rejection probability is of order~$\sqrt{\Delta t}$ in all cases for Random Walk Metropolis--Hastings. We therefore choose RWMH in order to have algorithms with rejection probabilities of the same order of magnitude, which allows for a fairer comparison of the dynamics. Let us mention that numerical schemes leading to a rejection probability scaling as~$\Delta t^{3/2}$ for Metropolizations of overdamped Langevin dynamics with position dependent diffusion matrices have recently been proposed in~\cite{LSS22}. We also refer to~\cite{DML23} for discussions about efficient discretizations of the overdamped Langevin dynamics with position dependent diffusions, using time-rescaling transforms.
\end{remark}

\subsection{Consistency of the Random Walk Metropolis--Hastings algorithm}
\label{subsec:Donsker}

We describe in this section the RWMH algorithm we consider, and then provide consistency results of the method in the limit~$\Delta t \to 0$.

\paragraph{Description of the Metropolis algorithm.}
The Random Walk Metropolis--Hastings algorithm~\cite{metropolis,hastings} is obtained by first proposing moves constructed from the current configuration by adding a Gaussian increment, and then accepting or rejecting the proposal move according to a Metropolis criterion. More precisely, starting from the current configuration~$q^i_{\Delta t}$ for~$i \geq 0$ and using a discretization time step~$\Delta t>0$, a new configuration is proposed as
\begin{equation}
  \label{eq:proposal_move_RWMH}
  \widetilde{q}^{i+1}_{\Delta t} = q^i_{\Delta t} + \sqrt{2\Delta t}\,\Diff^{1/2}(q^i_{\Delta t})\,G^{i+1},
\end{equation}
where~$(G^i)_{i\geq 1}$ is a sequence of independent and identically distributed (i.i.d.) normal random variables. The transition kernel~$\mathscr{T}(q^i_{\Delta t},q^{i+1}_{\Delta t})$ associated with this transition has density
\begin{equation*}
  \mathscr{T}(q,q') = \left(\frac{1}{4\pi\Delta t}\right)^{\dim/2}\det\Diff(q)^{-1/2} \exp\left(-\frac{1}{4\Delta t} (q'-q)^\top \Diff(q)^{-1} (q'-q)\right).
\end{equation*}
Note that the reverse move associated with the proposal~\eqref{eq:proposal_move_RWMH} is
\begin{equation*}
  q^i_{\Delta t} = \widetilde{q}^{i+1}_{\Delta t} - \sqrt{2\Delta t}\,\Diff^{1/2}(\widetilde{q}^{i+1}_{\Delta t})\, \widetilde{G}^{i+1},
  \qquad
  \widetilde{G}^{i+1} = \Diff^{-1/2}(\widetilde q^{i+1}_{\Delta t})\Diff^{1/2}(q^i_{\Delta t})\,G^{i+1}.
\end{equation*}
This allows to compute the Metropolis ratio as
\begin{equation}
  \label{eq:def_R_dt}
  \begin{aligned}
    R_{\Delta t}(q^i_{\Delta t},G^{i+1}) & = \min\left\{ 1, \frac{\mu(\widetilde{q}^{i+1}_{\Delta t})\mathscr{T}(\widetilde{q}^{i+1}_{\Delta t},q^i_{\Delta t})}{\mu(q^i_{\Delta t})\mathscr{T}(q^i_{\Delta t},\widetilde{q}^{i+1}_{\Delta t})} \right\}\\
    & = \min\left\{1,\left(\frac{\det \Diff(q^i_{\Delta t})}{\det \Diff(\widetilde q^{i+1}_{\Delta t})}\right)^{1/2} \mathrm{e}^{- [ V(\widetilde q^{i+1}_{\Delta t})-V(q^i_{\Delta t})] - (|\widetilde G^{i+1}|^2-|G^{i+1}|^2)/2} \right\}.
  \end{aligned}
\end{equation}
Accepting the proposal with probability~$R_{\Delta t}(q^i_{\Delta t},G^{i+1})$ using a sequence of i.i.d. random variables~$(U^i)_{i\geq 1}$ with uniform law on~$[0,1]$, independent of~$(G^i)_{i\geq 1}$, then leads to setting the new configuration as
\begin{equation}
  \label{eq:RWMH-variance}
  q^{i+1}_{\Delta t} = q^i_{\Delta t} + \sqrt{2\Delta t}\,\Diff^{1/2}(q^i_{\Delta t})\,G^{i+1}\mathbbm{1}_{\{U^{i+1}\leq R^n(q^i_{\Delta t},G^{i+1})\}}.
\end{equation}

\paragraph{Weak error estimates and pathwise weak convergence.}
The following result, similar to~\cite[Lemma~4]{FS17}, implies that the numerical scheme~\eqref{eq:RWMH-variance} is weakly consistent. The proof is omitted since it exactly follows the proof of~\cite[Lemma~4]{FS17} as a particular case when the drift term is 0.

\begin{proposition}
  \label{prop:weak_consistency_rwmh}
  For any~$\varphi \in \calC^\infty(\T^\dim)$, there exists~$K \in \mathbb{R}_+$ and~$\Delta t^\ast > 0$ such that, for any~$0<\Delta t\leqslant \Delta t^{\ast}$,
  \begin{equation*}
    \sup_{q \in \T^\dim}\left\lvert
      \frac{\mathbb{E}^q\left( \varphi(q_{\Delta t}^1) \right) - \varphi(q) - \Delta t \cLD \varphi(q)}{\Delta t^{3/2}},
    \right\rvert\leqslant K,
  \end{equation*}
  where~$\mathbb{E}^q$ denote the expectation with respect to all realizations of the Markov chain~\eqref{eq:RWMH-variance} starting from~$q_{\Delta t}^0 = q$.
\end{proposition}

We next state a weak consistency result of the Metropolis scheme with the baseline continuous dynamics~\eqref{eq:dynamics_mult} at the level of the paths of the process. Define the rescaled, linearly interpolated continuous-time process~$(Q^{\Delta t}_t)_{t \geq 0}$ by
\begin{equation}
  \label{eq:process}
  Q^{\Delta t}_t =\left (\left \lceil \frac{t}{\Delta t} \right \rceil - \frac{t}{\Delta t}\right)q_{\Delta t}^{\left \lfloor \frac{t}{\Delta t} \right \rfloor} + \left(\frac{t}{\Delta t} - \left \lfloor \frac{t}{\Delta t} \right \rfloor\right)q_{\Delta t}^{\left \lceil \frac{t}{\Delta t} \right \rceil},
\end{equation}
with~$Q_0^{\Delta t} = q_0$ given. We can then state the following pathwise consistency result, proved in~\Cref{app:thm:Donsker}.

\begin{theorem}
  \label{thm:Donsker}
  Assume that~$\Diff\in \calC^2(\T^\dim,\mathcal{S}_\dim^{++})$, and denote by~$P_{\Delta t}$ the law of the process~$(Q^{\Delta t}_t)_{t \geq 0}$ on~$\calC^0([0,+\infty), \T^\dim)$, starting from a given initial condition~$Q_0^{\Delta t} = q_0$. Then,~$P_{\Delta t}$ converges weakly to~$P$ as~$\Delta t \to 0$, where~$P$ is the law of the unique solution to the stochastic differential equation
  \begin{equation}
    \label{eq:dynamics_donsker}
    dQ_t = \left(- \Diff(Q_t)\nabla V(Q_t) + \div\Diff(Q_t) \right) dt + \sqrt{2}\, \Diff^{1/2}(Q_t) \, dW_t,
  \end{equation}
  where~$(W_t)_{t \geq 0}$ is a standard~$\dim$-dimensional Brownian motion, with initial condition $Q_0=q_0$.
\end{theorem}

Note that we assume that~$\Diff\in\mathcal{C}^2(\T,\mathcal{S}_d^{++})$ for some technical estimates, in particular those of~\Cref{lem:acceptance-rate-DL} in~\Cref{sec:technical_lemmas_pathwise_cv} below.

\subsection{Sampling experiments}
\label{subsec:sampling}

In this section, we illustrate the benefits of using a position dependent diffusion coefficient for sampling a Boltzmann--Gibbs target distribution. We consider a linear interpolation of the optimal diffusion coefficient~$\diff^{\star}$ (introduced in~\Cref{sec:numerical}) obtained with~$N=I=1000$ and~$a=b=0$, the diffusion coefficient for the homogenized problem~$\diff^{\star}_{\mathrm{hom}}$ (see~\Cref{sec:homog} and equation~\eqref{eq:homog_diff_opt}), and the constant diffusion coefficient~$\diff_{\rm cst}$ (see~\Cref{sec:numerical}).

We choose the one-dimensional double well potential~$V(q)=\sin(4\pi q)(2+\sin(2\pi q))$ defined on~$\T$ (the associated Gibbs distribution is displayed in~\Cref{fig:res_1_left}). The potential exhibits a global minimum on~$\T$ at~$x_0\approx 0.36544$ and a local minimum at~$x_1 \approx 0.89714$.

\paragraph{Sample trajectories.}  We show in~\Cref{fig:MC_samples} sample trajectories following the discretization~\eqref{eq:RWMH-variance} for each diffusion coefficient. In order to compare the simulations, the same Gaussian variables~$(G^{i})_{i\geqslant1}$ are used in~\eqref{eq:proposal_move_RWMH}. They are run with time step~$\Delta t=10^{-4}$ for~$N_{\rm it}=10^6$ iterations, with the same initial condition~$q_0=0$. This time step is chosen so that the rejection probability in the Metropolis--Hastings procedure is of order 5\%; see~\Cref{tab:MH_rejection_probability} for the values of the rejection probabilities in each case, as well as~\Cref{fig:transition_time_MH_ratio} for a study of the scaling of the rejection probability as a function of the time step. We observe that the dynamics using a constant diffusion coefficient is stuck longer in wells, whereas the dynamics using either the optimal or homogenized diffusion coefficient transitions faster between metastable states.

\paragraph{Mean Squared Displacement and effective diffusion.} We next compute the mean squared displacement (MSD) averaged over~$N_{\mathrm{sim}}=10000$ realizations of the dynamics for the various optimal diffusion coefficients. Each realization starts at position~$q_0=0$, and the physical simulation time is set to~$10$ units. Therefore, the MSD at iteration~$n\geqslant1$ is
\begin{equation*}
  (\mathrm{MSD})_n=\frac{1}{N_{\mathrm{sim}}}\sum_{i=1}^{N_{\mathrm{sim}}}\left\lvert q^{i,n}_{\Delta t}-q_0\right\rvert^{2},
\end{equation*}
where~$q_{\Delta t}^{i,n}$ is the (unperiodized) position of the~$i$-th realization of the dynamics at iteration~$n$ for a given time step~$\Delta t>0$. The results for~$\Delta t=10^{-7}$ are presented in~\Cref{fig:msd_iterations}. The MSD is computed every~$1000$ steps. The ribbons represent 95\% confidence intervals, quantified using the variance estimated over the~$N_{\rm sim}$ independent realizations. We observe more displacement when using either the optimal or the homogenized diffusion coefficient, which confirms that the associated dynamics diffuse more over the configuration space than the dynamics using a constant diffusion coefficient.

Observe that the MSD depends linearly on time: this is an illustration of the celebrated Enstein formula, the linear coefficient being twice the \emph{effective diffusion} (we refer for instance to~\cite{FS17} and references therein for more theoretical details). The effective diffusion is a function of the diffusion coefficient~$\Diff$, and is maximal when choosing the optimal homogenized diffusion~$\Diff_{\rm hom}^\star$ defined by~\eqref{eq:D_homogenized_1d}. In fact, for this particular choice, the effective diffusion has a simple analytical expression, given by~$1/Z\approx0.375$ where~$Z$ is the normalization constant introduced in~\eqref{eq:mu}. In~\Cref{fig:msd_slopes}, we compute approximations of the effective diffusion: we run MSD computations for various time steps~$\Delta t\in\left\lbrace 10^{-7},10^{-6},\dots,10^{-1}\right\rbrace$, and estimate the effective diffusion using a linear regression on the time interval~$[5,10]$. These simulations show that we obtain significantly larger MSD whatever the time step~$\Delta t$ when choosing any of the two optimal diffusions rather than the constant diffusion. The relative positions of the curves stay the same, even for time steps that are not close to the continuous-in-time limit. Lastly, the effective diffusion for the optimal homogenized diffusion for~$\Delta t=10^{-7}$ is equal to~$0.375$, which coincides with the reference value $1/Z$ for the continuous-in-time dynamics. We checked that these results were stable when increasing the simulation time up to~$100$ units.

\begin{figure}
  \begin{center}
    \begin{subfigure}[t]{0.45\textwidth}
      \centering \includegraphics[draft=false,width=\linewidth]{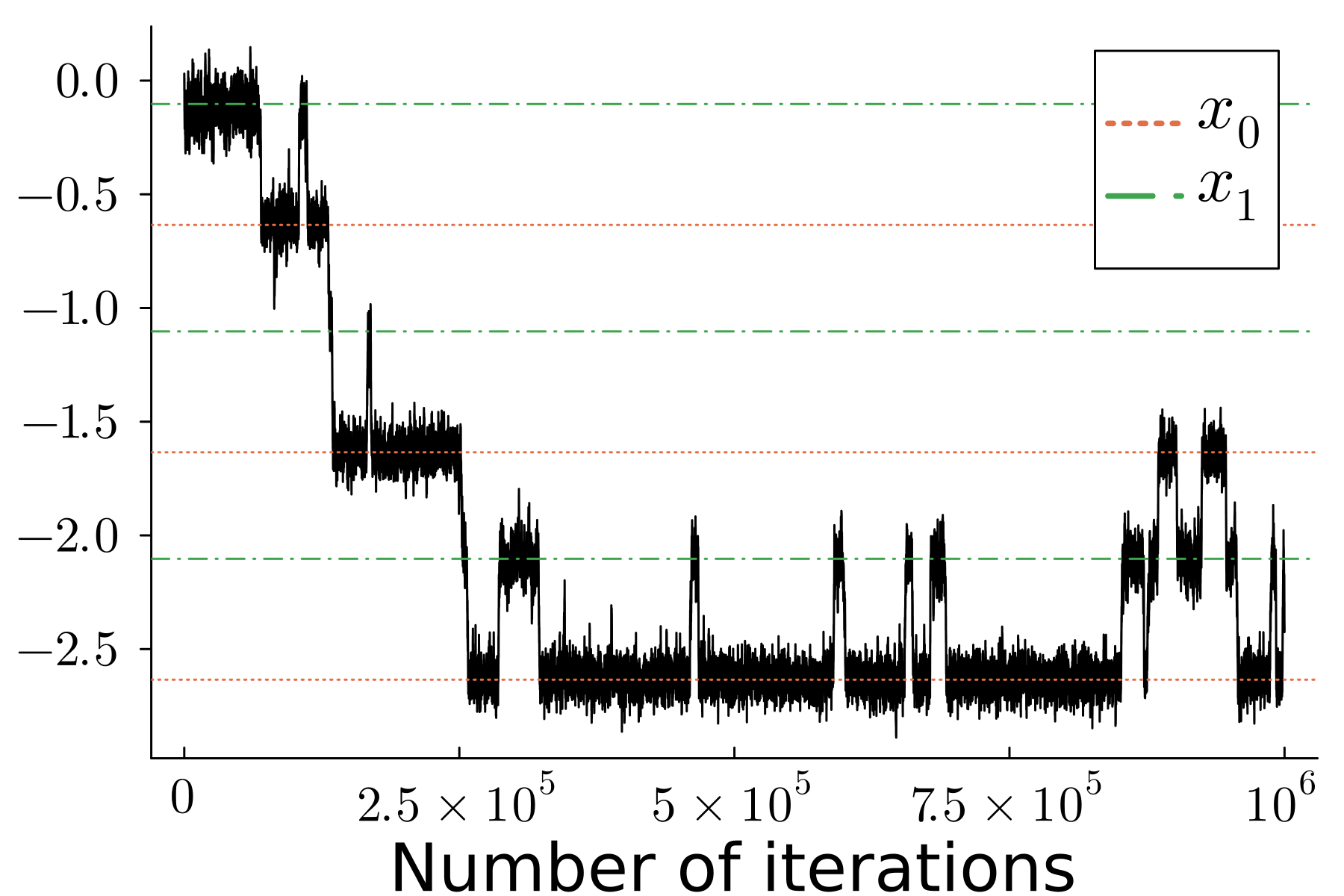}
      \caption{Constant diffusion coefficient.}
    \end{subfigure}
    \hfill
    \begin{subfigure}[t]{0.45\textwidth}
      \centering \includegraphics[draft=false,width=\linewidth]{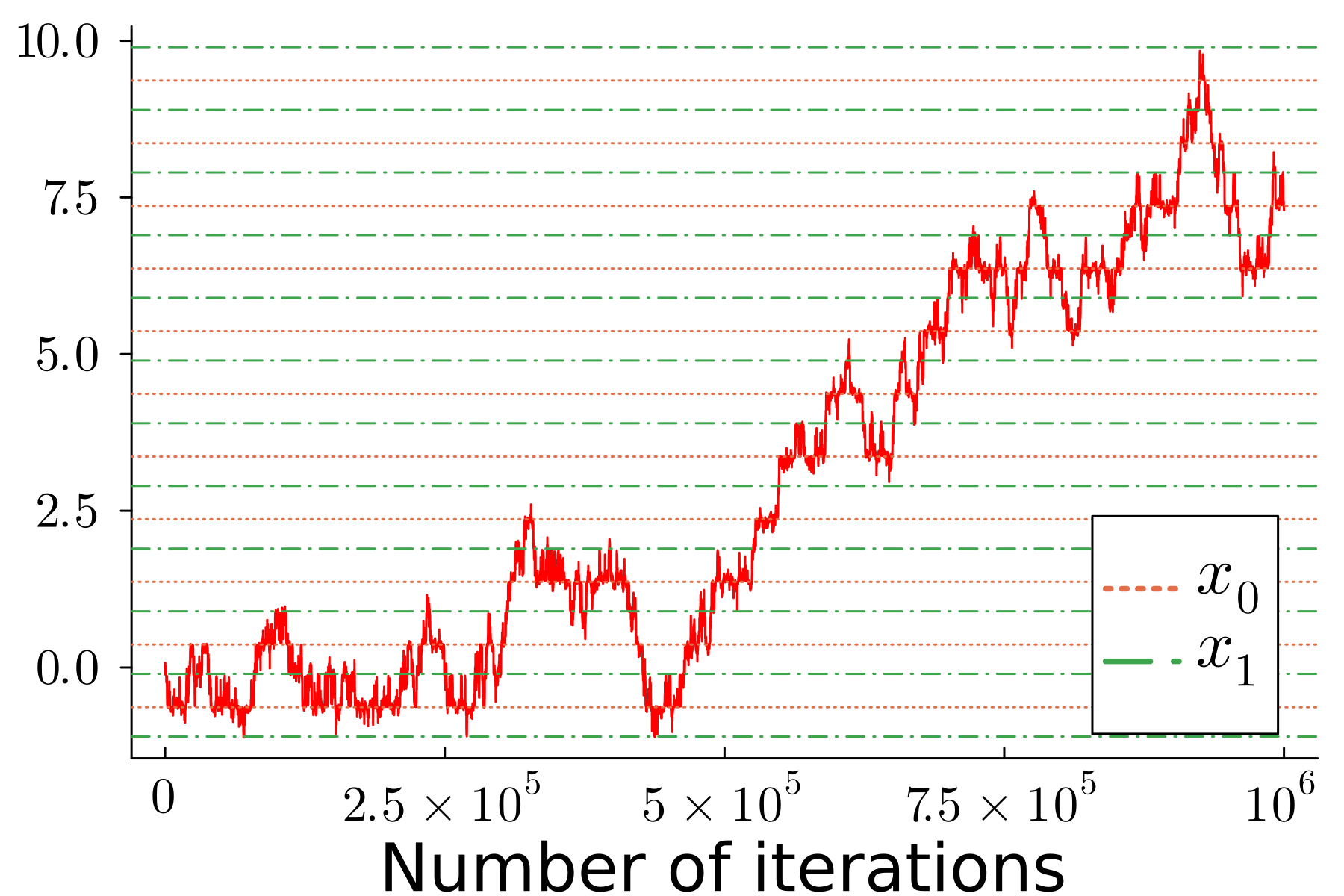}
      \caption{Optimal diffusion coefficient,~$a=0$.}
    \end{subfigure}
    \hfill
    \begin{subfigure}[t]{0.45\textwidth}
      \centering \includegraphics[draft=false,width=\linewidth]{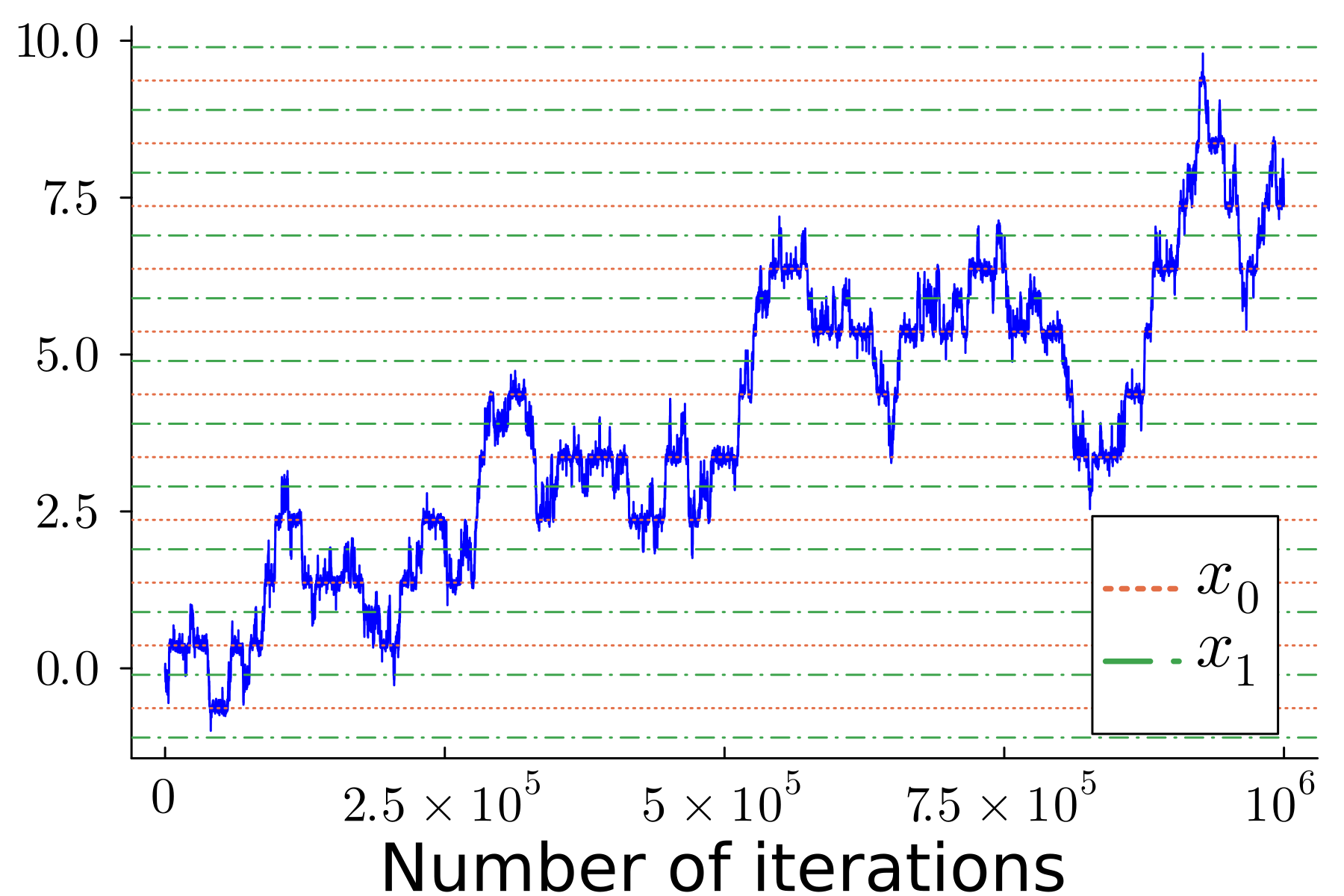}
      \caption{Homogenized diffusion coefficient.}
    \end{subfigure}
    \caption{\label{fig:MC_samples} Sample trajectories of the RWMH algorithm for a periodic, one-dimensional potential with double well~$V(q)=\sin(4\pi q)(2 + \sin(2\pi q))$. Notice that we plot the trajectories over the unfolded configuration space $\R$ instead of the torus $\T=\R/\Z$. The two wells corresponding to~$x_0\approx0.36544$ and~$x_1\approx0.89714$ are indicated using dashed and dotted horizontal lines in each periodic cell.}
  \end{center}
\end{figure}

\begin{figure}
    \centering
    \begin{subfigure}[t]{0.45\textwidth}
      \centering \includegraphics[draft=false,width=\linewidth]{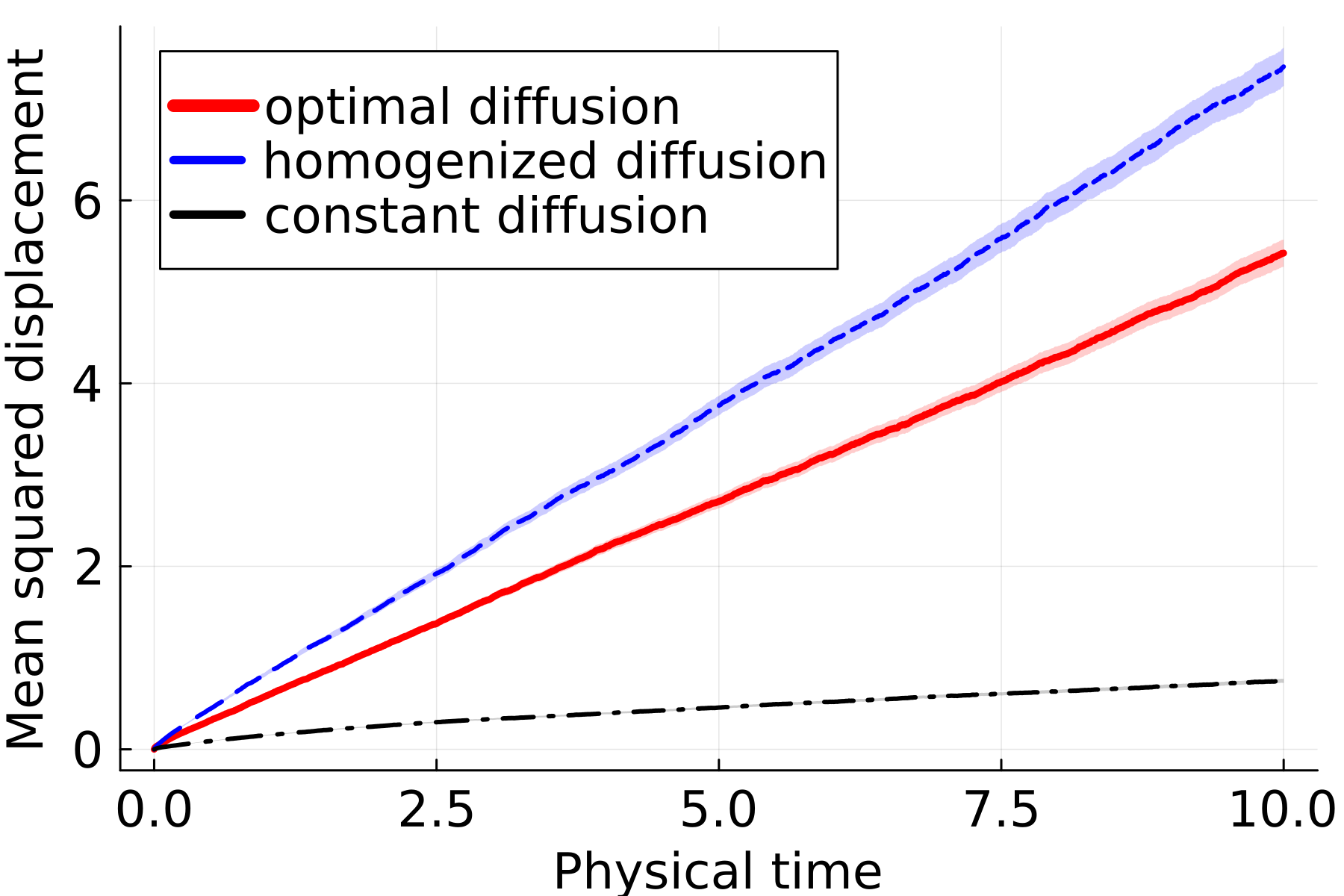}
      \caption{Mean Squared Displacement over time.}\label{fig:msd_iterations}
    \end{subfigure}
    \hfill
    \begin{subfigure}[t]{0.45\textwidth}
      \centering \includegraphics[draft=false,width=\linewidth]{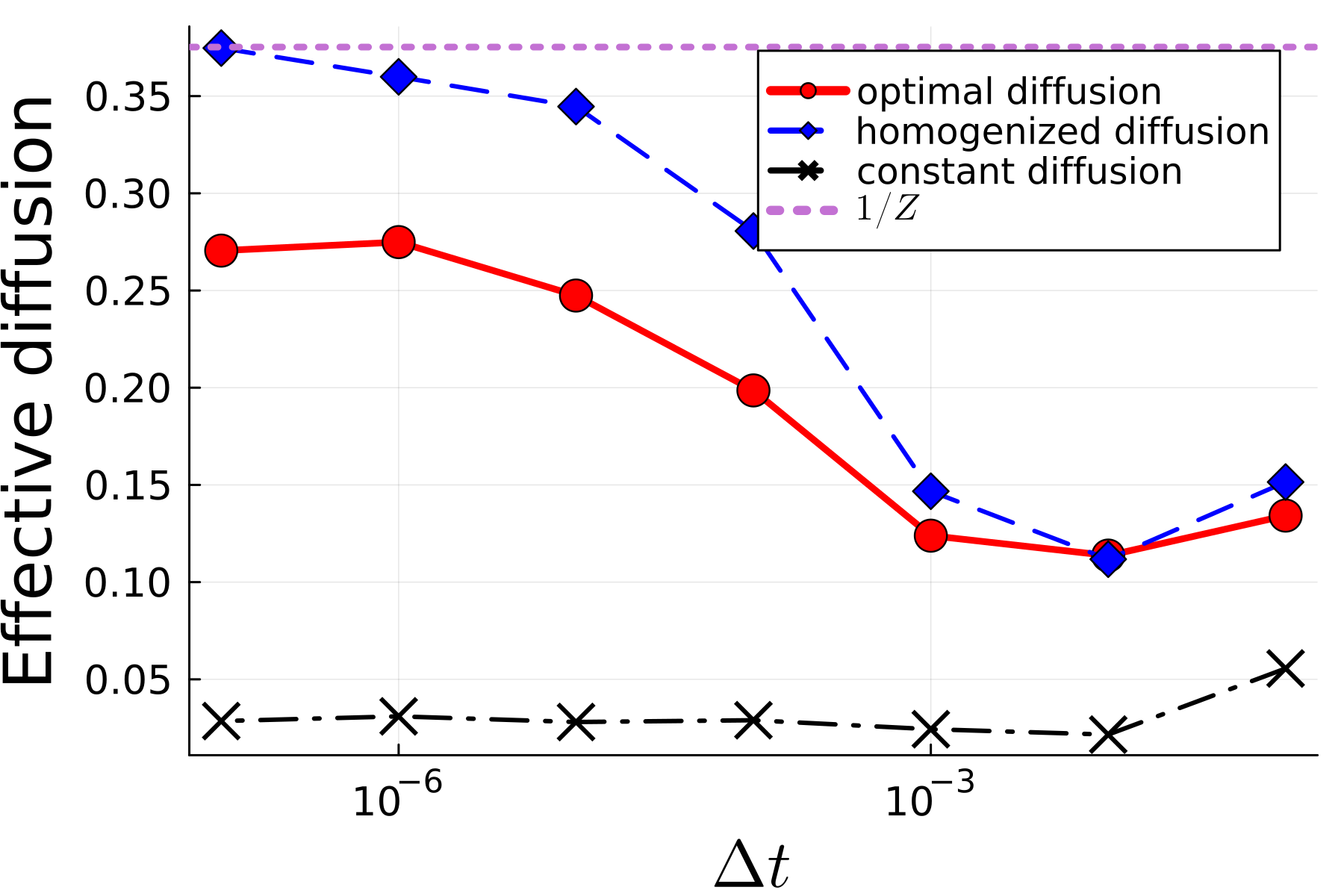}
      \caption{Effective diffusion for various time steps~$\Delta t$.}\label{fig:msd_slopes}
    \end{subfigure}
    \caption{Mean Squared Displacement and effective diffusion for the various diffusions coefficients.}
    \label{fig:msd}
\end{figure}

\begin{table}
  \centering
  \begin{tabular}{c|cccc}
    \toprule
    Diffusion coefficient           & Constant & Homogenized & Optimal \\\midrule
    Spectral gap                    & 0.81     & 10.6        & 11.2    \\\midrule
    M--H rejection probability (\%) & 3.72     & 4.00        & 6.42    \\
    \bottomrule
  \end{tabular}
  \caption{Spectral gap and Metropolis--Hastings rejection probability for the simulations in~\Cref{fig:MC_samples}.}
  \label{tab:MH_rejection_probability}
\end{table}

\paragraph{Convergence of the empirical law towards the Gibbs measure.}

In order to further compare the various diffusion coefficients and their respective effects on the sampling efficiency, we numerically compute the weighted~$L^{2}$ error ($\chi^2$ divergence) between the empirical distribution of the discretized process and the Gibbs measure, see~\eqref{eq:chi_square_disc} below. As pointed out in~\eqref{eq:cvg}, for the limiting continuous-in-time dynamics~\eqref{eq:dynamics_donsker}, this error converges exponentially fast towards~0, at a rate given by the spectral gap of the generator of the  dynamics.

More precisely, we set~$\Delta t=10^{-6}$ and consider an initial distribution~$\mu^{0}$ made of~$N_{\rm samples}=10^{5}$ samples generated independently and uniformly over~$[0,1]$:
\begin{equation*}
  \mu^{0}=\frac{1}{N_{\rm samples}}\sum_{i=1}^{N_{\rm samples}}\delta_{q_{\Delta t}^{i,0}},\qquad q_{\Delta t}^{i,0}\sim\mathcal{U}([0,1]).
\end{equation*}
Each sample is then independently updated using the RWMH algorithm presented in~\Cref{subsec:Donsker}, so that the empirical distribution~$\mu^{n}$ at time~$n\Delta t$ is
\begin{equation*}
  \mu^{n}=\frac{1}{N_{\rm samples}}\sum_{i=1}^{N_{\rm samples}}\delta_{q_{\Delta t}^{i,n}}.
\end{equation*}

In practice, the weighted~$L^{2}$ error appearing in the left-hand side of~\eqref{eq:cvg} is approximated by discretizing the interval~$[0,1)$ into~$N_{\rm bins}$ bins of equal sizes: for~$1\leqslant k\leqslant N_{\rm bins}$, we denote by~$B_k=[(k-1)/N_{\rm bins}, k/N_{\rm bins})$ the $k$-th bin. The weighted~$L^{2}$ error is then approximated as
\begin{equation}\label{eq:chi_square_disc}
  \left(
  \frac{1}{N_{\rm bins}}\sum_{k=1}^{N_{\rm bins}}\frac{\left(\widehat{\mu}_k^{n}-\widehat{\mu}_k\right)^{2}}{\widehat{\mu}_k}
  \right)^{1/2}
\end{equation}
where~$\widehat{\mu}_k^{n},\widehat{\mu}_k$ are proportional to the fraction of samples in the~$k$-th bin, and normalized as
\begin{equation*}
  \frac{1}{N_{\rm bins}}\sum_{k=1}^{N_{\rm bins}}\widehat{\mu}_k^{n}=\frac{1}{N_{\rm bins}}\sum_{k=1}^{N_{\rm bins}}\widehat{\mu}_k=1.
\end{equation*}
In particular, in our case,
\begin{equation*}
  \forall 1\leqslant k\leqslant N_{\rm bins},\qquad
  \widehat{\mu}_k=N_{\rm bins}\rme^{-V\left(\tfrac{k-1/2}{N_{\rm bins}}\right)} \Big/ \sum_{j=1}^{N_{\rm bins}}\rme^{-V\left(\frac{j-1/2}{N_{\rm bins}}\right)},
\end{equation*}
approximates the Gibbs distribution, and
\begin{equation*}
  \forall 1\leqslant k\leqslant N_{\rm bins},\qquad
  \widehat{\mu}^{n}_k=\frac{N_{\rm bins}}{N_{\rm samples}}\left\lvert\left\lbrace 1\leqslant i\leqslant N_{\rm samples},\, q_{\Delta t}^{i,n}\in B_k\right\rbrace\right\rvert,
\end{equation*}
approximates the empirical distribution~$\mu^{n}$, where~$\left\lvert A\right\rvert$ denotes the cardinal of a set~$A$. We run the RWMH algorithm for~$N_{\rm it}=10^{6}$ iterations and use~$N_{\rm bins}=100$ to approximate the weighted~$L^{2}$ error.

The error between the empirical distribution and the Gibbs distribution as a function of the physical time is presented in~\Cref{fig:L2_error}. This confirms that the use of the optimal diffusion or the homogenized coefficient greatly improves the convergence of the law of the process towards equilibrium. The exponential rate of convergence, corresponding to the dotted lines, is quickly reached. Recall that the spectral gaps for the optimal, homogenized and constant diffusion coefficients are respectively~11.23, 10.58 and 0.81. This is in agreement with the numerical results of~\Cref{fig:L2_error} which show a slightly lower error for the optimal diffusion coefficient than for the homogenized diffusion coefficient, and a significantly lower error than for the constant diffusion coefficient.

\begin{figure}
  \centering
  \includegraphics[width=0.8\textwidth]{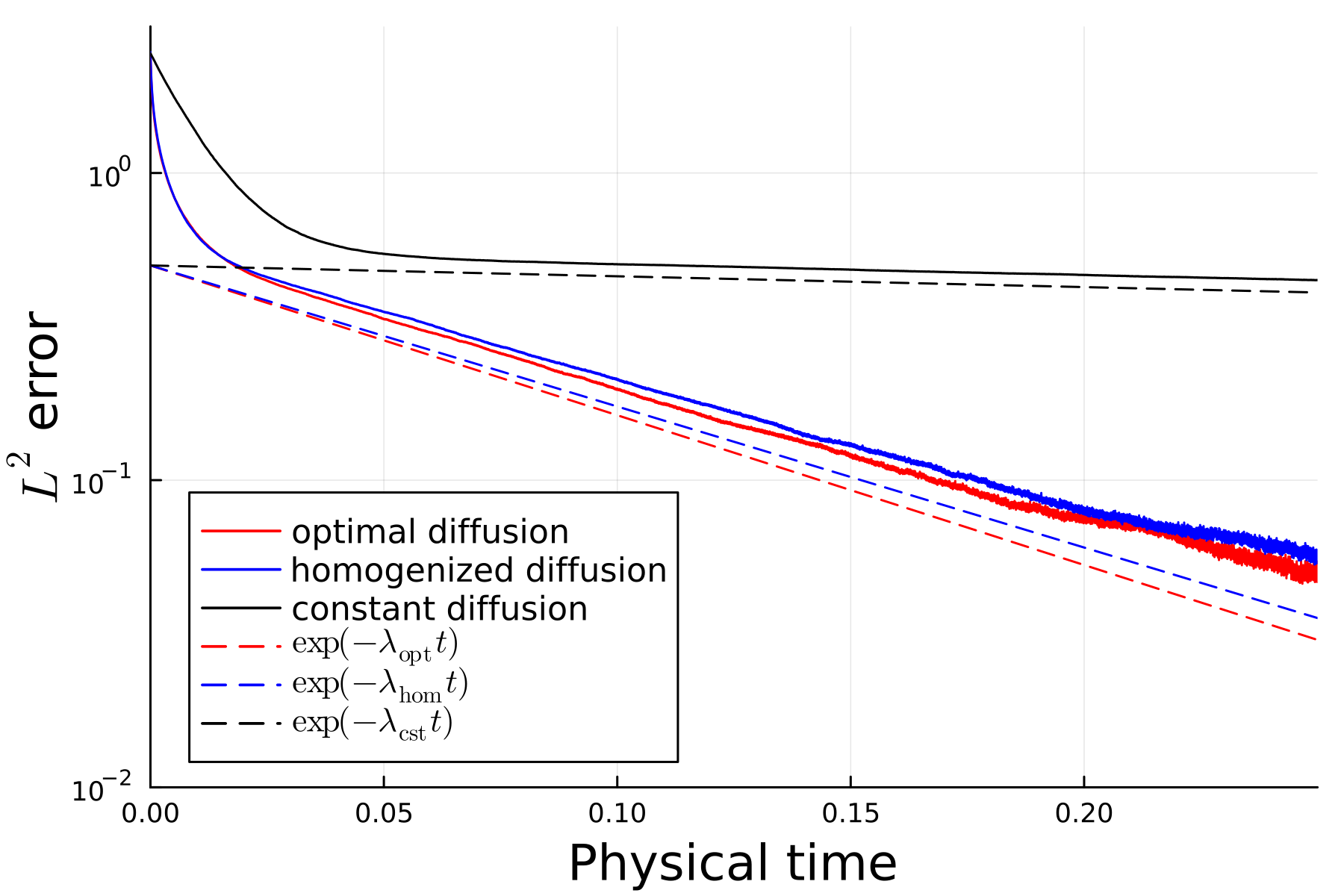}
  \caption{Weighted~$L^{2}$ error (in logarithmic scale) between the empirical distribution and the Gibbs distribution over time for various diffusion coefficients.}
  \label{fig:L2_error}
\end{figure}

\paragraph{Spatial analysis of the rejection probability of the Metropolis--Hastings procedure.}

We compute the average rejection probability of the Metropolis--Hastings procedure for a fixed time step~$\Delta t=0.01$. We discretize the torus with a set of points~$q^i=i/1000$ for~$0\leqslant i<1000$. For each point~$q^i$, we propose $10^{5}$ moves following the rule~\eqref{eq:proposal_move_RWMH}, and count how many proposals are accepted when performing the Metropolis--Hastings procedure, \emph{i.e.}~computing~\eqref{eq:def_R_dt} and comparing its value to a random number uniformly distributed over~$[0,1]$.

We represent on~\Cref{fig:rejection_probabilities}, for various diffusion coefficients, the three following functions: the target distribution, the diffusion coefficient and the average rejection probability. We discarded the error bars as they were too small to be even noticed in the plot.

For the constant diffusion coefficient, for which the results are presented in~\Cref{fig:rejection_probabilities_constant_diffusion}, the proposals are most likely rejected when~$q$ is at the bottom of a well. When using either the optimal diffusion coefficient or the homogenized diffusion coefficient, the behavior is the opposite: the proposals are most likely accepted when~$q$ is located at the bottom of a well. This is shown in~\Cref{fig:rejection_probabilities_homogenized_diffusion,fig:rejection_probabilities_optimal_diffusion}. Notice that for the case of the optimal diffusion, higher rejection probabilities are observed where the optimal diffusion coefficient almost vanishes. This is because of the ratio of determinant factors appearing in the denominator of~\eqref{eq:def_R_dt}: even though both terms are of similar orders, they are almost zero, which causes numerical instabilities. We plot in~\Cref{fig:rejection_probabilities_various_lower_bounds} the average rejection probabilities when using the optimal diffusion coefficients obtained with the optimization procedure and imposing various lower bounds~$a\in\left\lbrace 0.2,0.4,0.6,0.8,1\right\rbrace$. Of course, using a positive lower bound $a$ leads to uniformly positive diffusion coefficients, which is desirable to get stable and ergodic samplers.

\begin{figure}
  \centering
  \begin{subfigure}[t]{0.45\textwidth}
    \centering
    \includegraphics[width=\linewidth,draft=false]{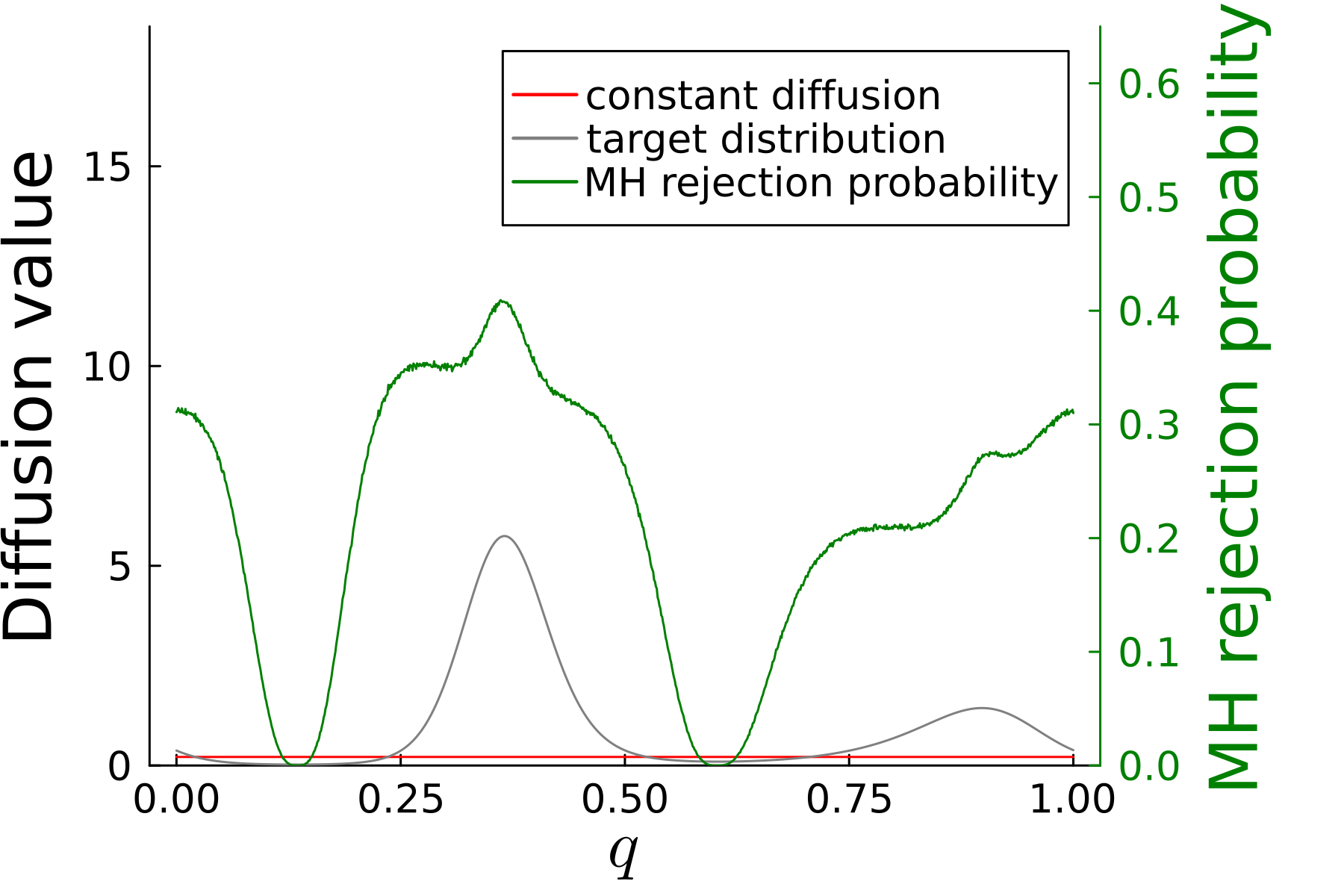}
    \caption{\label{fig:rejection_probabilities_constant_diffusion}
    Constant diffusion coefficient.}
  \end{subfigure}
  \hfill
  \begin{subfigure}[t]{0.45\textwidth}
    \centering
    \includegraphics[width=\linewidth,draft=false]{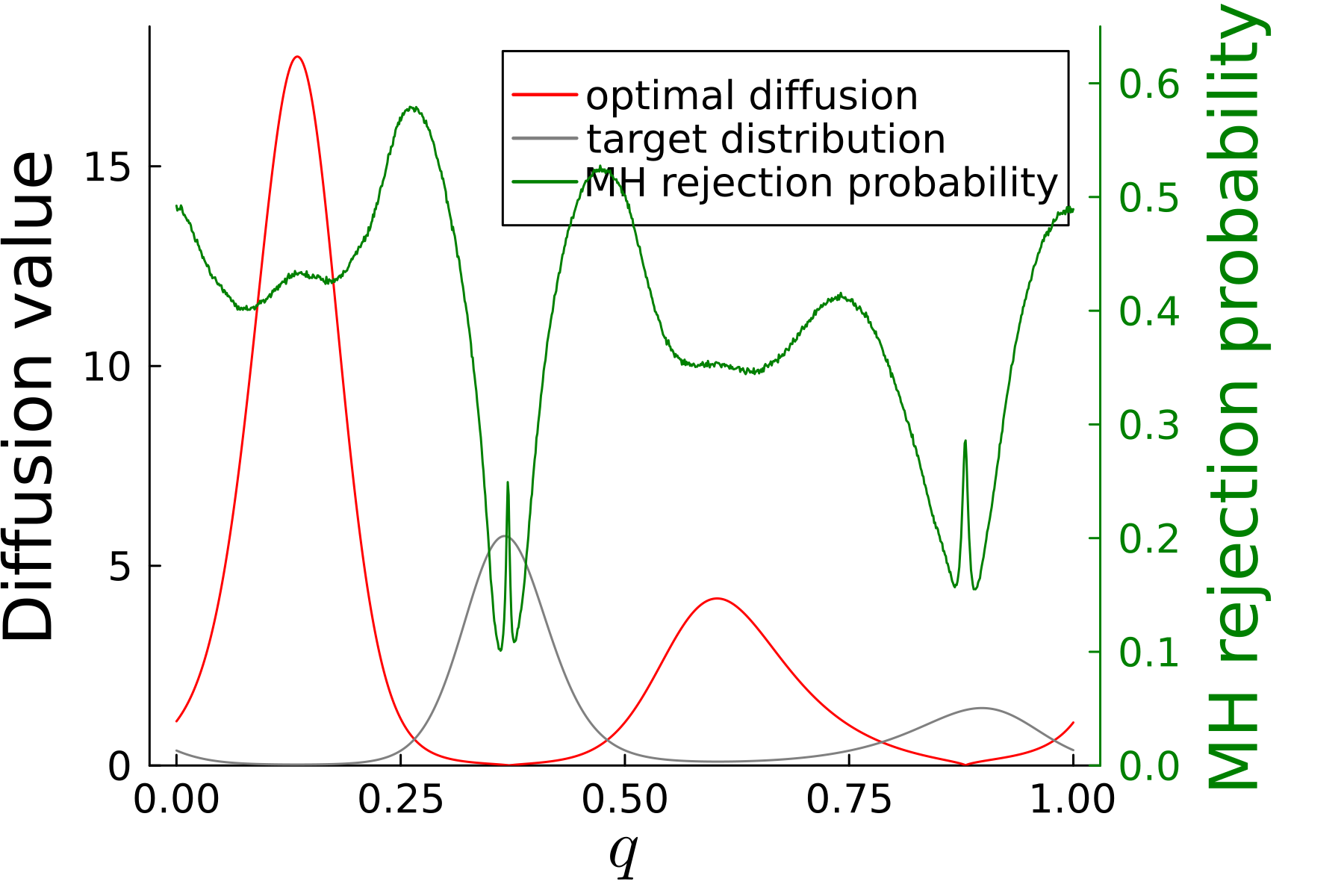}
    \caption{\label{fig:rejection_probabilities_optimal_diffusion}
    Optimal diffusion coefficient,~$a=0$.}
  \end{subfigure}
  \hfill
  \begin{subfigure}[t]{0.45\textwidth}
    \centering
    \includegraphics[width=\linewidth,draft=false]{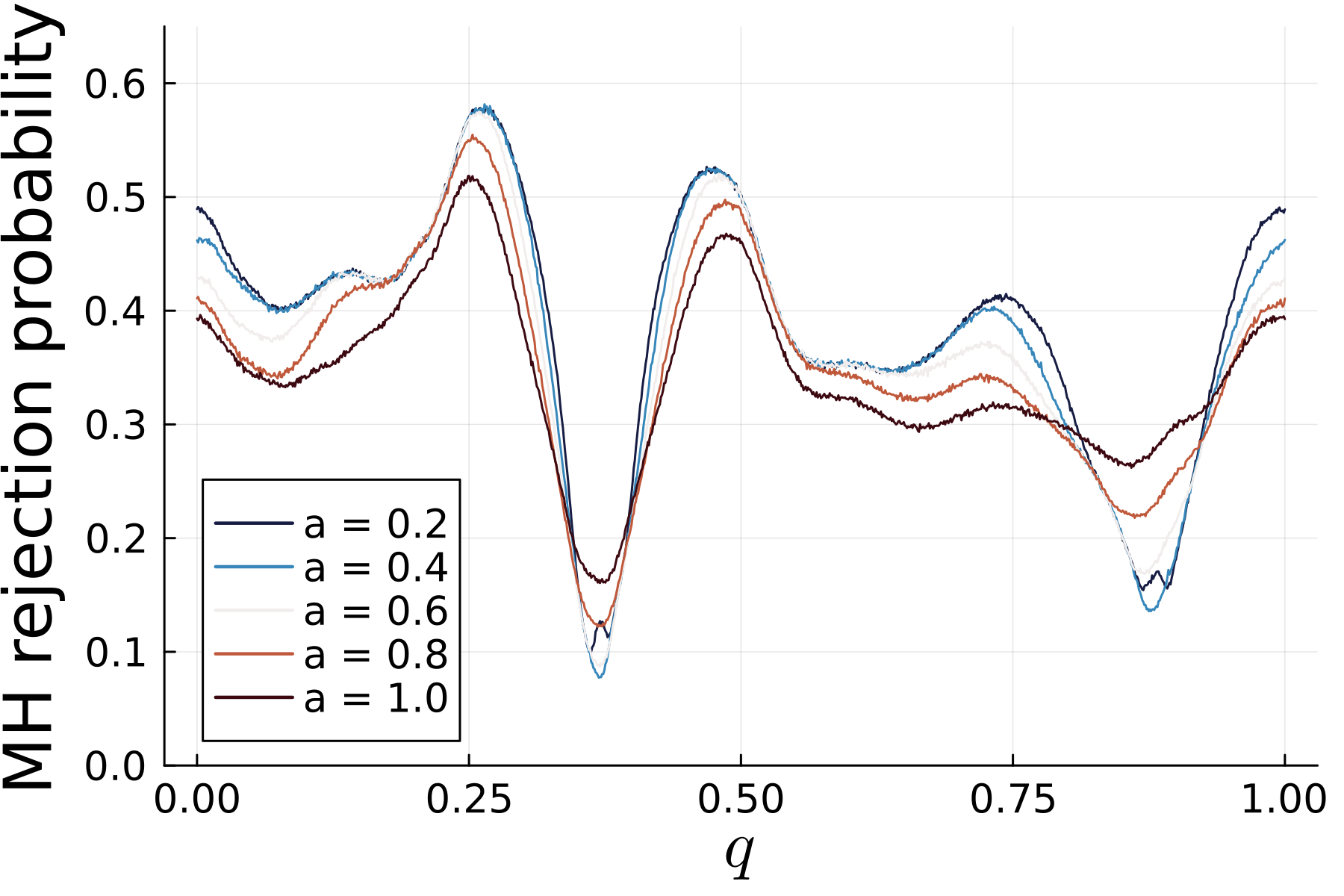}
    \caption{\label{fig:rejection_probabilities_various_lower_bounds}
    Optimal diffusion coefficient: influence of the lower bound~$a$ on the MH rejection probability.}
  \end{subfigure}
  \hfill
  \begin{subfigure}[t]{0.45\textwidth}
    \centering
    \includegraphics[width=\linewidth,draft=false]{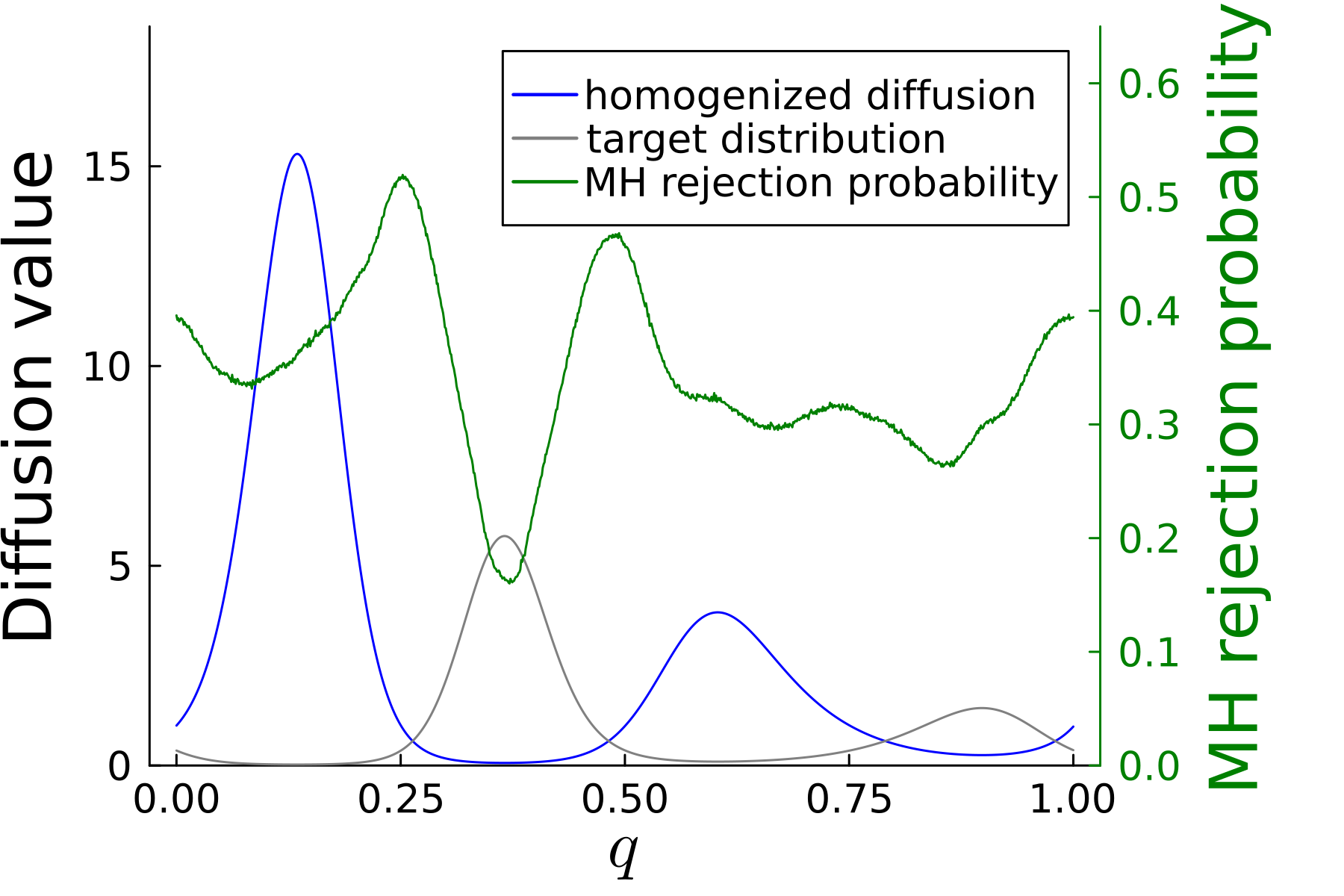}
    \caption{Homogenized diffusion coefficient.}
    \label{fig:rejection_probabilities_homogenized_diffusion}
  \end{subfigure}
  \caption{Spatial analysis of the Metropolis--Hastings rejection probability for the periodic, one-dimensional double well potential~$V(q)=\sin(4\pi q)(2 + \sin(2\pi q))$.}
  \label{fig:rejection_probabilities}
\end{figure}

\paragraph{Transition times.} We compute the mean transition time from the deepest well to any of its two nearby periodized copies. More precisely, we consider the initial condition~$q^0=x_0\approx 0.36544$ and compute the average physical time it takes for the system to cross~$x_0-1$ or~$x_0+1$. This average is computed over~$N_\mathrm{transitions}=10^5$ transitions. We show in~\Cref{fig:transition_times_pic} the mean transition time with respect to the time step~$\Delta t$, as well as the rejection probability of the Metropolis--Hastings procedure. The rejection probability due to the Metropolis--Hastings procedure scales as~$\mathrm{O}\left(\sqrt{\Delta t}\right)$; see~\cite[Section 4.7]{FS17} and~\Cref{lem:acceptance-rate-DL} in~\Cref{sec:technical_lemmas_pathwise_cv} below. We see that the mean transition time between the two metastable states is much smaller when using either the optimal diffusion coefficient or the homogenized diffusion coefficient than with the constant diffusion coefficient. For a time step~$\Delta t=5\times 10^{-5}$, the numerical values of the mean transition times when using~$\diff^{\star}$,~$\diff_{\mathrm{hom}}^{\star}$ and~$\diff_{\rm cst}$ are respectively~2.37, 1.77 and~17.78. This shows that the optimal diffusion coefficients obtained by our numerical procedure or the homogenized diffusion coefficient can be used to faster sample the configurational space, especially in the case of metastable dynamics.

\begin{figure}
  \centering
  \begin{subfigure}[t]{0.45\textwidth}
    \includegraphics[width=\linewidth,draft=false]{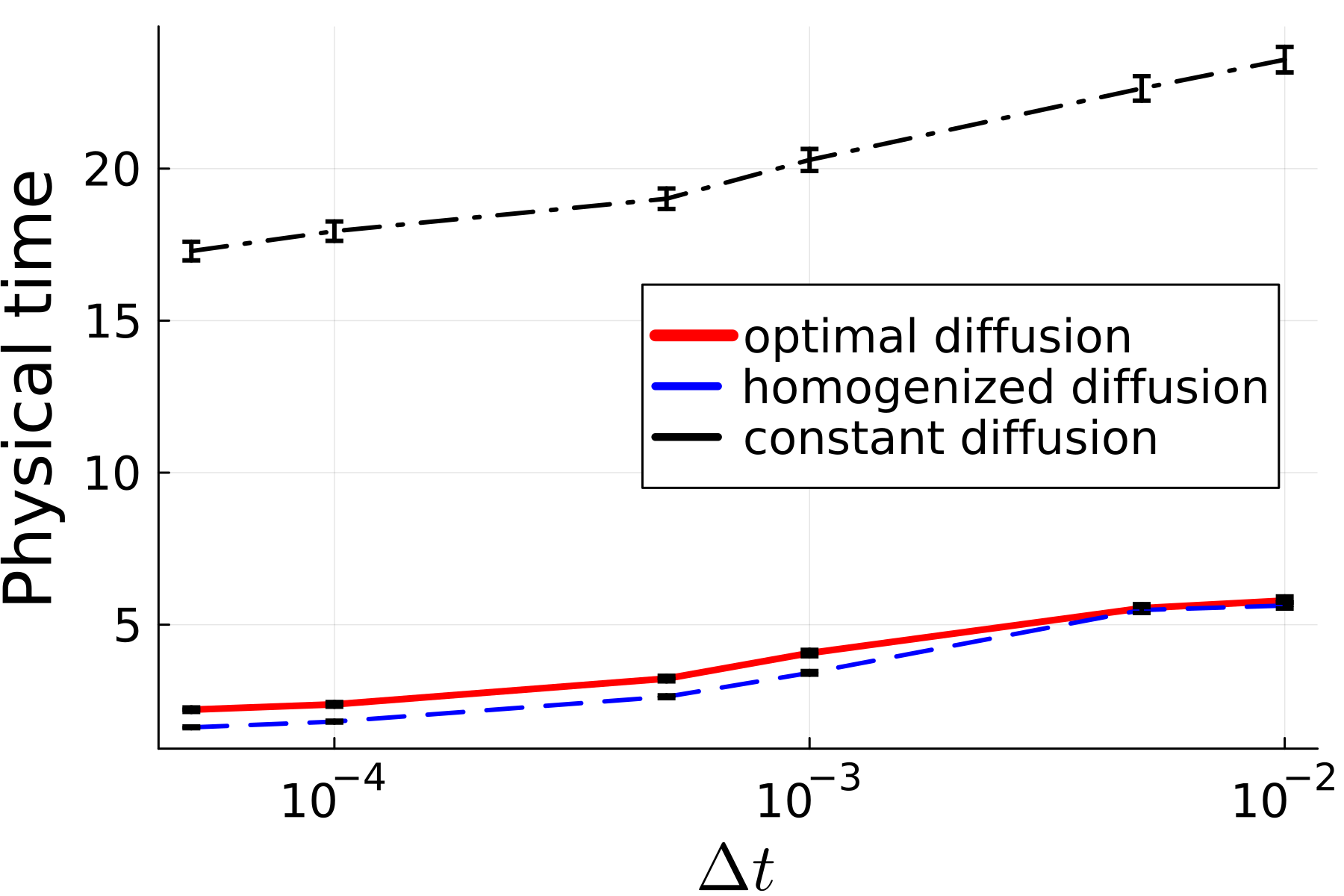}
    \caption{Mean transition time.}
  \end{subfigure}
  \hfill
  \begin{subfigure}[t]{0.45\textwidth}
    \includegraphics[width=\linewidth,draft=false]{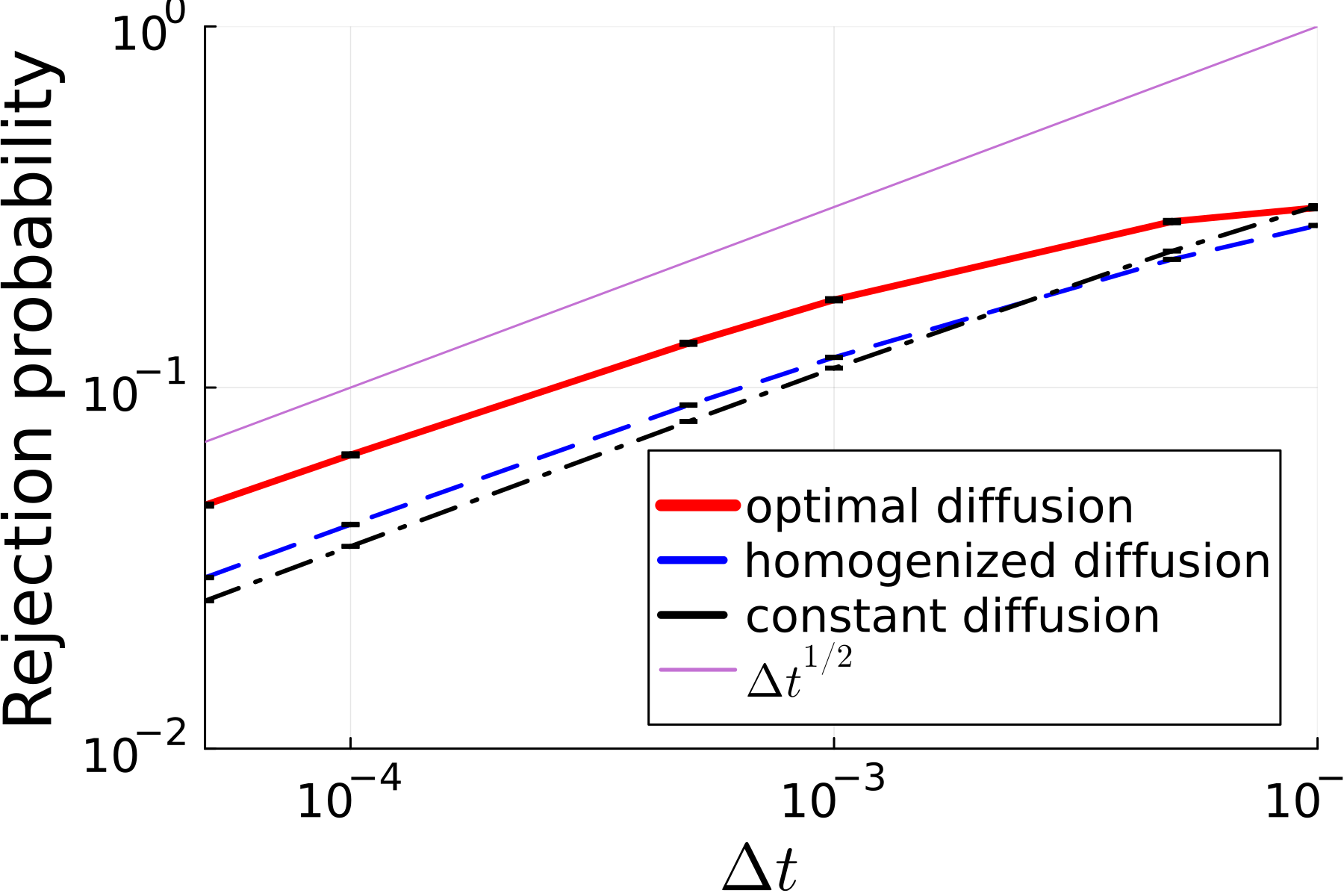}
    \caption{Metropolis--Hastings rejection probability.}
    \label{fig:transition_time_MH_ratio}
  \end{subfigure}
  \caption{Transition times with RWMH for the three optimal diffusion coefficients with respect to the time step~$\Delta t$.}
  \label{fig:transition_times_pic}
\end{figure}

\section{Perspectives}
\label{sec:perspectives}

This work calls for various perspectives and extensions, in particular:
\begin{itemize}
  \item A key issue remains to find a relevant normalization constraint \emph{e.g.}~based on a numerical criterion, as discussed in~\Cref{rmk:normalization_numerics}. One way to address this question would be to look at the fully time-discretized problem, \emph{e.g.}~optimizing the spectral gap of the transition kernel of the Metropolis Hastings algorithm introduced in Section~\ref{subsec:Donsker} with respect to $\Delta t \Diff$;
  \item In various contexts, we restrict ourselves to isotropic diffusion matrices or even to the one-dimensional setting (in particular for numerical simulations). Considering a larger class of positive definite symmetric matrices may be important to tackle anisotropic targets;
  \item In order to apply the methodology to actual systems of interest, one needs to adapt our numerical approach to higher dimensional scenarios, even in the case of a scalar diffusion coefficient, \emph{i.e.}~when the diffusion matrix is a multiple of the identity. It is indeed impossible to numerically optimize a diffusion matrix which genuinely depends on the full configuration~$q$ using discretization techniques such as finite elements, because of the curse of dimensionality of the corresponding approximation space. Apart from resorting to parsimonious functional representations of the diffusion matrix (which usually presuppose some form of smoothness), it seems appropriate to look for diffusion matrices parameterized by a low dimensional function~$\xi(q)$, a so-called reaction coordinate in molecular dynamics~\cite{LRS10}, which summarizes some important information on the system. In this context, the potential energy function is replaced by the free energy associated with~$\xi$, and the optimization problem can again be reformulated in a low dimensional setting where finite element methods can be used. Work in this direction is in progress~\cite{lelievre_2024};
  \item In this paper we consider the problem of maximizing the spectral gap. The self-adjointness of the generator implies that maximizing the spectral gap is typically equivalent to minimizing the asymptotic variance, uniformly over the space of $L^2(\mu)$ observables~\cite{AL21}. It would be interesting to explore the combination of spectral gap maximization and asymptotic variance minimization, as well as minimizing an appropriate cost functional that measures the true cost of the actual computation, by using the approach developed in~\cite{GlynnWhitt_1992};
  \item The objective in this paper is to optimize the convergence in $\chi^2$-divergence (see~\eqref{eq:cvg}). A natural question would be to consider convergence in other distances, such as relative entropy with respect to the target distribution (which amounts to optimizing Logarithmic Sobolev Inequality constants).
\end{itemize}


\appendix

\section{Proof of~\Cref{thm:well-posedness-1D}}
\label{app:thm:well-posedness-1D}

The fact that the optimization problem is well posed follows by a straightforward adaptation of~\cite[Theorem~10.3.1]{Henrot}. Recall that, by~\Cref{lem:concave-mat}, the application~$\Diff \mapsto\Lambda(\Diff)$ is concave. Moreover, the function~$\Lambda$ is upper-semicontinuous for the weak-*~$L^\infty_V$ topology since it is defined in~\eqref{eq:lambdaD-init} 
as the pointwise (in $\mathcal D$) infimum of the family of functions
\begin{equation*}
  \Diff \mapsto  \int_{\T^\dim} \nabla u (q)^{\top}\Diff(q) \nabla u (q) \, \mu(q)\,dq \Big / \int_{\T^\dim} u(q)^2 \mu(q)\,dq,
\end{equation*}
indexed by $u \in H^{1,0}(\mu) \setminus\{0\}$, which are continuous in $\mathcal D$ for the weak-*~$L^\infty_V$ topology.
In particular, $\Lambda$ is upper-semicontinuous for the weak-*~$L^\infty_V$ topology on~$\Diffset_p^{a,b}$ for any~$a \geq 0$,~$b\ge 0$ and~$p\geq 1$. Moreover, for any~$1\leq p \leq +\infty$, the subset~$\Diffset_{p}^{a,b}$ is weakly closed for the norm~$L^p_V$ defined in~\eqref{eq:norm_L^p_mu_matrices} (this follows directly from the weak closedness of standard Lebesgue spaces). We finally note that the set~$\Diffset_p^{a,b}$ is nonempty by~\Cref{lem:D_p^ab_nonempty} and compact for the weak-*~$L^\infty_V$ topology, for $a \ge 0$ and $b>0$. This already shows that there exists a solution to~\eqref{eq:maximizer_in_thm}.

To prove that $\Diff^\star$ necessarily takes a nonzero value on any open subset $\Omega$ of $\T^\dim$, we rely on the following lemma, which shows that the optimal diffusion matrix~$\Diff^{\star}(q)$ cannot be degenerate on an open set when~$a=0$ (for~$a>0$, the diffusion is of course nondegenerate).

\begin{lemma}
  \label{lem:D-positive-on-open-set}
  Fix~$a=0$. For any open set~$\Omega \subset \T^\dim$, there exists~$q\in\Omega$ such that~$\Diff^{\star}(q)\neq 0$.
\end{lemma}

\begin{proof}
  Let us first prove that $\Lambda(\Diff^\star)>0$.
  Since the potential function~$V\in\calC^{\infty}(\T^{\dim})$, it is bounded and the density~$\mu$ is bounded from below by a positive constant and bounded from above on~$\T^\dim$. The measure with density~\eqref{eq:mu} therefore satisfies a Poincaré inequality, \emph{i.e.}~there exists~$C_{\mu}>0$ such that
  \begin{equation}
    \label{eq:Poincare}
    \inf_{u\in H^{1,0}(\mu)} \left\{
    \int_{\T^\dim}|\nabla u(q)|^2 \, \mu(q)\,dq \Big/  \int_{\T^\dim} u(q)^2 \, \mu(q) \, dq \right\} = \frac{1}{C_{\mu}}.
  \end{equation}
  In addition, there exists~$u_{\mu}\in H^{1,0}(\mu)\setminus \{0\}$ such that the infimum~\eqref{eq:Poincare} is attained.
Consider
\begin{equation*}
    c = \min\left\lbrace  \frac{1}{\normF{\Id_\dim}} \left(\int_{\T^\dim} \rme^{- p V(q)} \, dq\right)^{-1/p}, \frac1b\right\rbrace.
  \end{equation*}
Notice that $\Diff(q) = c \Id_\dim$ is in the constrained set~$\Diffset_p^{a,b}$.
  Then,
  \begin{equation}
    \label{eq:positive-lambda}
    \Lambda(\Diff^\star) \geq \Lambda\left(c \Id_\dim\right) = c \inf\limits_{u\in H^{1,0}(\mu)\setminus\left\lbrace0\right\rbrace} \left\{ \int_{\T^\dim}|\nabla u(q)|^2 \mu(q) \, dq \Big / \dps \int_{\T^\dim} u(q)^2 \mu(q) \, dq\right\} = \frac{c}{C_{\mu}}>0.
  \end{equation}

To prove the result of Lemma~\ref{lem:D-positive-on-open-set}, we now proceed by contradiction. 
  Assume that there exists an open set~$\Omega\subset \T^\dim$ such that~$\Diff^{\star}(q) = 0$ for all~$q\in\Omega$. We can then consider a nonzero function~$u_{\Omega}\in \calC^{\infty}(\T^\dim)$ such that~$\supp(u_{\Omega})\subset \Omega$, which leads to
  \begin{equation*}
    0 \leq \Lambda(\Diff^{\star})\leq  \int_{\T^\dim} \nabla u_{\Omega}(q)^\top \Diff^{\star}(q)\nabla u_{\Omega}(q) \, \mu(q) \, dq \Big / \int_{\T^\dim} u_{\Omega}(q)^2 \mu(q) \, dq = 0.
  \end{equation*}
  The latter equality contradicts~\eqref{eq:positive-lambda} and thus provides the desired result.
\end{proof}

\section{Formal characterization of the maximizer using the smooth-min approach}
\label{app:smooth_max}

We first construct the counterpart of the map~$\mathsf{m}_\alpha$ in our framework. In order to do so, we fix~$\Diff\in L^{\infty}_{V}(\T^{\dim},\calM_{a,b})$ such that~$\Diff\geqslant c\Id_\dim$ for some~$c>0$ (see~\eqref{eq:unif_SDP}). We denote by~$g_{\alpha}\colon x\mapsto\rme^{-\alpha x}$ and~$h_{\alpha}\colon x\mapsto x\rme^{-\alpha x}$. By functional calculus, the maps~$g_\alpha$ and~$h_{\alpha}$ can be generalized to functions of the (positive) self-adjoint operator~$-\cLD$. With an abuse of notation, we write in this case~$g_\alpha(\Diff)$ and~$h_{\alpha}(\Diff)$ instead of~$g_\alpha(-\cLD)$ and~$h_{\alpha}(-\cLD)$. Since~$\Diff$ is bounded and uniformly bounded from below by a positive constant (in the sense of symmetric matrices), the positive operator~$-\cLD$ has discrete spectrum, its minimum eigenvalue being simple and equal to~0, see~\Cref{subsec:general-problem}. We therefore write
\begin{equation}
  \label{eq:decomposition_cld}
  -\cLD=\sum_{i\geqslant 1}\lambda_i P_i,
\end{equation}
where~$\lambda_1=0<\lambda_2<\ldots<\lambda_i<\ldots$ are the distinct eigenvalues of~$-\cLD$ and~$P_i$ is the orthogonal projection onto~$\Ker(-\cLD-\lambda_i\Id)$. The multiplicity of~$\lambda_i$ is denoted by~$N_i=\mathrm{dim}\,\Ran(P_i)\geqslant 1$. For~$\alpha>0$, the map~$f_{\alpha}$ defined in~\eqref{eq:f_alpha} thus rewrites
\begin{equation*}
  f_{\alpha}(\Diff)=\frac{\Tr(h_{\alpha}(\Diff))}{\Tr(g_{\alpha}(\Diff))-1},
\end{equation*}
where the operators~$g_{\alpha}(\Diff),h_{\alpha}(\Diff)$ are defined by functional calculus, \emph{i.e.}
\begin{equation*}
    g_{\alpha}(\Diff)  =\sum_{i\geqslant 1}g_{\alpha}(\lambda_i)P_i, \qquad
    h_{\alpha}(\Diff)  =\sum_{i\geqslant2}h_{\alpha}(\lambda_i)P_i,
\end{equation*}
where we used that~$h_{\alpha}(\lambda_1)=0$. The ratio~\eqref{eq:f_alpha} is well-defined in view of~\Cref{lem:trace_class_operators} and the fact that~$\Tr\left(g_\alpha(\Diff)\right)>1$. Formally, since~$\lambda_1=0$ and $N_1=1$,
\begin{equation*}
  f_{\alpha}(\Diff)=\sum_{i\geqslant2}N_i\lambda_i\rme^{-\alpha\lambda_i} \Big /{\sum_{i\geqslant2}}N_i\rme^{-\alpha\lambda_i},
\end{equation*}
so that, again formally (compare with~\eqref{eq:conv_m_alpha}),
\begin{equation}
  \label{eq:limit_f_alpha_to_infty}
  f_\alpha(\Diff)
  =\frac{
    \lambda_2\left(1+\dfrac{1}{N_2\lambda_2}\dps{\sum_{i\geqslant 3}}N_i\lambda_i\rme^{-\alpha(\lambda_i-\lambda_2)}\right)
  }{
    1+\dfrac{1}{N_2}\dps{\sum_{i\geqslant 3}}N_i\rme^{-\alpha(\lambda_i-\lambda_2)}
  }\xrightarrow[\alpha\to+\infty]{}\lambda_2=\min\limits_{i\geqslant2}\lambda_i.
\end{equation}
This limit is justified after the statement of~\Cref{lem:trace_class_operators} using the dominated convergence theorem. This shows that the quantity~$f_{\alpha}(\Diff)$ is an approximation of~$\Lambda(\Diff)$.

This motivates considering the approximation of the problem~\eqref{eq:maximizer_in_thm} considered in~\eqref{eq:maximizer_in_thm_alpha}. One can observe that, for a fixed~$\Diff$,~$f_\alpha(\Diff)$ is a convex combination of the eigenvalues~$\lambda_i$, which implies that~$f_\alpha(\Diff)\geqslant\Lambda(\Diff)$. Therefore, it holds~$f_\alpha(\Diff^{\star,\alpha})\geqslant\Lambda(\Diff^{\star})$, that we indeed observed numerically. 

As in~\Cref{subsec:euler_lagrange_nondegenerate}, we assume that there exists a solution to~\eqref{eq:maximizer_in_thm_alpha} for~$a=0$ and some~$b>0$, which satisfies (see~\eqref{eq:b_not_active}-\eqref{eq:unif_SDP})
\begin{equation}\label{eq:b_not_active_alpha}
  \exists b_+>b, \quad \Diff^{\star,\alpha}(q) \, \rme^{-V(q)} \le \frac{1}{b_+} \,   \Id_\dim \textrm{ for a.e. }q\in\T^\dim,
\end{equation}
and
\begin{equation}\label{eq:unif_SDP_alpha}
  \exists c>0,\quad \Diff^{\star,\alpha}(q)\geq c\, \Id_\dim \textrm{ for a.e. }q\in\T^\dim.
\end{equation}
In particular, it holds~$\Phi_p(\Diff^{\star,\alpha})=1$. In our numerical experiments, we observe that~\eqref{eq:b_not_active_alpha} is always satisfied for $b$ sufficiently small. Moreover, we use~\eqref{eq:unif_SDP_alpha} below for technical reasons (in order to rigorously define $f_\alpha$ and prove that it is differentiable). However, we will observe numerically that the characterization that we finally obtain on $\Diff^{\star,\infty}$ (by sending $\alpha \to +\infty$, see Equation~\eqref{eq:Diff_characterization_formal_alpha_to_infty_degenerate_case_1d_case}) seems to also hold even when~\eqref{eq:unif_SDP_alpha} is not satisfied.

\paragraph{Characterization of the optimal diffusion matrix for a finite value of~$\alpha$.}

The map~$f_{\alpha}$ defined by~\eqref{eq:f_alpha} is smooth over the set of functions~$\Diff\in L^{\infty}_{V}(\T^\dim,\calM_{a,b})$ such that~$\Diff\geqslant c\Id_\dim$ for some~$c>0$, and therefore its differential can be computed even when some eigenvalues are degenerate. The main result of this paragraph is the computation of the differentials of the map~$g_{\alpha}$ and~$h_{\alpha}$, whose proof is given in~\Cref{app:differential_g_h_alpha}. Note that the derivatives of the maps~$g_{\alpha}$ and~$h_{\alpha}$, namely the maps~$g'_{\alpha}$ and~$h'_{\alpha}$ defined by
\begin{equation*}
  \forall x\in\R, \quad \forall\alpha>0,\qquad
  g_{\alpha}'(x)=-\alpha\rme^{-\alpha x},\qquad 
  h_{\alpha}'(x)=(1-\alpha x)\rme^{-\alpha x},
\end{equation*}
can also be generalized to functions of the self-adjoint operator~$-\cLD$, and we then write~$g_{\alpha}'(\Diff)=g_{\alpha}'(-\cLD)$ and~$h_{\alpha}'(\Diff)=h_{\alpha}'(-\cLD)$.

\begin{lemma}
  \label{lem:differential_g_h_alpha}
  Fix~$\alpha>0$ and~$\Diff\in L^{\infty}_{V}(\T^\dim,\calM_{a,b})$ such that~$\Diff\geqslant c\Id_\dim$ for some~$c>0$. Then there exists~$K\in\R_{+}$ such that, for any~$\delta\Diff\in L^{\infty}(\T^{\dim},\calS_\dim)$ satisfying~$\left\lVert\delta\Diff\right\rVert_{L^{\infty}(\T^{\dim})}\leqslant c/2$, the operators~$g_{\alpha}(\Diff+\delta\Diff), g_{\alpha}(\Diff),g_{\alpha}'(\Diff)\cL_{\delta\Diff}$ and the operators~$h_{\alpha}(\Diff+\delta\Diff), h_{\alpha}(\Diff), h_{\alpha}'(\Diff)\cL_{\delta\Diff}$ are trace-class on~$L^{2}(\mu)$ and
  \begin{equation}
    \label{eq:differentials}
    \left\lbrace
    \begin{aligned}
      \left\lvert\Tr(g_{\alpha}(\Diff+\delta\Diff))-\Tr(g_{\alpha}(\Diff))-\Tr\left(g_{\alpha}'(\Diff)\left(-\cL_{\delta\Diff}\right)\right)\right\rvert
       & \leqslant K\left\lVert\delta\Diff\right\rVert^{2}_{L^{\infty}(\T^{\dim})}, \\
      \left\lvert\Tr(h_{\alpha}(\Diff+\delta\Diff))-\Tr(h_{\alpha}(\Diff))-\Tr\left(h_{\alpha}'(\Diff)\left(-\cL_{\delta\Diff}\right)\right)\right\rvert
       & \leqslant K\left\lVert\delta\Diff\right\rVert^{2}_{L^{\infty}(\T^{\dim})}.
    \end{aligned}
    \right.
  \end{equation}
\end{lemma}

This result immediately implies a differentiability result for the map~$f_{\alpha}$.
\begin{corollary}
  \label{cor:differential_f_alpha}
  Fix~$\alpha>0$ and~$\Diff\in L^{\infty}_{V}(\T^\dim,\calM_{a,b})$ such that~$\Diff\geqslant c\Id_\dim$ for some~$c>0$. Then there exists~$K\in\R_{+}$ such that, for any~$\delta\Diff\in L^{\infty}(\T^{\dim},\calS_\dim)$ satisfying~$\left\lVert\delta\Diff\right\rVert_{L^{\infty}(\T^{\dim})}\leqslant c/2$,
  \begin{equation*}
    \left\lvert f_{\alpha}(\Diff+\delta\Diff)-f_{\alpha}(\Diff)-\mathcal{F}_{\alpha}(\Diff)(\delta\Diff)\right\rvert \leqslant K\left\lVert\delta\Diff\right\rVert^{2}_{L^{\infty}(\T^{\dim})},
  \end{equation*}
  with
  \begin{equation*}
    \mathcal{F}_{\alpha}(\Diff)(\delta\Diff)=\frac{
    \Tr\left(h'_{\alpha}(\Diff)\left(-\cL_{\delta\Diff}\right)\right)\left[
      \Tr(g_{\alpha}(\Diff))-1
      \right]-\Tr(h_{\alpha}(\Diff))\Tr\left(g'_{\alpha}(\Diff)\left(-\cL_{\delta\Diff}\right)\right)
    }{
    \left(
    \Tr(g_{\alpha}(\Diff))-1
    \right)^{2}
    }.
  \end{equation*}
\end{corollary}

Straightforward computations lead to
\begin{equation*}
  \mathcal{F}_\alpha(\Diff)(\delta\Diff)
  =\sum\limits_{i\geqslant1}\left[\frac{\mathcal{G}_\alpha(1-\alpha\lambda_i)+\alpha \mathcal{H}_\alpha}{\mathcal{G}_\alpha^2}\rme^{-\alpha\lambda_i}\right]\Tr\left(
  P_i\left(-\cL_{\delta\Diff}\right)
  \right),
\end{equation*}
with
\begin{equation}
  \label{eq:G_alpha_H_alpha}
  \mathcal{G}_\alpha=\sum\limits_{j\geqslant2}N_j\rme^{-\alpha\lambda_j},\qquad
  \mathcal{H}_\alpha=\sum\limits_{j\geqslant2}N_j\lambda_j\rme^{-\alpha\lambda_j}.
\end{equation}

We now show how to obtain the characterization of~$\Diff^{\star,\alpha}$ solution to~\eqref{eq:maximizer_in_thm_alpha} given in~\eqref{eq:diff_alpha_F}, following the same computations as in the proof of~\Cref{prop:EL_degenerate}. This requires to introduce a Hilbert basis~$(e_k)_{k\geqslant1}$ composed of~$L^2(\mu)$ normalized eigenvectors of~$-\cLD$, namely
\begin{equation*}
  \forall k,\ell\geqslant 1,\qquad
  -\cLD e_{k}=\varlambda_k e_{k},\qquad \left\langle e_{k},e_{\ell}\right\rangle_{L^{2}(\mu)}=\delta_{k,\ell},
\end{equation*}
where~$\delta$ is the Kronecker symbol and the eigenvalues~$\varlambda_1=0\leqslant\varlambda_2\leqslant\dots\leqslant \varlambda_k\leqslant \dots$ are ordered and counted with their multiplicities:
\begin{equation*}
  \forall i\geqslant 1,\quad
  \forall k\in\left\lbrace 1+\sum_{j=1}^{i-1}N_j,\dots, \sum_{j=1}^{i}N_j\right\rbrace,\qquad
  \varlambda_k=\lambda_i.
\end{equation*}
We rewrite the spectral decomposition~\eqref{eq:decomposition_cld} of~$-\cLD$ in terms of this given Hilbert basis and with possibly degenerate eigenvalues: for a given smooth function~$\varphi$,
\begin{equation*}
  -\cLD\varphi=\sum_{k\geqslant1}\varlambda_k \left\langle \varphi,e_k\right\rangle_{L^{2}(\mu)} e_k.
\end{equation*}
We exploit the Euler--Lagrange equation for the objective function~$f_{\alpha}$, namely that~$\mathcal{F}_\alpha(\Diff^{\star,\alpha})$ is proportional to the differential of the  function $\Phi_p$~\eqref{eq:normalization_constraint}:
\begin{equation*}
  \forall \delta \Diff\in L^{\infty}(\T^{\dim},\calS_\dim),\qquad \mathcal{F}_{\alpha}(\Diff^{\star,\alpha})(\delta \Diff) \propto\int_{\T^{\dim}} \left\lvert \Diff^{\star,\alpha}(q)\right\rvert_{\mathrm{F}}^{p-2} \rme^{- p V(q)}  \Diff^{\star,\alpha}(q): \delta \Diff(q) \, dq.
\end{equation*}
Since
\begin{equation*}
  P_i=\sum_{k=1+\sum_{l=1}^{i-1}N_l}^{\sum_{l=1}^{i}N_l}\left\langle e_k,\cdot\right\rangle_{L^{2}(\mu)}e_k
\end{equation*}
and
\begin{align*}
  \Tr\left(\left\langle e_k,\cdot\right\rangle_{L^{2}(\mu)}e_k\nabla^*\delta\Diff\nabla\right)
   & =
  \int_{\T^{\dim}} e_k(q)\left[\nabla^*\delta\Diff\nabla e_k\right](q)\mu(q)\,dq \\
   & =
  \int_{\T^{\dim}} \nabla e_k(q)^{\top}\delta\Diff(q)\nabla e_k(q)\mu(q)\, dq          \\
   & =
  \int_{\T^{\dim}}\delta\Diff(q):\left(\nabla e_k(q)\otimes\nabla e_k(q)\right)\mu(q)\,dq,
\end{align*}
one readily obtains~\eqref{eq:diff_alpha_F}. Note that, since~$\Diff^{\star,\alpha}$ depends on~$\alpha$, its eigenvalues also depend on~$\alpha$ so that the quantities~$\mathcal{G}_{\alpha}$ and~$\mathcal{H}_{\alpha}$ first defined in~\eqref{eq:G_alpha_H_alpha} are changed accordingly, see~\eqref{eq:G_alpha_H_alpha_maximizer}. Note that we used that~$\nabla e_{1,\alpha}=0$ to start the sum at~$k=2$ in~\eqref{eq:diff_alpha_F} since the eigenspace associated to~$\varlambda_{1,\alpha}=0$, which is one-dimensional, is spanned by constant functions.

\paragraph{Formal characterization in the limit~$\alpha\to+\infty$ and in dimension 1.} We now show how to obtain the formal characterization~\eqref{eq:Diff_characterization_formal_alpha_to_infty_degenerate_case_1d_case}.

In the case~$\varlambda_{2,\infty}< \varlambda_{3,\infty}$, the limit when~$\alpha\to+\infty$ of the sum on the right-hand side of~\eqref{eq:diff_alpha_1d} can be computed as
\begin{align}
  \MoveEqLeft[2]
  \sum_{k\geqslant 2}\left[\frac{\mathcal{G}_\alpha(1-\alpha\varlambda_{k,\alpha})+\alpha \mathcal{H}_\alpha}{\mathcal{G}_\alpha^2}\rme^{-\alpha\varlambda_{k,\alpha}}\right]|e'_{k,\alpha}|^2\label{eq:expansion_nabla_e_i_alpha} \\
   & \underset{\alpha\to+\infty}{\sim}
  \frac{\rme^{-\alpha\varlambda_{2,\infty}}(1-\alpha \varlambda_{2,\infty})+\alpha\varlambda_{2,\infty}\rme^{-\alpha\varlambda_{2,\infty}}}{\rme^{-2\alpha\varlambda_{2,\infty}}}\rme^{-\alpha\varlambda_{2,\infty}}|e'_{2,\alpha}|^2=|e'_{2,\alpha}|^2.\nonumber
\end{align}
The characterization of the optimal diffusion matrix~\eqref{eq:diff_alpha_1d} becomes in the limit~$\alpha\to+\infty$:
\begin{equation*}
  \Diff^{\star,\infty}(q)=\widetilde{\gamma}_{\infty}\rme^{V(q)}
  \left\lvert e_{2,\infty}'(q)\right\rvert^{2/(p-1)},
\end{equation*}
with~$\widetilde{\gamma}_{\infty}>0$. We therefore recover Equation~\eqref{eq:EL-prop} of~\Cref{subsec:euler_lagrange_nondegenerate}.

When~$\varlambda_{2,\infty}=\varlambda_{3,\infty}$, the factor in front of~$|e'_{2,\alpha}|^2$ in~\eqref{eq:expansion_nabla_e_i_alpha} is asymptotically equivalent to
\begin{align*}
   & \frac{\rme^{-\alpha\varlambda_{2,\alpha}}}{\left(\rme^{-\alpha\varlambda_{2,\alpha}}+\rme^{-\alpha\varlambda_{3,\alpha}}\right)^{2}}\left[
    \left(
    \rme^{-\alpha\varlambda_{2,\alpha}}+\rme^{-\alpha\varlambda_{3,\alpha}}
    \right)\left(
    1-\alpha\varlambda_{2,\alpha}
    \right)
    +\alpha\left(
    \varlambda_{2,\alpha}\rme^{-\alpha\varlambda_{2,\alpha}}+
    \varlambda_{3,\alpha}\rme^{-\alpha\varlambda_{3,\alpha}}
    \right)
  \right]\\
   & \quad
  =
  \frac{1}{\left(1+\rme^{-\alpha(\varlambda_{3,\alpha}-\varlambda_{2,\alpha})}\right)^{2}}\left[
    \left(1+\rme^{-\alpha(\varlambda_{3,\alpha}-\varlambda_{2,\alpha})}\right)(1-\alpha\varlambda_{2,\alpha})+\alpha\varlambda_{2,\alpha}+\alpha\varlambda_{3,\alpha}\rme^{-\alpha(\varlambda_{3,\alpha}-\varlambda_{2,\alpha})}
  \right] \\
   & \quad
  =
  \frac{1}{\left(1+\rme^{-\alpha(\varlambda_{3,\alpha}-\varlambda_{2,\alpha})}\right)^{2}}\left[
    1+\rme^{-\alpha(\varlambda_{3,\alpha}-\varlambda_{2,\alpha})}+\alpha(\varlambda_{3,\alpha}-\varlambda_{2,\alpha})\rme^{-\alpha(\varlambda_{3,\alpha}-\varlambda_{2,\alpha})}
  \right]\\
   & \quad\xrightarrow[\alpha\to+\infty]{}\frac{1+\rme^{-\eta}+\eta\rme^{-\eta}}{(1+\rme^{-\eta})^{2}},
\end{align*}
while the factor in front of~$|e_{3,\alpha}'|^2$ is asymptotically equivalent to~$\frac{\rme^{-\eta}(1+\rme^{-\eta}-\eta)}{(1+\rme^{-\eta})^{2}}$. One therefore obtains the characterization given by~\eqref{eq:Diff_characterization_formal_alpha_to_infty_degenerate_case_1d_case}.

We check in~\Cref{fig:res_3_b} that the characterization of the optimal diffusion coefficient using a finite value of~$\alpha$, \emph{i.e.}~formula~\eqref{eq:diff_alpha_1d}, is valid, using similar approximations as for~\eqref{eq:approximation_d_star_infty}: more precisely, for~$n\in\left\lbrace1,\dots,N\right\rbrace$, the~$n$-th component of the discrete approximation of~\eqref{eq:diff_alpha_1d} is given by
\begin{equation*}
  \widetilde{\gamma}_{\alpha}\rme^{V((n-1)/N)}\left(
  \sum_{k=2}^{N}\left[
    \frac{G_{\alpha}(\diff^{\star,\alpha})(1-\alpha\sigma_k(\diff^{\star,\alpha}))+\alpha H_{\alpha}(\diff^{\star,\alpha})}{G_{\alpha}(\diff^{\star,\alpha})^{2}}\rme^{-\alpha\sigma_k(\diff^{\star,\alpha})}
    \right]\left\lvert U_{k,\alpha,n}-U_{k,\alpha,n-1}\right\rvert^2\right)^{1/(p-1)},
\end{equation*}
where~$G_\alpha,H_\alpha$ are defined in~\eqref{eq:g_h_alpha_discrete},~$(U_{k,\alpha})_{2\leqslant k\leqslant N}$ are the normalized eigenvectors associated with the eigenvalues~$(\sigma_k(\diff^{\star,\alpha}))_{2\leqslant k\leqslant N}$ and~$\widetilde{\gamma}_\alpha$ is a normalizing constant. Note that this formula requires all the eigenelements which can be computationally heavy.

\begin{figure}
    \centering
    \includegraphics[width=0.6\textwidth]{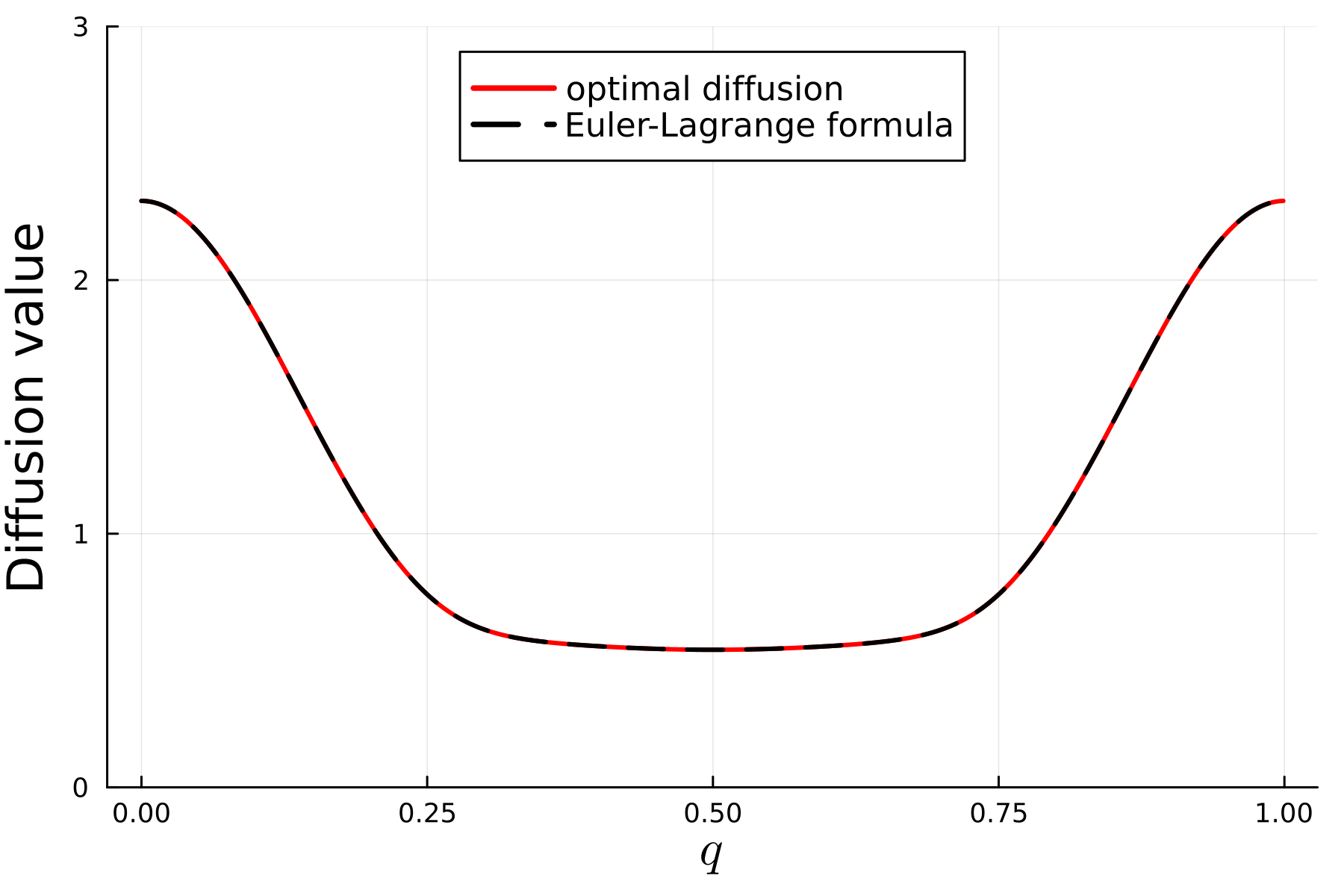}
    \caption{\label{fig:res_3_b}Comparison between~$D^{\star,\alpha}$ with~$\alpha=1.0$ and~$D^{\star}$.}
\end{figure}

\subsection{Proof of~\Cref{lem:differential_g_h_alpha}}
\label{app:differential_g_h_alpha}

\Cref{lem:differential_g_h_alpha} relies on technical results which are proved in~\Cref{app:differential_g_h_alpha_technical_results}. Let us introduce some notation and recall standard results for trace-class operators that will be useful below. For two Banach spaces~$E,F$, we denote by~$\calB(E,F)$ the Banach space of bounded linear operators from ~$E$ to~$F$. We denote by~$\left\lVert A\right\rVert_{1}=\Tr(A^{*}A)^{1/2}$ the trace norm of an operator~$A$ on~$L^{2}(\mu)$.  Note that if, for any~$s\in[0,1]$, the operator~$A(s)$ is a trace-class operator on~$L^{2}(\mu)$ and there exist~$K\in\R_{+}$ such that~$\left\lVert A(s)\right\rVert_{1}\leqslant K$ for any~$s\in[0,1]$, then
\begin{equation}
  \label{eq:bochner_integral_trace_norm}
  \left\lVert\int_{0}^{1} A(s)ds\right\rVert_{1}\leqslant\int_{0}^{1}\left\lVert A(s)\right\rVert_{1}ds\leqslant K.
\end{equation}
We also recall that if~$A$ is bounded on~$L^{2}(\mu)$ and~$B$ is trace-class on~$L^{2}(\mu)$, the operators~$AB$ and~$BA$ are trace-class on~$L^{2}(\mu)$ and
\begin{equation}
  \label{eq:trace_bounded_norm}
  \left\lVert AB\right\rVert_{1}\leqslant \left\lVert A\right\rVert_{\calB(L^{2}(\mu))}\left\lVert B\right\rVert_{1},\qquad \left\lVert BA\right\rVert_{1}\leqslant \left\lVert A\right\rVert_{\calB(L^{2}(\mu))}\left\lVert B\right\rVert_{1}
\end{equation}
We refer to~\cite{ReedSimon_Vol1,Simon_TraceIdeals} for more detailed results on trace-class operators.

To prove~\eqref{eq:differentials}, we first assume that~$\delta\Diff\in\calC^{3}(\T^{\dim},\calS_\dim)$ with~$\left\lVert\delta\Diff\right\rVert_{L^{\infty}(\T^{\dim})}\leqslant c/2$, and show the inequality~\eqref{eq:differentials} with a bound~$\left\lVert\delta\Diff\right\rVert_{L^{\infty}(\T^{\dim})}^{2}$ on the right-hand side. The desired result for any~$\delta\Diff\in L^{\infty}(\T^{\dim},\calS_\dim)$ such that~$\left\lVert\delta\Diff\right\rVert_{L^{\infty}(\T^{\dim})}\leqslant c/2$ is then obtained by a density argument.

We perform the computations for the map~$g_{\alpha}$, a similar reasoning leading to the result for the map~$h_{\alpha}$. In this proof, the letter~$K$ denotes a generic nonnegative finite constant which may change from line to line.

Consider~$\delta\Diff\in \calC^{3}(\T^\dim,\calS_\dim)$ such that~$\left\lVert\delta\Diff\right\rVert_{L^{\infty}(\T^{\dim})}\leqslant c/2$. Then~$\Diff+\delta\Diff\geqslant (c/2)\Id_\dim$ so that the operator~$\cL_{\Diff+\delta\Diff}$ is negative and self-adjoint on~$L^{2}(\mu)$. We can therefore use Duhamel's formula twice to obtain
\begin{align}
  \MoveEqLeft[2]
  g_{\alpha}(\Diff+\delta\Diff)-g_{\alpha}(\Diff)\nonumber                                                                                                                                                                                                \\
   & =
  \rme^{\alpha\cL_{\Diff+\delta\Diff}}-\rme^{\alpha\cLD}
  =\left[\rme^{s\alpha\cL_{\Diff+\delta\Diff}}\rme^{(1-s)\alpha\cLD}\right]_{s=0}^{s=1}
  \nonumber                                                                                                                                                                                                                                               \\
   & =
  \alpha\int_{0}^{1}\rme^{s\alpha\cL_{\Diff+\delta\Diff}}\cL_{\delta\Diff}\rme^{(1-s)\alpha\cLD}ds\nonumber                                                                                                                                               \\
   & =
  \alpha\int_{0}^{1}\left(\rme^{s\alpha\cLD}+\left[\rme^{s(1-u)\alpha\cLD}\rme^{su\alpha\cL_{\Diff+\delta\Diff}}\right]_{u=0}^{u=1}\right)\cL_{\delta\Diff}\rme^{(1-s)\alpha\cLD}ds\nonumber                                                              \\
   & =
  \alpha\int_{0}^{1}\rme^{s\alpha\cLD}\cL_{\delta\Diff}\rme^{(1-s)\alpha\cLD}ds+\alpha^{2}\int_{0}^{1}\int_{0}^{1}\rme^{s(1-u)\alpha\cLD}\cL_{\delta\Diff}\rme^{su\alpha\cL_{\Diff+\delta\Diff}}\cL_{\delta\Diff}\rme^{(1-s)\alpha\cLD}s\,du\,ds\nonumber \\
   & =
  \alpha\int_{0}^{1}\rme^{s\alpha\cLD}\cL_{\delta\Diff}\rme^{(1-s)\alpha\cLD}ds+\alpha^{2}\int_{0}^{1}\int_{0}^{s}\rme^{(s-u)\alpha\cLD}\cL_{\delta\Diff}\rme^{u\alpha\cL_{\Diff+\delta\Diff}}\cL_{\delta\Diff}\rme^{(1-s)\alpha\cLD}du\,ds,\label{eq:duhamel}
\end{align}
where we performed the change of variables~$u\leftarrow su$ for the last equality.

By~\Cref{lem:trace_class_operators} below, the operators~$g_{\alpha}(\Diff+\delta\Diff)$ and~$g_{\alpha}(\Diff)$ are trace-class on~$L^{2}(\mu)$. The operator defined by the first term on the right-hand side of~\eqref{eq:duhamel} is linear in~$\delta\Diff$. We now show that it is trace-class on~$L^{2}(\mu)$ and that there exists~$K\in\R_{+}$ such that
\begin{equation}
  \label{eq:differential_trace_class_operator}
  \left\lVert\alpha\int_{0}^{1}\rme^{s\alpha\cLD}\cL_{\delta\Diff}\rme^{(1-s)\alpha\cLD}ds\right\rVert_{1}
  \leqslant K\left\lVert\delta\Diff\right\rVert_{L^{\infty}(\T^\dim)}.
\end{equation}
For any~$s\in[0,1]$, let~$T_s=\rme^{s\alpha\cLD}\cL_{\delta\Diff}\rme^{(1-s)\alpha\cLD}$. We need to distinguish between the two cases~$0\leqslant s\leqslant 1/2$ and~$1/2\leqslant s\leqslant 1$: when~$0\leqslant s\leqslant 1/2$, the operator~$\rme^{(1-s)\alpha\cLD}$, possibly multiplied by any polynomial of~$\cLD$, is trace-class on~$L^{2}(\mu)$ uniformly in~$s$ thanks to~\Cref{lem:trace_class_operators}, while the operator~$\rme^{s\alpha\cLD}$ is bounded on~$L^{2}(\mu)$, so that~\eqref{eq:trace_bounded_norm} can be used. A similar result is obtained in the case~$1/2\leqslant s\leqslant 1$ by switching~$\rme^{(1-s)\alpha\cLD}$ and~$\rme^{s\alpha\cLD}$ in the reasoning.

More precisely, for~$s\in[0,1/2]$, we rewrite~$T_s$ as
\begin{equation*}
  T_s=
  \underbrace{\rme^{s\alpha\cLD}}_{=:A_1}
  \underbrace{\cL_{\delta\Diff}(\Id-\cLD)^{-1}}_{=:A_2}
  \underbrace{(\Id-\cLD)\rme^{(1-s)\alpha\cLD}}_{=:A_3}.
\end{equation*}
Note that~$\left\lVert A_1\right\rVert_{\calB(L^{2}(\mu))}\leqslant1$, and~$A_2$ is bounded on~$L^{2}(\mu)$ by~\Cref{lem:multiple_operators_bounded} (see~\eqref{eq:estimate_lemma_operators_bounded_1} below). The operator~$A_3$ is trace-class on~$L^{2}(\mu)$ by~\Cref{lem:trace_class_operators}, so that $T_s$ is trace-class on~$L^{2}(\mu)$ with trace norm uniformly bounded over~$s\in\left[0,1/2\right]$. In view of~\eqref{eq:trace_bounded_norm} and~\eqref{eq:estimate_lemma_operators_bounded_1}, there exists~$K\in\R_{+}$ such that
\begin{equation*}
  \forall s\in[0,1/2],\qquad
  \left\lVert T_s\right\rVert_{1}\leqslant K\left\lVert\delta\Diff\right\rVert_{L^{\infty}(\T^\dim)}.
\end{equation*}
In the case~$ 1/2\leqslant s\leqslant 1$, we rewrite $T_s$ as
\begin{equation*}
  T_s=\underbrace{\rme^{s\alpha\cLD}(\Id-\cLD)}_{:=A_3}\underbrace{(\Id-\cLD)^{-1}\cL_{\delta\Diff}}_{=:A_2}\underbrace{\rme^{(1-s)\alpha\cLD}}_{=:A_1},
\end{equation*}
and proceed with a similar argument as above. In view of~\eqref{eq:bochner_integral_trace_norm}, the operator defined by the first term on the right-hand side of~\eqref{eq:duhamel} is therefore trace-class on~$L^{2}(\mu)$ and~\eqref{eq:differential_trace_class_operator} holds.

We now show that the operator defined by the second term on the right-hand side of~\eqref{eq:duhamel} is trace-class on~$L^{2}(\mu)$, and that its trace norm is bounded from above by~$\left\lVert\delta\Diff\right\rVert_{\calC^{3}(\T^\dim)}^{2}$. First, note that the operator~$(\Id-\cL_{\Diff+\delta\Diff})^{2}\rme^{\theta\cLD}$ is trace-class on~$L^{2}(\mu)$ for any~$\theta>0$. Indeed, let us rewrite this operator as~$(\Id-\cL_{D+\delta\Diff})^{2}(\Id-\cLD)^{-2}(\Id-\cLD)^{2}\rme^{\theta\cLD}$, and let us set~$T=(\Id-\cL_{D+\delta\Diff})^{2}(\Id-\cLD)^{-2}$. \Cref{lem:bound_cl_d_delta_diff} then implies that
\begin{equation}
  \label{eq:to_ref_for_c_star_a_c}
  0\leqslant T^{*}T\leqslant(\Id-\cLD)^{-2}\left(\Id-\frac{\left\lVert\Diff\right\rVert_{L^{\infty}(\T^{\dim})}+\left\lVert\delta\Diff\right\rVert_{L^{\infty}(\T^{\dim})}}{c}\cLD\right)^{4}\left(\Id-\cLD\right)^{-2}.
\end{equation}
Recall that if~$A\leqslant B$ as symmetric operators on a Hilbert space, then~$C^{*}AC\leqslant C^{*}BC$ for any bounded operator~$C$ on the same Hilbert space. The operator on the right-hand side of~\eqref{eq:to_ref_for_c_star_a_c} is therefore bounded on~$L^{2}(\mu)$ by functional calculus, so that~$T$ is also bounded on~$L^{2}(\mu)$.~\Cref{lem:trace_class_operators} then implies that the operators~$(\Id-\cL_{\Diff+\delta\Diff})^{2}\rme^{\theta\cLD}$ are trace-class on~$L^{2}(\mu)$, and more precisely that for any~$\underline{\theta}>0$, there exists~$K\in\R_{+}$ (which depends on~$\underline{\theta}$) such that
\begin{equation}
  \label{eq:particular_operator_is_trace_class}
  \forall\theta>\underline{\theta},\qquad \left\lVert(\Id-\cL_{\Diff+\delta\Diff})^{2}\rme^{\theta\cLD}\right\rVert_{1}\leqslant K.
\end{equation}

For any~$s\in[0,1]$ and~$u\in[0,s]$, let~$T_{s,u}=\rme^{\alpha (s-u)\cLD}\cL_{\delta\Diff}\rme^{u\alpha\cL_{\Diff+\delta\Diff}}\cL_{\delta\Diff}\rme^{(1-s)\alpha\cLD}$. We need to distinguish between three cases: if~$0\leqslant s\leqslant 1/2$, the operator~$\rme^{(1-s)\alpha\cLD}$ is trace-class on~$L^{2}(\mu)$ uniformly in~$s$ and the other operators are bounded on~$L^{2}(\mu)$; if~$ 1/2\leqslant s\leqslant 1$, then the operator~$\rme^{\alpha(s-u)\cLD}$ is trace-class on~$L^{2}(\mu)$ uniformly in~$u,s$ for~$0\leqslant u\leqslant s/2$, while it is the operator~$\rme^{\alpha u\cL_{\Diff+\delta\Diff}}$ which is trace-class on~$L^{2}(\mu)$ uniformly in~$u,s$ for~$s/2\leqslant u\leqslant s$.

If $s\in[0,1/2]$, we rewrite~$T_{s,u}$ as
\begin{equation*}
  T_{s,u}=
  \underbrace{\rme^{(s-u)\alpha\cLD}}_{=:A_1}
  \underbrace{\cL_{\delta\Diff}(\Id-\cL_{\Diff+\delta\Diff})^{-1}}_{=:A_2}
  \underbrace{\rme^{u\alpha\cL_{\Diff+\delta\Diff}}}_{=:A_3}
  \underbrace{(\Id-\cL_{\Diff+\delta\Diff})\cL_{\delta\Diff}(\Id-\cL_{\Diff+\delta\Diff})^{-2}}_{=:A_4}
  \underbrace{(\Id-\cL_{\Diff+\delta\Diff})^{2}\rme^{(1-s)\alpha\cLD}}_{=:A_5}.
\end{equation*}
It holds~$\left\lVert A_{1}\right\rVert_{\calB(L^{2}(\mu))}\leqslant1$ and~$\left\lVert A_{3}\right\rVert_{\calB(L^{2}(\mu))}\leqslant1$. The operators~$A_2$ and~$A_4$ are bounded on~$L^{2}(\mu)$ by~\Cref{lem:multiple_operators_bounded}, while the operator~$A_5$ is trace-class on~$L^{2}(\mu)$ by~\eqref{eq:particular_operator_is_trace_class}. Moreover, in view of~\eqref{eq:trace_bounded_norm},~\eqref{eq:particular_operator_is_trace_class},~\eqref{eq:estimate_lemma_operators_bounded_1} and~\eqref{eq:estimate_lemma_operators_bounded_3}, there exists~$K\in\R_{+}$ such that, for any~$s\in[0,1/2]$ and any~$u\in[0,s]$,
\begin{equation}
  \label{eq:T_s_u_trace_norm}
  \left\lVert T_{s,u}\right\rVert_{1}\leqslant K\left\lVert\delta\Diff\right\rVert^{2}_{\calC^{3}(\T^\dim)}.
\end{equation}

If $s\in[1/2,1]$ and~$u\in[0,s/2]$, we rewrite~$T_{s,u}$ as
\begin{equation*}
  T_{s,u}=
  \underbrace{\rme^{(s-u)\alpha\cLD}(\Id-\cL_{\Diff+\delta\Diff})^{2}}_{=:A_5}
  \underbrace{(\Id-\cL_{\Diff+\delta\Diff})^{-2}\cL_{\delta\Diff}(\Id-\cL_{\Diff+\delta\Diff})}_{=:A_4}
  \underbrace{\rme^{u\alpha\cL_{\Diff+\delta\Diff}}}_{=:A_3}
  \underbrace{(\Id-\cL_{\Diff+\delta\Diff})^{-1}\cL_{\delta\Diff}}_{=:A_2}
  \underbrace{\rme^{(1-s)\alpha\cLD}}_{=:A_1},
\end{equation*}
and if~$s\in[1/2,1]$ and~$u\in[s/2,s]$, we rewrite~$T_{s,u}$ as
\begin{equation*}
  T_{s,u}=
  \underbrace{\rme^{(s-u)\alpha\cLD}}_{=:A_1}
  \underbrace{\cL_{\delta\Diff}(\Id-\cL_{\Diff+\delta\Diff})^{-1}}_{=:A_2}
  \underbrace{(\Id-\cL_{\Diff+\delta\Diff})\rme^{u\alpha\cL_{\Diff+\delta\Diff}}(\Id-\cL_{\Diff+\delta\Diff})}_{=:A_5}
  \underbrace{(\Id-\cL_{\Diff+\delta\Diff})^{-1}\cL_{\delta\Diff}}_{=:A_4}
  \underbrace{\rme^{(1-s)\alpha\cLD}}_{=:A_3}.
\end{equation*}
We then perform manipulations similar to the ones leading to~\eqref{eq:T_s_u_trace_norm} in the case~$0\leqslant s\leqslant 1/2$.

In view of~\eqref{eq:bochner_integral_trace_norm}, the operator in~\eqref{eq:duhamel} is therefore trace-class on~$L^{2}(\mu)$, and there exists~$K\in\R_{+}$ such that
\begin{equation*}
  \left\lVert\alpha^{2}\int_{0}^{1}\int_{0}^{s}\rme^{\alpha (s-u)\cLD}\cL_{\delta\Diff}\rme^{u\alpha\cL_{\Diff+\delta\Diff}}\cL_{\delta\Diff}\rme^{(1-s)\alpha\cLD}du\,ds\right\rVert_{1}\leqslant K\left\lVert\delta\Diff\right\rVert_{\calC^{3}(\T^\dim)}^{2}.
\end{equation*}

We can at this stage take the trace in the equality~\eqref{eq:duhamel}. In particular, using the linearity and the cyclicity of the trace, the trace of the operator defined by the first term on the right-hand side of~\eqref{eq:duhamel} is
\begin{align*}
  \Tr\left(
  \alpha\int_{0}^{1}\rme^{s\alpha\cLD}\cL_{\delta\Diff}\rme^{(1-s)\alpha\cLD}ds
  \right)
   & =
  \alpha\int_{0}^{1}\Tr(
  \rme^{s\alpha\cLD}\cL_{\delta\Diff}\rme^{(1-s)\alpha\cLD}
  )ds  \\
   & =
  \alpha\int_{0}^{1}\Tr(
  \rme^{\alpha\cLD}\cL_{\delta\Diff}
  )ds  \\
   & =
  \alpha\Tr(\rme^{\alpha\cLD}\cL_{\delta\Diff})=\Tr(g_{\alpha}'(\Diff)\left(-\cL_{\delta\Diff}\right)).
\end{align*}
This shows that~\eqref{eq:differentials} hold with the bound~$\left\lVert\delta\Diff\right\rVert_{\calC^{3}(\T^{\dim})}^{2}$ on the right-hand side.

Using the density of~$\calC^{3}(\T^{\dim},\calS_\dim)$ in~$L^{\infty}(\T^{\dim},\calS_\dim)$, we now prove that~\eqref{eq:differentials} holds. It suffices to show that the trace of the operator defined by the second term on the right-hand side of~\eqref{eq:duhamel} is bounded from above by~$\left\lVert\delta\Diff\right\rVert_{L^{\infty}(\T^{\dim})}^{2}$. We compute, using Fubini's theorem,
\begin{align*}
   & \Tr\left(\alpha^{2}
  \int_{0}^{1}\int_{0}^{s}
  \rme^{\alpha (s-u)\cLD}\cL_{\delta\Diff}\rme^{u\alpha\cL_{\Diff+\delta\Diff}}\cL_{\delta\Diff}\rme^{(1-s)\alpha\cLD}du\,ds
  \right)                \\
   & \qquad=
  \alpha^{2}
  \int_{0}^{1}\int_{0}^{s}
  \Tr\left(
  \rme^{(s-u)\alpha\cLD}\cL_{\delta\Diff}\rme^{u\alpha\cL_{\Diff+\delta\Diff}}\cL_{\delta\Diff}\rme^{(1-s)\alpha\cLD}
  \right)du\,ds          \\
   & \qquad=
  \alpha^{2}
  \int_{0}^{1}\int_{0}^{s}
  \Tr\left(
  \rme^{(1-u)\alpha\cLD}\cL_{\delta\Diff}\rme^{u\alpha\cL_{\Diff+\delta\Diff}}\cL_{\delta\Diff}
  \right)du\,ds          \\
   & \qquad=
  \alpha^{2}
  \int_{0}^{1}\int_{u}^{1}
  \Tr\left(
  \rme^{u\alpha\cL_{\Diff+\delta\Diff}}\cL_{\delta\Diff}\rme^{(1-u)\alpha\cLD}\cL_{\delta\Diff}
  \right)ds\,du          \\
   & \qquad=
  \alpha^{2}
  \int_{0}^{1}(1-u)
  \Tr\left(
  \rme^{u\alpha\cL_{\Diff+\delta\Diff}}\cL_{\delta\Diff}\rme^{(1-u)\alpha\cLD}\cL_{\delta\Diff}
  \right)du.
\end{align*}
Denote by~$V_{u}=\rme^{u\alpha\cL_{\Diff+\delta\Diff}}\cL_{\delta\Diff}\rme^{(1-u)\alpha\cLD}\cL_{\delta\Diff}$. If $u\in[0,1/2]$, we rewrite~$V_u$ as
\begin{equation*}
  V_u=
  \underbrace{\rme^{u\alpha\cL_{\Diff+\delta\Diff}}}_{=:A_1}
  \underbrace{\cL_{\delta\Diff}(\Id-\cLD)^{-1}}_{=:A_2}
  \underbrace{(\Id-\cLD)\rme^{(1-u)\alpha\cLD}(\Id-\cLD)}_{=:A_3}
  \underbrace{(\Id-\cLD)^{-1}\cL_{\delta\Diff}}_{=:A_4}.
\end{equation*}
It holds~$\left\lVert A_{1}\right\rVert_{\calB(L^{2}(\mu))}\leqslant1$, the operators~$A_2$ and~$A_4$ are bounded on~$L^{2}(\mu)$ by~\Cref{lem:multiple_operators_bounded}, while the operator~$A_3$ is trace-class on~$L^{2}(\mu)$ by~\Cref{lem:trace_class_operators}. In view of~\eqref{eq:trace_bounded_norm} and~\eqref{eq:estimate_lemma_operators_bounded_1}, there exists~$K\in\R_{+}$ such that, for all~$u\in[0,1/2]$,
\begin{equation*}
  \left\lVert V_{u}\right\rVert_{1}\leqslant K\left\lVert\delta\Diff\right\rVert^{2}_{L^{\infty}(\T^{\dim})}.
\end{equation*}
If $u\in[1/2,1]$, using the cyclicity of the trace, we write
\begin{equation*}
  \Tr(V_u)
  =
  \Tr(
  \underbrace{(\Id-\cLD)\rme^{u\alpha\cL_{\Diff+\delta\Diff}}(\Id-\cLD)}_{=:A_1}
  \underbrace{(\Id-\cLD)^{-1}\cL_{\delta\Diff}}_{=:A_2}
  \underbrace{\rme^{(1-u)\alpha\cLD}}_{=:A_3}
  \underbrace{\cL_{\delta\Diff}(\Id-\cLD)^{-1}}_{=:A_4}
  ),
\end{equation*}
and proceed as above.
The inequality
\begin{equation*}
  \left\lvert\int_{0}^{1}(1-u)\Tr\left(V_{u}\right)\,du\right\rvert\leqslant\int_{0}^{1}\left\lvert\Tr\left(V_{u}\right)\right\rvert\,du\leqslant\int_0^1\left\lVert V_u\right\rVert_1\,du,
\end{equation*}
then implies~\eqref{eq:differentials}, which concludes the proof.

\subsection{Technical results}
\label{app:differential_g_h_alpha_technical_results}

The following lemma (whose proof is given in~\Cref{app:trace_class_operators}) shows that~$f_{\alpha}$ is well-defined whenever~$\alpha>0$ and~$\Diff\in L^{\infty}_{V}(\T^{\dim},\calM_{a,b})$ satisfies~$\Diff\geqslant c\Id_\dim$ for some~$c>0$.

\begin{lemma}
  \label{lem:trace_class_operators}
  Fix~$\Diff\in L^{\infty}_{V}(\T^\dim,\calM_{a,b})$ such that~$\Diff\geqslant c\Id_\dim$ for some~$c>0$. Then, for any~$n\in\N$ and~$\theta>0$, the operators~$\left(\cLD\right)^{n}\rme^{\theta\cLD}$ and~$(\Id-\cLD)^{n}\rme^{\theta\cLD}$ are trace-class on~$L^{2}(\mu)$. Moreover, for any~$n\in\N$ and~$\underline{\theta}>0$, there exists~$\theta^{\star}>0$ and~$K\in\R_{+}$ (which both depend on~$n$ and~$\underline{\theta}$) such that
  \begin{equation}
    \label{eq:trace_bound}
    \forall\theta\geqslant\underline{\theta},\qquad
    \left\lVert \left(\Id-\cLD\right)^{n}\rme^{\theta\cLD}\right\rVert_{1}
    \leqslant
    K.
  \end{equation}
\end{lemma}

Using the notation of~\Cref{subsec:euler_lagrange_degenerate}, the limit in~\eqref{eq:limit_f_alpha_to_infty} is therefore obtained with the dominated convergence theorem, using that~$\rme^{-\alpha(\lambda_i-\lambda_2)}\leqslant\rme^{-(\lambda_i-\lambda_2)}$ for any~$\alpha\geqslant1$ and that the sums~$\sum_{i\geqslant3}N_i\lambda_i\rme^{-\lambda_i}$ and~$\sum_{i\geqslant3}N_i\rme^{-\lambda_i}$ are finite since the operators~$g_{1}(\Diff)$ and~$h_{1}(\Diff)$ are trace-class on~$L^{2}(\mu)$.

The three following lemmas are useful to bound various terms when identifying the differentials of~$\Diff\mapsto g_{\alpha}(\Diff)$ and~$\Diff\mapsto h_{\alpha}(\Diff)$. More precisely, the following lemma allows to compare the operators~$\cLD$ and~$\cL_{\delta\Diff}$ as a function of~$\left\lVert\delta\Diff\right\rVert_{L^{\infty}(\T^{\dim})}$. 
\begin{lemma}
  \label{lem:linear_bound}
  Fix~$\Diff\in L^{\infty}_{V}(\T^\dim,\calM_{a,b})$ such that~$\Diff\geqslant c\Id_\dim$ for some~$c>0$. Then for any~$\delta\Diff\in L^{\infty}(\T^\dim,\calS_\dim)$,
  \begin{equation}
    \label{eq:linear_bound}
    \frac{1}{c}\left\lVert\delta\Diff\right\rVert_{L^{\infty}(\T^\dim)}\cLD\leqslant\cL_{\delta\Diff}\leqslant -\frac{1}{c}\left\lVert\delta\Diff\right\rVert_{L^{\infty}(\T^\dim)}\cLD.
  \end{equation}
\end{lemma}
The result is an immediate consequence of the inequality
\begin{equation*}
    \delta\Diff\leqslant\left\lVert\delta\Diff\right\rVert_{L^\infty(\T^\dim)}\Id_\dim\leqslant\frac{1}{c}\left\lVert\delta\Diff\right\rVert_{L^\infty(\T^\dim)}\Diff.
\end{equation*}
Likewise, the following lemma (proved in~\Cref{app:bound_cl_d_delta_diff}) allows to compare the operators~$\cL_{\Diff}$ and~$\cL_{\Diff+\delta\Diff}$ for a small enough perturbation~$\delta\Diff$.
\begin{lemma}
  \label{lem:bound_cl_d_delta_diff}
  Fix~$\Diff\in L^{\infty}_{V}(\T^\dim,\calM_{a,b})$ such that~$\Diff\geqslant c\Id_\dim$ for some~$c>0$, and consider~$\delta\Diff\in L^{\infty}(\T^\dim,\calS_\dim)$ such that~$\left\lVert\delta\Diff\right\rVert_{L^{\infty}(\T^{\dim})}\leqslant c/2$. Then,
  \begin{equation}
    \label{eq:bound_cl_d_delta_diff}
    \frac{\left\lVert\Diff\right\rVert_{L^{\infty}(\T^\dim)}+\left\lVert\delta\Diff\right\rVert_{L^{\infty}(\T^\dim)}}{c}\cLD\leqslant\cL_{\Diff+\delta\Diff}\leqslant \frac{c}{2\left\lVert\Diff\right\rVert_{L^{\infty}(\T^\dim)}}\cLD\leqslant 0.
  \end{equation}
\end{lemma}

Lastly, the following result (proved in~\Cref{app:multiple_operators_bounded}) shows that some operators appearing in the proof of~\Cref{lem:differential_g_h_alpha} are bounded on~$L^{2}(\mu)$, with an explicit bound on~$\delta\Diff$ when needed. Note that the bound~\eqref{eq:estimate_lemma_operators_bounded_3} is not written in terms of the $L^{\infty}$ norm of~$\delta\Diff$, but rather as a function of the~$\calC^{3}$ norm of~$\delta\Diff$. This may be a technical limitation due to our technique of proof, which involves commutators, but the estimate is sufficient for our purposes.

\begin{lemma}
  \label{lem:multiple_operators_bounded}
  Fix~$\Diff\in L^{\infty}_{V}(\T^\dim,\calM_{a,b})$ such that~$\Diff\geqslant c\Id_\dim$ for some~$c>0$, and consider~$\delta\Diff\in \calC^{3}(\T^\dim,\calS_\dim)$ such that~$\left\lVert\delta\Diff\right\rVert_{L^{\infty}(\T^{\dim})}\leqslant c/2$. Then the following statements hold:
  \begin{itemize}
    \item The operators $\cL_{\delta\Diff}(\Id-\cL_{\Diff+\delta\Diff})^{-1}$ and~$\cL_{\delta\Diff}(\Id-\cLD)^{-1}$ are bounded on~$L^{2}(\mu)$. More precisely, there exists~$K\in\R_{+}$ such that
          \begin{equation}
            \label{eq:estimate_lemma_operators_bounded_1}
            \left\lVert\cL_{\delta\Diff}(\Id-\cL_{\Diff+\delta\Diff})^{-1}\right\rVert_{\calB(L^{2}(\mu))}+\left\lVert\cL_{\delta\Diff}(\Id-\cLD)^{-1}\right\rVert_{\calB(L^{2}(\mu))}\leqslant K\left\lVert\delta\Diff\right\rVert_{L^{\infty}(\T^\dim)};
          \end{equation}
    \item The operator~$(\Id-\cL_{\Diff+\delta\Diff})^{2}\rme^{\theta\cLD}$ is bounded on~$L^{2}(\mu)$ for any~$\theta>0$. More precisely, for any~$\underline{\theta}>0$, there exists~$K_{\underline{\theta}}\in\R_{+}$ such that, for all~$\theta>\underline{\theta}$,
          \begin{equation}
            \label{eq:estimate_lemma_operators_bounded_2}
            \left\lVert (\Id-\cL_{\Diff+\delta\Diff})^{2}\rme^{\theta \cLD}  \right\rVert_{\calB(L^{2}(\mu))}\leqslant K_{\underline{\theta}};
          \end{equation}
    \item The operator~$(\Id-\cL_{\Diff+\delta\Diff})\cL_{\delta\Diff}(\Id-\cL_{\Diff+\delta\Diff})^{-2}$ is bounded on~$L^{2}(\mu)$. More precisely, there exists~$K\in\R_{+}$ such that
          \begin{equation}
            \label{eq:estimate_lemma_operators_bounded_3}
            \left\lVert(\Id-\cL_{\Diff+\delta\Diff})\cL_{\delta\Diff}(\Id-\cL_{\Diff+\delta\Diff})^{-2}\right\rVert_{\calB(L^{2}(\mu))}
            \leqslant K\left\lVert\delta\Diff\right\rVert_{\calC^{3}(\T^\dim)},
          \end{equation}
          where~$\left\lVert\delta\Diff\right\rVert_{\calC^{3}(\T^\dim)}=\sup\limits_{1\leqslant i,j\leqslant\dim}\left\lVert\delta\Diff_{i,j}\right\rVert_{\calC^{3}(\T^\dim)}$.
  \end{itemize}
\end{lemma}

\subsubsection{Proof of~\Cref{lem:trace_class_operators}}
\label{app:trace_class_operators}

We first show that the operator~$\rme^{\theta\cLD}$ is trace-class on~$L^{2}(\mu)$ for~$\theta>0$. Using the lower bound on~$\Diff$, it holds~$\cLD\leqslant c\cL_{\Id_\dim}$ on~$L^{2}(\mu)$. 
It therefore suffices to show that the operator~$\rme^{\theta\cL_{\Id_\dim}}$ is trace-class on~$L^{2}(\mu)$. Define the unitary transformation
\begin{equation}
  \label{eq:unitary_transformation}
  U\colon
  \left\lbrace
  \begin{aligned}
    L^{2}(\mu) & \to L^{2}(\T^{\dim}), \\
    f          & \mapsto f\rme^{-V/2}.
  \end{aligned}
  \right.
\end{equation}
Straightforward computations show that
\begin{equation}
  \label{eq:unitary_transformation_laplacian}
  U\cL_{\Id_\dim}U^{\star}=\Delta +W\leqslant0
\end{equation}
on~$L^{2}(\T^{\dim})$, where~$\Delta$ is the Laplacian operator with periodic boundary conditions on~$L^{2}(\T^{\dim})$, and the smooth function
\begin{equation}
  \label{eq:W}
  W:=\frac{1}{2}\Delta V-\frac{1}{4}\left\lvert\nabla V\right\rvert^{2}
\end{equation}
is identified with a multiplication operator on~$L^{2}(\T^{\dim})$. This implies that
\begin{equation*}
  0\leqslant
  \Tr\left(
  \rme^{\theta\cLD}
  \right)\leqslant
  \Tr\left(
  \rme^{c\theta\cL_{\Id_\dim}}
  \right)=
  \Tr_{L^{2}(\T^{\dim})}\left(
  \rme^{c\theta(\Delta +W)}
  \right)\leqslant
  \rme^{c\theta\calW}\Tr_{L^{2}(\T^{\dim})}\left(
  \rme^{c\theta\Delta}
  \right),
\end{equation*}
where~$\calW=\max\limits_{q\in\T^{\dim}}\left\lvert W(q)\right\rvert$. Since the operator~$\rme^{c\theta \Delta}$ is trace-class on~$L^{2}(\T^\dim)$ for any~$\theta>0$ (see for instance~\cite[Theorem XIII.76]{ReedSimon_Vol4}), the operator~$\rme^{\theta\cLD}$ is trace-class on~$L^{2}(\mu)$ for any~$\theta>0$.

To conclude the proof, we rely on the fact that, for~$\underline{\theta}>0$ and~$n\in\N$ fixed, there exists~$\theta^{\star}>0$ and~$K>0$ such that~$(-x)^{n}\rme^{\theta x}\leqslant K\rme^{\theta^{\star}x}$ for any~$x\in\R_{-}$ and any~$\theta\geqslant\underline{\theta}$. This implies that the operator~$\left(\cLD\right)^{n}\rme^{\theta\cLD}$ is trace-class on~$L^{2}(\mu)$ for any~$n\in\N$. It follows that the operator~$(\Id-\cLD)^{n}\rme^{\theta\cLD}$ is also trace-class on~$L^{2}(\mu)$ as it is a linear combination of the operators~$\left(\cLD\right)^{k}\rme^{\theta\cLD}$ for~$0\leqslant k\leqslant n$. This also implies that the estimate~\eqref{eq:trace_bound} holds, and concludes the proof.



\subsubsection{Proof of~\Cref{lem:bound_cl_d_delta_diff}}
\label{app:bound_cl_d_delta_diff}

Since~$\frac{c}{2}\Id_\dim\leqslant\Diff+\delta\Diff\leqslant\left(\left\lVert\Diff\right\rVert_{L^{\infty}(\T^\dim)}+\left\lVert\delta\Diff\right\rVert_{L^{\infty}(\T^\dim)}\right)\Id_\dim$, it holds
\begin{equation}
  \label{eq:proof_bound_cl_d_delta_diff_1}
  \left(
  \left\lVert\Diff\right\rVert_{L^{\infty}(\T^\dim)}+\left\lVert\delta\Diff\right\rVert_{L^{\infty}(\T^\dim)}
  \right)\cL_{\Id_\dim}\leqslant\cL_{\Diff+\delta\Diff}\leqslant \frac{c}{2}\cL_{\Id_\dim}\leqslant 0.
\end{equation}
Using now that~$c\Id_\dim\leqslant\Diff\leqslant\left\lVert\Diff\right\rVert_{L^{\infty}(\T^\dim)}\Id_\dim$, it holds~$\left\lVert\Diff\right\rVert_{L^{\infty}(\T^\dim)}^{-1}\Diff\leqslant\Id_\dim\leqslant c^{-1}\Diff$ so that
\begin{equation}
  \label{eq:proof_bound_cl_d_delta_diff_2}
  c^{-1}\cLD\leqslant\cL_{\Id_\dim}\leqslant\left\lVert\Diff\right\rVert_{L^{\infty}(\T^\dim)}^{-1}\cLD.
\end{equation}
Combining~\eqref{eq:proof_bound_cl_d_delta_diff_1} with~\eqref{eq:proof_bound_cl_d_delta_diff_2} implies~\eqref{eq:bound_cl_d_delta_diff}, which concludes the proof.

\subsubsection{Proof of~\Cref{lem:multiple_operators_bounded}}
\label{app:multiple_operators_bounded}

We prove the first item. Denote by~$T=\cL_{\delta\Diff}(\Id-\cLD)^{-1}$. Using~\Cref{lem:linear_bound}, it holds
\begin{equation*}
  0\leqslant T^{*}T\leqslant \frac{1}{c^{2}}\left\lVert\delta\Diff\right\rVert_{L^{\infty}(\T^\dim)}^{2}(\Id-\cLD)^{-1}\cLD^{2}(\Id-\cLD)^{-1}.
\end{equation*}
The operator on the right-hand side of the inequality is bounded on~$L^{2}(\mu)$ since
\begin{equation*}
  \left\lVert
  (\Id-\cLD)^{-1}\cLD^{2}(\Id-\cLD)^{-1}
  \right\rVert_{\calB(L^{2}(\mu))}\leqslant\sup_{\lambda\geqslant0}\frac{\lambda^{2}}{(1+\lambda)^{2}}=1.
\end{equation*}
Therefore, the operator~$T$ is bounded on~$L^{2}(\mu)$. Now let~$S=\cL_{\delta\Diff}(\Id-\cL_{\Diff+\delta\Diff})^{-1}$. Using~\Cref{lem:bound_cl_d_delta_diff}, it holds
\begin{equation*}
  0\leqslant SS^{*}\leqslant \cL_{\delta\Diff}\left(\Id-\frac{c}{2\left\lVert\Diff\right\rVert_{L^{\infty}(\T^\dim)}}\cLD\right)^{-2}\cL_{\delta\Diff}.
\end{equation*}
The operator~$\cL_{\delta\Diff}\left(\Id-\frac{c}{2\left\lVert\Diff\right\rVert_{L^{\infty}(\T^\dim)}}\left\lVert\Diff\right\rVert_{L^{\infty}(\T^\dim)}^{-1}\cLD\right)^{-1}$ is bounded on~$L^{2}(\mu)$ using a similar argument as above, with an upper bound proportional to~$\left\lVert\delta\Diff\right\rVert_{L^{\infty}(\T^{\dim})}$. This implies that the operator~$S$ is bounded on~$L^{2}(\mu)$. The estimate~\eqref{eq:estimate_lemma_operators_bounded_1} easily follows.

We next prove the second item. Denote now by~$T_{\theta}=(\Id-\cL_{\Diff+\delta\Diff})^{2}\rme^{\theta\cLD}$. In view of~\Cref{lem:bound_cl_d_delta_diff},
\begin{equation*}
  0\leqslant T_{\theta}^{*}T_{\theta}\leqslant \rme^{\theta\cLD}\left(\Id-c^{-1}\left(\left\lVert\Diff\right\rVert_{L^{\infty}(\T^\dim)}+\left\lVert\delta\Diff\right\rVert_{L^{\infty}(\T^\dim)}\right)\cLD\right)^{4}\rme^{\theta\cLD}.
\end{equation*}
The operators~$\left(\cLD\right)^{i}\rme^{\theta\cLD}$ for~$0\leqslant i\leqslant 2$ are bounded on~$L^{2}(\mu)$ by functional calculus. This implies that the operator~$T_{\theta}$ is bounded on~$L^{2}(\mu)$, and the uniform estimate~\eqref{eq:estimate_lemma_operators_bounded_2} is obtained in the same manner as in the end of the proof of~\Cref{lem:trace_class_operators}.

We finally prove the third item. Let~$T=(\Id-\cL_{\Diff+\delta\Diff})\cL_{\delta\Diff}(\Id-\cL_{\Diff+\delta\Diff})^{-2}$. Consider~$\kappa>0$ (a parameter which will be made precise below) and rewrite the operator~$T$ as
\begin{equation}
  \label{eq:op_bounded_c3}
  T=\underbrace{(\Id-\cL_{\Diff+\delta\Diff})(\kappa\Id-\cL_{\Id_\dim})^{-1}}_{:=A_1}\underbrace{(\kappa\Id-\cL_{\Id_\dim})\cL_{\delta\Diff}(\kappa\Id-\cL_{\Id_\dim})^{-2}}_{:=A_2}\underbrace{(\kappa\Id-\cL_{\Id_\dim})^{2}(\Id-\cL_{\Diff+\delta\Diff})^{-2}}_{:=A_3}.
\end{equation}
The operators~$A_1$ and~$A_3$ are bounded on~$L^{2}(\mu)$. We prove this statement for the operator~$A_1$, the proof for~$A_3$ being similar. Using~\eqref{eq:proof_bound_cl_d_delta_diff_1}, it holds
\begin{equation*}
  0\leqslant A_1^{*}A_1\leqslant (\kappa\Id-\cL_{\Id_\dim})^{-1}\left(\Id-\left(\left\lVert\Diff\right\rVert_{L^{\infty}(\T^{\dim})}+\left\lVert\delta\Diff\right\rVert_{L^{\infty}(\T^{\dim})}\right)\cL_{\Id_\dim}\right)^{2}(\kappa\Id-\cL_{\Id_\dim})^{-1},
\end{equation*}
and the operator on the right-hand side of the previous inequality is bounded on~$L^{2}(\mu)$ since
\begin{equation*}
  \begin{aligned}
     & \left\lVert(\kappa\Id-\cL_{\Id_\dim})^{-2}\left(\Id-\left(\left\lVert\Diff\right\rVert_{L^{\infty}(\T^{\dim})}+\left\lVert\delta\Diff\right\rVert_{L^{\infty}(\T^{\dim})}\right)\cL_{\Id_\dim}\right)^{2}\right\rVert_{\calB(L^{2}(\mu))}              \\
     & \qquad\qquad\qquad\qquad\leqslant\left(\sup\limits_{\lambda\geqslant0}\frac{1+\left(\left\lVert\Diff\right\rVert_{L^{\infty}(\T^{\dim})}+\left\lVert\delta\Diff\right\rVert_{L^{\infty}(\T^{\dim})}\right)\lambda}{\kappa+\lambda}\right)^{2}<+\infty.
  \end{aligned}
\end{equation*}
In view of~\eqref{eq:op_bounded_c3}, it therefore suffices to show that the operator~$(\kappa\Id-\cL_{\Id_\dim})\nabla^{*}\delta\Diff\nabla(\kappa\Id-\cL_{\Id_\dim})^{-2}$ is bounded on~$L^{2}(\mu)$, or, equivalently, that the operator
\begin{equation}
  \label{eq:transformation_operator}
  U(\kappa\Id-\cL_{\Id_\dim})\nabla^{*}\delta\Diff\nabla(\kappa\Id-\cL_{\Id_\dim})^{-2}U^{*}=U(\kappa\Id-\cL_{\Id_\dim})U^{*}\left(U\nabla^{*}\delta\Diff\nabla U^{*}\right)U(\kappa\Id-\cL_{\Id_\dim})^{-2}U^{*},
\end{equation}
is bounded on~$L^{2}(\T^{\dim})$, where~$U$ denotes the unitary transformation defined by~\eqref{eq:unitary_transformation}. Using~\eqref{eq:unitary_transformation_laplacian}, it holds
\begin{equation*}
  U(\kappa\Id-\cL_{\Id_\dim})U^{*}=\kappa\Id-\Delta-W,\qquad
  U(\kappa\Id-\cL_{\Id_\dim})^{-2}U^{*}=(\kappa\Id -\Delta-W)^{-2}.
\end{equation*}
where~$W$ is defined in~\eqref{eq:W}. Straightforward computations lead to
\begin{equation*}
  U\nabla^{*}=\left(-\nabla+\frac{1}{2}\nabla V\right)U,\qquad
  \nabla U^{*}=U^{*}\left(\nabla+\frac{1}{2}\nabla V\right).
\end{equation*}
Therefore, the operator in~\eqref{eq:transformation_operator} can be rewritten as
\begin{equation*}
  \left(\kappa\Id-\Delta-W\right)
  \left[\sum_{1\leqslant i,j\leqslant\dim}\left(
    -\partial_{q_j}+\frac{1}{2}\partial_{q_j}V
    \right)\delta\Diff_{i,j}\left(
    \partial_{q_i}+\frac{1}{2}\partial_{q_i}V
    \right)\right]
  \left(\kappa\Id-\Delta-W\right)^{-2}.
\end{equation*}
We now choose~$\kappa$ such that~$\kappa-W\geqslant 1$ (which is possible since~$W$ is smooth on the compact space~$\T^{\dim}$), so that~$\kappa\Id-\Delta-W\geqslant\Id$ on~$L^{2}(\T^{\dim})$. The operators~$(\kappa\Id-\Delta-W)(\Id-\Delta)^{-1}$ and~$(\Id-\Delta)^{2}(\kappa\Id-\Delta-W)^{-2}$ are then bounded on~$L^{2}(\T^{\dim})$, so that it suffices to show that the operator
\begin{equation*}
  \left(\Id-\Delta\right)
  \left[\sum_{1\leqslant i,j\leqslant\dim}\left(
    -\partial_{q_j}+\frac{1}{2}\partial_{q_j}V
    \right)\delta\Diff_{i,j}\left(
    \partial_{q_i}+\frac{1}{2}\partial_{q_i}V
    \right)\right]
  \left(\Id-\Delta\right)^{-2}
\end{equation*}
is bounded on~$L^{2}(\T^{\dim})$ to conclude.

For~$k\geqslant1$, denote by~$H^{k}(\T^{\dim})$ the Sobolev space of square integrable functions on~$\T^{\dim}$ with all (distributional) derivatives up to order~$k$ belonging to~$L^{2}(\T^{\dim})$. Using the Fourier series characterization of Sobolev spaces and the fact that~$V$ is smooth, it holds
\begin{equation*}
  \left\lbrace
  \begin{aligned}
                                              & (\Id-\Delta)^{-2}\in\calB(L^{2}(\T^{\dim}),H^{4}(\T^{\dim})),                          \\
    \forall 1\leqslant i,j\leqslant\dim,\quad & \partial_{q_i}+\frac{1}{2}\partial_{q_i}V\in\calB(H^{4}(\T^{\dim}),H^{3}(\T^{\dim})),  \\
    \forall 1\leqslant i,j\leqslant\dim,\quad & -\partial_{q_j}+\frac{1}{2}\partial_{q_j}V\in\calB(H^{3}(\T^{\dim}),H^{2}(\T^{\dim})), \\
                                              & \Id-\Delta\in\calB(H^{2}(\T^{\dim}), L^{2}(\T^{\dim})).
  \end{aligned}
  \right.
\end{equation*}
To obtain~\eqref{eq:estimate_lemma_operators_bounded_3}, it therefore suffices that~$\delta\Diff_{i,j}$, seen as a multiplication operator on~$H^{3}(\T^{\dim})$, is bounded uniformly in~$i,j\in\left\lbrace1,\dots,\dim\right\rbrace$ with~$\left\lVert\delta\Diff_{i,j}\right\rVert_{\calB(H^{3}(\T^{\dim}))}\leqslant K_{i,j}\left\lVert\delta\Diff\right\rVert_{\calC^{3}(\T^{\dim})}$ for a constant~$K_{i,j}\geqslant 0$. This is easily seen to hold using the Leibniz rule since~$\delta\Diff\in\calC^{3}(\T^{\dim},\calS_\dim)$. Thus, the estimate~\eqref{eq:estimate_lemma_operators_bounded_3} holds, which concludes the proof of~\Cref{lem:multiple_operators_bounded}.

\section{Discretizations of the optimization problems~\eqref{eq:maximizer_in_thm} and~\eqref{eq:maximizer_in_thm_alpha}}
\label{app:numerical}

To solve the optimization problems~\eqref{eq:maximizer_in_thm} and~\eqref{eq:maximizer_in_thm_alpha} in practice, we introduce two finite-dimensional parametrizations: one for the diffusion matrix~$\Diff$, and one for the eigenfunction~$u$. We consider for simplicity the case of isotropic diffusion matrices
\begin{equation}
  \label{eq:scalar_diffusion_for_numerics}
  \Diff(q) = \Df(q) \Id_\dim.
\end{equation}
We introduce a piecewise constant parametrization for~$\Df$ in~\Cref{subsubsec:paramD}, and next a~$\mathbb{P}_1$ finite element parametrization (\emph{i.e.}~using a basis of continuous and piecewise affine functions) for the eigenfunctions~$u$ in~\Cref{subsec:FE}, in the one dimensional case~$\dim=1$ for simplicity of exposition. The optimization problem then boils down to a constrained generalized eigenvalue problem, see~\Cref{subsubsec:disc_optim}. We describe in~\Cref{subsubsec:optim-numeric} how to solve the latter problem.

\subsection{Parametrization of the diffusion matrix}
\label{subsubsec:paramD}

In practice, we parametrize the scalar valued function~$\Df$ in~\eqref{eq:scalar_diffusion_for_numerics} using a finite-dimensional vector subspace of~$L^\infty(\T^\dim,\R)$. More precisely, we introduce a set of~$N$ non-negative functions~$\{\psi_1,\ldots, \psi_N\} \subset L^\infty(\T^\dim,\R)$, and consider functions of the generic form
\begin{equation}
  \label{eq:Diff_parametrized}
  \Df_\diff(q) = \sum_{n=1}^N \diff_n \psi_n(q), \qquad \diff = (\diff_1,\ldots, \diff_N)\in\R^N.
\end{equation}
Nonnegativity conditions for~$\Df_\diff$ may be cumbersome to write when the functions~$\psi_n$ have supports which overlap in a nontrivial way, as is the case for instance for (tensor products of) trigonometric functions. This motivates us to choose
\begin{equation}
  \label{eq:choice_psi_i}
  \psi_n(q) = \mathbf{1}_{K_n}(q),
\end{equation}
the indicator function of a domain~$K_n \subset \T^\dim$, with the condition that~$(K_1,\ldots,K_N)$ forms a partition of~$\T^\dim$ (typically, the sets~$K_n$ are rectangles obtained from a Cartesian mesh). In order for the function~\eqref{eq:Diff_parametrized} to be in the set~\eqref{eq:constrained-set-matriciel} for the choice~\eqref{eq:choice_psi_i}, the components~$(D_n)_{1\leqslant n\leqslant N}$ should belong to the set
\begin{equation}
  \label{eq:def-Dhat}
  \diffset_p^{a,b} = \left\{\diff\in \left[a \rme^{ V_+(K_1)},\frac{1}{b}\rme^{ V_-(K_1)} \right] \times \dots \times \left[a \rme^{ V_+(K_N)},\frac{1}{b}\rme^{ V_-(K_N)} \right] \, \middle| \, \sum_{n=1}^N \omega_{p,n} D_n^p \leq 1 \right\},
\end{equation}
where
\begin{equation*}
  V_-(E) = \inf_E V, \qquad V_+(E) = \sup_E V, \qquad \omega_{p,n} = \int_{K_n} \rme^{- p V}\left\lvert\Id_\dim\right\rvert_{\rmF}^{p}>0.
\end{equation*}
We assume in the sequel that the conditions
\begin{equation}
  \label{eq:condition_ab_partition}
  ab \leq \min_{1 \leq n \leq N} \rme^{ [V_-(K_n)-V_+(K_n)]}, \qquad a  \leq \left( \sum_{n=1}^N \omega_{p,n} \rme^{ p V_+(K_n)} \right)^{-1/p},
\end{equation}
hold, so that the set~$\diffset_p^{a,b}$ is not empty. These conditions can be ensured by choosing~$a,b \geq 0$ sufficiently small. 

The maximization problem~\eqref{eq:maximizer_in_thm} can then be approximated by the finite dimensional maximization problem
\begin{equation}
  \label{eq:optim-PC}
  \textrm{Find }\diff^\star\in\diffset_p^{a,b}\textrm{ such that }\Lambda(\Df_{\diff^\star} \Id_\dim) = \sup_{\diff\in\diffset_p^{a,b}} \Lambda(\Df_\diff \Id_\dim).
\end{equation}
The well-posedness of this optimization problem is guaranteed by the following result.

\begin{proposition}
  \label{prop:well-posed-finite-dim}
  Fix~$N \geq 1$ and~$p\in [1,+\infty)$, and consider a partition~$(K_1,\dots,K_N)$ of~$\T^\dim$ and~$a,b \geq 0$ such that~\eqref{eq:condition_ab_partition} holds. There exists a solution~$\diff^{\star}$ to~\eqref{eq:optim-PC}.
\end{proposition}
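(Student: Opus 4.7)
The strategy is a standard compactness plus upper semicontinuity argument, made particularly clean by the fact that we are now in a genuinely finite-dimensional setting. The plan is to show that the feasible set $\diffset_p^{a,b}$ is a nonempty compact subset of $\R^N$, that the objective $\diff \mapsto \Lambda(\Df_\diff \Id_\dim)$ is upper semicontinuous on this set, and then to invoke the standard extreme value argument.

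First I would verify non-emptiness using~\eqref{eq:condition_ab_partition}. The natural candidate is $\overline{\diff} = (a\rme^{V_+(K_1)}, \dots, a\rme^{V_+(K_N)})$. The first inequality in~\eqref{eq:condition_ab_partition} gives $a\rme^{V_+(K_n)} \leq b^{-1}\rme^{V_-(K_n)}$ for every $n$, so $\overline{\diff}$ lies in the Cartesian product of intervals; the second inequality is precisely $\sum_n \omega_{p,n}\bigl(a\rme^{V_+(K_n)}\bigr)^p \leq 1$, so $\overline{\diff}$ satisfies the normalization constraint as well. Next, $\diffset_p^{a,b}$ is the intersection of a closed, bounded box with the closed sublevel set $\{\diff \in \R^N : \sum_n \omega_{p,n} D_n^p \leq 1\}$, and is therefore compact in $\R^N$.

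For upper semicontinuity, I would use the variational characterization of $\Lambda$. For every fixed test function $u \in H^1_0(\mu) \setminus \{0\}$, the Rayleigh-type functional
\begin{equation*}
  R_u(\diff) = \frac{\dps \int_{\T^\dim} \Df_\diff(q) \, |\nabla u(q)|^2 \, \mu(q)\, dq}{\dps \int_{\T^\dim} u(q)^2\, \mu(q)\, dq}
  = \sum_{n=1}^N D_n \, \frac{\dps \int_{K_n} |\nabla u(q)|^2\, \mu(q)\, dq}{\dps \int_{\T^\dim} u(q)^2 \, \mu(q)\, dq}
\end{equation*}
is linear, hence continuous, in $\diff \in \R^N$. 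From~\eqref{eq:lambdaD-init}, the map $\diff \mapsto \Lambda(\Df_\diff \Id_\dim)$ is the pointwise infimum of the family $(R_u)_{u \in H^1_0(\mu)\setminus\{0\}}$ of continuous functions of $\diff$, and is therefore upper semicontinuous on $\R^N$, and in particular on $\diffset_p^{a,b}$. An upper semicontinuous function on a nonempty compact set attains its supremum, which yields the existence of a maximizer $\diff^\star \in \diffset_p^{a,b}$.

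There is no real obstacle here: the passage from the infinite-dimensional Theorem~\ref{thm:well-posedness-1D} to the finite-dimensional Proposition~\ref{prop:well-posed-finite-dim} removes most of the technicalities, since compactness in $\R^N$ is immediate and one only needs the mildest regularity of $\Lambda$. The only point deserving a little care is to make explicit that both constraints in~\eqref{eq:condition_ab_partition} are exactly what is required for $\diffset_p^{a,b}$ to be non-empty, so that the compactness argument is not vacuous; this is handled by exhibiting the explicit candidate $\overline{\diff}$ above. Note also that one can alternatively invoke concavity of $\diff \mapsto \Lambda(\Df_\diff \Id_\dim)$ (inherited from Lemma~\ref{lem:concave-mat} via the linear embedding $\diff \mapsto \Df_\diff \Id_\dim$) to obtain continuity on the interior of $\diffset_p^{a,b}$, but the infimum-of-continuous-functions viewpoint gives upper semicontinuity up to the boundary in one line and is all that is needed.
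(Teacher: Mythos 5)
Your proof is correct and follows the paper's argument exactly: compactness of $\diffset_p^{a,b}$ together with upper semicontinuity of $\diff \mapsto \Lambda(\Df_\diff\Id_\dim)$, the latter obtained by viewing it as an infimum over $u$ of affine functionals of $\diff$. The only small imprecision is that when $b=0$ the componentwise interval constraints do not form a bounded box, so boundedness of $\diffset_p^{a,b}$ in that case actually comes from the normalization constraint $\sum_{n} \omega_{p,n} D_n^p \leq 1$ together with $\omega_{p,n}>0$, as the paper itself points out in the text immediately following the proposition.
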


Note that the value~$b=0$ is allowed for~$N$ finite, in contrast to~\Cref{thm:well-posedness-1D}, since the inequality in~\eqref{eq:def-Dhat} ensures that all the components~$(D_n)_{1 \leq n \leq N}$ are bounded from above by~$(\min_{1 \leq n \leq N} \omega_{p,n})^{-1/p}$.

\begin{proof}
  The function~$\diff \mapsto \Lambda(\Df_\diff \Id_\dim)$ is upper-semicontinuous on~$\diffset_p^{a,b}$ by arguments similar to the ones used in~\Cref{app:thm:well-posedness-1D}  (and whatever the choice of the norm on~$\R^N$). In addition,~$\diffset_p^{a,b}$ is closed and bounded, hence compact, which, combined with the upper-semicontinuity of the function to maximize, guarantees the existence of a maximum and thus proves the result.
\end{proof}

Likewise, the maximization problem~\eqref{eq:maximizer_in_thm_alpha} can be approximated by the finite dimensional maximization problem
\begin{equation*}
  \textrm{Find }\diff^{\star,\alpha}\in\diffset_p^{a,b}\textrm{ such that }f_\alpha(\Df_{\diff^{\star,\alpha}} \Id_\dim) = \sup_{\diff\in\diffset_p^{a,b}} f_\alpha(\Df_\diff \Id_\dim).
\end{equation*}
The well-posedness of this optimization problem can be obtained by similar arguments.

Notice that one can  prove that any solution~$\diff^{\star}$ of~\eqref{eq:optim-PC} is positive when the domains~$K_n$ have nonempty interiors.

\begin{proposition}
  \label{prop:well-posed-pwc}
  Fix~$N \geq 1$ and~$p\in [1,+\infty)$, and consider a partition~$(K_1,\dots,K_N)$ of~$\T^\dim$ for which the domains $(K_n)_{1 \le n \le N}$ have nonempty interiors. Assume that~$a,b \geq 0$ satisfy~\eqref{eq:condition_ab_partition}, and denote by~${\diff}^{\star} = (\diff^{\star}_{1},\ldots,{\diff}^{\star}_{N})\in\diffset_p^{a,b}$ a solution to the maximization problem~\eqref{eq:optim-PC}. Then,
  \begin{equation*}
    \forall n\in \{1,\ldots, N\}, \qquad {\diff}^{\star}_{n}>0.
  \end{equation*}
\end{proposition}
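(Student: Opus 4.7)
The case $a > 0$ is immediate from the definition~\eqref{eq:def-Dhat} of $\diffset_p^{a,b}$, since every feasible $\diff$ satisfies $D_n \geq a \rme^{V_+(K_n)} > 0$. I concentrate on the case $a = 0$ and argue by contradiction. Suppose $\diff^\star$ solves~\eqref{eq:optim-PC} and that there exists $n_0 \in \{1,\ldots,N\}$ with $D_{n_0}^\star = 0$. The plan is to first show that this forces $\Lambda(\diff^\star) = 0$, and then to produce a feasible competitor $\diff$ with $\Lambda(\diff) > 0$, contradicting optimality.

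For the first step, I exploit the Rayleigh characterization~\eqref{eq:lambdaD-init}. Since $K_{n_0}$ has nonempty interior by assumption, I pick a nonzero nonnegative smooth function $\phi$ compactly supported in $K_{n_0}^{\circ}$ and set $u = \phi - \int_{\T^\dim} \phi(q)\mu(q)\,dq$. Then $u \in H^1_0(\mu)$, and $u$ is nonzero in $L^2(\mu)$ (otherwise $\phi$ would be a.e. constant, incompatible with $\phi$ being nonzero and compactly supported in a strict subset of $\T^\dim$). Since $\nabla u = \nabla \phi$ is supported in $K_{n_0}$, evaluating the Rayleigh quotient at $u$ yields
\[
\int_{\T^\dim} \Df_{\diff^\star}(q)\,|\nabla u(q)|^2 \mu(q)\,dq = \sum_{n=1}^N D_n^\star \int_{K_n} |\nabla \phi(q)|^2 \mu(q)\,dq = D_{n_0}^\star \int_{K_{n_0}} |\nabla \phi|^2 \mu\,dq = 0,
\]
so $\Lambda(\diff^\star) \leq 0$, and hence $\Lambda(\diff^\star) = 0$ by nonnegativity of $\Lambda$.

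For the second step, take the uniform choice $\diff = (c,\ldots,c)$ with $c > 0$ small enough that the upper bounds $c \leq b^{-1} \rme^{V_-(K_n)}$ (vacuous when $b=0$) and the normalization $c^p \sum_{n=1}^N \omega_{p,n} \leq 1$ both hold; such a $c$ exists since the feasible set is nonempty under~\eqref{eq:condition_ab_partition}. Then $\Df_\diff \equiv c$ is constant, and the Poincar\'e inequality~\eqref{eq:Poincare} yields $\Lambda(\diff) = c/C_\mu > 0$, contradicting the assumed optimality of $\diff^\star$. The only nontrivial ingredient is the construction of the test function $u$ in the first step, which relies crucially on the nonempty-interior hypothesis on $K_{n_0}$; the remainder is an elementary manipulation of the Rayleigh quotient together with Poincar\'e's inequality.
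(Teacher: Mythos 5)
Your proof is correct and follows essentially the same approach as the paper's: localize a test function in the cell where the coefficient is smallest (or vanishes) to bound the Rayleigh quotient from above, and invoke the Poincar\'e inequality for a constant feasible diffusion to bound the optimal spectral gap from below. The paper phrases this as a direct quantitative lower bound $\diff^\star_n \geq \gamma \|u_n\|^2_{L^2(\mu)}/(\mu_+ \lambda_{1,n} C_\mu)$ using a Dirichlet eigenfunction on a ball $\mathscr{B}_n \subset K_n$, whereas you argue by contradiction with an arbitrary smooth bump function and dispatch the $a>0$ case immediately from the box constraint; these are cosmetic variations on the same idea.
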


Note however that this result does not give any information on the behavior of the lower bound on the components of~$D^\star$ in the limit of a vanishing mesh size. The numerical results we present (see for instance~\Cref{fig:res_1}) indicate that the lower bound can indeed converge to~0 as~$N \to +\infty$.

\begin{proof}
There exists~$\gamma > 0$ such that~$\gamma \mathbbm{1}_N\in\diffset_p^{a,b}$ (where~$\mathbbm{1}_N$ is the~$N$-dimensional vector whose components are all equal to~1), and thus, since~$\Df_{\mathbbm{1}_N}(q) = 1$ for any~$q \in \T^\dim$, and in view of the Poincar\'e inequality~\eqref{eq:Poincare},
\begin{equation*}
  \Lambda(\Df_{\diff^{\star}} \Id_\dim) \geq \gamma\Lambda(\Df_{\mathbbm{1}_N} \Id_\dim) \geq \frac{\gamma}{C_{\mu}}.
\end{equation*}
Consider~$n \in \argmin_{m \in \{1,\ldots,N\}} \diff^{\star}_{p,m}$. Since~$K_n$ has a nonempty interior, there exists an open ball~$\mathscr{B}_n$ such that~$\overline{\mathscr{B}_n} \subset K_n$. We consider the function~$v_n$ defined on~$\mathscr{B}_n$ as a normalized eigenfunction associated with the first eigenvalue of the Laplace problem with Dirichlet boundary conditions:
\begin{equation}
  \label{eq:def_v_n_prop_well_posed_pwc}
  -\Delta v_n = \lambda_{1,n} v_n \ \mathrm{on} \ \mathscr{B}_n, \qquad v_n = 0 \ \mathrm{on} \ \partial \mathscr{B}_n, \qquad \int_{\mathscr{B}_n} v_n^2 = 1.
\end{equation}
Note that~$\lambda_{1,n} > 0$. The function~$v_n$ is then extended by~0 on~$\T^\dim \setminus \mathscr{B}_n$. It is easy to check that~$v_n \in H^1(\mu)$. We finally define~$u_n \in H^{1,0}(\mu)$ as
\begin{equation*}
  u_n = v_n - \int_{\mathscr{B}_n} v_n \, \mu.
\end{equation*}
In view of~\eqref{eq:def_v_n_prop_well_posed_pwc}, and denoting by~$\mu_+ = \max\limits_{\T^\dim} \mu < +\infty$,
\begin{equation*}
  \int_{\T^\dim} \Df_{\diff^{\star}} |\nabla u_n|^2 \mu = \diff^\star_{p,n} \int_{\mathscr{B}_n} |\nabla u_n|^2 \mu \leq \mu_+ \diff^\star_{p,n} \int_{\mathscr{B}_n} |\nabla v_n|^2 = \mu_+ \lambda_{1,n} \diff^\star_{p,n}.
\end{equation*}
Therefore, since~$\|u_n\|^2_{L^2(\mu)} > 0$ (otherwise~$v_n$ would be constant on~$\mathscr{B}_n$, which is impossible by~\eqref{eq:def_v_n_prop_well_posed_pwc}),
\begin{equation*}
  \begin{aligned}
    \Lambda(\Df_{\diff^{\star}}\Id_\dim) & = \inf_{u\in H^{1,0}(\T^\dim) \setminus\{0\}}\frac{\dps \int_{\T^\dim}\Df_{\diff^{\star}}(q)|\nabla u(q)|^2\mu(q)\,dq}{\dps\int_{\T^\dim}u(q)^2\mu(q)\,dq} \leq \frac{\dps \int_{\T^\dim}\Df_{\diff^{\star}}(q)|\nabla u_n(q)|^2\mu(q)\,dq}{\dps\int_{\T^\dim}u_n(q)^2\mu(q)\,dq} \leq \frac{\mu_+\lambda_{1,n} \diff^\star_{p,n} }{\|u_n\|^2_{L^2(\mu)}} .
  \end{aligned}
\end{equation*}
This allows to conclude that~$\diff^{\star}_{n}\geq \gamma\|u_n\|^2_{L^2(\mu)} / (\mu_+ \lambda_{1,n}C_{\mu}) > 0$, which leads to the claimed result.
\end{proof}

\subsection{Finite element approximation of the eigenvalue problem}
\label{subsec:FE}

We discuss in this section how to numerically approximate~$\Lambda(\Diff)$ for a given diffusion matrix~$\Diff$. For simplicity of exposition, we restrict the presentation to the one dimensional case~$\dim=1$, the extension to higher dimensional situations posing no difficulties in principle (but numerically viable only for~$\dim \leqslant 3$ in terms of computational costs).

The minimization in~\eqref{eq:lambdaD-init} is performed over a linear space of dimension $I$ obtained by a $\mathbb{P}_1$ finite element discretization over a regular mesh with nodes
$q_i = i/I$ for~$i\in\{0,\ldots,I-1\}$ with~$q_{-1} \equiv q_{I-1}$,~$q_0 \equiv q_{I}$ and~$q_1 \equiv q_{I+1}$ to comply with periodic boundary conditions. The continuous basis functions~$(\varphi_i)_{i=1,\dots,I}$ have support on~$[q_{i-1},q_{i+1}]$ and are piecewise affine:
\begin{equation*}
  \varphi_i(q) =
  \left\{ \begin{aligned}
    I(q-q_{i-1}) & \quad \text{if} \ q\in[q_{i-1},q_i],   \\
    I(q_{i+1}-q) & \quad \text{if} \ q\in[q_{i},q_{i+1}].
  \end{aligned} \right.
\end{equation*}
We denote by~$S_I={\rm Span} (\varphi_1, \ldots, \varphi_I)$, and by
\begin{equation*}
  S_{I,0} = \left\{u \in S_I \, \middle| \, \int_{\T} u(q) \mu(q)\,dq = 0 \right\}.
\end{equation*}
We can then introduce the following mapping, which corresponds to a finite element approximation of~$\Lambda(\mathscr{D}_\diff)$:
\begin{equation}
  \label{eq:lambda-FE}
  \lambda_{N,I}(\diff) = \inf_{u \in S_{I,0} \setminus\{0\}} \frac{\dps \int_{\T} \Df_\diff(q) u'(q)^2 \mu(q)\,dq}{\dps \int_{\T} u(q)^2 \mu(q)\,dq}.
\end{equation}

Let us recall the standard approach to solve~\eqref{eq:lambda-FE}. Upon writing~$u = \sum_{i=1}^I U_i \varphi_i$, the optimization problem~\eqref{eq:lambda-FE} can be rewritten as
\begin{equation*}
  \inf_{U \in \R^I \setminus \{0\}} \frac{U^\top A(\diff) U}{U^\top B U}, \qquad A(\diff) = \sum_{n=1}^N \diff_n A_n,
\end{equation*}
where the matrices~$A_1,\dots,A_N$ and~$B \in \R^{I \times I}$ have entries
\begin{equation}
  \label{eq:A-M-mat}
  [A_n]_{i,j} = \int_{K_n} \varphi_j'(q) \varphi_i'(q) \rme^{-V(q)}\,dq,
  \qquad
  B_{i,j} = \int_{\T} \varphi_j(q) \varphi_i(q) \rme^{-V(q)}\,dq.
\end{equation}
The Euler--Lagrange condition for the minimization problem~\eqref{eq:lambda-FE} is therefore the following finite dimensional eigenvalue problem:
\begin{equation}
  \label{eq:disc_ev}
  A(\diff)U = \sigma B U, \qquad U^\top B U = 1.
\end{equation}
Since~$A(\diff)$ and~$B$ are real symmetric matrices and~$B$ is positive definite, the eigenvalue problem~\eqref{eq:disc_ev} admits~$0 \leq \sigma_1(\diff)\leq \sigma_2(\diff)\leq\ldots\leq \sigma_{I}(\diff)$ real and nonnegative eigenvalues (counted without multiplicities), with associated~$B$-orthonormal eigenvectors~$\{U_1(\diff),U_2(\diff),\ldots, U_{I}(\diff)\}$. In addition,~$\sigma_1(\diff) = 0$ and~$U_1(\diff)$ is proportional to~$(1,\dots,1)$.

\subsection{The discrete optimization problems}\label{subsubsec:disc_optim}
The optimization problem~\eqref{eq:maximizer_in_thm} is finally approximated by replacing~$\Lambda(\mathscr{D}_\diff)$ in~\eqref{eq:optim-PC} by~$\lambda_{N,I}(\diff)$. More precisely, fix~$p \in [1,+\infty)$. For any~$a,b\geqslant 0$ such that~\eqref{eq:condition_ab_partition} holds,
\begin{equation}
  \label{eq:optim-discrete}
  \text{Find } \diff^\star \in \diffset^p_{a,b} \text{ such that }
  \sigma_2(\diff^\star) = \sup_{\diff \in \diffset_p^{a,b}} \sigma_2(\diff) = \lambda_{N,I}^{\star},
\end{equation}
where~$\sigma_2(\diff)$ is the first nonzero (second-smallest) eigenvalue of the problem~\eqref{eq:disc_ev}.

The following proposition guarantees the existence of a maximizer to~\eqref{eq:optim-discrete}, which is moreover positive under some compatibility  on the meshes used to discretize the diffusion coefficient and the eigenvalue problem.

\begin{proposition}
  \label{prop:well-posed-disc}
  Fix~$p \in [1,+\infty)$. Consider~$a,b \geq 0$ such that~\eqref{eq:condition_ab_partition} holds. Then, there exists~$\diff^{\star}\in \diffset_p^{a,b}$ such that
  \begin{equation*}
    \sigma_2(\diff^\star) = \sup_{\diff \in \diffset_p^{a,b}} \sigma_2(\diff)= \lambda_{N,I}^{\star} > 0.
  \end{equation*}
  Assume moreover that, for all $n \in \{1,\ldots,N\}$, there exists~$i \in \{0,\dots,I-1\}$ such that $[q_{i-1},q_{i+1}] \subset \overline{K_n}$. Then, any maximizer satisfies~$\dps \min_{1 \leq n \leq N} \diff^\star_{n}>0$.
\end{proposition}

\begin{proof}
  As already mentioned in the proof of~\Cref{prop:well-posed-finite-dim}, the set~$\diffset_p^{a,b}$ is compact. Thus, to prove the existence of a solution, it suffices to show that the application~$\diff\mapsto \sigma_2(\diff)$ is upper-semicontinuous; in fact, we prove that it is continuous. Indeed, the application~$M\in\mathcal{S}_{I}(\R)\mapsto \sigma_2(M)$ mapping any symmetric matrix to its second-smallest eigenvalue is continuous on~$\mathcal{S}_{I}(\R)$ (see {\em e.g.} \cite[Corollary~III.2.6]{Bhatia:2023544}). On the other hand,~$\diff \mapsto A(\diff)$ is also continuous. By composition,~$\diff \mapsto \sigma_2(\diff)$ is continuous on~$\delta_p^{a,b}$, which proves the existence of a maximizer.

  In addition, the Poincar\'e inequality~\eqref{eq:Poincare} guarantees with the choice~$\diff = \gamma \mathbbm{1}_N$ (where~$\mathbbm{1}_N$ is the~$N$-dimensional vector whose components are all equal to~1, and~$\gamma>0$ is a normalization constant chosen such that~$\diff\in \delta_{p}^{a,b}$) that
  \begin{equation}
    \label{eq:ev_disc-pos-lambda}
    \lambda_{N,I}^{\star} \geq \sigma_2(\gamma \mathbbm{1}_N) \geq \frac{\gamma}{C_\mu} > 0.
  \end{equation}
  To prove that~$\diff^{\star}_{n}>0$ for all~$n\in\{1,\ldots,N\}$, we proceed by contradiction. Assume that there exists~$n\in\{1,\ldots,N\}$ such that~$\diff^{\star}_{n} \leq 0$. Consider~$i\in\{0,\ldots, I-1\}$ such that~$[q_{i-1}, q_{i+1}]\subset \overline{K_n}$. Setting~$\phi_i = \varphi_i - \int_{\T}\varphi_i \, \mu$ (so that~$\phi_i\in S_{I,0}$), we obtain
  \begin{equation*}
    \sigma_2(\diff^{\star})\leq \frac{\dps \int_{\T} \diff^{\star}(q)|\nabla \phi_i(q)|^2\mu(q)\,dq}{\dps \int_{\T}\phi_i(q)^2\mu(q)\,dq} = \diff^{\star}_{n}\frac{\dps \int_{K_n} |\nabla \phi_i(q)|^2\mu(q)\,dq}{\dps \int_{\T}\phi_i(q)^2\mu(q)\,dq}\leq 0,
  \end{equation*}
  which contradicts \eqref{eq:ev_disc-pos-lambda} and thus proves the result.
\end{proof}

The optimization problem~\eqref{eq:maximizer_in_thm_alpha} is approximated as follows. Fix~$p \in [1,+\infty)$. For any~$a,b\geqslant0$ such that~\eqref{eq:condition_ab_partition} holds,
\begin{equation}
  \label{eq:optim-discrete-alpha}
  \text{Find } \diff^{\star,\alpha} \in \diffset^p_{a,b} \text{ such that }
  F_{\alpha}(\diff^{\star,\alpha})=  \sup_{\diff \in \diffset_p^{a,b}}  F_\alpha(\diff)=\lambda_{N,I}^{\star,\alpha},
\end{equation}
where
\begin{equation}
  \label{eq:f_alpha_discrete}
  F_\alpha(\diff)=\frac{H_\alpha(\diff)}{G_\alpha(\diff)},
\end{equation}
with
\begin{equation}
  \label{eq:g_h_alpha_discrete}
  G_\alpha(\diff)=\sum_{i=2}^{I}\rme^{-\alpha\sigma_i(\diff)},\qquad
  H_{\alpha}(\diff)=\sum_{i=2}^{I}\sigma_i(\diff)\rme^{-\alpha\sigma_i(\diff)}.
\end{equation}

The well-posedness of the optimization problem~\eqref{eq:optim-discrete-alpha} (without the positivity of the optimizer) can be obtained by similar arguments. Using similar (and even simpler) arguments as the ones presented after~\eqref{eq:limit_f_alpha_to_infty}, one can show that
\begin{equation*}
  \forall\alpha>0,\qquad
  \lambda_{N,I}^{\star,\alpha}\geqslant\lambda_{N,I}^{\star},
\end{equation*}
and the map~$\alpha\mapsto F_\alpha(\diff)$ is nonincreasing on~$\R_{+}^{*}$. However, the monotonicity of the map~$\alpha\mapsto\sigma_2(\diff^{\star,\alpha})$ is not clear: it is observed numerically that this map is nondecreasing, see~\Cref{fig:res_3_c,fig:res_cos4_b}.
Finally, let~$(\alpha_n)_{n\geqslant0}$ be a sequence of~$\R_{+}^{*}$ such that~$\alpha_n\to+\infty$ when~$n\to+\infty$. The sequence of maximizers~$(\diff^{\star,\alpha_n})_{n\geqslant0}$ is bounded on~$\delta_p^{a,b}$ so that, upon extraction of a subsequence (not indicated in the notation), there exists~$\diff^{\star,\infty}\in\delta_p^{a,b}$ such that
\begin{equation*}
  \diff^{\star,\alpha_n}\xrightarrow[n\to+\infty]{}\diff^{\star,\infty}.
\end{equation*}
By definition, it holds
\begin{equation*}
  F_{\alpha_n}(\diff^{\star,\alpha_n})=\lambda_{N,I}^{\star,\alpha_n}\geqslant\lambda_{N,I}^{\star}\geqslant\sigma_2(\diff^{\star,\infty}).
\end{equation*}
By continuity of the maps~$\diff\mapsto\sigma_i(\diff)$, it is then easy to show that~$F_{\alpha_n}(\diff^{\star,\alpha_n})\to\sigma_2(\diff^{\star,\infty})$, so that~$\lambda_{N,I}^{\star}=\sigma_2(D^{\star,\infty})$ and~$\diff^{\star,\infty}$ solves~\eqref{eq:optim-discrete}. Moreover,~$\lambda_{N,I}^{\star,\alpha}\to\lambda_{N,I}^{\star}$ as~$\alpha\to+\infty$. The latter convergence is illustrated in~\Cref{sec:numerical}.

In conclusion, solving~\eqref{eq:optim-discrete-alpha} for large values of~$\alpha$ should yield a tight upper bound on~$\lambda_{N,I}^{\star}$. In practice, solving~\eqref{eq:optim-discrete-alpha} for small values of~$\alpha$ already yields a good approximation in the situations we have considered.

\subsection{Practical aspects of the implementation}
\label{subsubsec:optim-numeric}
We discuss in this section how to numerically solve~\eqref{eq:maximizer_in_thm}, approximated as~\eqref{eq:optim-discrete}. The numerical solution of the smooth-min approximation~\eqref{eq:maximizer_in_thm_alpha}, approximated as~\eqref{eq:optim-discrete-alpha}, is made precise in~\Cref{sec:numerical_approx_smooth_max}.

To solve~\eqref{eq:optim-discrete} in practice, we used either (i) the Sequential Least Squares Quadratic Programming algorithm (SLSQP), which operates through linearization of the optimality conditions (see for instance~\cite[Section 15.1]{Bonnans_optim} for an introduction to such methods), relying on the implementation available in SciPy~\cite{2020SciPy-NMeth}; or (ii) the IPOPT solver~\cite{Wachter2006} within the Julia programming language, using the JuMP framework~\cite{DunningHuchetteLubin2017}. In order to use these methods, we need to compute the gradient of the objective function and the constraint, which we discuss next.

A first task is to approximate the integrals for the matrix elements in~\eqref{eq:A-M-mat} and the weight factors~$\omega_{p,n}$ in~\eqref{eq:def-Dhat} by some quadrature method. In our numerical results, we set~$N=I$ and~$K_n=[(n-1)/N,n/N)$ for any~$n\in\left\lbrace1,\dots,N\right\rbrace$, so that, for example,~$[A_n]_{i,j}$ can be approximated by (with periodic boundary conditions for the indices)
\begin{equation}
  \label{eq:An_approximation}
  N\rme^{-V\left(\frac{n-1}{N}\right)}(
  \delta_{i,n}\delta_{i,j}+\delta_{i,n-1}\delta_{i,j}-\delta_{i,n}\delta_{i,j+1}-\delta_{i,n-1}\delta_{i,j-1}
  ),
\end{equation}
where~$\delta$ is the Kronecker delta function, and~$B_{i,j}$ can be approximated by
\begin{equation}
  \label{eq:B_approximation}
  \rme^{-V\left(\frac{i-1}{N}\right)}\frac{4\delta_{i,j}+\delta_{i,j-1}+\delta_{i,j+1}}{6N}.
\end{equation}
We refer to~\Cref{sec:numerical_approx_matrix_elements} for more details in order to obtain~\eqref{eq:An_approximation}-\eqref{eq:B_approximation}. Likewise, the weights~$\omega_{p,n}$ can be approximated by
\begin{equation}
  \label{eq:weights_omega_p_n}
  \frac{1}{N}\rme^{-pV\left(\frac{n-1}{N}\right)}.
\end{equation}

To run the optimization algorithms, we next need the gradient of the constraint
\begin{equation*}
  \diff \mapsto \sum_{n=1}^N \omega_{p,n} \diff_n^p,
\end{equation*}
which is readily computed. A more subtle point is to compute the gradient of the target function~$\lambda_{N,I}(\diff) = \sigma_2(\diff)$ with the notation introduced in~\Cref{subsec:FE}. It is readily computed assuming that the second eigenvalue~$\sigma_2(\diff)$ is simple as the following classical result shows. Recall that the matrix~$B$ is symmetric positive definite, so that the matrix~$B^{-1/2}$ is well-defined.

\begin{proposition}
  \label{prop:gradient}
  Consider~$\diff^{0} \in \R^N$ such that the second-smallest eigenvalue~$\sigma_2(\diff^{0})$ of the symmetric positive matrix~$B^{-1/2}A(\diff^{0})B^{-1/2}$ is simple. Then,~$\diff \mapsto \sigma_2(\diff)$ is differentiable around~$\diff^{0}$, and the components of its gradient are
  \begin{equation}
    \label{eq:dlambda}
    \forall n \in \{1,\dots,N\}, \qquad \frac{\partial \sigma_2}{\partial \diff_n}(\diff^{0}) = U(\diff^{0})^\top A_n U(\diff^{0}),
  \end{equation}
  where~$U(\diff^{0})$ is an eigenvector normalized as in~\eqref{eq:disc_ev}.
\end{proposition}

\begin{proof}
  The proof involves manipulations similar to the ones used to prove~\Cref{prop:EL_degenerate}. When~$\sigma_2(\diff^{0})$ is simple,~$\sigma_2$ is smooth in an open neighborhood of~$\diff^{0}$, and~$D \mapsto U(D)$ can also be constructed to be smooth, so that, by taking the partial derivative with respect to~$\diff_n$ of the first condition in~\eqref{eq:disc_ev},
  \begin{equation*}
    \frac{\partial A}{\partial \diff_n}(\diff) U(\diff) + A(\diff) \frac{\partial U}{\partial \diff_n}(\diff) = \frac{\partial \sigma_2}{\partial \diff_n}(\diff) B U(\diff) + \sigma_2(\diff) B \frac{\partial U}{\partial \diff_n}(\diff).
  \end{equation*}
  The desired result follows by multiplying this equality by~$U(\diff)^\top$ on the left, using the normalization~$U(D)^\top B U(D)=1$ and that~$U(D)^{\top}A(D)=\sigma_2(D)U(D)^{\top}B$ by transposing the relation~\eqref{eq:disc_ev} as the matrices~$A(D)$ and~$B$ are real symmetric.
\end{proof}

Using the approximation considered in~\eqref{eq:An_approximation} (recalling that~$I=N$ in this case), the formula approximating the gradient~\eqref{eq:dlambda} reads: for any~$n\in\left\lbrace1,\dots,N\right\rbrace$,
\begin{align}
  \frac{\partial\sigma_2}{\partial D_n}(\diff) & \approx
  N\rme^{-V\left(\frac{n-1}{N}\right)}
  \begin{pmatrix}
    U(\diff)_{n-1},U(\diff)_{n}
  \end{pmatrix}^{\top}
  \begin{pmatrix}
    1 & -1 \\-1&1
  \end{pmatrix}
  \begin{pmatrix}
    U(\diff)_{n-1},U(\diff)_{n}
  \end{pmatrix}\nonumber \\
  & =\label{eq:gradient_An_approximation}
  N\rme^{-V\left(\frac{n-1}{N}\right)}\left[U(\diff)_{n}-U(\diff)_{n-1}\right]^{2}.
\end{align}

\begin{remark}
 \Cref{prop:gradient} provides a formula for the gradient of~$\sigma_2$ around points where this function is differentiable, namely for~$\diff \in\R^N$ such that~$\sigma_2(\diff)$ is simple. However, formula~\eqref{eq:dlambda} makes sense even if~$\sigma_2(\diff)$ is degenerate, although the resulting quantity is not necessarily the gradient of~$\sigma_2$ in this situation. In practice, we use this formula for the gradient for any~$\diff$, including if degeneracy happens. The downside is that the convergence may be slower around degeneracy points.
\end{remark}

\paragraph{Hyperparameters used to obtain the numerical results of~\Cref{sec:numerical}.}
We use a mesh characterized by~$N = I = 1000$ (recall that~$N$ is the dimension of the approximation space for the diffusion coefficient~$\Diff$, and that~$I$ is the number of basis functions for the $\mathbb{P}_1$ finite element method). In particular, we choose~$K_n=[(n-1)/N,n/N)$ for~$n\in\left\lbrace1,\dots,N\right\rbrace$. Note that we used~$N=I$ to simplify the implementation of~$A_n$ (see~\eqref{eq:An_approximation}) and the computation of the gradient of~$\sigma_2$ (see~\Cref{prop:gradient}), even though the assumption in~\Cref{prop:well-posed-disc} to ensure positivity does not hold in that case. Similar results were obtained using~$I=kN$ with~$k\geqslant 2$ which is a sufficient condition for the positivity result to hold. The integrals for the matrix elements in~\eqref{eq:A-M-mat} are approximated by~\eqref{eq:An_approximation}-\eqref{eq:B_approximation}, and the weights~$\omega_{p,n}$ are approximated by~\eqref{eq:weights_omega_p_n}. The initial condition for the diffusion coefficient in the optimization procedure solving either~\eqref{eq:optim-discrete} or~\eqref{eq:optim-discrete-alpha} is set to~$\diff_{\mathrm{hom}}^{\star}$.

The constant~$\gamma$ used to defined the constant diffusion coefficient in~\Cref{sec:numerical} is given by
\begin{equation*}
  \gamma=\left(\sum_{n=1}^{N}\omega_{p,n}\right)^{-1/p},
\end{equation*}
so that~$\sum_{n=1}^{N}\omega_{p,n} D_{\mathrm{cst},n}^{p}=1$. Similarly, the constant~$\widetilde{\gamma}_{\infty}$ in~\eqref{eq:approximation_d_star_infty} is determined by the equality~$\sum_{n=1}^{N}\omega_{p,n} (D^{\star,\infty}_n)^{p}=1$.

\begin{remark}
  \label{rem:numerical_issues}
  We have checked that all the numerical results conducted in~\Cref{sec:numerical,subsec:homog:numerical_results} are converged up to numerical precision (with a convergence threshold of $10^{-15}$). For some potentials, we were not able to get converged results, in the sense that the convergence threshold was not attained before the maximum number of iterations, set to 1000 in our examples. In some cases, the algorithm even stopped before the maximum number of iterations, because it was unable to find a descent direction compatible with the constraints.
  In practice, numerical convergence was harder to obtain when the optimal diffusion coefficient was almost zero at some points.
\end{remark}

\subsubsection{Numerical optimization of the smooth-min approximation}
\label{sec:numerical_approx_smooth_max}

In order to solve~\eqref{eq:optim-discrete-alpha}, we need to compute the gradient of~$F_\alpha$ defined in~\eqref{eq:f_alpha_discrete} with respect to~$\diff$. The expressions of the gradients of the eigenvalues with respect to~$\diff$ are similar to the expressions obtained in the proof of~\Cref{prop:gradient}: if the maps~$\diff\mapsto\sigma_i(\diff)$ with~$i\in\left\lbrace2,\dots,I\right\rbrace$ are differentiable, one finds that
\begin{equation*}
  \forall n\in\left\lbrace1,\dots,N\right\rbrace,\qquad\frac{\partial\sigma_i}{\partial D_n}(D)=U_i(D)^{\top}A_n U_i(\diff),
\end{equation*}
where~$U_i(\diff)$ is an eigenvector associated with~$\sigma_i(\diff)$ which satisfies~$U_i(D)^{\top}B U_i(\diff)=1$. Likewise, using the approximation~\eqref{eq:An_approximation}, and similarly to~\eqref{eq:gradient_An_approximation}, it holds
\begin{equation*}
  \forall n\left\lbrace1,\dots,N\right\rbrace,\qquad
  \frac{\partial \sigma_i}{\partial D_n}(\diff)\approx N\rme^{-V\left(\frac{n-1}{N}\right)}[U_i (\diff)_n-U_i (\diff)_{n-1}]^{2}.
\end{equation*}
Then, it holds
\begin{equation*}
  \frac{\partial G_{\alpha}}{\partial D_n}(\diff)=-\alpha\sum_{i=2}^{I}\rme^{-\alpha\sigma_i(\diff)}U_i(\diff)^{\top}A_n U_i(\diff),
\end{equation*}
and
\begin{equation*}
  \frac{\partial H_{\alpha}}{\partial D_n}(\diff)=\sum_{i=2}^{I}(1-\alpha\sigma_i(\diff))\rme^{-\alpha\sigma_i(\diff)}U_i(\diff)^{\top}A_n U_i(\diff),
\end{equation*}
from which the gradient of~$F_\alpha$ is easily computed.

\subsubsection{Approximation of the integral elements}
\label{sec:numerical_approx_matrix_elements}

\paragraph{Approximation of~$A_n$.} Fix~$n\in\left\lbrace1,\dots,N\right\rbrace$. On~$K_n$, the only basis functions which are nonzero are~$\varphi_{n}$ and~$\varphi_{n-1}$. Therefore, the only nonzero entries for~$A_n$ correspond to the (row, column) pairs~$(n-1,n-1),(n-1,n),(n,n-1)$ and~$(n,n)$. For example, the entry~$(n,n)$ is approximated as
\begin{equation*}
  [A_n]_{n,n}
  =\int\limits_{\frac{n-1}{N}}^{n/N}(\varphi_{n}'(q))^{2}\rme^{-V(q)}\,dq \approx
  \rme^{-V\left(\frac{n-1}{N}\right)}\int\limits_{\frac{n-1}{N}}^{\frac{n}{N}}(\varphi'(q))^{2}\,dq=N\rme^{-V\left(\frac{n-1}{N}\right)},
\end{equation*}
while the entry~$(n,n-1)$ is approximated as~$[A_n]_{n,n-1}\approx-N\rme^{-V\left(\frac{n-1}{N}\right)}$. The formula~\eqref{eq:An_approximation} then follows.

\paragraph{Approximation of~$B$.} Fix~$(i,j)\in\left\lbrace1,\dots,N\right\rbrace^{2}$. Similar approximations as above lead to
\begin{align*}
  B_{i,j}
   & =\int\limits_{0}^{1}\varphi_i(q)\varphi_j(q)\rme^{-V(q)}\,dq
  =\sum_{n=1}^{N}\int\limits_{\frac{n-1}{N}}^{\frac{n}{N}}\varphi_i(q)\varphi_j(q)\rme^{-V(q)}\,dq                                                                                            \\
   & \approx
  \sum_{n=1}^{N}\rme^{-V\left(\frac{n-1}{N}\right)}\int\limits_{\frac{n-1}{N}}^{\frac{n}{N}}\varphi_i(q)\varphi_j(q)\,dq                                                                      \\
   & =\frac{1}{6N}\sum_{n=1}^{N}\rme^{-V\left(\frac{n-1}{N}\right)}\left(2\delta_{j,n-1}\delta_{i,j}+\delta_{j,n-1}\delta_{i,j+1}+2\delta_{j,n}\delta_{i,j}+\delta_{j,n}\delta_{i,j-1}\right) \\
   & =
  \frac{1}{6N}\rme^{-V\left(\frac{i-1}{N}\right)}\left(2\delta_{i,j}+\delta_{i,j+1}\right)+\frac{1}{6N}\rme^{-V\left(\frac{i}{N}\right)}\left(2\delta_{i,j}+\delta_{i,j-1}\right)                   \\
   & \approx
  \frac{1}{6N}\rme^{-V\left(\frac{i-1}{N}\right)}\left(4\delta_{i,j}+\delta_{i,j+1}+\delta_{i,j-1}\right),
\end{align*}
which is exactly~\eqref{eq:B_approximation}.

\section{Proofs of the homogenization results}
\label{app:homogenization}

We fix~$a,b>0$ in all this section. The proofs of the results in~\Crefrange{app:thm:Hcvg-compact}{app:thm:Hcvg-spectral} are given for completeness, as they are obtained by straightforward modifications of standard results of the literature on homogenization problems with Dirichlet boundary conditions~\cite{allaire_homogeneisation,lebris_blanc_homog_book}.

We start by stating a result which will be repeatedly used in the following, and whose proof is based on the Lax--Milgram lemma (by an easy adaptation of~\cite[Lemma~1.3.21]{allaire_homogeneisation}).
\begin{lemma}
  \label{lem:LaxMilgram}
  Fix~$a,b>0$, and consider~$f\in H^{-1}(\T^\dim)$ such that~$\left\langle f,\mathbf{1}\right\rangle_{H^{-1}(\T^\dim),H^{1}_0(\T^\dim)}=0$ and~$\mathcal{A}\in L^{\infty}(\T^\dim,\mathcal{M}_{a,b})$. Then, there exists a unique solution~$u\in H^1(\T^\dim)$ to the problem
  \begin{equation*}
    \left\lbrace
    \begin{aligned}
      -\div{\mathcal{A}\nabla u} & =f  & \text{in }H^{-1}(\T^\dim), \\
      \int_{\T^\dim}u            & =0.
    \end{aligned}
    \right.
  \end{equation*}
  Moreover, there exists~$C>0$ (which does not depend on~$u$ nor~$f$) such that
  \begin{equation}
    \label{eq:bound_LaxMilgram}
    \left\lVert u\right\rVert_{H^1(\T^\dim)}\leqslant C\left\lVert f\right\rVert_{H^{-1}(\T^\dim)}.
  \end{equation}
\end{lemma}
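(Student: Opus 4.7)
The plan is to apply the Lax--Milgram theorem on the closed subspace
\[
H^1_{\mathrm{m}}(\T^\dim) = \left\{ v \in H^1(\T^\dim) \ \middle| \ \int_{\T^\dim} v = 0 \right\} \subset H^1(\T^\dim),
\]
which, equipped with the inner product $\langle u,v\rangle_{H^1_{\mathrm{m}}} = \int_{\T^\dim} \nabla u \cdot \nabla v$, is a Hilbert space by the Poincaré--Wirtinger inequality: there exists $C_\mathrm{PW}>0$ such that $\|v\|_{L^2(\T^\dim)} \leq C_\mathrm{PW} \|\nabla v\|_{L^2(\T^\dim)}$ for every $v \in H^1_{\mathrm{m}}(\T^\dim)$ (this norm is equivalent to the usual $H^1(\T^\dim)$ norm on this subspace).

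First I would introduce the bilinear form $\mathbf{a}(u,v) = \int_{\T^\dim} \nabla v^\top \mathcal{A}(q) \nabla u(q)\,dq$ on $H^1_{\mathrm{m}}(\T^\dim) \times H^1_{\mathrm{m}}(\T^\dim)$. Continuity follows from the pointwise upper bound $\mathcal{A}(q) \leq b^{-1} \Id_\dim$ valid for a.e.\ $q \in \T^\dim$ (since $\mathcal{A}(q) \in \mathcal{M}_{a,b}$), giving $|\mathbf{a}(u,v)| \leq b^{-1} \|\nabla u\|_{L^2}\|\nabla v\|_{L^2}$. Coercivity follows from the lower bound $\mathcal{A}(q) \geq a \Id_\dim$ and the Poincaré--Wirtinger inequality: for $v \in H^1_{\mathrm{m}}(\T^\dim)$,
\[
\mathbf{a}(v,v) \geq a \|\nabla v\|_{L^2}^2 \geq \frac{a}{1+C_\mathrm{PW}^2}\|v\|_{H^1(\T^\dim)}^2.
\]
The linear form $\ell(v) = \langle f,v\rangle_{H^{-1},H^1}$ is continuous on $H^1_{\mathrm{m}}(\T^\dim)$ since $\|v\|_{H^1(\T^\dim)} \leq (1+C_\mathrm{PW}^2)^{1/2}\|\nabla v\|_{L^2}$ for such $v$. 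The Lax--Milgram theorem then yields a unique $u \in H^1_{\mathrm{m}}(\T^\dim)$ satisfying $\mathbf{a}(u,v) = \ell(v)$ for every $v \in H^1_{\mathrm{m}}(\T^\dim)$, together with the estimate $\|u\|_{H^1(\T^\dim)} \leq C \|f\|_{H^{-1}(\T^\dim)}$ for some $C>0$ independent of $f$.

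To upgrade the weak formulation to all test functions $v \in H^1(\T^\dim)$, I would decompose $v = \bar{v} + \widetilde{v}$ with $\bar v = \int_{\T^\dim} v \in \R$ and $\widetilde v = v - \bar v \in H^1_{\mathrm{m}}(\T^\dim)$. Then $\mathbf{a}(u,v) = \mathbf{a}(u,\widetilde v) = \ell(\widetilde v) = \ell(v) - \bar v \, \langle f,\mathbf{1}\rangle_{H^{-1},H^1} = \ell(v)$ thanks to the compatibility hypothesis $\langle f,\mathbf{1}\rangle_{H^{-1},H^1} = 0$. Hence $-\div(\mathcal{A} \nabla u) = f$ in $H^{-1}(\T^\dim)$ and the normalization $\int_{\T^\dim} u = 0$ holds. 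Uniqueness over $H^1(\T^\dim)$ modulo the zero-mean condition is immediate: any two solutions differ by a function $w$ with $\mathbf{a}(w,w) = 0$, which forces $\nabla w = 0$ a.e.\ (by the lower bound on $\mathcal{A}$), hence $w$ is constant, and the zero-mean condition makes it vanish. No real obstacle is expected here; the only mild care needed is to work with $H^1_{\mathrm{m}}(\T^\dim)$ rather than $H^1(\T^\dim)$ to obtain coercivity, which is precisely where the orthogonality condition on $f$ is consumed.
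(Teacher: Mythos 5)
Your proof is correct and follows exactly the standard Lax--Milgram argument on the mean-zero subspace $H^1_{\mathrm{m}}(\T^\dim)$, which is precisely the route the paper takes (it simply cites Allaire's Lemma 1.3.21 for this). The only cosmetic point is that you equip $H^1_{\mathrm{m}}(\T^\dim)$ with the gradient inner product but then state coercivity in terms of $\|\cdot\|_{H^1}$; since these norms are equivalent on $H^1_{\mathrm{m}}(\T^\dim)$ this causes no issue.
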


\subsection{Proof of~\Cref{thm:Hcvg-compact}}
\label{app:thm:Hcvg-compact}

The proof boils down to adapting the proof of~\cite[Theorem~1.2.16]{allaire_homogeneisation}, which states a similar compactness result for equations with Dirichlet boundary conditions, to our case with periodic boundary conditions. To do so, we start by considering a modified problem on an extended domain with Dirichlet boundary conditions, and apply~\cite[Theorem 1.2.16]{allaire_homogeneisation} to obtain the~$H$-convergence for this modified problem. We then use a result on the independence of~$H$-convergence on boundary conditions, stated e.g. in~\cite[Proposition~1.2.19]{allaire_homogeneisation}, to obtain the~$H$-convergence in the case of periodic boundary conditions.

Fix~$f\in H^{-1}(\T^\dim)$ such that~$\langle f,\mathbf{1}\rangle_{H^{-1}(\T^\dim),H^1(\T^\dim)} = 0$. Consider the open set~$\Omega = (-1,2)^\dim$ and denote by~$\widetilde{\A}^k$ and~$\widetilde{f}$ the extensions to~$\Omega$ of~$\A^k$ and~$f$ obtained by periodicity. Consider~$\widetilde u_0^k\in H^1(\Omega)$ the solution to
\begin{equation*}
  \left\{\begin{aligned}
    -\div\left(\widetilde \A^k\nabla \widetilde u_0^k\right) = \widetilde  f & \quad \text{ on }\Omega,         \\
    \widetilde u_0^k = 0                                                     & \quad \text{ on }\partial\Omega.
  \end{aligned}\right.
\end{equation*}
In view of~\cite[Theorem 1.2.16]{allaire_homogeneisation}, and upon extraction of a subsequence (not explicitly indicated), there exists~$\widehat{\mathcal{A}}\in L^{\infty}(\Omega,\mathcal{M}_{a,b})$ such that
\begin{equation*}
  \left\{\begin{aligned}
    \widetilde u^k_0                       & \rightharpoonup \widetilde u_0                     & \text{   weakly in }H^1(\Omega),      \\
    \widetilde \A^k\nabla \widetilde u^k_0 & \rightharpoonup \widehat{\A} \nabla \widetilde u_0 & \text{   weakly in }L^2(\Omega)^\dim,
  \end{aligned}\right.
\end{equation*}
where~$\widetilde u_0$ is the solution of the homogenized problem
\begin{equation*}
  \left\{\begin{aligned}
    -\div\left(\widehat{\A} \nabla \widetilde u_0\right) = \widetilde f & \quad \text{ on }\Omega,         \\
    \widetilde u_0 = 0                                                  & \quad \text{ on }\partial\Omega.
  \end{aligned}\right.
\end{equation*}

We now turn back to our original problem with periodic boundary conditions. Denote by~$u^k\in H^1(\T^\dim)$ the unique solution (thanks to~\Cref{lem:LaxMilgram}) to
\begin{equation}
  \label{eq:periodic-bnd-pbm}
  \left\{\begin{aligned}
    -\div\left(\A^k\nabla u^k\right) = f & \quad \text{ on }\T^\dim, \\
    \int_{\T^\dim}u^k(q) \, dq = 0.      &
  \end{aligned}\right.
\end{equation}
Since~$\A^k \in L^\infty(\T^\dim,\mathcal{M}_{a,b})$ and~$u^k$ has average~0 with respect to the Lebesgue measure on~$\T^\dim$, standard estimates based on the Poincar\'e--Wirtinger inequality show that the sequence~$(u^k)_{k\geq 1}$ is uniformly bounded in~$H^1(\T^\dim)$, hence weakly converges in~$H^1(\T^\dim)$ to a function~$u$, up to the extraction of a subsequence. This extracted subsequence also strongly converges in~$L^2(\T^\dim)$. In fact, as made precise at the end of the proof, we will prove that the whole sequence converges. With some abuse of notation, we therefore still denote the extracted subsequence as~$(u^k)_{k\geq 1}$.
Note that, in particular,~$\int_{\T^\dim}u(q) \, dq = 0$.

Denoting by~$\widetilde{u}^k,\widetilde{u}$ the extensions to~$\Omega$ of~$u^k,u$ obtained by periodicity, we claim that
\begin{equation}
  \label{eq:ccl_Prop1.2.19_for_Thm2}
  \widetilde \A^k \nabla \widetilde{u}^k \rightharpoonup \widehat{\A} \nabla \widetilde u \quad \text{ weakly in }L^2_{\rm loc}(\Omega)^\dim.
\end{equation}
In view of~\cite[Proposition 1.2.19]{allaire_homogeneisation}, it suffices to this end to check that~$\widetilde u^k \rightharpoonup \widetilde u$ weakly in~$H^1_{\rm loc}(\Omega)$ and~$-\div\left(\widetilde \A^k \nabla \widetilde{u}^k\right) = \widetilde{f} \in H^{-1}_{\rm loc}(\Omega)$. To prove the first statement, we combine the weak convergence of~$(u^k)_{k \geq 1}$ in~$H^1(\T^\dim)$ with an argument based on a partition of unity. Fix~$\widetilde\phi\in H^{-1}(\Omega)$ with compact support~$K$ in~$\Omega$. Thanks to the compactness of the support and for~$\varepsilon \in (0,1/3)$ fixed, there exists a finite number~$L$ of points~$(x_\ell)_{1\leq \ell\leq L}$ such that~$K \subset \bigcup_{\ell=1}^L \mathcal{B}(x_{\ell},\varepsilon)$, where~$\mathcal{B}(x_{\ell},\varepsilon)$ denotes the open ball of radius~$\varepsilon$ centered at~$x_{\ell}$. Upon reducing~$\varepsilon$, it can be assumed that~$\overline{\mathcal{B}(x_{\ell},\varepsilon)} \subset \Omega$ for all~$\ell\in \{1,\ldots, L\}$. Note that, for all~$\ell\in \{1,\ldots, L\}$, the mapping~$i_\ell : \mathcal{B}(x_{\ell},\varepsilon)\to \T^\dim$ defined as~$i_\ell(x) = x \text{ mod. } \Z^\dim$ is injective. In addition, there exists an associated partition of unity, namely a family of nonnegative smooth functions~$(\chi_{\ell})_{1\leq \ell\leq L}$ such that~$\supp(\chi_{\ell})\subset \overline{\mathcal{B}(x_{\ell},\varepsilon)}$ for all~$\ell\in \{1,\ldots, L\}$, and~$\sum_{\ell=1}^L\chi_{\ell} = 1$ on~$K$. We use the decomposition~$\widetilde\phi = \sum_{\ell = 1}^L\widetilde\phi_{\ell}$, where~$\widetilde\phi_{\ell} = \widetilde\phi\chi_{\ell}$ has support on~$\overline{\mathcal{B}(x_{\ell}, \varepsilon)}$. In particular, for all~$k\geq 1$,
\begin{equation*}
  \left\langle \widetilde{\phi} , \widetilde{u}^k \right\rangle_{H^{-1}(\Omega),H^{1}(\Omega)} = \sum_{\ell=1}^L \left\langle \widetilde{\phi}_{\ell}, \widetilde{u}^k \right\rangle_{H^{-1}(\Omega),H^{1}(\Omega)}.
\end{equation*}
Define next~$\phi_{\ell} \in H^{-1}(\T^\dim)$ from~$\widetilde{\phi}_{\ell}$ as
\begin{equation*}
  \forall v \in H^1(\T^\dim), \qquad \left\langle \phi_{\ell}, v \right\rangle_{H^{-1}(\T^\dim),H^{1}(\T^\dim)} = \left\langle \widetilde{\phi}, \chi_\ell (v \circ i_\ell) \right\rangle_{H^{-1}(\Omega),H^{1}(\Omega)}.
\end{equation*}
In particular,~$\langle \widetilde{\phi}_{\ell}, \widetilde{u}^k\rangle_{H^{-1}(\Omega),H^{1}(\Omega)} = \langle \phi_{\ell}, u^k \rangle_{H^{-1}(\T^\dim),H^{1}(\T^\dim)}$ for all~$k\geq 1$ and~$\langle \widetilde{\phi}_{\ell}, \widetilde{u}\rangle_{H^{-1}(\Omega),H^{1}(\Omega)} = \langle \phi_{\ell}, u \rangle_{H^{-1}(\T^\dim),H^{1}(\T^\dim)}$, so that
\begin{equation*}
  \begin{aligned}
     & \left\langle \widetilde{\phi} , \widetilde{u}^k \right\rangle_{H^{-1}(\Omega),H^{1}(\Omega)} =  \sum_{\ell=1}^L \left\langle \phi_{\ell}, u^k \right\rangle_{H^{-1}(\T^\dim),H^{1}(\T^\dim)}                                                                                                                                                        \\
     & \qquad\qquad \xrightarrow[k\to+\infty]{} \sum_{\ell=1}^L \left\langle \phi_{\ell}, u\right\rangle_{H^{-1}(\T^\dim),H^{1}(\T^\dim)} = \sum_{\ell=1}^L \left\langle \widetilde{\phi}_{\ell}, \widetilde{u} \right\rangle_{H^{-1}(\Omega),H^{1}(\Omega)} = \left\langle \widetilde{\phi} , \widetilde{u} \right\rangle_{H^{-1}(\Omega),H^{1}(\Omega)},
  \end{aligned}
\end{equation*}
which allows to conclude that~$\widetilde u^k \rightharpoonup \widetilde u$ weakly in~$H^1_{\rm loc}(\Omega)$. A similar argument, with the same partition of unity, can be used to prove that~$-\div\left(\widetilde \A^k\nabla \widetilde u^k\right) = \widetilde f$ in~$H^{-1}_{\rm loc}(\Omega)$ for all~$k \ge 1$.

For any~$\Phi \in L^2(\T^\dim)^\dim$, define the function~$\Phi^\dagger:\Omega \to \R$ obtained by extending~$\Phi$ by~0 outside of the set~$(0,1)^\dim$ identified with~$\T^\dim$. More precisely,~$\Phi^\dagger(q) = \mathbf{1}_{(0,1)^\dim}(q) \Phi(i(q))$, where~$i:(0,1)^\dim \to \T^\dim$ is the canonical injection. Note that~$\Phi^\dagger\in L^2(\Omega)^\dim$ has compact support in~$\Omega$, and, using~\eqref{eq:ccl_Prop1.2.19_for_Thm2},
\begin{equation*}
  \begin{aligned}
    \int_{\T^\dim} \Phi(q)^\top \A^k(q)\nabla {u}^k(q) \, dq & = \int_{\Omega} \Phi^\dagger(q)^\top \widetilde \A^k(q)\nabla \widetilde{u}^k(q)\, dq \\ \xrightarrow[k\rightarrow +\infty]{} & \int_{\Omega} \Phi^\dagger(q)^\top \widehat{\A}(q) \nabla \widetilde{u}(q)\, dq = \int_{\T^\dim} \Phi(q)^\top \overline{\A}(q) \nabla {u}(q) \, dq,
  \end{aligned}
\end{equation*}
where~$\overline{\A} = \widehat{\A} \circ i^{-1}$. This shows that~$\A^k\nabla {u}^k$ weakly converges to~$\overline{\A} \nabla {u}$ in~$L^2(\T^\dim)^\dim$. In view of~\eqref{eq:periodic-bnd-pbm},
\begin{equation*}
  \left\{\begin{aligned}
    -\div\left(\overline \A\nabla u\right) = f & \quad \text{ in } H^{-1}(\T^\dim), \\
    \int_{\T^\dim}u(q) \, dq = 0.              &
  \end{aligned}\right.
\end{equation*}
Thus,~$u$ is uniquely defined, and the whole sequences~$(u^k)_{k \geq 1}$ and~$(\A^k \nabla u^k)_{k \geq 1}$ weakly converge respectively to~$u$ in~$H^1(\T^\dim)$ and~$\overline{\A} \nabla {u}$ in~$L^2(\T^\dim)^\dim$, which provides the desired convergence in the sense of~\Cref{def:Hcvg}.

%

\subsection{Proof of~\Cref{thm:Hcvg-spectral}}
\label{app:thm:Hcvg-spectral}

The proof of~\Cref{thm:Hcvg-spectral} is an adaptation of the proof of~\cite[Theorem~1.3.16]{allaire_homogeneisation}. The first step is to show that~$(u^k)_{k \geq 1}$ and~$(\lambda^k)_{k \geq 1}$ converge respectively (up to extraction) to an eigenvector and eigenvalue of the problem
\begin{equation}
  \label{eq:1219-2}
  \left\{\begin{aligned}
    -\div\left(\overline{\mathcal{A}}\nabla u(q)\right) & = \overline{\lambda}\rho(q)u(q) \quad \text{ on } \T^\dim, \\
    \int_{\T^\dim} u(q)^2 \, dq                         & = 1.
  \end{aligned}\right.
\end{equation}
The second step is to prove that~$\overline{\lambda}$ is indeed the smallest nonzero eigenvalue of~\eqref{eq:1219-2}.

\paragraph{Convergence to a solution of the problem~\eqref{eq:1219-2}.}
Let us first prove that, up to the extraction of a subsequence, the sequences~$(\lambda^k)_{k \geq 1}$ and~$(u^k)_{k \geq 1}$ respectively converge in~$\R_+$ and weakly in~$H^1(\T^\dim)$. By the min-max principle, using the fact that the eigenfunctions of the operator~$-\div(\A^k\nabla \cdot)$ associated with the eigenvalue~0 are constant functions,
the second-smallest eigenvalue~$\lambda^k$ satisfies
\begin{equation*}
  \lambda^k = \min_{v \in H^1(\T^\dim) \setminus \{0\}} \left\{ \frac{\dps \int_{\T^\dim} \nabla v(q)^\top \A^k(q)\nabla v(q) \, dq}{\dps \int_{\T^\dim}v^2(q) \rho^k(q) \, dq} \ \middle| \ \int_{\T^\dim} v \rho^k = 0 \right\}.
\end{equation*}
By ,~$a \Id_\dim \leq \A^k \leq b^{-1} \Id_\dim$ and~$0 < \rho_- \leq \rho^k \leq \rho_+$, so that, for all~$v\in H^1(\T^\dim)$,
\begin{equation*}
  \frac{a}{\rho_+} \frac{\dps \int_{\T^\dim} |\nabla v(q)|^2 \, dq}{\dps \int_{\T^\dim} v^2(q) \, dq} \leq \frac{\dps \int_{\T^\dim} \nabla v(q)^\top \A^k(q)\nabla v(q)\,dq}{\dps \int_{\T^\dim}\rho^k(q)v^2(q)\,dq} \leq \frac{1}{b \rho_-} \frac{\dps \int_{\T^\dim} |\nabla v(q)|^2 \, dq}{\dps \int_{\T^\dim} v^2(q) \, dq},
\end{equation*}
which implies
\begin{equation}
  \label{eq:bounds_lambda_k}
  0 < \frac{4\pi^2 a}{\rho_+} \leq \lambda^k \leq \frac{4\pi^2}{b \rho_-}
\end{equation}
since the first nonzero eigenvalue of the operator~$-\Delta$ on~$\T^\dim$ is~$4\pi^2$. This also implies
\begin{equation*}
  a \left\|\nabla u^k \right\|_{L^2(\T^\dim)}^2 \leq \int_{\T^\dim} \nabla u^k(q)^\top \A^k(q)\nabla u^k(q)\,dq = \lambda^k \int_{\T^\dim} \rho^k(q) u^k(q)^2 \, dq \leq \frac{4\pi^2 \rho_+}{b \rho_-},
\end{equation*}
since~$u^k$ is normalized in~$L^2(\T^\dim)$. The sequences~$(\lambda^k)_{k \geq 1}$ and~$(u^k)_{k\geq 1}$ are therefore bounded respectively in~$\R_+$ and~$H^1(\T^\dim)$.

Upon extraction (not indicated explicitly in the notation), there exists a subsequence such that~$\lambda^k \to \overline{\lambda}$, and~$u^k \rightharpoonup u$ weakly in~$H^1(\T^\dim)$ and strongly in~$L^2(\T^\dim)$. In particular,~$\left\| u \right\|_{L^2(\T^\dim)} = 1$. Since~$\lambda^k \rho^k u^k$ is the product of two converging subsequences, respectively for the weak-$*$~$L^{\infty}$ topology and for the strong~$L^2$ topology, it converges to~$\overline{\lambda}\rho u$ weakly in~$L^2(\T^\dim)$, and thus strongly in~$H^{-1}(\T^\dim)$. Note also that
\begin{equation*}
  \lambda^k \int_{\T^\dim}\rho^k u^k = \left\langle -\div\left(\A^k\nabla u^k\right), \mathbf{1} \right\rangle_{H^{-1}(\T^\dim),H^1(\T^\dim)} = 0,
\end{equation*}
so that, since~$\lambda^k \neq 0$ by~\eqref{eq:bounds_lambda_k} and~$\rho^k u^k$ weakly converges to~$\rho u$ in~$L^2(\T^\dim)$,
\begin{equation*}
  0= \int_{\T^\dim}\rho^k u^k=\left\langle \rho^{k}u^{k},\mathbf{1}\right\rangle_{L^{2}(\T^{\dim})}\xrightarrow[k\to+\infty]{}\int_{\T^\dim}\rho u.
\end{equation*}
Since~$\rho u$ has integral~0, the solution~$v^k$ to the following problem is uniquely defined by~\Cref{lem:LaxMilgram}:
\begin{equation*}
  -\div\left(\A^k\nabla v^k\right)=\overline{\lambda}\rho u,\qquad \int_{\T^\dim}v^k=0.
\end{equation*}
By definition of the~$H$-convergence~$\A^k\xrightarrow[k\to+\infty]{H}\overline{\A}$, the sequence~$(v^k)_{k \geq 1}$ converges weakly in~$H^1(\T^\dim)$ to~$v \in H^1(\T^\dim)$, and~$\A^k\nabla v^k\rightharpoonup\overline{\A}\nabla v$ weakly in~$L^2(\T^\dim)^\dim$, with~$v$ the unique solution in~$H^1(\T^\dim)$ to
\begin{equation}
  \label{eq:eq_on_v_for_limit_on_u}
  -\div\left(\overline{\A}\nabla v\right)=\overline{\lambda}\rho u,\qquad\int_{\T^\dim}v=0.
\end{equation}
As
\begin{equation*}
  -\div\left(\A^k\nabla\left(u^k-v^k - \int_{\T^\dim}u^k\right)\right)=\lambda^k\rho^k u^k-\overline{\lambda}\rho u\xrightarrow[k\to+\infty]{}0
\end{equation*}
strongly in~$H^{-1}(\T^\dim)$, it follows by~\eqref{eq:bound_LaxMilgram} that~$u^k-v^k-\int_{\T^\dim}u^k \to 0$ strongly in~$H^1(\T^\dim)$. Therefore, since~$(u^k)_{k \geq 1}$ weakly converges to~$u$ in~$H^1(\T^d)$, we obtain that~$v^k\to u-\int_{\T^\dim}u$ weakly in~$H^1(\T^\dim)$. By uniqueness of the weak limit in~$H^1(\T^\dim)$, it holds~$v=u-\int_{\T^\dim}u$. By plugging this equality in~\eqref{eq:eq_on_v_for_limit_on_u}, we can finally conclude that~$(u^k)_{k\geqslant1}$ converges weakly in~$H^1(\T^\dim)$ to  a solution~$u$ of~\eqref{eq:1219-2}.

\paragraph{Proving that~$\overline{\lambda}$ is the smallest nonzero eigenvalue.}
To conclude the proof, we show that~$\overline{\lambda}$ is the smallest nonzero eigenvalue of~\eqref{eq:1219-2}, proceeding by contradiction. Assume that there exist~$0<\widehat{\lambda}<\overline{\lambda}$ and~$\widehat{u}\in H^1(\T^\dim)$ such that
\begin{equation*}
  \left\{\begin{aligned}
    -\div\left(\overline{\mathcal{A}}(q)\nabla \widehat{u}(q)\right) & = \widehat{\lambda}\rho(q)\widehat{u}(q)  \quad \text{ on } \T^\dim, \\
    \int_{\T^\dim} \widehat{u}(q)^2 \, dq                            & = 1.
  \end{aligned}\right.
\end{equation*}
In particular,
\begin{equation}
  \label{eq:int_rho_uchapeau_egal_0}
  \int_{\T^\dim} \rho \widehat{u} = 0.
\end{equation}
For~$k \geq 1$, define~$w^k \in H^1(\T^\dim)$ as the unique solution to (well-defined by~\Cref{lem:LaxMilgram})
\begin{equation}
  \label{eq:wk-sequence-spectral}
  -\div(\mathcal{A}^k\nabla w^k) = \widehat{\lambda} \rho \widehat{u}, \qquad \int_{\T^\dim} w^k = 0.
\end{equation}
The sequence~$(w^k)_{k \geq 1}$ is bounded in~$H^1(\T^\dim)$ by~\eqref{eq:bound_LaxMilgram}, and hence weakly converges in~$H^1(\T^\dim)$ and strongly converges in~$L^2(\T^\dim)$ up to extraction of a subsequence (not explicitly indicated). The limit~$\widetilde{u}$ satisfies, by definition of~$H$-convergence,
\begin{equation}
  \label{eq:homog_vp_widetilde_u}
  \left\{\begin{aligned}
    -\div\left(\overline{\mathcal{A}}(q)\nabla \widetilde{u}(q)\right) & = \widehat{\lambda}\rho(q)\widehat{u}(q)  \quad \text{ on } \T^\dim, \\
    \int_{\T^\dim} \widetilde{u}(q) \, dq                              & = 0.
  \end{aligned}\right.
\end{equation}
The function~$\widehat{u} - \int_{\T^\dim} \widehat{u}$ also satisfies~\eqref{eq:homog_vp_widetilde_u}, and is therefore equal to~$\widetilde{u}$ by uniqueness of the solution to~\eqref{eq:homog_vp_widetilde_u}. The second condition in~\eqref{eq:homog_vp_widetilde_u} then implies that
\begin{equation}
  \label{eq:egalite_integrales_widehat_widetilde}
  \int_{\T^\dim} \widehat{u} = -\frac{\dps \int_{\T^\dim} \rho \widetilde{u}}{\dps \int_{\T^\dim} \rho}.
\end{equation}

Let us next introduce~$\widetilde{w}^k = w^k - \int_{\T^\dim} \rho^k w^k/\int_{\T^\dim} \rho^k$. Note that~$\int_{\T^\dim} \rho^k \widetilde{w}^k = 0$ by construction and~$\widetilde{w}^k \to \widetilde{u} + \int_{\T^\dim} \widehat{u} = \widehat{u}$ strongly in~$L^2(\T^\dim)$ as~$k \to +\infty$ in view of~\eqref{eq:egalite_integrales_widehat_widetilde}. Then, using first the definition of~$\lambda^k$, and next~\eqref{eq:wk-sequence-spectral},
\begin{equation*}
  \begin{aligned}
    \lambda^k \leq \frac{\dps \int_{\T^\dim}\nabla \widetilde{w}^k(q)^\top \A^k(q)\nabla \widetilde{w}^k(q)\,dq}{\dps \int_{\T^\dim} \widetilde{w}^k(q)^2 \rho^k(q) \, dq} = \frac{\dps \int_{\T^\dim}\nabla w^k(q)^\top \A^k(q)\nabla w^k(q)\,dq}{\dps \int_{\T^\dim} \widetilde{w}^k(q)^2 \rho^k(q) \, dq} & = \widehat{\lambda} \frac{\dps \int_{\T^\dim} w^k(q)\widehat{u}(q) \rho(q)\, dq}{\dps \int_{\T^\dim} \widetilde{w}^k(q)^2 \rho^k(q) \, dq}            \\
    \xrightarrow[k \to +\infty]{} \widehat{\lambda} \frac{\dps \int_{\T^\dim} \widetilde{u}(q) \widehat{u}(q) \rho(q) \, dq}{\dps \int_{\T^\dim} \widehat{u}(q)^2 \rho(q) \, dq}                                                                                                                             & = \widehat{\lambda} \frac{\dps \int_{\T^\dim} \widehat{u}(q)^2 \rho(q) \, dq}{\dps \int_{\T^\dim} \widehat{u}(q)^2\rho(q) \, dq} = \widehat{\lambda},
  \end{aligned}
\end{equation*}
where we used~\eqref{eq:int_rho_uchapeau_egal_0} in the last but one equality. We therefore obtain that~$\overline{\lambda} \leq \widehat{\lambda}$, which contradicts the .

The argument above can in fact be applied to any converging subsequence of the original sequence~$(\lambda^k)_{k \geq 1}$, henceforth leading to the uniqueness of the limit and the convergence of the whole sequence~$(\lambda^k)_{k \geq 1}$, which completes the proof.

\subsection{Proof of~\Cref{prop:periodicity}}
\label{app:lem:periodicity}

Consider a maximizer~$\Diff_0 \in \Diffset_{\#,k,p}^{a,b}$ of~\eqref{eq:lambda-periodic-optim} (which indeed exists in view of~\Cref{thm:well-posedness-1D}), and~$u_0 \in H^1(\T^\dim) \setminus \{0\}$ a minimizer associated with~$\Lambda^k(\Diff_0)$ in~\eqref{eq:lambda-periodic-def}. For~$\ell=(\ell_1,\dots,\ell_\dim) \in \{0,\dots,k-1\}^\dim$, introduce
\begin{equation*}
  \Diff_\ell(q) = \Diff_0\left(q+\frac{\ell}{k}\right), \qquad u_\ell(q) = u_0\left(q+\frac{\ell}{k}\right).
\end{equation*}
For these two functions we have, using the change of variable~$Q = q + \ell/k$ and the~$\mathbb{Z}^\dim$-periodicity of~$V$,
\begin{align*}
  \frac{\dps \int_{\T^\dim}\nabla u_\ell(q)^\top\Diff_\ell(q)\nabla u_\ell(q)\, \rme^{-V_{\#,k}(q)} \, dq}{\dps \int_{\T^\dim} u_\ell(q)^2 \, \rme^{-V_{\#,k}(q)} \, dq} & = \frac{\dps \int_{\T^\dim}\nabla u_0\left(q+\frac{\ell}{k}\right)^\top \Diff_0\left(q+\frac{\ell}{k}\right)\nabla  u_0\left(q+\frac{\ell}{k}\right)\,\rme^{-V(kq)}\,dq}{\dps \int_{\T^\dim}  u_0\left(q+\frac{\ell}{k}\right)^2 \, \rme^{-V(kq)}\,dq} \\
                                                                                                                                                                         & = \frac{\dps \int_{\T^\dim}\nabla u_0(Q)^\top\Diff_0(Q)\nabla  u_0(Q)\,\rme^{-V(kQ-\ell)} \, dQ}{\dps \int_{\T^\dim}  u_0(Q)^2 \rme^{-V(kQ-\ell)} \, dQ} = \Lambda^k(\Diff_0).
\end{align*}
Thus, for all~$\ell \in \{0,\dots,k-1\}^\dim$, the function~$\mathcal{D}_\ell$ also satisfies~$\Lambda^k(\Diff_\ell) = \Lambda^{k,\star}$. Define now the diffusion matrix
\begin{equation*}
  \Diff_0^k(q) = \frac{1}{k^\dim}\sum_{\ell_1=0}^{k-1}\dots \sum_{\ell_\dim=0}^{k-1}\Diff_\ell(q).
\end{equation*}
By concavity of the application~$\Diff\mapsto\Lambda^k(\Diff)$ (see~\Cref{lem:concave-mat}),
\begin{equation*}
  \Lambda^k(\Diff_0^k)\geq \frac{1}{k^\dim}\sum_{\ell_1=0}^{k-1}\dots \sum_{\ell_\dim=0}^{k-1}\Lambda^k(\Diff_\ell) = \Lambda^{k,\star}.
\end{equation*}
Since~$\Diff_0^k \in\Diffset_{\#,k,p}^{a,b}$ by the triangle inequality,~$\Diff_0^k$ also satisfies~$\Lambda^k(\Diff_0^k) = \Lambda^{k,\star}$. In addition,~$\Diff_0^k$ is~$(\Z/k)^\dim$-periodic by construction. There exists therefore~$\Diff^{k,\star} \in \Diffset_{p}^{a,b}$ such that~$\Diff_0^k(q) = \Diff^{k,\star}(kq)$, which concludes the proof.

\subsection{Proof of~\Cref{thm:commutation}}
\label{app:thm:commutation}

The following lemma, proved in~\Cref{subsec:proof-Hcvg-constant}, shows that in the particular case where one considers a sequence of diffusion matrices~$(\A^k)_{k \geq 1}$ such that~$\A^k$ is~$(\Z/k)^\dim$-periodic for any~$k \geq 1$, the~$H$-limit~$\overline{\A}$ is in fact a constant matrix. It also includes comparison results between weak-* and homogenized limits, which are needed in the proof of~\Cref{thm:commutation}. This result is an adaptation to the case of periodic boundary conditions of standard results for divergence operators with Dirichlet boundary conditions (see for instance~\cite[Lemmas~1.3.13 and~1.3.14]{allaire_homogeneisation}).

\begin{lemma}
  \label{lem:Hcvg-constant}
  Let~$a,b>0$ and~$(\A^k)_{k\geq 1} \subset L^\infty(\mathbb{T}^\dim,\mathcal{M}_{a,b})$ be a sequence of diffusion matrices such that~$\A^k$ is~$(\Z/k)^\dim$-periodic for any~$k \geq 1$. Then,~$(\A^k)_{k\geq 1}$~$H$-converges (up to the extraction of a subsequence) to a constant matrix~$\overline{\A}\in \mathcal{M}_{a,b}$.

  Moreover, if~$(\calB^{k})_{k\geqslant1}\subset L^{\infty}(\T^{\dim},\calM_{a,b})$ is another sequence of diffusion matrices such that~$\calB^{k}$ is~$(\Z/k)^{\dim}$-periodic for any~$k\geqslant 1$, and
  \begin{equation*}
    \forall k\geqslant 1,\quad
    \forall\xi\in\T^{\dim},\qquad
    \xi^{\top}\calA^{k}\xi\leqslant\xi^{\top}\calB^{k}\xi,
  \end{equation*}
  then the homogenized limits are also ordered, that is, denoting by~$\overline{\calB}\in\calM_{a,b}$ the~$H$-limit of~$(\calB^{k})_{k\geqslant1}$,
  \begin{equation*}
    \forall\xi\in\T^{\dim},\qquad
    \xi^{\top}\overline{\calA}\xi\leqslant\xi^{\top}\overline{\calB}\xi.
  \end{equation*}

  Furthermore, if
  \begin{equation*}
    \begin{aligned}
      \calA^{k}                   & \rightharpoonup \calA_{+} \text{ weakly-* in }L^{\infty}(\T^{\dim},\calM_{a,b}),      \\
      \left(\calA^{k}\right)^{-1} & \rightharpoonup \calA_{-}^{-1} \text{ weakly-* in }L^{\infty}(\T^{\dim},\calM_{a,b}),
    \end{aligned}
  \end{equation*}
  then it holds
  \begin{equation*}
    \forall\xi\in\T^{\dim},\qquad
    \xi^{\top}\calA_{-}\xi\leqslant\xi^{\top}\overline{\calA}\xi\leqslant\xi^{\top}\calA_{+}\xi.
  \end{equation*}
\end{lemma}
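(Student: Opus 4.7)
The plan is to use Murat--Tartar compactness to extract an $H$-converging subsequence, then handle the three claims in turn: constancy of the homogenized limit, monotonicity with respect to the ordering of diffusion matrices, and the harmonic/arithmetic mean bounds. First, I would apply Theorem~\ref{thm:Hcvg-compact} to extract a subsequence (not relabeled) such that $\A^k \xrightarrow{H} \overline{\A}$ for some $\overline{\A} \in L^\infty(\T^\dim, \calM_{a,b})$.

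To establish that $\overline{\A}$ is constant, the plan is to combine the $(\Z/k)^\dim$-periodicity of $\A^k$ with the translation equivariance of $H$-convergence. The latter says that for any $h \in \R^\dim$, the translated sequence $\A^k(\cdot+h)$ $H$-converges to $\overline{\A}(\cdot+h)$ (which is seen by the change of variable $q \mapsto q+h$ in~\eqref{eq:periodic-boundary} and the $\Z^\dim$-periodicity of the ambient data). On the other hand, for any rational $h \in \Q^\dim$ with common denominator $q$, all $k$'s divisible by $q$ satisfy $h \in (\Z/k)^\dim$ and therefore $\A^k(\cdot+h) = \A^k(\cdot)$. A diagonal extraction over a countable enumeration of $\Q^\dim$ then yields a subsequence along which $\A^k(\cdot+h) = \A^k(\cdot)$ holds cofinitely for every $h \in \Q^\dim$. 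By uniqueness of the $H$-limit, $\overline{\A}(\cdot+h) = \overline{\A}$ almost everywhere for every $h \in \Q^\dim$. A Fourier-series argument then shows that any $L^\infty$ function on $\T^\dim$ invariant under translations by a dense subgroup is constant almost everywhere: the $m$-th Fourier coefficient $\widehat{\overline{\A}}_m$ with $m \neq 0$ must vanish, since for some rational $h$ one has $\exp(2\pi\i m\cdot h)\neq 1$.

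For the monotonicity, I would use the variational characterization of the $H$-limit through periodic correctors. Letting $w^k_\xi \in H^1(\T^\dim)$ with zero mean solve $-\div(\A^k(\xi + \nabla w^k_\xi))=0$, integration by parts and the $H$-convergence (noting that $\nabla w^k_\xi \rightharpoonup 0$ weakly in $L^2$, since $\overline{\A}$ is constant and the limiting corrector problem has the trivial solution) imply
\begin{equation*}
\xi^\top \overline{\A}\xi = \lim_{k\to+\infty}\min_{v\in H^1(\T^\dim)}\int_{\T^\dim}(\xi+\nabla v(q))^\top\A^k(q)(\xi+\nabla v(q))\,dq,
\end{equation*}
and similarly for $\overline{\calB}$. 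Since the pointwise inequality $\A^k \leq \calB^k$ passes through the integrand for every test $v$, it also passes through the infima, and then through the limit, which gives $\xi^\top\overline{\A}\xi\leq\xi^\top\overline{\calB}\xi$. For the upper bound in the third claim, I would simply choose $v=0$ in the infimum characterization: this yields $\xi^\top\overline{\A}\xi \leq \lim_k \int_{\T^\dim}\xi^\top\A^k\xi\,dq = \xi^\top\A_+\xi$, using the weak-$*$ convergence $\A^k\rightharpoonup^*\A_+$. For the lower bound, the plan is to invoke the dual variational principle for $\overline{\A}^{-1}$, where the infimum is taken over divergence-free vector fields $\tau \in L^2(\T^\dim,\R^\dim)$ with fixed mean $\xi$ of the quadratic form $\int \tau^\top (\A^k)^{-1} \tau$. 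Choosing the constant test field $\tau\equiv\xi$ yields $\xi^\top\overline{\A}^{-1}\xi\leq\xi^\top\A_-^{-1}\xi$, which inverts to $\A_-\leq\overline{\A}$ by operator monotonicity of matrix inversion on symmetric positive matrices.

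The main obstacle will be the constancy argument: implementing the diagonal extraction while preserving $H$-convergence, and more delicately, justifying the translation equivariance of $H$-convergence in the periodic setting (which requires ensuring the changes of variable map the class of admissible test data to itself and that the resulting solutions still have zero mean). A secondary technical point will be establishing the dual variational characterization needed for the lower bound; this relies on a Helmholtz-type decomposition on $\T^\dim$ and the fact that the flux $\sigma^k_\xi := \A^k(\xi+\nabla w^k_\xi)$ converges weakly in $L^2$ to the constant $\overline{\A}\xi$, which then feeds back into the energy identity.
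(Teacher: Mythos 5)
Your constancy argument is correct and is a genuinely different route from the paper's. The paper proves constancy via oscillating corrector test functions together with the periodic-averaging Lemma~\ref{lem:homog_lim}, identifying the $H$-limit directly with the flux averages; you instead observe that $H$-convergence on $\T^\dim$ is translation equivariant (the change of variable $q\mapsto q+h$ maps the periodic Dirichlet problem~\eqref{eq:periodic-boundary} to itself), then use the $(\Z/k)^\dim$-periodicity of $\A^k$ along a subsequence where each fixed rational denominator eventually divides $k$ (e.g., $k=j!$, refined by Murat--Tartar), so that $\overline{\A}(\cdot+h)=\overline{\A}$ a.e.\ for every $h\in\Q^\dim$, whence constancy by a Fourier-coefficient argument. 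This is shorter and, modulo the careful ordering of extractions (periodize first, then extract for $H$-convergence, so that the divisibility structure survives), it is sound.

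However, there is a genuine gap in your treatment of the monotonicity and of the arithmetic/harmonic bounds. Both steps invoke, as if it were automatic, the identity
\begin{equation*}
\xi^\top \overline{\A}\xi = \lim_{k\to+\infty}\min_{v\in H^1(\T^\dim)}\int_{\T^\dim}(\xi+\nabla v)^\top\A^k(\xi+\nabla v)\,dq,
\end{equation*}
and your justification ("the limiting corrector problem has the trivial solution, since $\overline{\A}$ is constant") reads the $H$-convergence of $\A^k$ as guaranteeing the weak convergence $\A^k(\xi+\nabla w_\xi^k)\rightharpoonup \overline{\A}\xi$ for the corrector $w_\xi^k$. But the corrector equation $-\div(\A^k\nabla w_\xi^k)=\div(\A^k\xi)$ has a right-hand side that \emph{varies with~$k$} and converges only weakly, not strongly, in $H^{-1}(\T^\dim)$. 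The definition of $H$-convergence (\Cref{def:Hcvg}) and the standard stability of $H$-limits under source perturbations control the case of a \emph{fixed}, or strongly $H^{-1}$-convergent, right-hand side; they do not directly give you that the oscillating correctors have weakly vanishing gradients and that their fluxes converge to $\overline{\A}\xi$. Establishing this identification is precisely the content of the oscillating test function / div--curl computation that the paper carries out (see the "preliminary computation"~\eqref{eq:homog_preliminary_comp} and the identification $\mathfrak{F}=\overline{\A}$), and it is not a corollary of $H$-convergence plus constancy alone. The weak limit of the fluxes $\A^k(\xi+\nabla w_\xi^k)$ is indeed a constant by Lemma~\ref{lem:homog_lim}, but showing that constant equals $\overline{\A}\xi$ requires a div--curl argument or equivalent, exactly as in the paper. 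The same missing step affects your dual variational bound for $\A_-$: the dual principle presupposes the primal corrector characterization. So the second and third parts of your proof are incomplete as written; you would need to reintroduce the oscillating test functions (or an equivalent compensated-compactness argument) to close them, at which point the approach essentially coincides with the paper's.
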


For fixed~$k\geq 1$, recalling the notation~$\Diff_{\#,k}(q) = \Diff(kq)$ for any~$\Diff \in \Diffset_p^{a,b}$, we obtain by definition of the diffusion matrix~$\Diff^{k,\star}$ in~\Cref{prop:periodicity} that
\begin{equation}
  \label{eq:commut-1}
  \forall\Diff\in \Diffset_p^{a,b},\qquad
  \Lambda^k(\Diff_{\#,k}^{k,\star})\geq \Lambda^k(\Diff_{\#,k}).
\end{equation}
Fix now~$\Diff\in\Diffset_p^{a,b}$, and introduce the sequences~$(\A^{k,\star})_{k\geq 1}$ and~$(\A^k)_{k\geq 1}$ defined as
\begin{equation*}
  \A^{k,\star} = \Diff_{\#,k}^{k,\star} \rme^{-V_{\#,k}},
  \qquad
  \A^k = \Diff_{\#,k} \rme^{-V_{\#,k}}.
\end{equation*}
By~\Cref{lem:Hcvg-constant}, upon extraction a subsequence (still indexed by~$k$ with some abuse of notation), the sequence~$(\A^{k,\star})_{k\geq 1}$~$H$-converges to a constant homogenized matrix~$\overline\A^\star\in \mathcal{M}_{a,b}$. By~\Cref{thm:Hcvg}, the sequence~$(\A^k)_{k\geq 1}$ also~$H$-converges to a constant matrix~$\overline{\A}\in\mathcal{M}_{a,b}$. In addition, in view of the definitions~\eqref{eq:lambda_per_k} and~\eqref{eq:lambda-periodic-def},
\begin{equation*}
  \Lambda^k(\Diff_{\#,k}^{k,\star}) = \Lambda_{\#,k}(\Diff^{k,\star}),
  \qquad
  \Lambda^k(\Diff_{\#,k})  = \Lambda_{\#,k}(\Diff).
\end{equation*}
Now, by~\Cref{thm:Hcvg-spectral} and~\Cref{cor:Hcvg-spectral},~$H$-convergence implies the convergence of the associated spectral gap, and thus
\begin{equation*}
  \Lambda_{\#,k}(\Diff^{k,\star})\xrightarrow[k\to+\infty]{} \overline \Lambda^\star = \min_{u \in H^{1,0}(\T^\dim) \setminus\{0\}} \frac{\dps \int_{\T^\dim} \nabla u^\top \overline{\A}^\star \nabla u}{\dps \int_{\T^\dim} u^2 \int_{\T^\dim} \rme^{-V}}, \qquad\qquad \Lambda_{\#,k}(\Diff) \xrightarrow[k\to+\infty]{} \Lambda_{\mathrm{hom}}(\Diff).
\end{equation*}
Thus, passing to the limit~$k\to +\infty$ in~\eqref{eq:commut-1}, we obtain that~$\overline \Lambda^\star \geq \Lambda_{\mathrm{hom}}(\Diff)$. Since this result holds for any~$\Diff\in\Diffset_p^{a,b}$, we finally obtain that
\begin{equation*}
  \overline \Lambda^\star \geq \max_{\Diff \in \Diffset_p^{a,b}}\Lambda_{\mathrm{hom}}(\Diff) = \Lambda_{\mathrm{hom}}^\star,
\end{equation*}
where we recall that~$\Lambda_{\mathrm{hom}}^\star$ is defined in~\eqref{eq:optim-homog-def}.

To obtain the equality~$\overline \Lambda^\star = \max_{\Diff \in \Diffset_p^{a,b}}\Lambda_{\mathrm{hom}}(\Diff)$, it therefore suffices to find a diffusion matrix~$\widehat{\Diff}\in \Diffset_p^{a,b}$ such that~$\Lambda_{\mathrm{hom}}(\widehat{\Diff}) = \overline \Lambda^\star$. The natural candidate is~$\widehat{\Diff}(q) = \mathrm{e}^{V(q)}\overline{\A}^\star$. Note first that~$\widehat{\A}_{\#,k} = \widehat{\Diff}_{\#,k} \rme^{-V_{\#,k}} = \overline{\A}^\star$ for any~$k \geq 1$, so that the sequence~$(\widehat{\A}_{\#,k})_{k \geq 1}$~$H$-converges to~$\overline{\A}^\star$, and therefore~$\Lambda_{\mathrm{hom}}(\widehat{\Diff}) = \overline \Lambda^\star$ by~\Cref{cor:Hcvg-spectral}. To conclude the proof, it suffices to check that~$\widehat{\Diff}\in\Diffset_p^{a,b}$. Note first that, since~$\Diff^{k,\star} \in \Diffset_p^{a,b}$, it holds for any~$\xi \in \R^\dim$ and almost all~$q \in \T^\dim$,
\begin{equation*}
  a |\xi|^2 \leq \mathrm{e}^{- V_{\#,k}(q)} \xi^{\top}\Diff_{\#,k}^{k,\star}(q)\xi \leq \frac1b |\xi|^2.
\end{equation*}
By~\Cref{lem:Hcvg-constant}, the above inequality passes to the~$H$-limit, so that,
\begin{equation*}
  \forall\xi\in\R^{\dim},\qquad
  a |\xi|^2 \leq \xi^{\top}\overline{\A}^{\star}\xi \leq \frac1b |\xi|^2.
\end{equation*}
Plugging~$\widehat{\Diff}(q) =\mathrm{e}^{ V(q)}\overline{\mathcal{A}}^\star$ in the last inequality allows to conclude that~$\widehat{\Diff}\in L^{\infty}_{V}(\T^\dim,\mathcal{M}_{a,b})$. Let us next check that
\begin{equation*}
  \left\|\widehat{\Diff}\right\|_{L^p_V}^p = \int_{\T^\dim} \normF{\widehat\Diff(q)}^p \rme^{- p V(q)} \, dq \leq 1.
\end{equation*}
In view of~\Cref{lem:Hcvg-constant}, it holds~$0 \leq \overline{\A}^\star \leq \A_{+}$ in the sense of symmetric matrices for any weak-* limit~$\A_{+}$ of (a subsequence of)~$(\A^{k,\star})_{k \geq 1}$. By lower semicontinuity of the norm, and since the norm~$|\cdot|_{\rmF}$ is compatible with the order on symmetric positive matrices and weak-*~$L^\infty(\T^\dim)$ convergence implies weak~$L^p(\T^\dim)$ convergence, 
\begin{equation}\label{eq:CV_norm_homog_fin}
  \left\|\widehat{\Diff}\right\|_{L^p_V}^p = \normF{\overline{\A}^\star}^p \leq \int_{\T^\dim} \normF{\A_{+}(q)}^p \, dq \leq \liminf_{k \to +\infty} \int_{\T^\dim} \normF{\mathcal{A}^{k,\star}(q)}^p \, dq = \liminf_{k \to +\infty} \left\|\Diff^{k,\star}\right\|_{L^p_V}^p \leq 1,
\end{equation}
which gives the claimed bound and completes the proof of~\Cref{thm:commutation}.

\begin{remark}
\label{rmk:choice_normalization_motivated}
  The convergence result~\eqref{eq:CV_norm_homog_fin} in fact holds for other choices of normalizations, in particular normalizations for the diffusion matrix of the form
  \[
  \int_{\T^d} \left|\Diff(q)\right|_\F^p \, \rme^{-p V(q)} \, \omega(q) \, dq
  \]
  for~$\omega \in L^\infty(\T^d)$ with~$\omega \geq 0$. Therefore, \Cref{thm:commutation} can be adapted for other normalizations than the one introduced in~\eqref{eq:constrained-set-matriciel}.
\end{remark}

\subsubsection{Proof of~\Cref{lem:Hcvg-constant}}
\label{subsec:proof-Hcvg-constant}

\Cref{thm:Hcvg-compact} guarantees the~$H$-convergence of~$(\A^k)_{k\geq 1}$, up to the extraction of a subsequence, towards a general diffusion matrix~$\overline{\A}\in L^\infty(\mathbb{T}^p,\mathcal{M}_{a,b})$. Thus, we only need to show that~$\overline{\A}$ is constant. To do so, we follow the proof of~\cite[Theorem~1.3.18]{allaire_homogeneisation}. Since~$\A^k$ is~$(\mathbb{Z}/k)^\dim$-periodic, there exists a~$\mathbb{Z}^\dim$-periodic matrix-valued function~$\mathfrak{A}^k \in L^\infty(\mathbb{T}^\dim,\mathcal{M}_{a,b})$ such that~$\A^k(q) = \mathfrak{A}^k(kq)$. Fix a smooth function~$\varphi$ defined on~$\T^{\dim}$, and consider the sequence of oscillating test functions:
\begin{equation}
  \label{eq:corr_periodique}
  \widetilde{w}^k(q) = \varphi(q) + \frac1k \sum_{i=1}^{\dim}w_i^k(kq)\partial_{q_i}\varphi(q),
\end{equation}
where~$w_i^k \in H^1(\T^\dim)$ is the periodic function uniquely defined (in view of~\Cref{lem:LaxMilgram}) by the following equation on~$\mathbb{T}^\dim$:
\begin{equation}
  \label{eq:def_wtilde_i_k}
  -\div\left(\mathfrak{A}^k ( e_i + \nabla w_i^k)\right) = 0, \qquad \int_{\T^\dim} w_i^k = 0.
\end{equation}
The choice~\eqref{eq:corr_periodique} is suggested in the proof of~\cite[Theorem~1.3.1]{AllaireLectNotes} 
and was used for instance in the proof of~\cite[Proposition~7]{LLT12}. By construction, it holds
\begin{equation}
  \label{eq:gradient_w_k}
  \nabla \widetilde{w}^k(q) = \sum_{i=1}^{\dim}\partial_{q_i}\varphi(q)\left(e_i+\nabla w_i^k(kq)\right)+\frac1k\sum_{i=1}^{\dim}w_i^k(kq)\nabla\partial_{q_i}\varphi(q).
\end{equation}

\paragraph{The sequence~$(w_i^k)_{k\geqslant 1}$ is uniformly bounded in~$H^{1}(\T^\dim)$.}
By~\eqref{eq:def_wtilde_i_k}, $w_i^k$ solves 
\begin{equation*}
  -\div\left(\mathfrak{A}^k\nabla w_i^k\right)=\div\left(\mathfrak{A}^k e_i\right),\qquad \int_{\T^\dim}w_i^k=0.
\end{equation*}
Since~$\left\langle\div\left(\mathfrak{A}^k e_i\right),\mathbf{1}\right\rangle_{H^{-1}(\T^\dim),H^1(\T^\dim)}=0$,~\eqref{eq:bound_LaxMilgram} implies that there exists a constant~$C>0$ (which does not depend on~$k$) such that
\begin{equation}
  \label{eq:w_ik_LM}
  \left\lVert w_i^k\right\rVert_{H^{1}(\T^\dim)}\leqslant C\left\lVert\div\left(\mathfrak{A}^{k}e_i\right)\right\rVert_{H^{-1}(\T^\dim)}.
\end{equation}
Since~$\mathfrak{A}^k\in L^{\infty}(\T^\dim,\calM_{a,b})$ with~$b>0$, one easily bounds the right-hand side of~\eqref{eq:w_ik_LM} by a constant which does not depend on~$k$.

\paragraph{The sequence~$(\widetilde{w}^k)_{k\geqslant 1}$ weakly converges in~$H^{1}(\T^\dim)$ to~$\varphi$.}
Since~$\left\lVert w_i^k(k\cdot)\right\rVert_{L^{2}(\T^\dim)}=\left\lVert w_i^k\right\rVert_{L^{2}(\T^\dim)}$ and knowing that~$(w_i^k)_{k\geqslant 1}$ is uniformly bounded in~$L^{2}(\T^\dim)$, it is clear in view of~\eqref{eq:corr_periodique} that~$(\widetilde{w}^k)_{k\geqslant1}$ strongly converges to~$\varphi$ in~$L^{2}(\T^\dim)$. As for the weak~$L^{2}(\T^\dim)$ convergence of the gradient, we use~\Cref{lem:homog_lim} in~\Cref{sec:lemma_avg}: the sequence~$(e_j\cdot\nabla w_i^k)_{k\geqslant 1}$ is uniformly bounded in~$L^{2}(\T^\dim)$ for any~$1\leqslant j\leqslant\dim$, and it holds
\begin{equation*}
  \int_{\T^\dim}e_j\cdot\nabla w_i^k(q)\,dq =0.
\end{equation*}
Therefore, the sequence~$(e_j\cdot\nabla w_i^k(k\cdot))_{k\geqslant 1}$ weakly converges in~$L^{2}(\T^\dim)$ to 0, so that~$(\nabla\widetilde{w}^k)_{k\geqslant 1}$ weakly converges in~$L^{2}(\T^\dim)^{\dim}$ to~$\sum_{i=1}^{d}\partial_{q_i}\varphi\,e_i=\nabla\varphi$.

\paragraph{The sequence~$\left(\A^k(e_i+\nabla w_i^k(k\cdot))\right)_{k\geqslant 1}$ weakly converges in~$L^{2}(\T^\dim)^\dim$ to a constant vector~$\mathfrak{F}_i$.}

Since~$\left\lVert \nabla w_i^k(k\cdot)\right\rVert_{L^{2}(\T^\dim)}=\left\lVert\nabla w_i^k\right\rVert_{L^{2}(\T^\dim)}$, and knowing that~$(\nabla w_i^k)_{k\geqslant1}$ is uniformly bounded in~$L^{2}(\T^\dim)$, the sequence~$\left(\A^k(e_i+\nabla w_i^k(k\cdot))\right)_{k\geqslant 1}$ is uniformly bounded in~$L^{2}(\T^\dim)$. Therefore, we can introduce, up to extraction, the weak limit~$\mathfrak{F}_i$ of this sequence in~$L^2(\T^\dim)^\dim$. Let us show that this weak limit is constant using again~\Cref{lem:homog_lim}. Indeed, for~$1\leqslant j\leqslant \dim$, let~$a_k^j=e_j\cdot \mathfrak{A}^k(e_i+\nabla w_i^k)\in L^2(\T^\dim)$. The family~$(a_k^j)_{k\geqslant0}$ is uniformly bounded in~$L^2(\T^\dim)$ so that, using the Cauchy--Schwarz inequality,
\begin{equation*}
  \sup\limits_{k\geqslant 1}\int_{\T^\dim}\left\lvert a_k^j\right\rvert \leq \sup\limits_{k\geqslant 1}\| a_k^j \|_{L^2(\T^\dim)} < +\infty.
\end{equation*}
Moreover, up to extraction, there exists~$\rho_j\in\R$ such that~$\int_{\T^\dim}a_k^j$ converges to~$\rho_j$. This allows us to conclude that~$\mathfrak{F}_{i,j}$, the~$j$-th component of~$\mathfrak{F}_{i}$, is constant. By multiplying~\eqref{eq:gradient_w_k} by~$\A^k$, we therefore obtain that the sequence~$\left(\A^k\nabla\widetilde{w}^k\right)_{k\geqslant1}$ weakly converges to~$\mathfrak{F}\nabla\varphi$ in~$L^{2}(\T^\dim)^\dim$.

\paragraph{A preliminary computation.} We show a convergence result that will be used multiple times in the proof. This can be seen as the counterpart of the celebrated div-curl lemma in our context. Let~$(u^{k})_{k\geqslant1}$ be a sequence converging weakly in~$H^{1}(\T^\dim)$ and strongly in~$L^{2}(\T^\dim)$ to~$u\in H^{1}(\T^\dim)$. It then holds
\begin{equation}
  \label{eq:homog_preliminary_comp}
  \left\langle-\div\left(\A^k\nabla \widetilde{w}^k\right),u^k\right\rangle_{H^{-1}(\T^\dim),H^{1}(\T^\dim)}\xrightarrow[k\to+\infty]{}\left\langle-\div\left(\mathfrak{F}\nabla\varphi\right), u\right\rangle_{H^{-1}(\T^\dim),H^{1}(\T^\dim)},
\end{equation}
where the constant matrix~$\mathfrak{F}$ has entries~$\mathfrak{F}_{i,j}$.
Indeed, using~\eqref{eq:gradient_w_k}, it holds
\begin{equation}
  \label{eq:ak_nablawk}
  \int_{\T^\dim}\A^k\nabla \widetilde{w}^k\cdot\nabla u^k = \sum_{i=1}^\dim\int_{\T^\dim}\left(e_i+\nabla w^k_i(k\cdot)\right)\partial_{q_i}\varphi \cdot\A^k\nabla u^k + \frac1k \sum_{i=1}^\dim\int_{\T^\dim} w_i^k(k\cdot)\nabla\partial_{q_i}\varphi \cdot\A^k\nabla u^k.
\end{equation}
The second term on the right-hand side of the previous equality converges to~0 as~$k \to +\infty$ since~$\left(\A^k\right)_{k\geqslant 1}$ is uniformly bounded in~$L^{\infty}(\T^\dim,\calM_{a,b})$ with~$b>0$, $\left(\nabla u^k\right)_{k\geqslant1}$ weakly converges in~$L^{2}(\T^\dim)^\dim$ to~$\nabla u$ and the sequence~$(w_i^k(k\cdot))_{k\geqslant1}$ is uniformly bounded in~$L^{2}(\T^\dim)$. For the first term, we  use an integration by parts and~\eqref{eq:def_wtilde_i_k} to write
\begin{equation}
  \label{eq:identify_weak_limit}
  \begin{aligned}
     & \sum_{i=1}^\dim\int_{\T^\dim}\left(e_i+\nabla w^k_i(k\cdot)\right)\partial_{q_i}\varphi \cdot\A^k\nabla u^k = -\sum_{i=1}^\dim\int_{\T^\dim}\A^k\left(e_i+\nabla w_i^k(k\cdot)\right)\cdot\left(\nabla\partial_{q_i}\varphi\right) u^k \\
     & \qquad \xrightarrow[k\to+\infty]{}
    -\sum_{i=1}^\dim\int_{\T^\dim} \mathfrak{F}_i\cdot\left(\nabla\partial_{q_i}\varphi\right) u
    =
    -\sum_{i,j=1}^\dim\int_{\T^\dim}\mathfrak{F}_{i,j}\left(\partial_{q_i,q_j}^2\varphi\right) u=\int_{\T^\dim} \mathfrak{F}\nabla\varphi\cdot\nabla u.
  \end{aligned}
\end{equation}
This shows that~\eqref{eq:homog_preliminary_comp} holds.

\paragraph{The matrix~$\mathfrak{F}$ is equal to~$\overline{\A}$.}

Let us next identify the weak limit~$\mathfrak{F}$. Fix~$u\in H^1(\T^\dim)$, and introduce~$f = -\div(\overline{\A}\nabla u) \in H^{-1}(\T^\dim)$. Note that~$\left\langle f,\mathbf{1}\right\rangle_{H^{-1}(\T^\dim),H^1(\T^\dim)}=0$. Introduce next the solution~$u^k$ (again well-defined by~\Cref{lem:LaxMilgram}) of
\begin{equation*}
  -\div\left(\A^k\nabla u^k\right)=f,\qquad \int_{\T^\dim}u^k=0.
\end{equation*}
By definition of~$H$-convergence,~$u^k\rightharpoonup u-\int_{\T^\dim}u$ weakly in~$H^1(\T^\dim)$, and strongly in~$L^{2}(\T^\dim)$ upon extraction. Testing against~$\widetilde{w}^k$, it holds
\begin{equation}
  \label{eq:eq_to_pass_to_limit_oscillating_test_fct}
  \left\langle f, \widetilde{w}^k \right\rangle_{H^{-1}(\T^\dim),H^1(\T^\dim)} = \int_{\T^\dim}\nabla \widetilde{w}^k \cdot \A^k\nabla u^k = \left\langle-\div\left(\A^k\nabla\widetilde{w}^k\right),u^k\right\rangle_{H^{-1}(\T^\dim),H^{1}(\T^\dim)}.
\end{equation}
By~\eqref{eq:homog_preliminary_comp}, it holds
\begin{equation*}
  \int_{\T^\dim}\nabla\widetilde{w}^k\cdot\A^k\nabla u^k\xrightarrow[k\to+\infty]{}\int_{\T^\dim}\mathfrak{F}\nabla\varphi\cdot\nabla \left(u-\int_{\T^\dim}u\right)=\int_{\T^\dim}\mathfrak{F}\nabla\varphi\cdot\nabla u.
\end{equation*}
As~$\widetilde{w}^k$ weakly converges in~$H^{1}(\T^\dim)$ to~$\varphi$, it follows, by passing to the limit~$k \to +\infty$ in~\eqref{eq:eq_to_pass_to_limit_oscillating_test_fct}, that
\begin{equation*}
   \left\langle f, \varphi \right\rangle_{H^{-1}(\T^\dim),H^1(\T^\dim)} = \int_{\T^\dim}\mathfrak{F}\nabla u\cdot\nabla \varphi.
\end{equation*}
By definition of~$f$,
\begin{equation*}
   \left\langle f, \varphi \right\rangle_{H^{-1}(\T^\dim),H^1(\T^\dim)} = \int_{\T^\dim}\overline{\mathcal{A}}\nabla u\cdot\nabla \varphi.
\end{equation*}
As the previous equalities hold true for any smooth function~$\varphi$, we obtain that~$\overline{\A} \nabla u=\mathfrak{F} \nabla u$, which indeed allows us to conclude that~$\overline{\A} = \mathfrak{F}$ since~$u \in H^1(\T^\dim)$ was arbitrary. This shows finally that the homogenized matrix is constant.

\begin{remark}[The periodic case]
  \label{rmk:cv_H_periodic}
  In the case when~$\A^k(q) = \mathfrak{A}(kq)$ (which corresponds to the setting of~\Cref{thm:Hcvg}), no extraction is needed for the sequence of functions~$(\A^k(e_i+\nabla w_i^k(k\cdot)))_{k \geq 1}$ to weakly converge in~$L^2(\T^\dim)$. This is due to~\Cref{lem:homog_lim}, and the fact that the functions~$w_i^k$ obtained from~\eqref{eq:def_wtilde_i_k} are independent of~$k$, and actually correspond to the functions~$w_i$ defined in~\eqref{eq:Abar2}. Therefore, there is a unique possible limiting constant matrix~$\mathfrak{F}$, with
  \begin{equation*}
    \mathfrak{F}_i = \int_{\T^\dim} \A (e_i + \nabla w_i) = \sum_{j=1}^d e_j \int_{\T^\dim} e_j \cdot \A (e_i + \nabla w_i) = \sum_{j=1}^d e_j \int_{\T^\dim} (e_j + \nabla w_j)\cdot \A (e_i + \nabla w_i),
  \end{equation*}
  where we used~\eqref{eq:def_wtilde_i_k} for the last equality. This allows us to recognize the matrix in~\eqref{eq:Abar1} applied to~$e_i$.
  Since all converging subsequences have the same limit, this shows in fact that the full sequence~$(\A^k)_{k \geq 1}$ converges, as stated in~\Cref{thm:Hcvg}.
\end{remark}
We now show the second part of~\Cref{lem:Hcvg-constant}, whose proof is an adaptation of~\cite[Lemma~1.3.13]{allaire_homogeneisation}. Define the sequence of oscillating test functions corresponding to the sequence~$(\calB^{k})_{k\geqslant0}$, namely
\begin{equation*}
  \widetilde{v}^{k}(q)=\varphi(q)+\frac{1}{k}\sum_{i=1}^{\dim}v_i^{k}(kq)\partial_{q_{i}}\varphi(q),
\end{equation*}
where~$v_{i}^{k}$ is such that
\begin{equation*}
  -\div\left(\mathfrak{B}^{k}(e_i+\nabla v_{i}^{k})\right)=0,\quad
  \int_{\T^{\dim}}v_i^k=0,
\end{equation*}
with~$\mathfrak{B}^{k}\in L^{\infty}(\T^{\dim},\calM_{a,b})$ such that~$\calB^{k}(q)=\mathfrak{B}^{k}(kq)$ for almost all~$q\in\T^{\dim}$  (this is the counterpart of~\eqref{eq:corr_periodique}-\eqref{eq:def_wtilde_i_k} with~$\A^k$ replaced by~$\calB^k$). The sequence~$\left(\widetilde{v}^{k}\right)_{k\geqslant0}$ then weakly converges in~$H^{1}(\T^{\dim})$ to~$\varphi$ in the same way as the sequence~$(\widetilde{w}^k)_{k\geqslant0}$ weakly converges in~$H^{1}(\T^{\dim})$ to~$\varphi$. Since for almost all~$q\in\T^{\dim}$, the matrix~$\calA^{k}$ is symmetric and positive, it holds
\begin{equation*}
  \calA^{k}\nabla \widetilde{w}^{k}\cdot\nabla \widetilde{w}^{k}-2\calA^{k}\nabla \widetilde{w}^{k}\cdot\nabla \widetilde{v}^{k}+\calA^{k}\nabla \widetilde{v}^{k}\cdot\nabla \widetilde{v}^{k}=\calA^{k}\left(\nabla \widetilde{w}^{k}-\nabla \widetilde{v}^{k}\right)\cdot (\nabla \widetilde{w}^{k}-\nabla \widetilde{v}^{k})\geqslant0.
\end{equation*}
Using that~$\calA^{k}\leqslant \calB^{k}$, this implies that
\begin{equation}
  \label{eq:ak_bk_bound}
  \calA^{k}\nabla \widetilde{w}^{k}\cdot\nabla \widetilde{w}^{k}-2\calA^{k}\nabla \widetilde{w}^{k}\cdot\nabla \widetilde{v}^{k}+\calB^{k}\nabla \widetilde{v}^{k} \cdot\nabla \widetilde{v}^{k}\geqslant0.
\end{equation}
We now pass to the limit~$k\to+\infty$. To do so, consider~$\psi\in\calC^{\infty}(\T^{\dim})$ with~$\psi\geqslant0$. Multiplying~\eqref{eq:ak_bk_bound} by~$\psi$ and integrating over~$\T^{\dim}$, it holds
\begin{equation}
  \label{eq:int_ipp_div_curl_lemma}
  \int_{\T^{\dim}}\calA^{k}\nabla \widetilde{w}^{k}\cdot\nabla \widetilde{w}^{k}\psi-2\int_{\T^{\dim}}\calA^{k}\nabla \widetilde{w}^{k}\cdot\nabla \widetilde{v}^{k}\psi+\int_{\T^{\dim}}\calB^{k}\nabla \widetilde{v}^{k}\cdot\nabla \widetilde{v}^{k}\psi\geqslant0.
\end{equation}
For any~$k\geqslant0$, it holds~$\widetilde{w}^{k}\psi\in H^{1}(\T^{\dim})$. Therefore, one obtains
\begin{equation}
  \label{eq:ipp_div_curl_lemma}
  \int_{\T^{\dim}}\calA^{k}\nabla \widetilde{w}^{k}\cdot\nabla \widetilde{w}^{k} \, \psi
  =\left\langle-\div(\calA^{k}\nabla \widetilde{w}^{k}),\widetilde{w}^{k}\psi\right\rangle_{H^{-1}(\T^{\dim}),H^{1}(\T^{\dim})}-\int_{\T^{\dim}}\calA^{k}\nabla \widetilde{w}^{k}\cdot\nabla\psi \, \widetilde{w}^{k}.
\end{equation}
The sequence~$(\widetilde{w}^{k}\psi)_{k\geqslant0}$ converges to~$\varphi\psi$, weakly in~$H^{1}(\T^{\dim})$ and strongly in~$L^{2}(\T^\dim)$, so that, using~\eqref{eq:homog_preliminary_comp}, it holds
\begin{equation}
  \label{eq:first_term_ipp_div_curl_lemma}
  \left\langle-\div(\calA^{k}\nabla \widetilde{w}^{k}),\widetilde{w}^{k}\psi\right\rangle_{H^{-1}(\T^{\dim}),H^{1}(\T^{\dim})}\xrightarrow[k\to+\infty]{}
  \left\langle-\div(\overline{\calA}\nabla \varphi),\varphi\psi\right\rangle_{H^{-1}(\T^{\dim}),H^{1}(\T^{\dim})}.
\end{equation}
As~$(\calA^{k}\nabla \widetilde{w}^{k})_{k\geqslant0}$ weakly converges in~$L^{2}(\T^\dim)^\dim$ to~$\overline{\calA}\nabla\varphi$, and the sequence~$\left(\widetilde{w}^{k}\nabla\psi\right)_{k\geqslant0}$ strongly converges to~$\varphi\nabla\psi$ in~$L^{2}(\T^{\dim})^{\dim}$, one obtains
\begin{equation}
\label{eq:second_term_ipp_div_curl_lemma}
  \int_{\T^{\dim}}\calA^{k}\nabla \widetilde{w}^{k}\cdot\nabla \psi \, \widetilde{w}^{k}
  \xrightarrow[k\to+\infty]{}
  \int_{\T^{\dim}}\overline{\calA}\nabla \varphi\cdot\nabla \psi \, \varphi.
\end{equation}
Combining~\eqref{eq:first_term_ipp_div_curl_lemma} and~\eqref{eq:second_term_ipp_div_curl_lemma} in~\eqref{eq:ipp_div_curl_lemma}, it holds
\begin{equation*}
  \int_{\T^{\dim}}\calA^{k}\nabla \widetilde{w}^{k}\cdot\nabla \widetilde{w}^{k}\psi\xrightarrow[k\to+\infty]{}
  \left\langle-\div(\overline{\calA}\nabla \varphi),\varphi\psi\right\rangle_{H^{-1}(\T^{\dim}),H^{1}(\T^{\dim})}-\int_{\T^{\dim}}\overline{\calA}\nabla \varphi\cdot\nabla \psi \, \varphi=\int_{\T^{\dim}}\overline{\calA}\nabla\varphi\cdot\nabla\varphi\, \psi.
\end{equation*}
Using a similar reasoning to identify  the limits of the two other terms in~\eqref{eq:int_ipp_div_curl_lemma} (using the convergence result~\eqref{eq:homog_preliminary_comp} for the sequence~$\left(\calB^k\right)_{k\geqslant1}$), it holds
\begin{equation*}
  \int_{\T^{\dim}}\left(\overline{\calA}\nabla\varphi\cdot\nabla\varphi-2\overline{\calA}\nabla\varphi\cdot\nabla\varphi+\overline{\calB}\nabla\varphi\cdot\nabla\varphi\right)\psi\geqslant0.
\end{equation*}
Since~$\psi\geqslant0$ is arbitrary, this implies the following pointwise inequality:
\begin{equation*}
  0\leqslant\overline{\calA}\nabla\varphi\cdot\nabla\varphi-2\overline{\calA}\nabla\varphi\cdot\nabla\varphi+\overline{\calB}\nabla\varphi\cdot\nabla\varphi=\left(\overline{\calB}-\overline{\calA}\right)\nabla\varphi\cdot\nabla\varphi.
\end{equation*}
Since~$\varphi$ is an arbitrary smooth function, this shows that~$\overline{\calA}\leqslant\overline{\calB}$.

We finally show the third part of~\Cref{lem:Hcvg-constant}, which is an adaptation of~\cite[Lemma~1.3.14]{allaire_homogeneisation}. Using the coercivity and symmetry of~$\calA^{k}$, it holds
\begin{equation*}
\calA^{k}\nabla \widetilde{w}^{k}\cdot\nabla \widetilde{w}^{k}-2\calA^{k}\nabla \widetilde{w}^{k}\cdot\nabla\varphi+\calA^{k}\nabla\varphi\cdot\nabla\varphi = \calA^{k}\left(\nabla \widetilde{w}^{k}-\nabla\varphi\right)\cdot\left(\nabla \widetilde{w}^{k}-\nabla\varphi\right)\geqslant0.
\end{equation*}
Passing to the limit in the previous inequality using similar arguments as above, one obtains
\begin{equation*}
  \overline{\calA}\nabla\varphi\cdot\nabla\varphi-2\overline{\calA}\nabla\varphi\cdot\nabla\varphi+\calA_{+}\nabla\varphi\cdot\nabla\varphi\geqslant0,
\end{equation*}
which implies that~$\calA_{+}\geqslant\overline{\calA}$. Likewise, denoting by~$\sigma=\calA_{-}\nabla\varphi$,
\begin{equation*}
\calA^{k}\nabla \widetilde{w}^{k}\cdot\nabla \widetilde{w}^{k}-2\nabla \widetilde{w}^{k}\cdot\sigma+\left(\calA^{k}\right)^{-1}\sigma\cdot\sigma=   \left(\calA^{k}\right)^{-1}\left(
  \calA^{k}\nabla \widetilde{w}^{k}-\sigma
  \right)\cdot\left(
  \calA^{k}\nabla \widetilde{w}^{k}-\sigma
  \right)
  \geqslant0.
\end{equation*}
Passing to the limit~$k\to+\infty$ yields
\begin{equation*}
  \overline{\calA}\nabla\varphi\cdot\nabla\varphi-2\nabla\varphi\cdot\calA_{-}\nabla\varphi+\nabla\varphi\cdot\calA_{-}\nabla\varphi\geqslant0.
\end{equation*}
This shows that~$\overline{\calA}\geqslant\calA_{-}$, which concludes the proof.


\subsubsection{A technical result of periodic averaging}
\label{sec:lemma_avg}

We conclude this section by stating and proving the following technical result, obtained by a simple adaptation of~\cite[Lemma~1.3.19]{allaire_homogeneisation}.

\begin{lemma}
  \label{lem:homog_lim}
  Consider a sequence of functions~$(a_k)_{k \geq 1} \subset L^2(\T^\dim)$ such that
  \begin{equation}
    \label{eq:averaging_s}
    \int_{\T^\dim} a_k \xrightarrow[k \to +\infty]{} \varrho \in \mathbb{R},
    \qquad
    \sup_{k \geq 1} \int_{\T^\dim} |a_k| < +\infty.
  \end{equation}
  Then the sequence~$(a_k(k\cdot))_{k \geq 1}$ weakly converges in~$L^2(\T^\dim)$ to the constant function~$\varrho$.
\end{lemma}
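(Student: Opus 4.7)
The plan is to test against a dense family of smooth functions and exploit the periodic structure of $a_k$ through a Riemann-sum argument. More precisely, the goal is to prove that for every $\phi \in L^2(\T^\dim)$,
\begin{equation*}
  \int_{\T^\dim} a_k(kq) \phi(q) \, dq \xrightarrow[k \to +\infty]{} \varrho \int_{\T^\dim} \phi(q) \, dq.
\end{equation*}
I would first establish the limit for $\phi \in \calC^\infty(\T^\dim)$ and then extend by density, noting that the change of variable $q \mapsto kq$ preserves the $L^2$-norm, so that in the applications of interest the family $(a_k(k \cdot))_{k \geq 1}$ is uniformly bounded in $L^2(\T^\dim)$.

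For the first step, I would partition $\T^\dim = \bigsqcup_{\alpha \in \{0,\ldots,k-1\}^\dim} Q_\alpha^k$ with $Q_\alpha^k = \prod_{i=1}^\dim [\alpha_i/k,(\alpha_i+1)/k)$. On each cube, the change of variables $y = kq - \alpha$ together with the $\mathbb{Z}^\dim$-periodicity of $a_k$ gives
\begin{equation*}
  \int_{Q_\alpha^k} a_k(kq)\phi(q) \, dq = \frac{1}{k^\dim}\int_{\T^\dim} a_k(y) \, \phi\!\left(\frac{y+\alpha}{k}\right) dy.
\end{equation*}
Summing over $\alpha$ and applying Fubini yields the key identity
\begin{equation*}
  \int_{\T^\dim} a_k(kq) \phi(q) \, dq = \int_{\T^\dim} a_k(y) \, S_k(y) \, dy, \qquad S_k(y) := \frac{1}{k^\dim} \sum_{\alpha \in \{0,\ldots,k-1\}^\dim} \phi\!\left(\frac{y+\alpha}{k}\right).
\end{equation*}
For fixed $y \in \T^\dim$, the quantity $S_k(y)$ is the midpoint Riemann sum of $\phi$ on the lattice of mesh $1/k$ translated by $y/k$; by uniform continuity of $\phi$, it converges uniformly on $\T^\dim$ to $\int_{\T^\dim} \phi$, with
\begin{equation*}
  \left\| S_k - \int_{\T^\dim}\phi\right\|_{L^\infty(\T^\dim)} \leq \omega_\phi(1/k) \xrightarrow[k \to +\infty]{} 0,
\end{equation*}
where $\omega_\phi$ denotes a modulus of continuity of $\phi$.

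Finally, I would decompose
\begin{equation*}
  \int_{\T^\dim} a_k(kq) \phi(q) \, dq = \int_{\T^\dim} a_k(y) \!\left[ S_k(y) - \int_{\T^\dim}\phi \right] dy + \left(\int_{\T^\dim} a_k\right) \int_{\T^\dim} \phi.
\end{equation*}
The second term converges to $\varrho \int_{\T^\dim} \phi$ by the first assumption in~\eqref{eq:averaging_s}. The first term is bounded in absolute value by $\left\| S_k - \int \phi \right\|_{L^\infty} \| a_k \|_{L^1(\T^\dim)}$, which tends to zero thanks to the uniform $L^1$ bound in~\eqref{eq:averaging_s} combined with the uniform Riemann-sum convergence established above. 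The extension from $\calC^\infty(\T^\dim)$ to $L^2(\T^\dim)$ follows from a standard density argument, provided $(a_k(k\cdot))_{k\geq 1}$ is uniformly bounded in $L^2(\T^\dim)$; this is the only delicate step, and is tacitly ensured in all applications of the lemma (in particular, in the proof of~\Cref{lem:Hcvg-constant}, the relevant sequences are uniformly bounded in $L^2$, so Cauchy--Schwarz automatically gives the $L^1$ bound in~\eqref{eq:averaging_s}).
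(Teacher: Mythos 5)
Your proof is essentially the same as the paper's: both decompose $\T^\dim$ into the $k^\dim$ cubes of side $1/k$, exploit the $\Z^\dim$-periodicity of $a_k$ to reduce each cube contribution to an integral over the unit cell, invoke uniform continuity of $\phi$ to bound the Riemann-sum error by $(\int|a_k|)\,\omega_\phi(1/k)$, and finish by density. The only difference is cosmetic: you apply Fubini to rewrite the integral as $\int a_k\,S_k$ with $S_k$ a translated Riemann sum of $\phi$, whereas the paper freezes $\phi$ at a reference point $q_i^k$ of each cube and bounds the deviation cubewise; these are two bookkeepings of the same estimate.

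You also correctly flag a subtlety that the paper passes over in silence: the closing density argument yields weak convergence in $L^2(\T^\dim)$ only if the sequence $(a_k(k\cdot))_{k\geq 1}$ is uniformly bounded in $L^2(\T^\dim)$, i.e.\ (since the change of variable preserves the $L^2$ norm for periodic functions) only if $\sup_k\|a_k\|_{L^2(\T^\dim)}<\infty$. Weak convergence in a Hilbert space forces norm boundedness by Banach--Steinhaus, yet the hypotheses~\eqref{eq:averaging_s} impose only an $L^1$ bound, which does not imply an $L^2$ bound. As you observe, this is harmless in practice: in each application inside the proof of~\Cref{lem:Hcvg-constant}, the sequences being tested are uniformly bounded in $L^2(\T^\dim)$ (from which the $L^1$ bound follows by Cauchy--Schwarz), so the density step is legitimate there. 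But as a freestanding statement the lemma ought to add $\sup_k\|a_k\|_{L^2(\T^\dim)}<\infty$ to its hypotheses, or weaken its conclusion to convergence when tested against continuous functions.
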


\begin{proof}
  The proof is a simple extension of~\cite[Lemma~1.3.19]{allaire_homogeneisation}. Consider~$\phi\in \calC^0(\T^\dim)$. We decompose~$\T^\dim$ as the union of~$k^\dim$ cubes~$q_i^k + (\T/k)^\dim$ for~$1 \leq i \leq k^\dim$, so that
  \begin{equation*}
    \int_{\T^\dim} a_k(kq) \phi(q) \, dq = \sum_{i=1}^{k^\dim} \int_{(\T/k)^\dim} a_k\left(k(q_i^k+Q)\right) \phi\left(q_i^k+Q\right) dQ.
  \end{equation*}
  For a given~$1 \leq i \leq k^\dim$, by the periodicity of~$a_k$,
  \begin{equation*}
    \begin{aligned}
       & \left|\int_{(\T/k)^\dim} a_k(k(q_i^k+Q)) \phi(q_i^k+Q) \, dQ - \frac{1}{k^\dim} \phi(q_i^k) \int_{\T^\dim} a_k \right|     \\
       & \qquad\qquad\qquad\leq \int_{(\T/k)^\dim} \left|a_k(k(q_i^k+Q))\right| \, \left|\phi(q_i^k+Q)-\phi(q_i^k)\right| dQ        \\
       & \qquad\qquad\qquad \leq \frac{1}{k^\dim} \left(\int_{\T^\dim} |a_k|\right)\max_{|q-q'|_\infty \leq 1/k}|\phi(q')-\phi(q)|.
    \end{aligned}
  \end{equation*}
  Therefore,
  \begin{equation*}
    \left| \int_{\T^\dim} a_k(kq) \phi(q) \, dq - \sum_{i=1}^{k^\dim} \frac{1}{k^\dim} \phi(q_i^k) \int_{\T^\dim} a_k \right| \leq \left(\int_{\T^\dim} |a_k|\right)\max_{|q-q'|_\infty \leq 1/k}|\phi(q')-\phi(q)|,
  \end{equation*}
  the right-hand side of the above inequality going to~0 as~$k \to +\infty$ since~$\phi$ is continuous on the compact set~$\T^\dim$, hence uniformly continuous. Moreover,
  \begin{equation*}
    \sum_{i=1}^{k^\dim} \frac{1}{k^\dim} \phi(q_i^k) \int_{\T^\dim} a_k \xrightarrow[k \to +\infty]{} \varrho \int_{\T^\dim} \phi
  \end{equation*}
  by results on the convergence of Riemann sums. This shows finally that
  \begin{equation*}
    \int_{T^\dim} a_k(kq) \phi(q) \, dq \xrightarrow[k \to +\infty]{} \varrho \int_{\T^\dim} \phi.
  \end{equation*}
  The claimed result follows by the density of~$\calC^0(\T^\dim)$ in~$L^2(\T^\dim)$.
\end{proof}

\section{Proof of~\Cref{thm:Donsker}}
\label{app:thm:Donsker}

The proof consists of three main steps. For the ease of the presentation, we consider the specific case of processes associated with a sequence of time steps~$(\Delta t_n)_{n \geq 1}$, which we further assume to be~$\Delta t_n = 1/n$ in all this section, the proof in the general case following by straightforward modifications.

We first show in~\Cref{prop:tightness} that the sequence~$\{P_{1/n}\}_{n\in\mathbb{N}^{*}}$ is tight in the family of probability measures on $\calC^0([0,+\infty), \T^\dim)$. By Prokhorov's theorem,~$\{P_{1/n}\}_{n\in\mathbb{N}^{*}}$ then admits a weakly convergent subsequence. We next show that the weak limit solves the martingale problem for the limiting diffusion process. Finally, the uniqueness of the martingale problem implies that the weak limit is the law of the solution of the SDE defined in~\eqref{eq:RWMH-variance}.

Let us now state more precisely the above results, starting with the tightness of the sequence~$\{P_{1/n}\}_{n\in\N^{*}}$ (proved in~\Cref{sec:proof_prop:tightness}, with some technical results postponed to~\Cref{sec:technical_lemmas_pathwise_cv}).

\begin{proposition}[Tightness]
  \label{prop:tightness}
  The sequence~$\{P_{1/n}\}_{n\in\N^{*}}$ is tight in the family of probability measures on $\calC^0([0,+\infty), \T^\dim)$.
\end{proposition}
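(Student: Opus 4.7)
The target space $\T^\dim$ is compact, so the family of marginal laws of $(P_{1/n})_{n\geq 1}$ is automatically tight. Consequently, by the standard characterization of tightness in $\calC^0([0,\infty),\T^\dim)$ equipped with the topology of uniform convergence on compacts, it suffices to fix $T>0$ and prove that for every $\varepsilon>0$,
\[
\lim_{\delta\to 0}\limsup_{n\to\infty}\bbP\!\left(\sup_{\substack{0\leq s<t\leq T\\ t-s\leq \delta}} \lvert Q^{1/n}_t - Q^{1/n}_s\rvert > \varepsilon\right)=0,
\]
where the displacement is measured through a canonical lift of the interpolated path to $\R^\dim$ (this is legitimate because each increment of the chain is, by construction, an unfolded Gaussian step $\sqrt{2\Delta t}\,\Diff^{1/2}(q^i_{\Delta t})G^{i+1}$, possibly annihilated by the rejection). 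I would establish this modulus-of-continuity bound via a Kolmogorov-type criterion applied to the fourth moment:
\begin{equation}\label{eq:plan-moment}
\forall 0\leq s\leq t\leq T,\qquad \mathbb{E}\bigl[\lvert Q^{1/n}_t-Q^{1/n}_s\rvert^{4}\bigr] \leq C_T\,|t-s|^{2},
\end{equation}
uniformly in $n$, which by Kolmogorov's tightness criterion (exponents $\alpha=4$, $\beta=1$) delivers the required control.

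To prove \eqref{eq:plan-moment}, the first step is a pathwise one-step bound: since the Metropolis acceptance indicator is bounded by $1$ and $\Diff\in\calC^2(\T^\dim,\calS_\dim^{++})$ is bounded,
\[
\lvert q^{i+1}_{\Delta t}-q^i_{\Delta t}\rvert \leq \sqrt{2\Delta t}\,\|\Diff\|_{L^{\infty}}^{1/2}\,\lvert G^{i+1}\rvert,
\]
which yields $\mathbb{E}\bigl[\lvert q^{i+1}_{\Delta t}-q^i_{\Delta t}\rvert^{2m}\bigm|q^i_{\Delta t}\bigr]\leq C_m\Delta t^{m}$ uniformly in the current state. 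Second, I would decompose the multi-step increment into martingale and conditional-drift parts,
\[
q^k_{\Delta t}-q^j_{\Delta t}=M_{k,j}+D_{k,j},\qquad D_{k,j}:=\sum_{i=j}^{k-1}\mathbb{E}\bigl[q^{i+1}_{\Delta t}-q^i_{\Delta t}\bigm|q^i_{\Delta t}\bigr],
\]
where $M_{k,j}$ is a sum of bounded martingale differences. Applying \Cref{lem:weak_consistency_rwmh} coordinate-wise to the identity map gives $\mathbb{E}[q^{i+1}_{\Delta t}-q^i_{\Delta t}\mid q^i_{\Delta t}]=\mathrm{O}(\Delta t)$ uniformly in $q^i_{\Delta t}$, so $\lvert D_{k,j}\rvert\leq C(k-j)\Delta t$ almost surely. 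For the martingale term, Burkholder--Davis--Gundy together with the one-step bound produces
\[
\mathbb{E}[\lvert M_{k,j}\rvert^{4}]\leq C\,\mathbb{E}\!\left[\Bigl(\sum_{i=j}^{k-1}\lvert\Delta_i\rvert^{2}\Bigr)^{\!2}\right]\leq C\bigl((k-j)\Delta t\bigr)^{2}.
\]
Combining these two estimates yields \eqref{eq:plan-moment} at grid times $s=j\Delta t$, $t=k\Delta t$, and the extension to arbitrary $s,t\in[0,T]$ follows from the piecewise-affine definition~\eqref{eq:process} of $Q^{\Delta t}$ (for $s,t$ inside a single interval $[i\Delta t,(i+1)\Delta t]$ the increment is $((t-s)/\Delta t)\cdot(q^{i+1}_{\Delta t}-q^i_{\Delta t})$, and one checks the bound directly).

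The main obstacle is the clean, state-uniform $\mathrm{O}(\Delta t)$ control of the conditional-drift term $D_{k,j}$. The coordinate maps are not globally defined on the torus, so applying \Cref{lem:weak_consistency_rwmh} requires working with a lift of the chain to $\R^\dim$ via the unfolded Gaussian moves, or equivalently testing against a partition of unity of smooth bounded observables covering $\T^\dim$ and reconstructing the first moment of the unfolded increment from them. Compactness of $\T^\dim$ together with the smoothness of $V$ and $\Diff$ makes this essentially a bookkeeping exercise, and the $\mathrm{O}(\Delta t^{3/2})$ remainder in \Cref{lem:weak_consistency_rwmh} is harmlessly smaller than the leading $\mathrm{O}(\Delta t)$ drift. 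All other steps are standard.
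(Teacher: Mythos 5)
Your proposal is correct and is at heart the same argument as in the paper. Both proofs decompose the multi-step increment into a martingale plus a drift, bound the drift uniformly in the current state by $\mathrm{O}(\Delta t)$ per step, and control the fourth moment of the martingale part through a Burkholder inequality; this is exactly the content of the paper's \Cref{lem:martingale-part} combined with \Cref{thm:bounds-moments-martingale}. The one packaging difference is that you feed the resulting fourth-moment estimate into Kolmogorov's tightness criterion, whereas the paper runs the block-by-block Billingsley argument directly (union bound over length-$\delta$ blocks, Markov at power four, then Burkholder). These are interchangeable once the uniform moment bound $\E\lvert Q^{1/n}_t-Q^{1/n}_s\rvert^4\leq C_T\lvert t-s\rvert^2$ is in hand, and your extension to intra-interval times via the piecewise-affine interpolation is correct. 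One small remark: rather than applying \Cref{lem:weak_consistency_rwmh} ``coordinate-wise to the identity map'' and then patching the fact that coordinates are not smooth functions on $\T^\dim$, the paper obtains the state-uniform $\mathrm{O}(\Delta t)$ drift estimate directly from the Taylor expansion of the acceptance probability in \Cref{lem:acceptance-rate-DL}, packaged in \Cref{lem:martingale-part}; this avoids the lift or partition-of-unity bookkeeping you flag at the end and is the cleaner route.
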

~\Cref{prop:tightness} and Prokhorov's theorem guarantee that~$\{P_{1/n}\}_{n\in\N^{*}}$ admits a weakly converging subsequence~$P_\star$.  To conclude the proof, we identify~$P_\star$ as the law of the solution to the SDE~\eqref{eq:dynamics_donsker}, with generator~\eqref{eq:generator_cLD}, by considering the martingale problem.

\begin{proposition}[Limit martingale problem]
  \label{prop:limit-martingale}
  Any weak limit $P_\star$ of the sequence $\{P_{1/n}\}_{n\in\N^{*}}$ is a solution to the martingale formulation of the stochastic differential equation~\eqref{eq:dynamics_donsker}. More precisely,
for any $\mathcal C^\infty$ function $\varphi: \T^\dim \to \R$, let us define the process~$(M_t)_{t \ge 0}$  by:
\begin{equation*}
\forall t \ge 0, \qquad  M_t=  \varphi(Q^\star_t) - \varphi(q_0) - \int_{0}^t (\cLD\varphi)(Q^\star_s) \, ds
  \end{equation*}
  where~$(Q^\star_t)_{t\geq 0}$ is the canonical process on $\calC^0([0,+\infty), \T^\dim)$.
  Then, $(M_t)_{t \ge 0}$ is a $P_\star$-martingale.
\end{proposition}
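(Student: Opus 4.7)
The strategy is the standard Stroock--Varadhan-type argument that turns a consistent discretization into a limiting martingale problem. It suffices to show that, for any $0 \leq s < t$, any integer $m \geq 1$, times $0 \leq s_1 < \cdots < s_m \leq s$, and any bounded continuous function $\Psi : (\T^\dim)^m \to \R$,
\begin{equation*}
  \E_{P_\star}\!\left[ (M_t - M_s)\, \Psi(Q^\star_{s_1},\ldots, Q^\star_{s_m}) \right] = 0,
\end{equation*}
which, by a monotone-class argument, yields the martingale property for the natural filtration of $(Q^\star_t)_{t\geq 0}$.

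To this end, fix $\Delta t = 1/n$ and consider the Metropolis chain $(q^i_{\Delta t})_{i \geq 0}$ with its canonical filtration $(\mathcal{F}_i)_{i \geq 0}$. I first construct the discrete-time approximate martingale
\begin{equation*}
  M^{\Delta t,\mathrm{d}}_k = \varphi(q^k_{\Delta t}) - \varphi(q_0) - \Delta t \sum_{j=0}^{k-1} \cLD\varphi(q^j_{\Delta t}),
\end{equation*}
for which the Markov property together with \Cref{lem:weak_consistency_rwmh} gives
\begin{equation*}
  \E\!\left[M^{\Delta t,\mathrm{d}}_{k+1} - M^{\Delta t,\mathrm{d}}_k \,\middle|\, \mathcal{F}_k\right] = \Delta t^{3/2}\, r_{\varphi,\Delta t}(q^k_{\Delta t}),
\end{equation*}
with $\|r_{\varphi,\Delta t}\|_{L^\infty(\T^\dim)} \leq K$ uniformly in $\Delta t$ small enough. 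Next, I relate $(M^{\Delta t,\mathrm{d}}_k)_k$ to $M^{\Delta t}_t := \varphi(Q^{\Delta t}_t) - \varphi(q_0) - \int_0^t \cLD\varphi(Q^{\Delta t}_u)\,du$. Since $\varphi \in \calC^\infty(\T^\dim)$ and $\cLD\varphi$ is bounded and Lipschitz on the compact $\T^\dim$ (using $\Diff \in \calC^2$), the errors coming from (i) the linear interpolation of $\varphi$ between consecutive chain values and (ii) the Riemann-sum approximation of the integral in $M^{\Delta t}_t$ by $\Delta t \sum_j \cLD\varphi(q^j_{\Delta t})$ are each of order $\sqrt{\Delta t}$. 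Telescoping the discrete identity between indices $\lfloor s/\Delta t\rfloor$ and $\lfloor t/\Delta t\rfloor$ and using the boundedness of $\Psi$ then yields
\begin{equation*}
  \left| \E_{P_{\Delta t}}\!\left[ (M^{\Delta t}_t - M^{\Delta t}_s)\, \Psi(Q^{\Delta t}_{s_1}, \ldots, Q^{\Delta t}_{s_m}) \right] \right| \leq C_{\varphi,\Psi,s,t}\, \sqrt{\Delta t}.
\end{equation*}

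To pass to the limit, I introduce the functional $\Phi : \calC^0([0,\infty),\T^\dim) \to \R$ defined by
\begin{equation*}
  \Phi(\omega) = \left(\varphi(\omega(t)) - \varphi(\omega(s)) - \int_s^t \cLD\varphi(\omega(u))\,du\right)\! \Psi(\omega(s_1),\ldots,\omega(s_m)),
\end{equation*}
which is continuous and bounded for the topology of uniform convergence on compact subsets, thanks to the continuity of $\varphi$ and $\cLD\varphi$ on the compact set $\T^\dim$. The weak convergence $P_{1/n}\rightharpoonup P_\star$ together with the continuous mapping theorem yields $\E_{P_{1/n}}[\Phi] \to \E_{P_\star}[\Phi]$; but the previous estimate forces the former to vanish, whence $\E_{P_\star}[\Phi] = 0$, which is the desired identity.

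The main obstacle is the uniform control of the cumulative error on an interval of length $\mathrm{O}(1)$ containing $\mathrm{O}(1/\Delta t)$ increments: the pointwise $\mathrm{O}(\Delta t^{3/2})$ per-step bound from \Cref{lem:weak_consistency_rwmh} is just enough to give a vanishing $\mathrm{O}(\sqrt{\Delta t})$ total error after summation, and the $\calC^2$-regularity of $\Diff$ is used here to control both the interpolation and the Riemann-sum errors on the same time scale. Notice that no It\^o-type stochastic integral appears explicitly at the discrete level: stochasticity enters only through the expectation on the left-hand side of the key estimate, and all pathwise bounds remain deterministic thanks to the compactness of $\T^\dim$.
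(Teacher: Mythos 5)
Your argument is correct and follows essentially the same route as the paper: use the $\mathrm{O}(\Delta t^{3/2})$ per-step consistency from \Cref{lem:weak_consistency_rwmh}, absorb the interpolation and Riemann-sum errors at the $\mathrm{O}(\sqrt{\Delta t})$ level, telescope, and pass to the weak limit by testing $(M_t-M_s)$ against bounded continuous functionals of the path at finitely many earlier times. The paper's appendix re-derives the consistency inside the proof (via \Cref{lem:acceptance-rate-DL} and the antisymmetry of the acceptance-rate expansion) and delegates the limit passage to Stroock--Varadhan, whereas you simply invoke the already-stated lemma and spell out the continuous-mapping step; these are presentational differences only.
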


The proof of this result can be read in~\Cref{sec:proof_prop:limit-martingale} (with, again, some technical results postponed to~\Cref{sec:technical_lemmas_pathwise_cv}). The uniqueness of the solution to the martingale problem then implies that~$P_\star$ is the law of the process~\eqref{eq:dynamics_donsker}.

To prove the results and for the sake of conciseness, we use the following notation, specific to our context. Let $f:\T^d \times \R^d \times [0,1] \to \R$ be a measurable function. For any couple of independent random variables $(G,U)$ where~$G$ is a centered Gaussian vector with identity covariance and $U$ is uniformly distributed on~$[0,1]$, we say that 
$$f(q,G,U) = \mathrm{O}(n^{-a})$$
for some~$a \geq 0$ if there exists an integer~$k_f \in\N$ such that, for any~$b \in \mathbb{N}$, there is~$K_{b,f} \in \mathbb{R}_+$ for which, almost surely,
\begin{equation}
  \label{eq:bound_remainder_f_b}
  \forall q \in \T^\dim, \qquad \E\left( |f(q,G,U)|^b \, \middle| \, G \right) \leq \frac{K_{b,f}}{n^{ab}} \left(1+|G|^{k_f}\right)^b.
\end{equation}
In particular,~$f(q,G,U) = \mathrm{O}(n^{-a})$ implies that, for all~$b \in \mathbb{N}$, there exists a constant~$C_{f,b} > 0$ such that
\begin{equation*}
  \E\left(|f(q,G,U)|^b\right) \leq \frac{C_{f,b}}{n^{ab}}.
\end{equation*}

\subsection{Proof of~\Cref{prop:tightness}}
\label{sec:proof_prop:tightness}

Recall that, by definition, the sequence $\{P_{1/n}\}_{n\in\N^{*}}$ of probability measures on $\calC^0([0,+\infty), \T^\dim)$ is tight if, for any~$\varepsilon>0$, there exists a relatively compact set~$K\subseteq \calC^0([0,+\infty), \T^\dim)$ such that~$\sup_{n\in\N^{*}} P_{1/n}(K^c)\leq \varepsilon$ (see, e.g.,~\cite[Section~5]{billing}). The characterization of relatively compact subsets of~$\calC^0([0,+\infty), \T^\dim)$ by the Arzelà--Ascoli theorem (see, e.g. \cite[Theorem~7.2]{billing}) allows to derive conditions for the tightness of~$\{P_{1/n}\}_{n\in\N^{*}}$  (see \cite[Theorem~7.3]{billing}). We consider the following pair of conditions to get tightness:
\begin{enumerate}[(i)]
  \item For any~$\varepsilon>0$, there exists a compact set~$\mathscr{K}\subseteq \T^\dim$ such that
        \begin{equation*}
          \limsup_{n\in\N^{*}} P_{1/n}\left(Q_0^{1/n}\notin \mathscr{K}\right)\leq \varepsilon.
        \end{equation*}
  \item For any~$T\in\R_+$ and~$\varepsilon>0$,
        \begin{equation*}
          \lim_{\delta\to 0}\limsup_{n\in\N^{*}}P_{1/n}\left(\sup_{\substack{s,t\in[0,T]\\|s-t|\leq \delta}}\left|Q^{1/n}_s-Q^{1/n}_t\right|\geq \varepsilon\right)=0.
        \end{equation*}
\end{enumerate}
The first condition holds trivially since the initial condition~$Q_0^{1/n} = q_0$ is fixed. To prove the second condition, we start by observing that, for all~$n\in\N^{*}$,
\begin{equation*}
  \sup_{\substack{s,t\in[0,T]\\|s-t|\leq \delta}}\left|Q^{1/n}_s-Q^{1/n}_t\right| \leq 3\max_{0\leq k \leq \lfloor T/\delta \rfloor}\sup_{t\in[0,\delta]}\left|Q^{1/n}_{k\delta +t}-Q^{1/n}_{k\delta}\right|.
\end{equation*}
By~\Cref{lem:tightness-subres-1} in~\Cref{sec:technical_lemmas_pathwise_cv}, for any~$k\leq \lfloor T/\delta \rfloor$,
\begin{equation*}
  \begin{aligned}
     & \sup_{\substack{s,t\in[0,T] \\|s-t|\leq \delta}}\left|Q^{1/n}_s-Q^{1/n}_t\right| \\
     & \quad \leq
    3\max\left(\max_{0\leq k \leq \lfloor T/\delta \rfloor}\max_{\lfloor nk\delta \rfloor \leq \ell\leq \lceil n(k+1)\delta \rceil} \left|q_{1/n}^\ell - q_{1/n}^{\lfloor nk\delta \rfloor}\right|,\max_{0\leq k \leq \lfloor T/\delta \rfloor}\max_{\lceil nk\delta \rceil \leq \ell\leq \lceil n(k+1)\delta \rceil} \left|q_{1/n}^\ell - q_{1/n}^{\lceil nk\delta \rceil}\right|\right).
  \end{aligned}
\end{equation*}
Thus,
\begin{equation}
  \label{eq:decomposition_a_b_tightness}
  \begin{aligned}
    \P \left( \sup_{\substack{s,t\in[0,T]                                                                                                                                                                                                                    \\|s-t|\leq \delta}}\left|Q^{1/n}_s-Q^{1/n}_t\right|\geq \varepsilon\right)
     & \leq \underbrace{\P \left(\max_{0\leq k \leq \lfloor T/\delta \rfloor}\max_{\lfloor nk\delta \rfloor \leq \ell\leq \lceil n(k+1)\delta \rceil} \left|q_{1/n}^\ell - q_{1/n}^{\lfloor nk\delta \rfloor}\right| \geq\frac{\varepsilon}{3}\right)}_{(a)} \\
     & \ \ +  \underbrace{\P \left(\max_{0\leq k \leq \lfloor T/\delta \rfloor}\max_{\lceil nk\delta \rceil \leq \ell\leq \lceil n(k+1)\delta \rceil} \left|q_{1/n}^\ell - q_{1/n}^{\lceil nk\delta \rceil}\right| \geq\frac{\varepsilon}{3}\right)}_{(b)}.
  \end{aligned}
\end{equation}
The two terms~$(a)$ and~$(b)$ are controlled using similar arguments; for simplicity we make precise only the control of~$(a)$.  We start by writing, for~$\ell \geq \lfloor nk\delta \rfloor$,
\begin{equation*}
  q_{1/n}^\ell - q_{1/n}^{\lfloor nk\delta \rfloor} =\sum_{i = \lfloor nk\delta \rfloor}^{\ell - 1}\Delta_n(q^i_{1/n}, G^{i+1}, U^{i+1}) ,
\end{equation*}
where (in view of~\eqref{eq:RWMH-variance} and recalling the notation~$R_{\Delta t}$ from~\eqref{eq:def_R_dt})
\begin{equation}
  \label{eq:increment}
  \Delta_n(q, G, U) = \sqrt{\frac{2}{n}}\Diff(q)^{1/2}G\mathbbm{1}_{\{U\leq R_{1/n}(q,G)\}}.
\end{equation}
By~\Cref{lem:martingale-part} in~\Cref{sec:technical_lemmas_pathwise_cv}, the following decomposition holds:
\begin{equation}
  \label{eq:decomposition_q}
  q_{1/n}^\ell - q_{1/n}^{\lfloor nk\delta \rfloor} =\sum_{i = \lfloor nk\delta \rfloor}^{\ell - 1}M_n(q^i_{1/n}, G^{i+1}, U^{i+1})+\sum_{i = \lfloor nk\delta \rfloor}^{\ell - 1}T_n(q^i_{1/n}),
\end{equation}
with~$(M_n(q^i_{1/n}, G^{i+1}, U^{i+1}))_{i\geq 0}$ a sequence of martingale increments satisfying~$M_n(q, G, U) = \mathrm{O}(n^{-1/2})$ and~$T_n(q) = \mathrm{O}(n^{-1})$. Explicit expressions for~$M_n(q^i_{1/n}, G^{i+1}, U^{i+1})$ and~$T_n(q^i_{1/n})$ are given in~\Cref{lem:martingale-part}. From~\eqref{eq:decomposition_q}, we obtain
\begin{equation*}
  \begin{aligned}
     & \max_{0\leq k \leq \lfloor T/\delta \rfloor}\max_{\lfloor nk\delta \rfloor \leq \ell\leq \lceil n(k+1)\delta \rceil} \left|q_{1/n}^\ell - q_{1/n}^{\lfloor nk\delta \rfloor} \right|                                               \\
     & \qquad\qquad \leq \max_{0\leq k \leq \lfloor T/\delta \rfloor}\max_{\lfloor nk\delta \rfloor \leq \ell\leq \lceil n(k+1)\delta \rceil}\left|\sum_{i = \lfloor nk\delta \rfloor}^{\ell - 1}M_n(q^i_{1/n}, G^{i+1}, U^{i+1}) \right| \\
     & \qquad\qquad \ \ + \max_{0\leq k \leq \lfloor T/\delta \rfloor}\max_{\lfloor nk\delta \rfloor \leq \ell\leq \lceil n(k+1)\delta \rceil}\left |\sum_{i = \lfloor nk\delta \rfloor}^{\ell - 1}T_n(q^i_{1/n}) \right|,
  \end{aligned}
\end{equation*}
so that 
\begin{equation*}
  \begin{aligned}
     & \P \left( \max_{0\leq k \leq \lfloor T/\delta \rfloor}\max_{\lfloor nk\delta \rfloor \leq \ell\leq \lceil n(k+1)\delta \rceil} \left|q_{1/n}^\ell - q_{1/n}^{\lfloor nk\delta \rfloor}\right|\geq \frac{\varepsilon}{3} \right)                                                                \\
     & \qquad\qquad \leq  \underbrace{\P \left(\max_{0\leq k \leq \lfloor T/\delta \rfloor}\max_{\lfloor nk\delta \rfloor \leq \ell\leq \lceil n(k+1)\delta \rceil}\left|\sum_{i = \lfloor nk\delta \rfloor}^{\ell - 1}M_n(q^i_{1/n}, G^{i+1}, U^{i+1}) \right| \geq\frac{\varepsilon}{6}\right)}_{I} \\
     & \qquad\qquad \ \ + \underbrace{\P \left( \max_{0\leq k \leq \lfloor T/\delta \rfloor}\max_{\lfloor nk\delta \rfloor \leq \ell\leq \lceil n(k+1)\delta \rceil}\left |\sum_{i = \lfloor nk\delta \rfloor}^{\ell - 1}T_n(q^i_{1/n}) \right| \geq\frac{\varepsilon}{6}\right)}_{II}.
  \end{aligned}
\end{equation*}
We next bound each of the terms on the right-hand side of the above inequality. We use~$C$ to denote a constant which can change from line to line.

\paragraph{Control of~$I$.} The term~$I$ is a sum of martingale increments, and can therefore be bounded using martingale inequalities. Consider to this end the sequence~$(S_k^{j})_{\ell\geq 0}$ for~$0\leq k \leq \left\lfloor T/\delta\right\rfloor$, defined as~$S_k^{0} = 0$ and
\begin{equation*}
  \forall 1\leq j\leq \lceil n(k+1)\delta \rceil-\lfloor nk\delta \rfloor, \qquad S_k^{j}=\sum_{i = \lfloor nk\delta \rfloor}^{\lfloor nk\delta \rfloor+j-1 }M_n(q^i_{1/n}, G^{i+1}, U^{i+1}).
\end{equation*}
For~$j>\lceil n(k+1)\delta\rceil-\lfloor nk\delta \rfloor$, we set the increments to~$0$, which yields~$S_k^{j}=S_k^{\lceil n(k+1)\delta\rceil-\lfloor nk\delta \rfloor}$. The sequence~$(S_k^{j})_{j\geq 1}$ is a martingale. We start by applying the union bound and Markov's inequality, which yields
\begin{equation}
  \label{eq:proba-martingale}
  \begin{aligned}
     & \P\left(\max_{0\leq k \leq \lfloor T/\delta \rfloor}\max_{j\geq 1}\left |S_k^{j}\right|  \geq \frac{\varepsilon}{6}\right) = \P\left(\max_{0\leq k \leq \lfloor T/\delta \rfloor}\max_{j \geq 1}\left |S_k^{j}\right|^4  \geq \frac{\varepsilon^4}{6^4}\right) \\
     & \qquad\qquad\qquad\qquad \leq \sum_{k=0}^{\lfloor T/\delta\rfloor} \P\left(\max_{j \geq 1}|S_k^{j}|^4  \geq \frac{\varepsilon^4}{6^4}\right)\leq \frac{6^4}{\varepsilon^4}\sum_{k=0}^{\lfloor T/\delta\rfloor} \E\left(\max_{j \geq 1}|S_k^{j}|^4 \right).
  \end{aligned}
\end{equation}
We are then in position to resort to the martingale inequality from~\cite[Theorem 1.1]{burkholder1972integral}, recalled here for convenience.

\begin{theorem}[Theorem~1.1 in~\cite{burkholder1972integral}]
  \label{thm:bounds-moments-martingale}
  Suppose that~$X=(X_1,X_2,\ldots)$ is a martingale, and introduce (with the convention $X_0=0$)
  \begin{equation*}
    X^* = \sup_{n \geq 1} |X_n|,
    \qquad
    S(X) = \left(\sum_{k=1}^{+\infty} (X_k-X_{k-1})^2\right)^{1/2}.
  \end{equation*}
  Consider a convex function~$\Phi : [0,+\infty) \to [0,+\infty)$ such that~$\Phi(0)=0$, satisfying the following growth condition: There exists~$c \in \mathbb{R}_+$ such that
  \begin{equation*}
    \forall u > 0, \qquad \Phi(2u)\leq c\Phi(u).
  \end{equation*}
  Set~$\Phi(\infty) = \lim_{u\to +\infty} \Phi(u)$. Then there exists~$C_-,C_+ \in \mathbb{R}_+$ such that
  \begin{equation*}
    C_- \E\left[ \Phi(S(X))\right]\leq \E\left[\Phi(X^*)\right] \leq C_+ \E\left[\Phi(S(X))\right].
  \end{equation*}
\end{theorem}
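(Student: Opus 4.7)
The plan is to follow the Burkholder--Gundy ``good-$\lambda$'' method, which handles both inequalities simultaneously by reducing them to pointwise distributional comparisons between $X^*$ and $S(X)$, and then integrating against $\Phi$ using its moderate growth. Write $S_n(X) = (\sum_{k=1}^n (X_k-X_{k-1})^2)^{1/2}$ for the truncated square function, and introduce the stopping times $\tau_\lambda = \inf\{n \geq 1 : |X_n| > \lambda\}$ and $\sigma_\lambda = \inf\{n \geq 1 : S_n(X) > \lambda\}$.

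First I would establish two good-$\lambda$ inequalities: there exist constants $\beta > 1$ and $c_0 > 0$ such that, for every $\delta \in (0,1)$ and every $\lambda > 0$,
\[
\P\bigl(X^* > \beta\lambda,\; S(X) \leq \delta\lambda\bigr) \leq \frac{c_0\,\delta^2}{(\beta-1)^2}\,\P\bigl(X^* > \lambda\bigr),
\]
together with the symmetric inequality obtained by exchanging the roles of $X^*$ and $S(X)$. The key argument relies on the stopped martingale $Y_n = X_{n\wedge\sigma_{\delta\lambda}} - X_{n\wedge\tau_\lambda}$: on the event $\{X^* > \beta\lambda,\, S(X) \leq \delta\lambda\}$, the quadratic variation of $Y$ is bounded by $(\delta\lambda)^2$, while $Y$ must exceed $(\beta-1)\lambda$ somewhere. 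Applying Doob's maximal inequality to $Y$ together with the $L^2$ isometry $\E[Y_\infty^2] = \E[S(Y)^2]$ delivers the bound via Chebyshev.

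Second I would convert these distributional bounds into the claimed moment inequalities using the layer-cake formula $\E[\Phi(X^*)] = \int_0^\infty \P(X^* > \lambda)\, d\Phi(\lambda)$. A change of variables $\lambda \mapsto \beta\lambda$ together with the splitting $\P(X^* > \beta\lambda) \leq \P(X^* > \beta\lambda,\, S(X) \leq \delta\lambda) + \P(S(X) > \delta\lambda)$ yields, via the growth condition $\Phi(2u) \leq c\Phi(u)$ iterated to $\Phi(ku) \leq c_k \Phi(u)$ for any $k \geq 1$,
\[
\E\bigl[\Phi(X^*)\bigr] \;\leq\; \frac{c_\beta\, c_0\, \delta^2}{(\beta-1)^2}\,\E\bigl[\Phi(X^*)\bigr] \,+\, c_{\beta/\delta}\,\E\bigl[\Phi(S(X))\bigr].
\]
Choosing $\delta$ small enough that the coefficient of the first term is strictly less than $1/2$ allows absorbing it on the left-hand side, yielding the upper bound. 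The lower bound follows by exactly the same argument starting from the symmetric good-$\lambda$ inequality.

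The main obstacle is the good-$\lambda$ inequality itself: a naive Doob-type estimate only controls $X^*$ in an $L^p$ sense, not pointwise on the ``exceptional'' event $\{S(X) \leq \delta\lambda\}$. The decisive trick is the joint stopping at $\tau_\lambda \wedge \sigma_{\delta\lambda}$, which uses orthogonality of martingale differences to transfer control from $X^*$ to $S(X)$ with an explicit quadratic dependence on $\delta$. A secondary subtlety is that the absorption step tacitly requires $\E[\Phi(X^*)] < \infty$ a priori; I would handle this by proving the inequality first for the truncated martingales $X^{\tau_N}$ (for which $|X^{\tau_N}| \leq N$ and hence $\E[\Phi((X^{\tau_N})^*)] < \infty$ since $\Phi$ has at most polynomial growth by the $\Delta_2$ condition), obtaining constants independent of $N$, and then letting $N \to \infty$ by monotone convergence.
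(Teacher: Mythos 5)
The paper does not give a proof of this statement; it is imported verbatim as Theorem~1.1 of Burkholder, Davis, and Gundy~\cite{burkholder1972integral} and used as a black box in the proof of the tightness estimate. There is therefore no in-paper argument to compare against, and your sketch should be judged against the original reference.

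Your plan is the right one in outline: the result is indeed proved by deriving a pair of good-$\lambda$ inequalities between $X^*$ and $S(X)$ and then integrating them against $d\Phi$ with the $\Delta_2$ growth. Your layer-cake integration, the choice of a small $\delta$ to absorb the error term, and the truncation argument to justify the a priori finiteness of $\mathbb{E}[\Phi(X^*)]$ are all correct. The gap is in the good-$\lambda$ inequality itself. After passing to Doob's $L^2$ inequality you need a bound on the full expectation $\mathbb{E}[S(Y)^2]$, not merely a pointwise bound on $S(Y)$ on the favourable event $\{S(X)\le\delta\lambda\}$. On that event $\sigma_{\delta\lambda}=+\infty$ and $S(Y)\le\delta\lambda$, as you say; but off the event the stopped square function is $S(Y)^2 \le (\delta\lambda)^2 + |X_{\sigma_{\delta\lambda}}-X_{\sigma_{\delta\lambda}-1}|^2$, and the increment at the first crossing of $\delta\lambda$ is not controlled pointwise for a general discrete-time martingale. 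This is precisely the obstruction that separates the predictable square function $\langle X\rangle^{1/2}$ (for which $\{\langle X\rangle_{n+1}>\delta^2\lambda^2\}\in\mathcal{F}_n$, so one can stop one step early and your argument goes through unchanged) from the optional square function $S(X)$ appearing in the statement, and it is the reason the general moderate-$\Phi$ result required Davis' decomposition of $X$ into a bounded-increment part plus a ``big jump'' part, each handled separately. Without that decomposition, or an enhanced good-$\lambda$ statement formulated with $S(X)\vee d^*(X)$ and a separate treatment of $d^*$, the inequality $\mathbb{E}[S(Y)^2]\lesssim(\delta\lambda)^2\,\mathbb{P}(X^*>\lambda)$ does not follow, and the argument as you have written it does not close.
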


Applying~\Cref{thm:bounds-moments-martingale} to the sequence~$(S_k^j)_{j \geq 0}$ with~$\Phi(u) = u^4$, and using a discrete Cauchy--Schwarz inequality, it follows that $  \forall 0\leq k \leq \lfloor T/\delta \rfloor$,
\begin{equation*}
  \begin{aligned}
    \E\left(\max_{j\geq 1}|S_k^j|^4 \right) & \leq C_+ \E\left[ \left(\sum_{i=\lfloor nk\delta \rfloor}^{\lceil n(k+1)\delta \rceil-1}\left|M_n\left(q_{1/n}^{i}, G^{i+1},U^{i+1}\right)\right|^2\right)^2\right] \\
    & \leq C_+(n\delta+1) \sum_{i=\lfloor nk\delta \rfloor}^{\lceil n(k+1)\delta \rceil-1} \E\left[\left|M_n\left(q_{1/n}^i, G^{i+1},U^{i+1}\right)\right|^4\right].
  \end{aligned}
\end{equation*}
Since~$M_n\left(q_{1/n}^{i}, G^{i+1},U^{i+1}\right) = \mathrm{O}(n^{-1/2})$ with bounds uniform in~$i$, it follows that
\begin{equation*}
  \forall 0\leq k \leq \lfloor T/\delta \rfloor,
  \qquad
  \E\left(\max_{1 \leq j\leq \lceil n\delta \rceil}|S_k^j|^4 \right)\leq C \frac{(n\delta + 1)^2}{n^2}.
\end{equation*}
Combining the latter inequality with~\eqref{eq:proba-martingale}, we finally obtain that
\begin{equation*}
  \P\left(\max_{0\leq k \leq \lfloor T/\delta \rfloor}\max_{1 \leq j\leq \lceil n\delta \rceil}\left |S_\ell\right|  \geq \frac{\varepsilon}{6}\right) \leq C \left(\frac{T}{\delta}+1\right)\frac{(n\delta + 1)^2}{n^2 \varepsilon^4}.
\end{equation*}
The control of~$I$ is concluded by noticing that~$\dps \lim_{\delta\to 0}\limsup_{n\to+\infty} \left(T/\delta+1\right)(n\delta + 1)^2 n^{-2} = 0$.

\paragraph{Control of~$II$.} Using Markov's inequality, then a Cauchy--Schwarz inequality,
\begin{align}
   & \P \left( \max_{0\leq k \leq \lfloor T/\delta \rfloor}\max_{\lfloor nk\delta \rfloor \leq \ell\leq \lceil n(k+1)\delta \rceil}\left |\sum_{i = \lfloor nk\delta \rfloor}^{\ell - 1}T_n(q^i_{1/n}) \right| \geq\frac{\varepsilon}{6}\right)\nonumber                                           \\
   & \qquad\qquad\qquad\qquad \leq \frac{36}{\varepsilon^2}\E\left[\max_{0\leq k \leq \lfloor T/\delta \rfloor}\max_{\lfloor nk\delta \rfloor \leq \ell\leq \lceil n(k+1)\delta \rceil}\left |\sum_{i = \lfloor nk\delta \rfloor}^{\ell - 1}T_n(q^i_{1/n}) \right|^2 \right]\nonumber              \\
   & \qquad\qquad\qquad\qquad \leq \frac{36 (n \delta+1)}{\varepsilon^2}\E\left[\max_{0\leq k \leq \lfloor T/\delta \rfloor}\max_{\lfloor nk\delta \rfloor \leq \ell\leq \lceil n(k+1)\delta \rceil} \sum_{i = \lfloor nk\delta \rfloor}^{\ell - 1}\left|T_n(q^i_{1/n}) \right|^2 \right]\nonumber \\
   & \qquad\qquad\qquad\qquad \leq\frac{36 (n \delta+1)}{\varepsilon^2}\E\left[\max_{0\leq k \leq \lfloor T/\delta \rfloor}\sum_{i = \lfloor nk\delta \rfloor}^{\lceil n(k+1)\delta\rceil- 1}\left|T_n(q^i_{1/n}) \right|^2 \right]\nonumber                                                       \\
   & \qquad\qquad\qquad\qquad \leq \frac{36 (n \delta+1)}{\varepsilon^2}\sum_{k=0}^{\lfloor T/\delta \rfloor}\sum_{i = \lfloor nk\delta \rfloor}^{\lceil n(k+1)\delta\rceil- 1}\E\left[\left|T_n(q^i_{1/n}) \right|^2 \right]. \label{eq:tight-F}
\end{align}
Yet, by~\Cref{lem:martingale-part},~$T_n(q^i_{1/n}) = \mathrm{O}(n^{-1})$ uniformly in~$i$, so that
\begin{equation*}
  \sum_{i = \lfloor nk\delta \rfloor}^{\lceil n(k+1)\delta\rceil- 1}\E\left[\left|T_n(q^i_{1/n}) \right|^2 \right] \leq \frac{C(n\delta +1)}{n^2} .
\end{equation*}
Plugging the last inequality in~\eqref{eq:tight-F}, we obtain that
\begin{equation*}
  \P \left( \max_{0\leq k \leq \lfloor T/\delta \rfloor}\max_{\lfloor nk\delta \rfloor \leq \ell\leq \lceil n(k+1)\delta \rceil}\left |\sum_{i = \lfloor nk\delta \rfloor}^{\ell - 1}T_n(q^i_{1/n}) \right| \geq\frac{\varepsilon}{6}\right) \leq C\left(\frac{T}{\delta}+1\right) \frac{(n\delta+1)^2}{n^{2} \varepsilon^{2}}.
\end{equation*}
The control of~$II$ is concluded by noticing that~$\dps \lim_{\delta\to 0}\limsup_{n\to\infty} C(T/\delta+1) (n\delta+1)^2 n^{-2} =0$.

\paragraph{Conclusion of the proof.} Combining the control of the two terms~$I$ and~$II$, we conclude that
\begin{equation*}
  \lim_{\delta\to 0}\limsup_{n\to +\infty}\P \left( \max_{0\leq k \leq \lfloor T/\delta \rfloor}\max_{\lfloor nk\delta \rfloor \leq \ell\leq \lceil n(k+1)\delta \rceil}\left|q_{1/n}^\ell - q_{1/n}^{\lfloor nk\delta \rfloor}\right|\geq \frac{\varepsilon}{3} \right) = 0.
\end{equation*}
It can similarly be proved that
\begin{equation*}
  \lim_{\delta\to 0}\limsup_{n\to +\infty} \P \left( \max_{0\leq k \leq \lfloor T/\delta \rfloor}\max_{\lceil nk\delta \rceil \leq \ell\leq \lceil n(k+1)\delta \rceil}\left|q_{1/n}^\ell - q_{1/n}^{\lceil nk\delta \rceil}\right|\geq \frac{\varepsilon}{3} \right) = 0.
\end{equation*}
In view of~\eqref{eq:decomposition_a_b_tightness}, we finally obtain that
\begin{equation*}
  \lim_{\delta\to 0}\limsup_{n\to +\infty} \P\left(\sup_{\substack{s,t\in[0,T]\\|s-t|\leq \delta}}  \left|Q^{1/n}_s-Q^{1/n}_t\right|\right) = 0,
\end{equation*}
which concludes the proof of~\Cref{prop:tightness}.

\subsection{Proof of~\Cref{prop:limit-martingale}}
\label{sec:proof_prop:limit-martingale}

For~$n\in\N^{*}$, let us denote by~$\mathcal{L}^{n}$ the operator defined by:
 for any bounded measurable function~$\phi : \T^\dim \to \mathbb{R}$,
\begin{equation}
  \label{eq:def_Ln_varphi}
  (\mathcal{L}^n \phi)(q) = \mathbb{E}\left[ \phi(q_{1/n}^{1}) \, \middle| \, q_{1/n}^0 = q\right] - \phi(q).
\end{equation}
For a given function~$\varphi \in \calC^3(\T^d)$, we start by showing in~\Cref{sec:convergence_generator_Ln} that the sequence of functions~$(n\mathcal{L}^{n}\varphi)_{n\geq 1}$ converges uniformly towards~$\cLD\varphi$ as~$n\to\infty$, in the sense of uniform convergence of continuous functions over the compact set~$\T^d$. We then write the martingale problem associated to~$\mathcal{L}^n$ in~\Cref{sec:limit_martingale_pbm}, and prove that the limit~$P_\infty$ solves the martingale problem for the limit generator~$\cLD$.

\subsubsection{Convergence of the sequence of functions~$(n\mathcal{L}^{n}\varphi)_{n\geq 1}$}
\label{sec:convergence_generator_Ln}

In order to determine the limit of the sequence of functions~$(n\mathcal{L}^{n}\varphi)_{n\geq 1}$, we work out a Taylor expansion of the function~\eqref{eq:def_Ln_varphi}. We expand to this end~$\varphi(q_{1/n}^{1})$ at second order around~$q$, seeing~$q^1_{1/n}-q$ as a perturbation along the direction~$\Diff(q)^{1/2}G^1$ with step size~$\sqrt{2/n}\mathbbm{1}_{\{U^1\leq R_{1/n}(q,G^1)\}} = \mathrm{O}(n^{-1/2})$. Writing~$G,U$ instead of~$G^1,U^1$ to alleviate the notation,
\begin{equation*}
  \begin{aligned}
    \varphi\left(q+\sqrt{\frac{2}{n}}\mathbbm{1}_{\{U\leq R_{1/n}(q,G)\}}\Diff(q)^{1/2}G\right) = \varphi(q)  + \sqrt{\frac{2}{n}}\mathbbm{1}_{\{U\leq R_{1/n}(q,G)\}}\nabla\varphi(q)^\top\Diff(q)^{1/2}G & \\
    + \frac{1}{n}\mathbbm{1}_{\{U\leq R_{1/n}(q,G)\}} G^\top \Diff(q)^{1/2} \nabla^2\varphi(q)\Diff(q)^{1/2}G + \mathrm{O}(n^{-3/2}),                                                                      &
  \end{aligned}
\end{equation*}
where the remainder (obtained for instance with a Taylor expansion with exact remainder) is uniformly controlled using derivatives of~$\varphi$ of order at most~3, by which we mean that the remainder satisfies an inequality such as~\eqref{eq:bound_remainder_f_b}, with a bound~$K_{b,f} \leq c_{b,f} \|\varphi\|_{\calC^3(\T^\dim)}$. Subtracting~$\varphi(q)$ and taking the expectation with respect to~$U$,
\begin{equation*}
  \begin{aligned}
    \E\left[\varphi\left(q+\sqrt{\frac{2}{n}}\mathbbm{1}_{\{U\leq R_{1/n}(q,G)\}}\Diff(q)^{1/2}G\right) \middle| G \right]-\varphi(q) = \sqrt{\frac{2}{n}} R_{1/n}(q,G)\nabla\varphi(q)^\top\Diff(q)^{1/2}G & \\
    + \frac{1}{n}R_{1/n}(q,G)G^\top \Diff(q)^{1/2} \nabla^2\varphi(q)\Diff(q)^{1/2}G + \mathrm{O}(n^{-3/2}).                                                                                      &
  \end{aligned}
\end{equation*}
In view of the expansion of the acceptance rate provided by~\Cref{lem:acceptance-rate-DL} (with the analytical expression~\eqref{eq:zeta-def} for~$\zeta(q,G)$), and further taking the expectation with respect to~$G$, we obtain
\begin{equation*}
  (\mathcal{L}^n \varphi)(q) =
  -\frac{2}{n} \nabla\varphi(q)^\top\Diff(q)^{1/2}\E[G\max\left(0,\zeta(q,G)\right)] + \frac{1}{n} \Diff(q) : \nabla^2\varphi(q) + \mathrm{O}(n^{-3/2}),
\end{equation*}
with a remainder term which is uniformly bounded in~$q \in \T^\dim$.
A simple expression for the expectation
\begin{equation*}
  \E[G\max\left(0,\zeta(q,G)\right)] = \int_{\R^\dim}g\max\left(0,\zeta(q,g)\right)\frac{\mathrm{e}^{-g^2/2}}{(2\pi)^{\dim/2}} \, dg
\end{equation*}
can be derived as in~\cite{FS17} using the anti-symmetry of the function~$\zeta$. Indeed, for all~$q\in\T^\dim$ and for all~$g\in\R^\dim$, a direct inspection of~\eqref{eq:zeta-def} reveals that~$\zeta(q,g) = -\zeta(q,-g)$, so that~$g\zeta(q,g) = -g\zeta(q,-g)$. Therefore, using~\cite[Lemma~10]{FS17} to obtain the second line,
\begin{equation*}
  \begin{aligned}
    \E[G\max\left(0,\zeta(q,G)\right)] & = \int_{\{g\in\R^d \, | \, \zeta(q,g)\geq 0\}} g\zeta(q,g)\frac{\mathrm{e}^{-|g|^2/2}}{(2\pi)^{\dim/2}} \, dg =  \frac{1}{2}\int_{\R^\dim}g\zeta(q,g)\frac{\mathrm{e}^{-|g|^2/2}}{(2\pi)^{\dim/2}}\, dg \\
    & = \frac12 \left[ \Diff(q)^{1/2}\nabla V(q) -\Diff(q)^{-1/2}\div(\Diff)(q) \right].
  \end{aligned}
\end{equation*}
We conclude that
\begin{equation*}
  2 \nabla\varphi(q)^\top\Diff(q)^{1/2}\E[G\min\left(0,\zeta(q,G)\right)] =  \left(\Diff(q)\nabla V(q) - \div(\Diff)(q)\right)^\top \nabla\varphi(q).
\end{equation*}
This ensures finally that~$n(\mathcal{L}^n\varphi)(q) \to (\cLD\varphi)(q)$ uniformly in~$q \in \T^\dim$ as~$n\to +\infty$.

\subsubsection{Limit martingale problem}
\label{sec:limit_martingale_pbm}

The argument is standard, and we refer for example to~\cite[Section 11.2]{StroockVaradhan} and~\cite[Theorem 5.8]{CometsMeyre} for details. Let us however outline the reasoning for the sake of completeness.

For any~$n\geq 1$ and any test function~$\varphi\in \calC^3(\T^\dim)$, the definition of~$\mathcal{L}^n$ ensures that
\begin{equation*}
  \varphi(q^{k}_{1/n}) - \varphi(q^0_{1/n}) - \frac1n \sum_{i=1}^{k-1} (n\mathcal{L}^n\varphi)(q^{i}_{1/n})
\end{equation*}
is a martingale with respect to~$P_{1/n}$. Now, using the definition~\eqref{eq:process},
\begin{equation*}
  \begin{aligned}
    \varphi(q^{k}_{1/n}) & - \varphi(q^0_{1/n}) - \frac1n \sum_{i=1}^{k-1} (n\mathcal{L}^n\varphi)(q^{i}_{1/n})\\
    & = \varphi\left(Q^{1/n}_{k/n}\right)-\varphi\left(Q^{1/n}_{0}\right) - \frac1n \sum_{i=1}^{k-1} (n\mathcal{L}^n\varphi)\left(Q^{1/n}_{i/n}\right)\\
    & = \varphi\left(Q^{1/n}_{k/n}\right)-\varphi\left(Q^{1/n}_{0}\right) - \frac1n \sum_{i=1}^{k-1} (\mathcal{L}_\Diff \varphi)\left(Q^{1/n}_{i/n}\right) + \frac1n \sum_{i=1}^{k-1} (n\mathcal{L}^n\varphi - \mathcal{L}_\Diff \varphi)\left(Q^{1/n}_{i/n}\right).
  \end{aligned}
\end{equation*}
The last term converges to~0 as~$n \to +\infty$ in view of the uniform convergence~$n(\mathcal{L}^n\varphi)(q) \to (\cLD\varphi)(q)$. The first sum can be rewritten as
\begin{equation*}
  \begin{aligned}
    \frac1n \sum_{i=1}^{k-1} (\mathcal{L}_\Diff \varphi)\left(Q^{1/n}_{i/n}\right) & = \sum_{i=1}^{k-1} \int_{i/n}^{(i+1)/n} (\mathcal{L}_\Diff \varphi)\left(Q^{1/n}_{i/n}\right) dt \\
    & = \int_{1/n}^{k/n} (\mathcal{L}_\Diff \varphi)\left(Q^{1/n}_t\right) dt + \mathrm{O}\left( \frac 1 n\right).
  \end{aligned}
\end{equation*}
For~$t > 0$ fixed, we conclude by letting~$n\to +\infty$ with~$k = \lfloor nt \rfloor$ that
\begin{equation*}
  \varphi(Q_t^\star) - \varphi(Q_0^\star) - \int_{0}^t(\mathcal{L}_\mathcal{D}\varphi)(Q_s^\star)
\end{equation*}
is a martingale process for any weak limit~$P_\ast$ of~$\{P_{1/n}\}_{n \geq 1}$, which concludes the proof.

\subsection{Technical results}
\label{sec:technical_lemmas_pathwise_cv}

We gather in this section some technical results used in~\Cref{sec:proof_prop:tightness,sec:proof_prop:limit-martingale}.

\begin{lemma}
  \label{lem:tightness-subres-1}
  For any~$\delta>0$, $n \in \N$ and~$k\in\N$, it holds
  \begin{equation*}
    \begin{aligned}
       & \sup_{t\in[0,\delta]} \left|Q^{1/n}_{k\delta +t}-Q^{1/n}_{k\delta}\right|\\
       & \qquad \leq \max\left\{\max_{\lfloor nk\delta \rfloor \leq \ell\leq \lceil n(k+1)\delta \rceil} \left|q_{1/n}^\ell - q_{1/n}^{\lfloor nk\delta \rfloor}\right|,\max_{\lceil nk\delta \rceil \leq \ell\leq \lceil n(k+1)\delta \rceil} \left|q_{1/n}^\ell - q_{1/n}^{\lceil nk\delta \rceil}\right|\right\}.
    \end{aligned}
  \end{equation*}
\end{lemma}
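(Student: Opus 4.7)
The plan is to exploit the piecewise affine structure of $s\mapsto Q^{1/n}_s$. By the definition \eqref{eq:process}, on each subinterval $[\ell/n,(\ell+1)/n]$ this map is affine, with $Q^{1/n}_{\ell/n}=q^{\ell}_{1/n}$ at the nodes; in particular, for any non-nodal time $s$, $Q^{1/n}_s$ is a strict convex combination of $q^{\lfloor ns\rfloor}_{1/n}$ and $q^{\lceil ns\rceil}_{1/n}$. Consequently the scalar map $t\mapsto|Q^{1/n}_{k\delta+t}-Q^{1/n}_{k\delta}|$ is, restricted to each subinterval cut out by the nodes $\ell/n\in[k\delta,(k+1)\delta]$, the norm of an affine function of $t$, hence convex; its supremum over $[0,\delta]$ is therefore attained either at $t=0$ (contributing $0$), at $t=\delta$, or at a ``breakpoint'' $t$ with $k\delta+t=\ell/n$ for some $\ell\in\{\lceil nk\delta\rceil,\dots,\lfloor n(k+1)\delta\rfloor\}$, at which $Q^{1/n}_{k\delta+t}=q^{\ell}_{1/n}$.

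The key elementary fact I would then invoke is that, for any $a,b,c\in\T^\dim$ and $\alpha\in[0,1]$,
\[
\bigl|c-\bigl(\alpha a+(1-\alpha) b\bigr)\bigr|\leq \max\bigl(|c-a|,|c-b|\bigr),
\]
which follows from the triangle inequality. For each breakpoint $\ell\in\{\lceil nk\delta\rceil,\dots,\lfloor n(k+1)\delta\rfloor\}$, I would write
\[
Q^{1/n}_{k\delta}=\alpha\, q^{\lfloor nk\delta\rfloor}_{1/n}+(1-\alpha)\,q^{\lceil nk\delta\rceil}_{1/n}
\]
for some $\alpha\in[0,1]$ (both indices coinciding when $nk\delta\in\Z$), and apply the elementary inequality with $c=q^{\ell}_{1/n}$ to obtain
\[
\bigl|q^{\ell}_{1/n}-Q^{1/n}_{k\delta}\bigr|\leq \max\Bigl(\bigl|q^\ell_{1/n}-q^{\lfloor nk\delta\rfloor}_{1/n}\bigr|,\ \bigl|q^\ell_{1/n}-q^{\lceil nk\delta\rceil}_{1/n}\bigr|\Bigr).
\]
Since $\lceil nk\delta\rceil\leq\ell\leq\lfloor n(k+1)\delta\rfloor\leq\lceil n(k+1)\delta\rceil$, these two terms lie respectively in the first and second $\max$ on the right-hand side of the claimed inequality.

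For the endpoint $t=\delta$, I would similarly decompose
\[
Q^{1/n}_{(k+1)\delta}=\beta\,q^{\lfloor n(k+1)\delta\rfloor}_{1/n}+(1-\beta)\,q^{\lceil n(k+1)\delta\rceil}_{1/n}
\]
and apply the elementary inequality once more to reduce $\bigl|Q^{1/n}_{(k+1)\delta}-Q^{1/n}_{k\delta}\bigr|$ to a maximum of two quantities $\bigl|q^{\ell'}_{1/n}-Q^{1/n}_{k\delta}\bigr|$ with $\ell'\in\{\lfloor n(k+1)\delta\rfloor,\lceil n(k+1)\delta\rceil\}$, each of which falls under the bound established for the breakpoint case. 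Taking the maximum over the finitely many extremal points yields the inequality in the statement.

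There is no real obstacle: the argument is purely structural, reducing everything to the piecewise affine nature of the interpolant and one application of the triangle inequality. The only care required is bookkeeping of the floor/ceiling indices to ensure that every discrete time that appears indeed lies in $\{\lfloor nk\delta\rfloor,\dots,\lceil n(k+1)\delta\rceil\}$; this is immediate since the breakpoints of $s\mapsto Q^{1/n}_s$ on $[k\delta,(k+1)\delta]$ are exactly the $\ell/n$ with $\lceil nk\delta\rceil\leq \ell\leq \lfloor n(k+1)\delta\rfloor$, and the endpoints $k\delta,(k+1)\delta$ involve only the neighbouring floor/ceiling indices.
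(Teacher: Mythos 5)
Your proof is correct and takes essentially the same approach as the paper's: both exploit the piecewise-affine structure of the interpolant $s\mapsto Q^{1/n}_s$, the fact that the norm of an affine function on a closed interval is maximized at an endpoint, and the elementary inequality $\bigl|c-(\alpha a+(1-\alpha)b)\bigr|\leq\max(|c-a|,|c-b|)$ applied to the interpolation at $Q^{1/n}_{k\delta}$. The only difference is the order of operations: the paper first pulls out the interpolation at $Q^{1/n}_{k\delta}$ (reducing to two supremums against fixed chain values) and then decomposes $[k\delta,(k+1)\delta]$ into subintervals $[\ell/n,(\ell+1)/n]$, whereas you first locate the supremum over $t$ at a breakpoint or at $t=\delta$ via convexity, and then apply the elementary inequality at $Q^{1/n}_{k\delta}$; the two routes are reorderings of the same argument.
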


\begin{proof}
  Note first that, since~$Q^{1/n}_{k\delta}$ linearly interpolates between~$q_{1/n}^{\lfloor nk\delta \rfloor}$ and~$q_{1/n}^{\lceil nk\delta \rceil}$,
  \begin{equation}
    \label{eq:sup_Q_to_be_bounded}
    \sup_{t\in[0,\delta]} \left|Q^{1/n}_{k\delta +t}-Q^{1/n}_{k\delta}\right| \leq \max \left\{ \sup_{s\in[k\delta,(k+1)\delta]} \left|Q^{1/n}_{s}-q_{1/n}^{\lfloor nk\delta \rfloor}\right|, \sup_{s\in[k\delta,(k+1)\delta]} \left|Q^{1/n}_{s}-q_{1/n}^{\lceil nk\delta \rceil}\right|\right\}.
  \end{equation}
  We next decompose the supremums over~$s \in [k\delta,(k+1)\delta]$ into supremums over intervals of length~$1/n$, by noting that
  \begin{equation*}
    [k\delta,(k+1)\delta] \subset \bigcup_{\ell = \lfloor nk\delta \rfloor}^{\lceil n(k+1)\delta \rceil-1} \left[ \frac{\ell}{n},\frac{\ell+1}{n}\right].
  \end{equation*}
  For instance,
  \begin{equation*}
    \sup_{s\in[k\delta,(k+1)\delta]} \left|Q^{1/n}_{s}-q_{1/n}^{\lfloor nk\delta \rfloor}\right| \leq \max_{\lfloor nk\delta \rfloor \leq \ell\leq \lceil n(k+1)\delta \rceil-1} \sup_{\tau\in[0,1/n]} \left|Q^{1/n}_{\ell/n+\tau}-q_{1/n}^{\lfloor nk\delta \rfloor}\right|.
  \end{equation*}
  Since~$Q^{1/n}_{\ell/n+\tau}$ linearly interpolates between~$q_{1/n}^\ell$ and~$q_{1/n}^{\ell+1}$, it holds
  \begin{equation*}
    \sup_{\tau\in[0,1/n]} \left|Q^{1/n}_{\ell/n+\tau}-q_{1/n}^{\lfloor nk\delta \rfloor}\right| \leq \max \left\{ \left|q_{1/n}^\ell-q_{1/n}^{\lfloor nk\delta \rfloor}\right|, \left|q_{1/n}^{\ell+1}-q_{1/n}^{\lfloor nk\delta \rfloor}\right|\right\}.
  \end{equation*}
  The claimed conclusion then follows from a similar bound for the second maximum over~$s$ on the right-hand side of~\eqref{eq:sup_Q_to_be_bounded}.
\end{proof}

The following lemma provides an expansion of the acceptance rate in inverse powers of~$n$.

\begin{lemma}[Expansion of~$R_{1/n}(q, G)$]
  \label{lem:acceptance-rate-DL}
  For any~$q \in \mathbb{T}^\dim$ and~$G \in \mathbb{R}^\dim$, it holds
  \begin{equation*}
    R_{1/n}(q, G) = 1-\sqrt{\frac{2}{n}}\max(0,\zeta(q, G) ) + \mathrm{O}(n^{-1}),
  \end{equation*}
  with
  \begin{equation}
    \label{eq:zeta-def}
    \begin{aligned}
      \zeta(q, G) = &\nabla V(q)^{\top}\Diff(q)^{1/2}G  + \frac{1}{2}\Tr\left(\Diff(q)^{-1}\left[\textsf{D}\Diff(q)\cdot\Diff(q)^{1/2}G\right]\right)           \\
    & -\frac{1}{2}G^{\top}\Diff(q)^{-1/2}\left[\textsf{D}\Diff(q)\cdot\Diff(q)^{1/2}G\right]\Diff(q)^{-1/2}G,
    \end{aligned}
  \end{equation}
  where~$\textsf{D}\Diff(q)$ is the differential matrix of~$q\mapsto\Diff(q)$, \emph{i.e.}, for any~$h \in \R^\dim$, $\textsf{D}\Diff(q) \cdot h$  is the matrix with entries~$\nabla \Diff_{ij}(q)^\top h$.
\end{lemma}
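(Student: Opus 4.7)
The plan is to perform a first-order Taylor expansion in the small parameter $\varepsilon := \sqrt{2/n}$ of the argument of the minimum in~\eqref{eq:def_R_dt}, to identify $\zeta(q,G)$ as the first-order coefficient, and then to pass the expansion through the non-smooth map $x \mapsto \min(1,\rme^{x})$.

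First I would introduce the proposal $\widetilde{q} = q + \varepsilon\, \Diff(q)^{1/2} G$ and write $R_{1/n}(q,G) = \min(1,\rme^{A})$, where
\begin{equation*}
  A = \tfrac12\log\det\Diff(q) - \tfrac12\log\det\Diff(\widetilde{q}) - [V(\widetilde{q})-V(q)] - \tfrac12(|\widetilde{G}|^2-|G|^2),
\end{equation*}
with $\widetilde G = \Diff^{-1/2}(\widetilde q)\Diff^{1/2}(q)G$. Since $V,\Diff\in\calC^\infty(\T^\dim)$ (with $\Diff$ uniformly positive definite by ), all derivatives of $V$, $\log\det\Diff$, and $\Diff^{-1}$ are bounded on the compact torus, so Taylor's theorem with integral remainder gives
\begin{align*}
  V(\widetilde q)-V(q) &= \varepsilon\,\nabla V(q)^{\top}\Diff(q)^{1/2}G + r_1(q,G),\\
  \log\det\Diff(\widetilde q)-\log\det\Diff(q) &= \varepsilon\,\Tr\bigl(\Diff(q)^{-1}[\textsf D\Diff(q)\cdot\Diff(q)^{1/2}G]\bigr) + r_2(q,G),\\
  \Diff^{-1}(\widetilde q)-\Diff^{-1}(q) &= -\varepsilon\,\Diff^{-1}(q)[\textsf D\Diff(q)\cdot\Diff(q)^{1/2}G]\Diff^{-1}(q) + R_3(q,G),
\end{align*}
where each remainder is of size $\varepsilon^2 P(|G|)$ for some polynomial $P$, uniformly in $q\in\T^\dim$. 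Inserting the third expansion into $|\widetilde G|^2-|G|^2 = G^{\top}\Diff(q)^{1/2}[\Diff^{-1}(\widetilde q)-\Diff^{-1}(q)]\Diff(q)^{1/2}G$ and collecting the three terms yields $A = -\varepsilon\,\zeta(q,G) + r(q,G)$, where $\zeta(q,G)$ is exactly the expression in~\eqref{eq:zeta-def} and the remainder $r$ satisfies $|r(q,G)| \le K n^{-1}(1+|G|)^{k}$ uniformly in $q$, for some $k\in\N$ and $K\in\R_+$.

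The second step is to turn this expansion of $A$ into an expansion of $\min(1,\rme^{A})$. The elementary identity $\min(1,\rme^{x}) = 1 + \min(0,x) + \rho(x)$, where $|\rho(x)|\le x^2$ for $|x|\le 1$, combined with the $1$-Lipschitz property $|\min(0,x)-\min(0,y)|\le |x-y|$, gives
\begin{equation*}
  \min(1,\rme^{A}) = 1 + \min(0,-\varepsilon\zeta(q,G)) + \bigl[\min(0,A)-\min(0,-\varepsilon\zeta)\bigr] + \rho(A) = 1 - \varepsilon\max(0,\zeta(q,G)) + \widetilde r(q,G),
\end{equation*}
with $|\widetilde r(q,G)| \le |r(q,G)| + A^{2} \le K' n^{-1}(1+|G|)^{2k}$ uniformly in $q$ (for $n$ large enough that the intermediate quantities stay in the regime $|A|\le 1$; the very rare bad event $|A|>1$ is handled separately using that $R_{1/n}$ is bounded by $1$, together with Gaussian concentration of $|G|$). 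This gives precisely the claimed expansion with an $\mathrm{O}(n^{-1})$ remainder in the sense of~\eqref{eq:bound_remainder_f_b}.

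The only delicate point is to track the polynomial growth in $|G|$ of the various remainders, since the step size $\varepsilon\Diff^{1/2}G$ is only bounded in probability and not almost surely. All remainders come from Taylor expansions of smooth functions on the compact torus evaluated at displacements of size $\varepsilon|\Diff^{1/2}G|$, so they are controlled by $\varepsilon^{2}(1+|G|)^{k}$ with $k\le 4$ in a deterministic way; hence the conditional moment estimate~\eqref{eq:bound_remainder_f_b} is automatic, because the remainders do not depend on $U$ at all. I expect no serious obstacle beyond the somewhat tedious bookkeeping of these polynomial growth constants across the three Taylor expansions.
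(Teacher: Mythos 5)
Your proposal is correct and follows essentially the same route as the paper's proof: the same three Taylor expansions of $V$, $\log\det\Diff$ and $\Diff^{-1}$ yield $\alpha = -A = \sqrt{2/n}\,\zeta(q,G) + \mathrm{O}(n^{-1})$, and then one passes through the kink at the origin — the paper via the two-sided bound $\max(x,0)-\max(x,0)^2/2 \le 1-\min(1,\rme^{-x}) \le \max(x,0)$, you via $\min(1,\rme^x)=1+\min(0,x)+\rho(x)$ combined with the $1$-Lipschitz property of $\min(0,\cdot)$, which are equivalent. A minor simplification: $0\le\rho(x)\le x^2/2$ in fact holds for all $x\in\R$, not just $|x|\le 1$, so your separate treatment of the event $|A|>1$ is unnecessary and the pointwise bound $|\widetilde r(q,G)| \le |r(q,G)| + A^2/2$ already gives~\eqref{eq:bound_remainder_f_b} directly.
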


\begin{proof}
  The computations are similar to the ones in~\cite[Section~4.7]{FS17}. 
  To control remainder terms, we use the fact that the functions at hand are sufficiently smooth, the diffusion matrix~$\Diff$ is positive and the domain~$\T^\dim$ is compact. The magnitude of the acceptance probability in the Metropolis algorithm reads
  \begin{equation*}
    R_{1/n}(q, G) = \min \left\{ 1, \exp\left[-\alpha\left(q, \Phi_{1/n}(q,G)\right)\right] \right\}, \qquad \Phi_{1/n}(q,G) = q + \sqrt{\frac{2}{n}}\Diff( q)^{1/2}G,
  \end{equation*}
  with
  \begin{equation*}
    \begin{aligned}
      \alpha(q,\widetilde q) & = \frac{1}{2}\left( \ln \det[\Diff(\widetilde q)] - \ln \det[\Diff(q)] \right) + \left[V(\widetilde q)-V(q)\right]                                    \\
                             & \qquad \qquad + \frac{ n}{4}\left[ \left|\Diff(\widetilde q)^{-1/2}(q-\widetilde q)\right|^2 - \left|\Diff(q)^{-1/2}(\widetilde q-q)\right|^2\right].
    \end{aligned}
  \end{equation*}
  First,
  \begin{equation}
    \label{eq:grad-term}
    V(\Phi_{1/n}(q,G)) - V(q) = \sqrt{\frac{2}{n}}\nabla V(q)^{\top}\Diff( q)^{1/2}G + \mathrm{O}(n^{-1}).
  \end{equation}
  We next consider the terms involving~$\det(\Diff)$. Since~$\Diff$ is $\calC^2$ on $\T^\dim$, 
  \begin{equation*}
    \Diff(\Phi_{1/n}(q,G)) =  \Diff(q) + \sqrt{\frac{2}{n}}\textsf{D}\Diff(q)\cdot \Diff( q)^{1/2}G + \mathrm{O}(n^{-1}).
  \end{equation*}
It then holds, using the fact that $\Diff$ is uniformly bounded from below by a positive constant,
  \begin{equation*}
    \begin{aligned}
      \det[\Diff( \Phi_{1/n}(q,G))] & =  \det[\Diff( q)]\det( \Id +  \sqrt{\frac{2}{n}}\Diff( q)^{-1}\textsf{D}\Diff(q)\cdot \Diff(q)^{1/2}G + \mathrm{O}(n^{-1}))                        \\
    & =  \det[\Diff( q)]\left(1+  \sqrt{\frac{2}{n}}\Tr\left(\Diff( q)^{-1}[\textsf{D}\Diff(q)\cdot \Diff( q)^{1/2}G]\right) + \mathrm{O}(n^{-1})\right),
    \end{aligned}
  \end{equation*}
  so that
  \begin{equation}
    \label{eq:det-term}
    \ln \det[\Diff(\Phi_{1/n}(q,G))] - \ln \det[\Diff( q)] = \sqrt{\frac{2}{n}}\Tr\left(\Diff( q)^{-1} \left[\textsf{D}\Diff(q)\cdot \Diff( q)^{1/2}G\right]\right) + \mathrm{O}(n^{-1}).
  \end{equation}

 We finally turn to the term~$|\Diff(\Phi_{1/n}(q,G))^{-1/2}(q-\Phi_{1/n}(q,G))|^2 -|\Diff(q)^{-1/2}(\Phi_{1/n}(q,G)-q)|^2$. We use the expansion~$\Diff(q + x)^{-1} = \Diff(q)^{-1} - \Diff(q)^{-1}\left[\textsf{D}\Diff(q)\cdot x\right] \Diff(q)^{-1} + \mathrm{O}(|x|^2)$ (using again the uniform positive lower bound on $\Diff$) to write
  \begin{equation*}
    \begin{aligned}
       & \left|\Diff(\Phi_{1/n}(q,G))^{-1/2}(q-\Phi_{1/n}(q,G))\right|^2 - \left|\Diff(q)^{-1/2}(\Phi_{1/n}(q,G)-q)\right|^2                                                            \\
       & \qquad \qquad \qquad \qquad = (\Phi_{1/n}(q,G)-q)^{\top} \left[\Diff(\Phi_{1/n}(q,G))^{-1} -  \Diff(q)^{-1}\right](\Phi_{1/n}(q,G)-q)                                          \\
       & \qquad \qquad \qquad \qquad = -\left(\frac{2}{n}\right)^{3/2}G^{\top}\Diff(q)^{-1/2}\left[\textsf{D}\Diff(q)\cdot \Diff(q)^{1/2}G\right]\Diff(q)^{-1/2}G + \mathrm{O}(n^{-2}).
    \end{aligned}
  \end{equation*}
  Combining~\eqref{eq:grad-term} and~\eqref{eq:det-term} with the latter equality leads to~$\alpha(q,\Phi_{1/n}(q,G)) = (2/n)^{1/2} \zeta(q, G) + \mathrm{O}(n^{-1})$, with~$\zeta(q,G)$ given by~\eqref{eq:zeta-def}. The desired conclusion then follows from the inequality~$\max(x,0) - \max(x,0)^2/2 \leq 1-\min(1,\rme^{-x}) \leq \max(x,0)$, obtained by distinguishing the cases~$x \leq 0$ and~$x \geq 0$.
\end{proof}

The last result makes precise the martingale decomposition of the increments~$\Delta_n(q, G, U)$ defined in~\eqref{eq:increment}.

\begin{lemma}
  \label{lem:martingale-part}
  Fix~$q \in \T^\dim$. The increment~$\Delta_n(q, G, U)$ writes
  \begin{equation*}
    \Delta_n(q, G, U) =  M_n(q, G, U) + T_n(q),
  \end{equation*}
  where~$M_n(q, G, U) = \Delta_n(q, G, U)- \E\left[ \Delta_n(q, G, U) \right]$ and~$T_n(q)  =\E\left[ \Delta_n(q, G, U) \right]$, the latter expectations being with respect to realizations of the independent random variables~$G$ and~$U$, with~$G$ a~$\dim$-dimensional standard Gaussian random variable and~$U$ a uniform random variable on~$[0,1]$. Moreover,
  \begin{equation}
\label{eq:properties_martingale_decomposition}
    \E[M_n(q, G, U)] = 0,
    \qquad
    M_n(q, G, U) = \mathrm{O}(n^{-1/2}),
    \qquad
    T_n(q) = \mathrm{O}(n^{-1}),
  \end{equation}
  the last two estimates holding uniformly in~$q \in \T^\dim$.
\end{lemma}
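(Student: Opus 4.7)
The mean-zero property $\E[M_n]=0$ is built into the definition $M_n = \Delta_n - \E[\Delta_n]$, so only the two magnitude estimates require work. The plan is to treat $M_n$ as an easy corollary of the pointwise bound on $\Delta_n$, and to extract $T_n = O(n^{-1})$ from the expansion of the Metropolis acceptance probability provided by~\Cref{lem:acceptance-rate-DL}, exploiting the vanishing first Gaussian moment.

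For the estimate on $M_n$, I would start from the pointwise bound $|\Delta_n(q,G,U)| \leq \sqrt{2/n}\,|\Diff(q)^{1/2}G|$, valid for all $q,G,U$ because the indicator is at most one. Since $\Diff \in \calC^2(\T^\dim,\mathcal{S}_\dim^{++})$ and $\T^\dim$ is compact, $\|\Diff^{1/2}\|_{L^\infty(\T^\dim)} < +\infty$, which gives $|\Delta_n(q,G,U)| \leq C n^{-1/2}|G|$. In particular $\Delta_n = \rmO(n^{-1/2})$ in the sense of~\eqref{eq:bound_remainder_f_b} with $k_f = 1$. Assuming the claim $T_n = \rmO(n^{-1})$, the triangle inequality $|M_n| \leq |\Delta_n| + |T_n|$ then yields $M_n = \rmO(n^{-1/2})$ with the same polynomial weight in $|G|$.

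The heart of the argument is $T_n(q) = \rmO(n^{-1})$. I would first integrate out $U$ conditionally on $G$: since $U$ and $G$ are independent and $U$ is uniform on $[0,1]$,
\[
T_n(q) = \sqrt{\tfrac{2}{n}}\,\Diff(q)^{1/2}\, \E\bigl[G\, R_{1/n}(q,G)\bigr].
\]
Inserting the expansion $R_{1/n}(q,G) = 1 - \sqrt{2/n}\,\max(0,\zeta(q,G)) + r_n(q,G)$ with $r_n(q,G) = \rmO(n^{-1})$ from~\Cref{lem:acceptance-rate-DL} splits $T_n(q)$ into three pieces. The first, $\sqrt{2/n}\,\Diff(q)^{1/2}\,\E[G]$, vanishes because $G$ is centered. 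The second, $-(2/n)\,\Diff(q)^{1/2}\,\E[G\max(0,\zeta(q,G))]$, is manifestly of order $1/n$: the explicit formula~\eqref{eq:zeta-def} shows that $\zeta(q,g)$ is polynomial in $g$ with coefficients depending on $\nabla V$, $\Diff$, $\Diff^{-1}$, $\Diff^{1/2}$ and $\mathsf{D}\Diff$, all of which are uniformly bounded on $\T^\dim$ by smoothness of $V$ and the  that $\Diff \in \calC^2(\T^\dim, \mathcal{S}_\dim^{++})$. The third piece, $\sqrt{2/n}\,\Diff(q)^{1/2}\,\E[G\,r_n(q,G)]$, is $\rmO(n^{-3/2})$: applying the definition~\eqref{eq:bound_remainder_f_b} of $r_n = \rmO(n^{-1})$ gives $\E[|G|\,|r_n(q,G)|] = \E[|G|\,\E[|r_n|\mid G]] \leq K n^{-1}\,\E[|G|(1+|G|^{k_{r}})]$, the latter expectation being a finite Gaussian moment. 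Collecting the three contributions yields $|T_n(q)| \leq K/n$ uniformly in $q \in \T^\dim$.

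The only delicate point is that the $\rmO(n^{-1})$ remainder in~\Cref{lem:acceptance-rate-DL} must hold in the quantitative sense of~\eqref{eq:bound_remainder_f_b}, uniformly in $q$ and with polynomial control in $|G|$. This however follows routinely from Taylor's theorem with integral remainder applied to the logarithm of the Metropolis ratio, together with the elementary inequality $|\max(x,0) - \max(x,0)^2/2 - (1-\min(1,\rme^{-x}))| \leq \rmO(x^3)$ used at the end of the proof of~\Cref{lem:acceptance-rate-DL}; the polynomial weight in $|G|$ reflects the fact that the proposal increment $\sqrt{2/n}\,\Diff^{1/2}(q)G$ is itself polynomial in $G$. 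Once this quantitative form is in place, no further obstacle arises.
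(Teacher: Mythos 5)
Your proof is correct and follows essentially the same route as the paper's: both bound $T_n(q)$ by integrating out $U$ to get $\sqrt{2/n}\,\Diff(q)^{1/2}\E[G\,R_{1/n}(q,G)]$, use the centering $\E[G]=0$ to cancel the leading term, and then read off the $\mathrm{O}(n^{-1})$ bound from the expansion in~\Cref{lem:acceptance-rate-DL} together with compactness of $\T^\dim$ and uniform positivity of $\Diff$. The only cosmetic difference is that the paper writes $\E[G R_{1/n}]=\E[G(R_{1/n}-1)]$ and estimates in one step, whereas you expand into three explicit pieces (and note that the remainder piece is in fact $\mathrm{O}(n^{-3/2})$, sharper than needed); and for $M_n$ the paper estimates directly from its explicit expression, whereas you pass through the pointwise bound $|\Delta_n|\leq C n^{-1/2}|G|$ and the triangle inequality $|M_n|\leq|\Delta_n|+|T_n|$. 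Both are equivalent in substance.
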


\begin{proof}
  The first property of~\eqref{eq:properties_martingale_decomposition} holds by construction, so it suffices to prove the two other estimates. For the last one, we note that, by definition,
  \begin{equation*}
    T_n(q) = \sqrt{\frac{2}{n}}\Diff(q)^{1/2} \E\left[ G R_{1/n}(q,G) \right] = \sqrt{\frac{2}{n}}\Diff(q)^{1/2} \E\left[ G \left(R_{1/n}(q,G)-1\right) \right],
  \end{equation*}
  so that the conclusion follows from~\Cref{lem:acceptance-rate-DL} and the fact that~$\Diff$, its derivatives and its inverse, are uniformly bounded in~$q$. The other estimate is obtained by noting that
  \begin{equation*}
    M_n(q, G, U) = \sqrt{\frac{2}{n}} \Diff(q)^{1/2} \left( G\mathbbm{1}_{\{U\leq R_{1/n}(q,G)\}} - \E\left[ G R_{1/n}(q,G) \right]\right),
  \end{equation*}
  so that, in view of~\Cref{lem:acceptance-rate-DL},
  \begin{equation*}
    \forall u \in [0,1], \qquad |M_n(q, G, u)| \leq \frac{C}{\sqrt{n}} \normF{\Diff(q)}^{1/2} |G| + \mathrm{O}(n^{-1}).
  \end{equation*}
  This concludes the proof.
\end{proof}

\section{Derivation of the differential of~$\Lambda$}
\label{app:differential_lambda}

This derivation follows the proof of the Hellmann--Feynman theorem. It can be seen as differentiating the Rayleigh--Ritz quotient that defines~$\Lambda(\Diff^{\star}+t\delta\Diff)$, using the chain rule. We detail here the computations:
\begin{align*}
    \frac{d}{dt}\Lambda(\Diff^{\star}+t\delta\Diff)
    &=\frac{d}{dt}\left(
        \frac{\left\langle-\cL_{\Diff^{\star}+t\delta\Diff}u_{\Diff^{\star}+t\delta\Diff},u_{\Diff^{\star}+t\delta\Diff}\right\rangle_{L^{2}(\mu)}}{\left\lVert u_{\Diff^{\star}+t\delta\Diff}\right\rVert^{2}_{L^{2}(\mu)}}
    \right)\\
    &=\frac{d}{dt}
        \left\langle
            -\cL_{\Diff^{\star}+t\delta\Diff}u_{\Diff^{\star}+t\delta\Diff},u_{\Diff^{\star}+t\delta\Diff}        \right\rangle_{L^{2}(\mu)}\\
    &=\left\langle -\cL_{\delta\Diff} u_{\Diff^{\star}+t\delta\Diff}, u_{\Diff^{\star}+t\delta\Diff}\right\rangle_{L^{2}(\mu)}+2\left\langle -\cL_{\Diff^{\star}+t\delta\Diff}u_{\Diff^{\star}+t\delta\Diff},\frac{d}{dt} u_{\Diff^{\star}+t\delta\Diff}\right\rangle_{L^{2}(\mu)}\\
    &=\left\langle -\cL_{\delta\Diff} u_{\Diff^{\star}+t\delta\Diff}, u_{\Diff^{\star}+t\delta\Diff}\right\rangle_{L^{2}(\mu)}+2\Lambda(\Diff^{\star}+t\Diff)\left\langle u_{\Diff^{\star}+t\delta\Diff},\frac{d}{dt}u_{\Diff^{\star}+t\delta\Diff}\right\rangle_{L^{2}(\mu)}\\
    &=\left\langle -\cL_{\delta\Diff} u_{\Diff^{\star}+t\delta\Diff}, u_{\Diff^{\star}+t\delta\Diff}\right\rangle_{L^{2}(\mu)},
\end{align*}
where we used that~$u_{\Diff^{\star}+t\delta\Diff}$ is normalized in~$L^{2}(\mu)$ for the second equality, that~$\cL_\Diff$ is linear in~$\Diff$ and that it is a symmetric operator on~$L^{2}(\mu)$ for the third equality, that~$u_{\Diff^{\star}+t\delta\Diff}$ is an eigenvector of~$-\cL_{\Diff^{\star}+t\delta\Diff}$ associated with the eigenvalue~$\Lambda(\Diff^{\star}+t\delta\Diff)$ for the fourth equality and that~$\frac{d}{dt}\left\lVert u_{\Diff^{\star}+t\delta\Diff}\right\rVert^{2}_{L^{2}(\mu)}=0=2\left\langle u_{\Diff^{\star}+t\delta\Diff},\frac{d}{dt}u_{\Diff^{\star}+t\delta\Diff}\right\rangle_{L^{2}(\mu)}$ for the last equality. Evaluating this equality at~$t=0$ leads to the expression of the differential.

\paragraph{Acknowledgements.}
We thank Grégoire Allaire, Benjamin Bogosel, Andrew Duncan, Federico Ghimenti, Frédéric Legoll, Mathieu Lewin, Danny Perez, Samuel Power, Frédéric van Wijland Jonathan Weare and Olivier Zahm for stimulating discussions. The works of T.L., R.S. and G.S. benefit from fundings from the European Research Council (ERC) under the European Union's Horizon 2020 research and innovation programme (project EMC2, grant agreement No 810367), and from the Agence Nationale de la Recherche through the grants ANR-19-CE40-0010-01 (QuAMProcs) and ANR-21-CE40-0006 (SINEQ). G.P. is partially supported by an ERC-EPSRC Frontier Research Guarantee through Grant No. EP/X038645, ERC through Advanced Grant No. 247031. This project was initiated as T.L. was a visiting professor at Imperial College of London (ICL), with a visiting professorship grant from the Leverhulme Trust. The Department of Mathematics at ICL and the Leverhulme Trust are warmly thanked for their support.

\bibliographystyle{abbrv}
\bibliography{biblio.bib}

\end{document}